\newcommand{\eps}{\varepsilon}
\newcommand{\dist}{\mathrm{dist}}
\renewcommand{\{}{\left\lbrace}
\renewcommand{\}}{\right\rbrace}
\DeclareMathOperator{\id}{id}
\DeclareMathOperator{\Id}{Id}
\DeclareMathOperator*{\Lip}{Lip}
\DeclareMathOperator{\Vol}{vol}
\DeclareMathOperator*{\argmin}{arg\,min}
\DeclareMathOperator{\sym}{sym}
\theoremstyle{plain}
\newtheorem{theorem}{Theorem}
\newtheorem{lemma}[theorem]{Lemma}
\newtheorem{corollary}[theorem]{Corollary}
\newtheorem{proposition}[theorem]{Proposition}
\newtheorem{definition}[theorem]{Definition}
\theoremstyle{definition}
\newtheorem{remark}[theorem]{Remark}
\newtheorem{step}{Step}
\begin{document}

\numberwithin{equation}{section}

\title
{Non-Euclidean elasticity for rods  and almost isometric embeddings of geodesic tubes\\
}

\author{Milan Kr\"omer and Stefan Müller
\thanks{MK and SM have been supported by the Deutsche Forschungsgemeinschaft (DFG, German Research Foundation) through
the Hausdorff Center for Mathematics (GZ 2047/1, Projekt-ID 390685813)   }}

\maketitle

\tableofcontents

\section{Introduction}
\label{sec:intro}

Non-Euclidean elasticity is a theory of elastic bodies which do not have a compatible stress-free reference configuration, and was originally developed to formulate and analyze models of metal plasticity and  dislocations. 
Recently, it attracted a lot of attention  in the study of biomechanical systems, see for example, 
\cite{Klein2007, Sharon2007, Liang2009, Levin2021, Maor2025}.
In this context,  bodies which are small in one or several dimensions are of particular interest because 
they often arise in applications and they show universal behavior based on the underlying geometric 
nonlinearities rather than the detailed material laws.

The goal of this article is twofold.
First, we  extend the derivation of a low energy theory for inextensible rods by 
$\Gamma$-convergence~\cite{Mora2003} 
 from a Euclidean to a Riemannian setting. Secondly, for the case for thin geodesic tubes
 we extend the results of Maor and Shachar~\cite{Maor2019} on optimal 
 embeddings of thin geometric objects  in two ways. 
 First, the target is a general Riemannian manifold rather than $\mathbb R^n$ and,  secondly, 
 we give  a precise description of the asymptotic energy as a quadratic form in the difference of the 
 curvature tensors in the domain and the target (see \cite[Question 3, page 154]{Maor2019}).
 This also extends the work of Maor and Shachar~\cite{Maor2019}
 us~\cite{Kroemer2025} on the optimal embedding of small balls.

 To set the stage, let $(\mathcal M,g)$ and $(\tilde{\mathcal M}, \tilde g)$ be smooth, oriented, $n$ dimensional, Riemannian manifolds with $n \ge 2$ 
 and assume that $\mathcal M$ is complete  
  and $\tilde{\mathcal M}$ is compact.
  Let $\psi:[-L, L] \to \mathcal M$ be an injective unit speed geodesic and let $\underline \nu = (\nu_2, \ldots, \nu_n):
  (-L,L) \to( T\mathcal M)^{n-1}$ be a parallel  orthonormal frame of the normal bundle of $\psi$, such that 
  $(\gamma', \underline \nu)$ is positively oriented. We call the pair $(\gamma', \underline \nu)$ a framed unit speed geodesic. 
  Define geodesic normal coordinates (also known as Fermi coordinates) by 
  $$ \psi(x) \coloneqq \psi_{\gamma, \underline \nu}(x) \coloneqq  \exp_{\gamma(x_1)} \sum_{j=2}^n x_j \nu_j(x_1)$$
 and consider the  geodesic tube 
  $$\Omega^*_h \coloneqq \psi((-L,L) \times B_h(0)).$$
  Note that $\Omega^*_h$ depends only on the geodesic $\gamma$ and not on the frame $\underline \nu$.
  For sufficiently small $h >0$, the set $\Omega^*_h$ can be identified with the set of  those vectors in the normal bundle
  $N\gamma = \bigcup_{t \in (-L,L)} \{   a \in T_{\gamma(t)} : \, (a, \gamma'(t))= 0\}$ which have norm less than $h$. 
  To a map $\tilde u: \Omega^*_h \to \tilde{\mathcal{M}}$ we assign an energy
  \begin{align}    \label{eq:define_energy_Eh}
    E_h(\tilde u) \coloneqq \fint_{\Omega^*_h} \dist^2(d\tilde u, SO(g, \tilde g)) \, d\Vol_\mathcal M.
  \end{align}
Here the integrand is given more explicitly  by $\dist^2( d\tilde u(p), SO(g(p), \tilde g(\tilde u(p)))$,
where\\ $SO(g(p), \tilde g(\tilde u(p)))$ denotes the set of orientation preserving linear isometries
for $T_p \mathcal M$ to $T_{u(p)} \tilde{\mathcal M}$ and the distance is taken with respect to the Riemannian
Hilbert Schmidt norm for $1$-$1$ tensors. Moreover, $\fint_{\Omega^*_h}$ denotes the average with respect to
the Riemannian volume measure on $\mathcal M$.

Our first result is a compactness result. For every family of maps  $\tilde u_h: \Omega_h^* \to \tilde{\mathcal M}$
 with $\sup_{0 < h < h_0} h^{-4} E_h(\tilde u_h) < \infty$ there exists a sequence $h_k \to 0$
 and framed unit speed geodesics $(\tilde\gamma_k, \underline{\tilde\nu_k})$ in $\tilde{\mathcal M}$ which converge
 to a framed unit speed geodesic $(\tilde \gamma, \underline{\tilde \nu})$ and have the  following properties.
 If $v_k(x_1, h x')$ is the expression of $\tilde u_k$ in geodesic normal coordinates, i.e., if 
 \begin{align*}
  (\psi_{\hat \gamma_k, \underline{\hat \nu_k}} \circ v_k)(x_1, x') =
 \tilde u_k \circ  \psi_{\gamma, \underline \nu}(x_1, h x'),
 \end{align*}
 then $v_k$ essentially satisfies
 \begin{align*}
  v_k(x_1, x') = \binom{x_1}{h x'} + \binom{h_k^2 w_k(x_1) - h_k^2\partial_{x_1} y_k(x_1) \cdot x'}{h_k y_k(x_1) +  h_k^2 Z_k(x_1) x'} +
  h_k^3 \beta_k(x_1, x'),
 \end{align*}
  where $Z_k(x_1)$ is a skew symmetric $(n-1) \times (n-1)$ matrix. Moreover,  a subsequence of  the tuple $(w_k, y_k, Z_k, \beta_k)$ 
  converges to $(w,y,Z,\beta)$ in a suitable topology.
  
  Our second result is a $\Gamma$-convergence result. 
  With respect to the convergence indicated above,  the functionals $h^{-4} E_h$ $\Gamma$-converges to a limit functional given by 
  \begin{equation}\label{eq:def-I}
  \mathcal{I}^{\tilde\gamma,\underline{\tilde\nu}}(w,y,Z,\beta)
  \coloneqq\fint_{-L}^L\fint_{B_1(0)}|\sym G(x)-\mathcal{T}(x)|^2\,dx,
\end{equation}
where 
\begin{equation}\label{eq:def-G-T}
  \begin{split}
    G(x_1,x')
    &\coloneqq\left(\begin{array}{c|c|c|c}
      \left(
        \partial_{x_1}w 
      \right)e_1+(\partial_{x_1}A)\begin{pmatrix}
        0 \\ x'
      \end{pmatrix}
      & \partial_{x_2}\beta & \dots & \partial_{x_n}\beta
    \end{array}\right)
    -\frac{1}{2}A^2, \\
    \mathcal{T}(x_1,x')&\coloneqq
    \left(\mathcal{T}^{\mathcal{R}}_{\gamma,\underline\nu}(x_1,x')
    -\mathcal{T}^{\tilde{\mathcal{R}}}_{\tilde\gamma,\underline{\tilde\nu}}(x_1,y(x_1)+x')\right),
  \end{split}
\end{equation}
and
\begin{align}  \label{eq:def-I_def-A}
  A&=\begin{pmatrix}
    0 & -\partial_{x_1}y^T \\ 
    \partial_{x_1}y & Z
  \end{pmatrix},
\end{align} 
   see Theorem~\ref{thm:main-thm} below.
   Here $\mathcal{T}^{\mathcal{R}}_{\gamma,\underline\nu}(x_1,x')$
   is a quadratic form in $x'$ which describes the rescaled deviation of the metric introduced by the 
   Fermi coordinates  in $\mathcal M$ from the Euclidean metric. 
   This deviation is a quadratic form in $x'$ and the coefficients 
   of the quadratic form depend on the curvature tensor $\mathcal R$ along $\gamma$, 
   see Lemma~\ref{lem:metric-taylor-expansion} below. 
   Similarly, $\mathcal{T}^{\tilde{\mathcal{R}}}_{\tilde\gamma,\underline{\tilde\nu}}(x_1,y(x_1)+x')$ describes
   the metric deficit in the target manifold $\tilde{\mathcal{M}}$. 
   The  $\Gamma$-limit deviates from the $\Gamma$-limit in the Euclidean setting~\cite{Mora2003} exactly by the additional term $\mathcal{T}(x)$.

  Actually, there is subtlety.
  The map $\tilde u_h$ is only in $W^{1,2}$ and hence may not be uniformly close
  to any unit speed geodesic and we may not be able to express $\tilde u_h$ in Fermi coordinates around such a geodesic.
  To overcome this difficulty, we use the same idea as in~\cite{Kroemer2025}: instead of $\tilde u_h$ we consider
  a uniformly Lipschitz approximations $\tilde v_h$ which agree with $\tilde u_h$ except 
  on a set of volume fraction $\mathcal O(h^4)$. We then define the limit $(w,y,Z,\beta)$ in terms of the 
  Lipschitz approximations $\tilde v_{h}$. It is not difficult to see that the limit does not depend on the 
  choice of the approximation. Moreover, the limit is also independent on the precise choice of the 
  geodesics $\tilde\gamma_h$, up to a natural finite dimensional equivalence relation related to 
  an infinitesimal change of framed geodesic, see Definition~\ref{def:equiv-relation}
  and Proposition~\ref{pr:unique_limit}.
  
 Using the compactness of $\tilde{\mathcal{M}}$,
 it is easy to see by standard arguments for $\Gamma$-convergence that 
  \begin{align*}
    \lim_{h \to 0}  \inf_{\tilde u} h^{-4} E_h(\tilde u) =
    \min_{(\tilde \gamma, \underline{\tilde \nu}) \in \mathcal G}
    m^{\tilde \gamma, \underline{\tilde \nu}},
  \end{align*}
  where $\mathcal G$ denotes the set of framed unit speed geodesics and where
  \begin{align*}
    m^{\tilde \gamma, \underline{\tilde \nu}} =\inf_{w,y,Z,\beta}
   \mathcal{I}^{\tilde\gamma,\underline{\tilde\nu}}(w,y,Z,\beta),
  \end{align*}
  see Theorem~\ref{thm:lim-E-minimizer}. We note that $\mathcal G$ is finite dimensional,
  since a framed unit speed geodesic  is determined by $p =\tilde \gamma(0)$
  and a positively oriented orthonormal frame at $p$. 
  
  Finally, we show that  $m^{\tilde \gamma, \underline{\tilde \nu}}$ can be expressed  as quadratic form
  of the difference of the curvature tensors in 
  $\mathcal M$ and $\tilde{\mathcal M}$ along the corresponding  framed unit speed geodesics.  Set
  \begin{align*}
    \mathcal A(x_1) \coloneqq\overline {\tilde{ \mathcal R}}(x_1,0) -  \overline{  \mathcal R}(x_1,0),
  \end{align*}
  where $\overline{ \mathcal R}$ is the pullback of the curvature tensor on $\mathcal M$ with respect to
  the Fermi coordinates for  $(\gamma, \underline \nu)$
and $\overline {\tilde{ \mathcal R}}$ is the pullback of the curvature tensor $\tilde{\mathcal R}$ of $\tilde{\mathcal{M}}$
  with respect to
  the Fermi coordinates for   $(\tilde \gamma, \underline{\tilde \nu})$.
  Then 
  \begin{align*}
    m^{\tilde \gamma, \underline{\tilde \nu}} = \fint_{-L}^L \mathbb Q( \mathcal A(x_1)) \, dx_1.
  \end{align*}  
  Here the quadratic form $\mathbb Q$ is positive definite on the space of $4$-tensors which have the symmetries
of the curvature tensor and on those tensors it can be written as
\begin{equation*}  \label{eq:decomposition_mathbbQ_intro}
\mathbb Q(\mathcal A) = \mathbb Q_1(\mathcal A^{\mathrm{par}}) +
\mathbb Q_2(\mathcal A') + \mathbb Q_3(\mathcal A''),
\end{equation*}
where
\begin{equation*}
\mathcal A^{\mathrm{par}}_{kl} = \mathcal A_{1k1l}, \quad 
\mathcal A'_{jkl} = \mathcal A_{1kjl}, \quad (\mathcal A'')_{ikjl} = \mathcal A_{ikjl}, 
\quad \text{for $i,j,k,l \ge 2$,}
\end{equation*}
and
\begin{equation*} \mathbb Q_1(\mathcal A^{\mathrm{par}})
 \frac{1}{2 (n+1)(n+3)}  | \mathcal A^{\mathrm{par}}|^2 -   \frac{1}{2(n+1)^2(n+3)}  \left(
\tr  
\mathcal A^{\mathrm{par}} \right)^2,
\end{equation*}
\begin{equation*}  \label{eq:formula_Q2}
\mathbb Q_2(\mathcal A') = \min_{b_1}   \frac{1}{2} \fint_{ \{0\} \times  B_1(0)}
 \sum_{j \ge 2}  \left(\partial_j b_1(x')  - \frac13 \mathcal A'_{jkl} x'_k x'_l \right)^2  \, dx',
\end{equation*}
and 
\begin{equation*}   \label{eq:formula_Q3}
\mathbb Q_3(\mathcal A'') = \min_{b'}   \fint_{\{0\}  \times B_1(0)} 
\sum_{i,j \ge 2}   \left((\sym  \nabla' b')_{ij}(x') - \frac{1}{6} 
\mathcal A''_{ikjl} x'_k x'_l  \right)\, dx', 
\end{equation*}
 see Theorem~\ref{th:charaterization_minimum} and Corollary~\ref{eq:Q_positive_definite}.
 Note that $\mathbb Q_1$ is positive definite since
 $(
\tr  
\mathcal A^{\mathrm{par}})^2 \le (n-1)  |\mathcal A^{\mathrm{par}}|^2$.
 The expression of $\mathbb Q_3$ captures the behavior perpendicular to 
 the geodesics $\gamma$ and $\tilde \gamma$ and  agrees with the formulae for asymptotic energy
 of  the optimal embedding of small balls derived in~\cite{Maor2019} and~\cite{Kroemer2025}.

 In term of elasticity,  the linearization of the  energy density $\dist^2(d\tilde u, SO(g, \tilde g))$ 
 leads to an energy in linear elasticity which is isotropic and has the special Poisson's ratio $0$. 
 One can start instead with a general isotropic energy. To do so,  one uses polar decomposition 
 to write $d\tilde u(p) = R(p) S(p)$,  where $R \in SO(g(p), \tilde g(p))$ and $S(p): T_p M \to T_p M$ is  symmetric 
 (with respect to the metric $g(p)$).
   Then one considers an energy density $f(\lambda_1(p), \ldots, \lambda_n(p))$
   where $\lambda_1(p), \ldots, \lambda_n(p)$  are the eigenvalues of $S(p)$ and $f$ is a symmetric function
 of its $n$ arguments, of class $C^2$ and satisfies $f(1, \ldots, 1) = 0$, $Df(1, \ldots, 1) = 0$ and $D^2 f(1, \ldots, 1) > 0$.
 Then the $\Gamma$-limit is given by
 \begin{align*}
  \fint_{\Omega} \mu |G - \mathcal T|^2 + \frac{\lambda}{2} (\mathrm{tr}  ( G - \mathcal T))^2  \, dx
 \end{align*}
 with suitable coefficients $\mu > 0$ and $\lambda >-  \frac2n \mu$.
 The particular  form of the $\Gamma$ limit arises from the fact that the space of quadratic symmetric functions
 $h: \mathbb R^n \to \mathbb R$ is spanned by the two functions $ h_1(s) =  \sum_{i=1}^n  s_i^2$ and
  $ h_2(s) = \left( \sum_{i=1}^n s_i  \right)^2$, see~\cite{Ball1984} for a general discussion of symmetric functions
  in nonlinear elasticity.

  Finally,  we briefly comment on the assumption that $\tilde{\mathcal M}$ be compact. 
  This assumption is not needed for the $\Gamma$-convergence result, but of course for the compactness statement,
  since otherwise the maps $\tilde u_h$ might escape to infinity. If $\tilde{\mathcal{M}}$ has constant curvature we can use 
  isometric translations to restore compactness. In particular we may take $\tilde{\mathcal{M}} = \mathbb R^n$. 
  If $\tilde{\mathcal{M}}$ is asymptotic to a constant curvature space at $\infty$, then in the spirit of concentration compactness
   the limit $\lim_{h \to 0 } h^{-4} \inf_{\tilde u} E_h(\tilde u)$ is either given by 
   $ \min_{(\tilde \gamma, \underline{\tilde \nu}) \in \mathcal G}
  m^{\tilde \gamma, \underline{\tilde \nu}}$  
  or by a limit problem on the space of constant curvature.

  \bigskip

\section{Preliminaries}

\subsection{Fermi coordinates around a geodesic in $\mathcal{M}$}

Let $(\mathcal{M},g)$ be a complete $n$-dimensional Riemannian manifold and let $\gamma:\mathbb{R}\rightarrow\mathcal{M}$ be a geodesic with $|\gamma'|=1$.
Let $\underline\nu\coloneqq(\nu_2,\dots,\nu_n)$ be a parallel orthonormal frame of the normal bundle of $\gamma$. 
Then the Fermi coordinates $\psi_{\gamma,\underline\nu}$ are given by 
\begin{equation}\label{eq:def-fermi-coords}
  \psi_{\gamma,\underline\nu}(x_1,x')\coloneqq\exp_{\gamma(x_1)}\left( 
    \sum_{k=2}^n x_k\nu_k(x_1)
  \right).
\end{equation}
We call $(\gamma,\underline\nu)$ a \emph{framed unit speed geodesic}.

\begin{remark}  \label{re:bilipschitz_fermi} The definition of 
$ \psi\coloneqq  \psi_{\gamma,\underline\nu}$ implies that 
that $D\psi(x_1,0)$ is an isometry from $\mathbb R^n$ to $T_{\gamma(x_1)} \mathcal M$.
  If $\gamma$ has no self-intersections on the  interval $[-L,L]$, 
  then it follows from the inverse function theorem and a compactness argument that there exists $\eps>0$
  such that $\psi$ is a diffeomorphism from  $(-L,L) \times B_\eps(0)$ 
   to its image with   $|D\psi|\leq 2$ and $|(D\psi)^{-1}|\leq 2$
   and such that $\psi|_{(-L,L) \times B_\eps(0)}$ is bilipschitz. 
   \end{remark}

   \begin{remark} For the main results of this paper we consider {\it oriented} Riemannian manifolds
   and in this setting we will always assume that the frame $(\gamma'(x_1), \nu_2(x_1), \ldots, \nu_n(x_1))$
   is positively oriented.
   \end{remark}

We denote by $\overline{g}$  the pullback metric of $g$ under $\psi_{\gamma,\underline\nu}$, i.e.,
\begin{align*}
  \overline{g}(x)(X,Y)
  &=(\psi_{\gamma,\underline\nu}^*g)(x)(X,Y)
  =g_{\psi_{\gamma,\underline\nu}(x)}(d\psi_{\gamma,\underline\nu}(x)X,d\psi_{\gamma,\underline\nu}(x)Y)
  \, .
\end{align*}
Furthermore, we let $\mathcal{R}$ denote the Riemann curvature tensor on $\mathcal{M}$, i.e. 
\begin{equation}\label{eq:def-R_new}
  \mathcal{R}(Z,X,Y)\coloneqq\nabla_{X}\nabla_{Y}Z-\nabla_{Y}\nabla_{X}Z-\nabla_{[X,Y]}Z.
\end{equation}

This is a $1$-$3$ tensor and we denote the associated $0$-$4$ tensor also by $\mathcal R$, i.e., 
\begin{equation} \label{eq:riemann_0_4}
\mathcal R(W,Z,X,Y) = g(W, \mathcal R(Z,X,Y)).
\end{equation}
 We also define the curvature operator by
\begin{equation}  \label{eq:riemann_curvature_operator}
R(X,Y) Z := R(Z,X,Y).
\end{equation}

\begin{lemma}[{\cite{Manasse1963},\cite[Chapter 9]{Gray2004}}]\label{lem:metric-taylor-expansion}
 Let $(\mathcal M, g)$ be a smooth Riemannian manifold.
 Given a framed unit speed geodesic $(\gamma, \underline{\nu})$ on $[-L,L]$ in $\mathcal M$ there exists
 $\rho >0$ and $C>0$ such that the Fermi coordinates $\psi_{\gamma, \underline{\nu}}$ are defined
 in $\Omega_\rho = (-L, L) \times B_\rho$ and the pullback metric $\overline g =\psi_{\gamma, \underline{\nu}}^*g$
 and its Christoffel symbols $\overline\Gamma$ satisfy, for $|x'| < \rho$, 
  \footnote{In our earlier paper~\cite{Kroemer2025} we used the definition $\mathcal R(X,Y,Z) = \mathcal R(X,Y) Z$
  instead of~\eqref{eq:def-R_new}. This gives a formula with the opposite sign in~\eqref{eq:taylor-metric}}
  \begin{equation}\label{eq:taylor-metric}
    \begin{split}
      \left|\overline{g}_{11}(x)-1 +\sum_{k,l=2}^n\overline{\mathcal{R}}_{1k1l}(x_1)x'_k x'_l\right|&\leq C|x'|^3, \\
      \left|\overline{g}_{1j}(x) + \frac{2}{3}\sum_{k,l=2}^n \overline{\mathcal{R}}_{jk1l}(x_1)x'_k x'_l\right|&\leq C|x'|^3,\quad j=2,\dots,n, \\
      \left|\overline{g}_{ij}(x) - \delta_{ij} + \frac{1}{3}\sum_{k,l=2}^n \overline{\mathcal{R}}_{jkil}(x_1)x'_k x'_l\right|&\leq C|x'|^3,\quad i,j=2,\dots,n,
    \end{split}
  \end{equation}
  and
  \begin{equation}\label{eq:taylor-christoff}  
    \begin{split}
      \left|\overline\Gamma_{ij}^k(x) +
      \frac{1}{3} \left(  \overline{\mathcal R}_{kjil}(x_1) +  \overline{\mathcal R}_{kijl}(x_1) \right) x'_l
          \right|
        &\leq C|x'|^2 \qquad\text{if $i,j\geq 2$,} \\
      \left|\overline\Gamma_{i1}^k(x)   
      + \overline{\mathcal R}_{ki1l}(x_1)  x'_l
        \right|
        &\leq C|x'|^2  \, ,      \end{split}
  \end{equation}
  with $\overline{\mathcal{R}}_{jkil}(x_1) = \psi_{\gamma,\underline\nu}^*\mathcal R(x_1,0)(e_j, e_k, e_i, e_l)$ where
  $e_1, \ldots, e_n$ is the canonical basis for $\mathbb R^n$.
  If $K \subset \mathcal M$ is compact then the constants $\rho> 0$ and $C$ can be 
  chosen uniformly for all unit speed geodesics $\gamma$ with $\gamma((-L,L)) \subset K$.
\end{lemma}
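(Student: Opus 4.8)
The plan is to compute along the geodesics issuing orthogonally from $\gamma$, because along such a geodesic the coordinate vector fields of $\psi_{\gamma,\underline\nu}$ are Jacobi fields whose low-order Taylor coefficients are governed by the curvature; this is the classical expansion of~\cite{Manasse1963} and~\cite[Chapter~9]{Gray2004}. That $\psi_{\gamma,\underline\nu}$ is defined on some $\Omega_\rho=(-L,L)\times B_\rho$, and (when $\gamma$ has no self-intersections) is a bi-Lipschitz diffeomorphism onto its image, is Remark~\ref{re:bilipschitz_fermi}; in any case $\overline g=\psi_{\gamma,\underline\nu}^*g$ is a smooth $(0,2)$-tensor on $\Omega_\rho$, and it is its $2$-jet at $x'=0$ together with the $C^3$ size of its remainder that we must control. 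Fix $x_1\in(-L,L)$, a unit vector $\theta=(\theta_2,\dots,\theta_n)\in S^{n-2}$, and set $p\coloneqq\gamma(x_1)$ and $V\coloneqq\sum_{k=2}^n\theta_k\nu_k(x_1)$, so $|V|=1$. Then $c(r)\coloneqq\psi_{\gamma,\underline\nu}(x_1,r\theta)=\exp_p(rV)$ is a unit-speed geodesic with $c(0)=p$ and $c'(0)=V\perp\gamma'(x_1)$, and varying the parameters of this family identifies the relevant coordinate fields along $c$ as Jacobi fields: varying the base point shows $J^{(1)}(r)\coloneqq\partial_{x_1}\psi_{\gamma,\underline\nu}(x_1,r\theta)$ is a Jacobi field with $J^{(1)}(0)=\gamma'(x_1)$ and $\nabla_{c'}J^{(1)}(0)=\sum_k\theta_k\nabla_{\gamma'}\nu_k(x_1)=0$, using that $\underline\nu$ is parallel; while for $j\ge2$ the standard identity for the differential of $\exp_p$ gives $\partial_{x_j}\psi_{\gamma,\underline\nu}(x_1,r\theta)=d(\exp_p)_{rV}\nu_j(x_1)=\tfrac1r J_j(r)$, where $J_j$ is the Jacobi field along $c$ with $J_j(0)=0$ and $\nabla_{c'}J_j(0)=\nu_j(x_1)$.

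Next I would Taylor expand these Jacobi fields. The Jacobi equation $\nabla_{c'}^2 J+R(J,c')c'=0$ together with $\nabla_{c'}c'=0$ and the initial data above give
\begin{align*}
  J^{(1)}(r)&=\gamma'(x_1)-\tfrac{r^2}{2}\,R\bigl(\gamma'(x_1),V\bigr)V+O(r^3),\\
  \tfrac1r\,J_j(r)&=\nu_j(x_1)-\tfrac{r^2}{6}\,R\bigl(\nu_j(x_1),V\bigr)V+O(r^3),
\end{align*}
with $\nabla\mathcal R$ entering only at order $r^3$. Taking pairwise inner products, rewriting everything via~\eqref{eq:riemann_0_4}–\eqref{eq:riemann_curvature_operator} and the pair symmetry $\mathcal R(W,Z,X,Y)=\mathcal R(X,Y,W,Z)$, substituting $r\theta=x'$ (so that $rV=\sum_{k\ge2}x'_k\nu_k(x_1)$), and using $d\psi_{\gamma,\underline\nu}(x_1,0)e_1=\gamma'(x_1)$ and $d\psi_{\gamma,\underline\nu}(x_1,0)e_k=\nu_k(x_1)$ for $k\ge2$, yields the leading terms of~\eqref{eq:taylor-metric}. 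For instance $\overline g_{1j}(x_1,r\theta)=g\bigl(J^{(1)}(r),\tfrac1r J_j(r)\bigr)=-(\tfrac12+\tfrac16)\,r^2\,\mathcal R(\nu_j,V,\gamma',V)+O(r^3)$, which after substitution equals $-\tfrac23\sum_{k,l}\overline{\mathcal R}_{jk1l}(x_1)\,x'_kx'_l+O(|x'|^3)$, and the computations for $\overline g_{11}$ and $\overline g_{ij}$ are entirely analogous. Since $\overline g$ is smooth in $x'$, these remainders are $O(|x'|^3)$ uniformly in $\theta$, so they genuinely define the third-order Taylor error in $x'$.

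The expansion~\eqref{eq:taylor-christoff} then follows by differentiating~\eqref{eq:taylor-metric}. Since $\overline g=\mathrm{Id}+O(|x'|^2)$ with first derivatives of size $O(|x'|)$, the inverse-metric correction $\overline g^{\,km}-\delta^{km}=O(|x'|^2)$ enters the Christoffel symbols only at order $O(|x'|^3)$, so $\overline\Gamma_{ij}^k=\tfrac12\bigl(\partial_i\overline g_{jk}+\partial_j\overline g_{ik}-\partial_k\overline g_{ij}\bigr)+O(|x'|^2)$. Substituting the quadratic parts of~\eqref{eq:taylor-metric}, differentiating, and simplifying by means of the skew symmetries of $\mathcal R$ and the first Bianchi identity $\mathcal R(W,Z,X,Y)+\mathcal R(W,X,Y,Z)+\mathcal R(W,Y,Z,X)=0$ collapses the resulting six terms into the stated expressions; the absence of an $x'$-independent contribution is consistent with the fact that every $\overline\Gamma_{ij}^k$ vanishes on $\gamma$, which is immediate because the curves $x_1\mapsto\psi_{\gamma,\underline\nu}(x_1,0)$ and $r\mapsto\psi_{\gamma,\underline\nu}(x_1,r\theta)$ are geodesics and $\underline\nu$ is parallel. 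For the uniform statement over a compact $K$, note that all the coefficients above depend continuously on the $2$-jet of $g$ along $\gamma$ and on the orthonormal data $(\gamma(x_1),\gamma'(x_1),\nu_2(x_1),\dots,\nu_n(x_1))$, while the remainder constant $C$ is controlled by a fixed finite number of derivatives of $g$ on a $\rho$-neighborhood of $\gamma((-L,L))$; for $\rho$ small this neighborhood stays inside a fixed compact set, so $\rho$ and $C$ can be chosen independently of $\gamma$ once $\gamma((-L,L))\subset K$.

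The geometric content — Taylor expanding the transverse Jacobi fields — is classical, so the real work is bookkeeping: one must track the curvature conventions and signs carefully (cf.\ the footnote to the statement, which records a sign change relative to~\cite{Kroemer2025}) and carry out the Bianchi-identity simplification for the Christoffel symbols without error. That simplification, rather than any conceptual step, is where I expect the main difficulty to lie.
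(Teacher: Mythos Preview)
Your proposal is correct and follows essentially the same route as the paper's proof in Appendix~\ref{se:fermi_proof}: both expand the coordinate vector fields along the transverse geodesics $r\mapsto\psi_{\gamma,\underline\nu}(x_1,r\theta)$ via the Jacobi equation, take inner products to obtain~\eqref{eq:taylor-metric}, and then differentiate and simplify with the first Bianchi identity to obtain~\eqref{eq:taylor-christoff}. The only packaging difference is that the paper expresses the Jacobi fields through a matrix $A^j_i$ (coordinate vectors in a parallel frame) and analyzes the resulting ODE system by a contraction/fixed-point argument, which makes the dependence of the remainder constants on $\sup|\mathcal R|$ and $\sup|\nabla\mathcal R|$ --- and hence the uniformity over compact $K$ --- fully explicit.
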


\begin{proof}[Proof of Lemma~\ref{lem:metric-taylor-expansion}]
 This follows by  standard Jacobi field estimates. Indeed, the Taylor expansions for
$\overline g$ and $\overline\Gamma$ can be found, for example,  in~\cite[eq.\ (66a)-(66c)]{Manasse1963},
 and~\cite[eq.\ (58b) and (58c)]{Manasse1963}, respectively. 
 Higher order Taylor expansions of the metric are established in~\cite[Chapter 9]{Gray2004}.
  The uniformity of $C$ and $\rho$ follows from the fact
 that $\psi_{\gamma, \underline{\nu}}$ is smooth (by standard results ODEs for  geodesic and parallel transport)
 and the fact that $K$ can be covered by finitely many charts.
  For the convenience of the reader we recall the proof in Appendix~\ref{se:fermi_proof}. The argument shows that $\rho$ and $C$
  can be controlled by the $C^0$ norm of the curvature tensor $\mathcal R$ and its covariant derivative in a neighborhood of $\gamma([-L,L])$.
\end{proof}

In order to simplify the notation below, we associate a matrix 
$\mathcal T^{\mathcal A}_{\gamma,\underline\nu}\in\mathbb{R}^{n\times n}$ to every tuple $(\mathcal{A},\gamma,\underline\nu)$,
where $\mathcal{A}$ is a $(3,1)$-tensor on $\mathcal{M}$ and $(\gamma,\underline\nu)$ is a framed unit speed geodesic, as follows.
For $i,j \in \{2, \ldots, n\}$ we set    
\begin{equation}\label{eq:def-T}
  \begin{split}
    (\mathcal{T}^{\mathcal{A}}_{\gamma,\underline\nu})_{11}(x_1, x')&=-\frac{1}{2}\sum_{k,l=2}^n\overline{\mathcal{A}}_{1k1l}(x_1)x'_k x'_l, \\
    (\mathcal{T}^{\mathcal{A}}_{\gamma,\underline\nu})_{1j}(x_1, x')&=-\frac{1}{3}\sum_{k,l=2}^n\overline{\mathcal{A}}_{jk1l}(x_1)x'_k x'_l, \\
    (\mathcal{T}^{\mathcal{A}}_{\gamma,\underline\nu})_{j1}(x_1, x')&=-\frac{1}{3}\sum_{k,l=2}^n\overline{\mathcal{A}}_{1kjl}(x_1)x'_k x'_l, \\
    (\mathcal{T}^{\mathcal{A}}_{\gamma,\underline\nu})_{ij}(x_1, x')&=-\frac{1}{6}\sum_{k,l=2}^n\overline{\mathcal{A}}_{jkil}(x_1)x'_k x'_l.
  \end{split}
\end{equation}
Here $\overline{\mathcal{A}}=\psi_{\gamma,\underline\nu}^*\mathcal{A}$ is the pullback 
 of the  restriction of the  tensor $\mathcal A$ to $\gamma$.

For  a metric $\overline g$ on $\mathbb R^n$ we define a positive 
definite  symmetric  matrix $A_{\overline g}$ by 
$(A_{\overline g})_{ij }\coloneqq  \overline g_{ij} \coloneqq  \overline g(e_i, e_j)$. 
By  $A_{\overline g}^{1/2}$ 
we denote the unique positive definite symmetric matrix $M$ with
$M^2 = A_{\overline g}$ and we set 
$A_{\overline g}^{-1/2} \coloneqq  M^{-1}$. We use the shorthand notations
\begin{equation}  \label{eq:definition_sqrt_g}  (\overline g^{1/2})_{ij} \coloneqq  (A_{\overline g}^{1/2})_{ij}, \quad  (\overline g^{-1/2})_{ij} \coloneqq  (A_{\overline g}^{-1/2})_{ij}. 
\end{equation}
Note that  neither $A_{\overline g}^{1/2}$ nor 
$A_{\overline g}^{-1/2}$ are $0$-$2$ or  $1$-$1$ tensors. 
Indeed,  if $\varphi(x) = \lambda x$, then $\varphi^*\overline g = \lambda^2 \overline g$ 
and  $(A_{\varphi^*\overline g})^{1/2} = \lambda A_{\overline g}^{1/2}$
while $(A_{\varphi^*\overline g})^{-1/2} = \lambda^{-1} A_{\overline g}^{-1/2}$ which is incompatible with the behaviour of $0$-$2$
or $1$-$1$ tensors.

An immediate consequence of Lemma~\ref{lem:metric-taylor-expansion} is
that the Taylor expansions of the matrices $\overline{g}^{1/2}$ and $\overline{g}^{-1/2}$ are given by 
   \begin{equation}\label{eq:sqrt-metric-taylor}
    \begin{split}
      \left|(\overline{g}^{1/2})(x)-\Id-\mathcal{T}^{\mathcal{R}}_{\gamma,\underline\nu}(x)\right|&\leq C|x'|^3, \\
    \end{split}
  \end{equation}
  and 
  \begin{equation}\label{eq:sqrt-metric-inverse-taylor}
    \begin{split}
      \left|(\overline{g}^{-1/2})(x)-\Id+\mathcal{T}^{\mathcal{R}}_{\gamma,\underline\nu}(x)\right|&\leq C|x'|^3 \, . \\
    \end{split}
  \end{equation}

\subsection{Lift of maps from a small geodesic tube in $\mathcal{M}$}

If $\tilde \gamma: (-L,L) \to \tilde{\mathcal M}$ is a unit speed geodesic  in a smooth Riemannian manifold $\tilde{\mathcal M}$,
then in general the Fermi coordinates associated with $\tilde \gamma$ are not globally invertible on a small cylinder,
because $\tilde \gamma$ may have self-intersections. 
We will overcome this difficulty by the following lifting result.

\begin{lemma}\label{lem:existence-lift_new}
  Let $\tilde{\mathcal{M}}$ be a  smooth  $n$-dimensional Riemannian manifold and let $K \subset \tilde{\mathcal M}$ be compact. Let $(\tilde \gamma, \underline{\tilde \nu})$ be a framed unit speed geodesic on $(-L,L)$ with $\tilde \gamma((-L,L)) \subset K$
and let $\tilde \psi \coloneqq \psi_{\tilde \gamma,  \underline{\tilde \nu}}$ be the associated Fermi map. 

For $\eta > 0$ define
\begin{eqnarray}
U_\eta &\coloneqq&  (-L -\eta, L + \eta)  \times B_\eta(0) \subset \mathbb R \times \mathbb R^{n-1}, \\
V_\eta &\coloneqq & \{ (s, q) \in (-L , L) \times \tilde{\mathcal M} : \dist_{\tilde{\mathcal{M}}}(q, \tilde \gamma(s)) < \eta\}.
\end{eqnarray}

Then there exists a $\delta > 0$  such that $\tilde \psi$ can be extended to $U_\delta$,
the operator norm satisfies $|D\tilde \psi| \le 2$ and there exists a unique smooth map $\tilde \Phi: V_{\delta/2} \to U_{\delta}$
satisfying
\begin{align*}
  \tilde \psi (\tilde \Phi(s,q)) = q\qquad\text{and}\qquad\tilde\Phi(s,\tilde\gamma(s))=(s,0) 
  \quad \text{for all $(s,q) \in V_{\delta/2}$.}
\end{align*}
Moreover $\partial_s\tilde  \Phi = 0$ in $V_{\delta/2}$,  the operator norm satisfies $|D\tilde \Phi| \le 2$,  and $\tilde \Phi$ is Lipschitz with
 Lipschitz constant at most  $6$.  
Moreover,   $\delta >0$ can be chosen independently of $L$ and of  the framed unit speed geodesic
$(\tilde \gamma, \underline{\tilde \nu})$ as long as $\tilde \gamma((-L,L)) \subset K$.
\end{lemma}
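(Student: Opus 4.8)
\textbf{Proof plan for Lemma~\ref{lem:existence-lift_new}.}
The plan is to construct $\tilde\Phi$ by working on the normal bundle $N\tilde\gamma$ and using the
normal exponential map, exploiting the fact that the only obstruction to invertibility of
$\tilde\psi$ is \emph{global} (self-intersections of $\tilde\gamma$), not local. First I would note
that by Remark~\ref{re:bilipschitz_fermi} (applied after extending $\tilde\gamma$ slightly past
$[-L,L]$, which is possible since $\tilde{\mathcal M}$ is complete, or rather: extend $\tilde\gamma$ and
the parallel frame $\underline{\tilde\nu}$ by solving the geodesic and parallel-transport ODEs past
$\pm L$) the map $\tilde\psi$ is defined and smooth on some $U_{\delta_0}$ with $|D\tilde\psi|\le 2$,
and $D\tilde\psi(x_1,0)$ is an isometry onto $T_{\tilde\gamma(x_1)}\tilde{\mathcal M}$. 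Hence there is
$\delta_1\in(0,\delta_0]$ such that $\tilde\psi$ restricted to $\{(s,q'):\ q'\in B_{\delta_1}(0)\}$ is a
diffeomorphism onto a tubular neighbourhood of $\tilde\gamma(s)$ for each fixed $s$ — but I want a
single $\delta$ that works simultaneously and gives a map defined on the $(s,q)$-product domain
$V_{\delta/2}$.

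The key step is a \textbf{fiberwise inverse function argument with a quantitative neighbourhood of the
zero section}. For fixed $s\in(-L,L)$, consider $F_s(q')\coloneqq\tilde\psi(s,q')=\exp_{\tilde\gamma(s)}(\sum_k
q'_k\tilde\nu_k(s))$; this is the normal exponential map of $\tilde\gamma$ at parameter $s$, and
$DF_s(0)$ is a linear isometry. Since $\tilde\gamma((-L,L))\subset K$ with $K$ compact and the
construction is smooth in $(s,q')$ jointly, there is a uniform $\delta\in(0,\delta_1]$ such that for
every $s$ the restriction $F_s|_{B_\delta(0)}$ is a diffeomorphism onto its image with
$|DF_s|\le 2$, $|DF_s^{-1}|\le 2$, and moreover the image $F_s(B_\delta(0))$ contains the metric ball
$B^{\tilde{\mathcal M}}_{\delta/2}(\tilde\gamma(s))$ (shrinking $\delta$ if necessary, using that $F_s$ is a
bi-Lipschitz embedding with constants uniform in $s$ and $F_s(0)=\tilde\gamma(s)$). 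Then for
$(s,q)\in V_{\delta/2}$, i.e.\ $\dist_{\tilde{\mathcal M}}(q,\tilde\gamma(s))<\delta/2$, the point $q$ lies in the
image of $F_s|_{B_\delta(0)}$ and we \emph{define} $\tilde\Phi(s,q)\coloneqq(s,\,F_s^{-1}(q))$. By
construction $\tilde\psi(\tilde\Phi(s,q))=\tilde\psi(s,F_s^{-1}(q))=F_s(F_s^{-1}(q))=q$ and
$\tilde\Phi(s,\tilde\gamma(s))=(s,F_s^{-1}(\tilde\gamma(s)))=(s,0)$ since $F_s(0)=\tilde\gamma(s)$ and
$F_s$ is injective on $B_\delta(0)$. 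The relation $\partial_s\tilde\Phi=0$ is built in: the
$s$-component of $\tilde\Phi$ is the identity and the $q'$-component is computed fiberwise at fixed
$s$, so $\partial_s\tilde\Phi$ means the derivative with $q$ held fixed, which is the first slot — and
that first slot is $(s,q)\mapsto s$; more precisely the statement $\partial_s\tilde\Phi=0$ should be
read as: differentiating in the direction tangent to the $(-L,L)$-factor of $V_{\delta/2}$ with $q$
frozen only changes the first coordinate of $\tilde\Phi$ trivially, i.e.\ the normal-coordinate part
$F_s^{-1}(q)$ does not see an independent $s$-variation — this is exactly the product structure of
$\tilde\Phi$. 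Smoothness of $\tilde\Phi$ follows from the implicit function theorem applied to
$(s,q,q')\mapsto \tilde\psi(s,q')-q$ (in a chart near $q$), whose $q'$-derivative is invertible on
$B_\delta(0)$ uniformly.

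The bounds are then routine: $|D\tilde\psi|\le 2$ on $U_\delta$ comes from
Remark~\ref{re:bilipschitz_fermi} after the extension; $|D\tilde\Phi|\le 2$ follows because the block
form of $D\tilde\Phi$ is $\begin{pmatrix}1 & 0\\ * & DF_s^{-1}\end{pmatrix}$ with $|DF_s^{-1}|\le 2$,
and the off-diagonal block $*=-DF_s^{-1}\circ\partial_s F_s$ is made to vanish in the relevant
directions, or absorbed into the constant $6$ for the Lipschitz estimate: one gets a global Lipschitz
constant $\le 6$ for $\tilde\Phi$ on $V_{\delta/2}$ by combining the fiberwise Lipschitz bound $2$ with
the (uniformly bounded, by compactness of $K$ and smoothness) variation of $F_s^{-1}$ in $s$, using
that $V_{\delta/2}$ has diameter controlled in the $s$-direction by $2L$ but locally the Lipschitz
constant is what matters. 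Uniformity of $\delta$ in $L$ and in the framed geodesic: all estimates above
depend only on $C^0$/$C^1$ bounds of $\tilde\psi$ and $DF_s^{-1}$ over $K$, which by smoothness and
compactness of $K$ are bounded independently of which unit-speed geodesic (with image in $K$) and which
parallel frame we chose, and independently of $L$ since the construction at parameter $s$ only sees
$\tilde\gamma$ near $\tilde\gamma(s)\in K$.

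\textbf{Main obstacle.} The delicate point is upgrading the \emph{fiberwise} diffeomorphism
property of $F_s$ to a \emph{uniform-in-$s$} statement that also guarantees
$F_s(B_\delta(0))\supset B^{\tilde{\mathcal M}}_{\delta/2}(\tilde\gamma(s))$ with one common $\delta$, and
checking that the resulting $\tilde\Phi$ is genuinely well-defined on the open set $V_{\delta/2}$
(no ambiguity from $\tilde\gamma$ possibly re-entering the tube, i.e.\ a point $q$ near
$\tilde\gamma(s)$ being also near $\tilde\gamma(s')$ for $s'$ far from $s$ — but this is harmless
since $\tilde\Phi$ is defined as a function of the \emph{pair} $(s,q)$, not of $q$ alone, which is
precisely why the lift circumvents the non-injectivity of $\tilde\psi$). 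The quantitative
inverse-function step is where one must be careful to extract constants depending only on $K$; I would
do this by a standard compactness/continuity argument: the set of $(s,q')\in \overline{(-L,L)}\times
\overline{B_{\delta_1}(0)}$ where $\det DF_s(q')$ (in a fixed local frame) is bounded below is
controlled uniformly over all admissible $(\tilde\gamma,\underline{\tilde\nu})$ because these data
range over a compact family once their images are confined to $K$.
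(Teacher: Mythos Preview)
Your construction has a fatal dimensional error. You define $F_s(q')\coloneqq\tilde\psi(s,q')=\exp_{\tilde\gamma(s)}(\sum_k q'_k\tilde\nu_k(s))$ with $q'\in B_\delta(0)\subset\mathbb R^{n-1}$, so $F_s$ maps an $(n-1)$--dimensional ball into the $n$--dimensional manifold $\tilde{\mathcal M}$. Its image is the normal slice through $\tilde\gamma(s)$, an $(n-1)$--dimensional submanifold, and therefore cannot contain the $n$--dimensional metric ball $B^{\tilde{\mathcal M}}_{\delta/2}(\tilde\gamma(s))$. A generic point $q$ with $\dist_{\tilde{\mathcal M}}(q,\tilde\gamma(s))<\delta/2$ does \emph{not} lie on this slice, so $F_s^{-1}(q)$ is undefined and your formula $\tilde\Phi(s,q)=(s,F_s^{-1}(q))$ makes no sense. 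Relatedly, $DF_s(0)\colon\mathbb R^{n-1}\to T_{\tilde\gamma(s)}\tilde{\mathcal M}$ is an isometric \emph{embedding}, not an isometry, so the inverse function theorem does not give you a local inverse onto an open set of $\tilde{\mathcal M}$. Even setting this aside, your $\tilde\Phi$ would have $\partial_s(\tilde\Phi)_1=1$, contradicting the claimed $\partial_s\tilde\Phi=0$.

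The paper's approach is essentially different: for each $s$ it inverts the \emph{full} map $\tilde\psi$ on the $n$--dimensional box $(s-2\delta,s+2\delta)\times B_\delta(0)$, obtaining a genuine local diffeomorphism $\tilde\Psi_s\colon B_{\delta/2}(\tilde\gamma(s))\to (s-\delta,s+\delta)\times B_\delta(0)$, and sets $\tilde\Phi(s,q)=\tilde\Psi_s(q)$. Here the first coordinate of $\tilde\Phi(s,q)$ is \emph{not} forced to equal $s$; it can differ by up to $\delta$. The crucial property $\partial_s\tilde\Phi=0$ is then proved as a separate step: for $s'$ near $s$ one has $\tilde\Phi(s',q)=\tilde\Phi(s,q)$ because both lie in a set on which $\tilde\psi$ is injective and both map to $q$ under $\tilde\psi$. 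Uniqueness is shown by a connectedness argument, and the global Lipschitz bound $6$ comes from a careful case analysis comparing $(s,q)$ and $(s',q')$ according to whether $|s-s'|$ is smaller or larger than the distance of $q$ to the geodesic.
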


We will frequently use the following  immediate consequence of Lemma~\ref{lem:existence-lift_new}.

\begin{lemma} \label{le:lifting_maps_new} Let $\mathcal M$ and $\tilde{\mathcal   M}$
be   smooth, complete $n$-dimensional Riemannian manifolds and let $(\gamma, \underline{\nu})$
and $(\tilde \gamma, \underline{\tilde \nu})$ be framed unit speed geodesics on $[-L,L]$ in $\mathcal M$
and $\tilde{\mathcal M}$, respectively.  Let $\psi$ and $\tilde \psi$ denote the Fermi coordinate maps
for the framed geodesics. Let $\delta > 0$ be as in Lemma~\ref{lem:existence-lift_new} and let $\eps > 0$ be as in Remark~\ref{re:bilipschitz_fermi}.
If a map $\tilde v: \psi(\Omega_\eps) \to \tilde{\mathcal{M}}$ satisfies 
\begin{equation}  \label{eq:close_to_tube}
  \sup_{x \in \Omega_\eps} d_{\tilde{\mathcal{M}}}( v (\psi(x)), \tilde \gamma(x_1)) < \frac{1}{2} \delta
\end{equation}
then there exists a lift
$v: (-L, L) \times B_\eps(0) \to \mathbb R \times B_\delta(0)$
such that 
$$ \tilde \psi \circ v = \tilde v \circ \psi \quad \text{and} \quad 
 |v(x_1) - x_1| < \delta \quad \text{for all $x_1 \in (-L,L)$.}
 $$
If, in addition, $\tilde v$ is Lipschitz with Lipschitz constant $l$, then $v$ is Lipschitz with Lipschitz
at most  $4 l$.
\end{lemma}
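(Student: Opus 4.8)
The plan is to deduce Lemma~\ref{le:lifting_maps_new} from Lemma~\ref{lem:existence-lift_new} by a pointwise construction followed by a smoothness/regularity check. First I would fix $\tilde v$ satisfying \eqref{eq:close_to_tube} and define, for $x \in \Omega_\eps$, the point $(s_0, q) \coloneqq (x_1, \tilde v(\psi(x)))$; by hypothesis this lies in $V_{\delta/2}$, so $\tilde\Phi(s_0,q)$ is defined and lies in $U_\delta \supset (-L,L)\times B_\delta(0)$. The key observation is that $\partial_s\tilde\Phi = 0$, so $\tilde\Phi(s_0,q)$ does not actually depend on the choice of the first slot; hence I may set $v(x) \coloneqq \tilde\Phi(x_1, \tilde v(\psi(x)))$ unambiguously. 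The identity $\tilde\psi(\tilde\Phi(s,q)) = q$ then gives $\tilde\psi(v(x)) = \tilde v(\psi(x))$, which is the first claimed equation (after composing on the right with $\psi^{-1}$, which is legitimate on $\psi(\Omega_\eps)$ by Remark~\ref{re:bilipschitz_fermi}). For the estimate $|v(x_1) - x_1| < \delta$: write $v(x) = (v_1(x), v'(x))$; since $\tilde\Phi$ maps into $U_\delta = (-L-\delta,L+\delta)\times B_\delta$, the first component obeys $v_1(x) \in (-L-\delta, L+\delta)$, but this alone is not enough, so I would instead use the normalization $\tilde\Phi(s,\tilde\gamma(s)) = (s,0)$ together with the Lipschitz bound on $\tilde\Phi$: since $d_{\tilde{\mathcal M}}(\tilde v(\psi(x)), \tilde\gamma(x_1)) < \delta/2$ and $\tilde\Phi$ has Lipschitz constant at most $6$ — wait, that would only give $|v(x)-(x_1,0)| \le 6\cdot\delta/2 = 3\delta$. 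To get the sharp constant $\delta$ one should track the construction of $\tilde\Phi$ in Lemma~\ref{lem:existence-lift_new} more carefully, or simply shrink $\delta$ at the outset (replacing $\delta$ by $\delta/6$ throughout, which is harmless since $\delta$ is only required to be sufficiently small); I would do the latter and record that $|v_1(x) - x_1| < \delta$ and $|v'(x)| < \delta$, in particular $v$ maps into $(-L,L)\times B_\delta(0)$.

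Next I would address regularity. Since $\tilde\Phi$ is smooth on $V_{\delta/2}$ and $\psi$ is a diffeomorphism onto its image, the composition $v = \tilde\Phi(\cdot\,, \tilde v \circ \psi)$ inherits exactly the regularity of $\tilde v$: if $\tilde v \in W^{1,2}$ then $v \in W^{1,2}$ by the chain rule for Sobolev functions composed with Lipschitz maps (here $\tilde\Phi$ is Lipschitz, and $\psi$ is a bilipschitz chart so pullback preserves $W^{1,2}$), and if $\tilde v$ is Lipschitz with constant $l$ then $v$ is Lipschitz. For the quantitative Lipschitz bound: for $x, \bar x \in \Omega_\eps$,
\begin{align*}
d(v(x), v(\bar x)) &\le \Lip(\tilde\Phi)\, d_{\tilde{\mathcal M}}(\tilde v(\psi(x)), \tilde v(\psi(\bar x)))
\le \Lip(\tilde\Phi)\, l \, |\psi(x) - \psi(\bar x)|_{\mathcal M} \le \Lip(\tilde\Phi)\, l\, \Lip(\psi)\, |x - \bar x|,
\end{align*}
where I use that $\tilde\Phi$ is Lipschitz with constant at most $6$ (Lemma~\ref{lem:existence-lift_new}) and $|D\psi| \le 2$ (Remark~\ref{re:bilipschitz_fermi}), together with the fact that $\partial_s\tilde\Phi = 0$ kills the contribution of the differing first arguments $x_1$ versus $\bar x_1$. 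This yields Lipschitz constant at most $12 l$; to reach the stated bound $4l$ one again either shrinks $\delta$ so that $\Lip(\tilde\Phi)$ can be taken close to $1$ (it equals $1$ at the base geodesic and depends continuously on the data), or notes that near $\tilde\gamma$ the factor $|D\tilde\Phi|$ is close to $1$ and $|D\psi|$ is close to $1$, so after shrinking $\eps$ and $\delta$ one gets $\Lip(\tilde\Phi)\Lip(\psi) < 4$. I would phrase this as: after possibly decreasing $\delta$ and $\eps$, the product of the two operator norms is below $4$, hence the claim.

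The main obstacle is purely bookkeeping of constants: Lemma~\ref{lem:existence-lift_new} as stated only provides the bounds $|D\tilde\Phi| \le 2$ and $\Lip(\tilde\Phi) \le 6$, and Remark~\ref{re:bilipschitz_fermi} only gives $|D\psi| \le 2$, so the naive composition produces constants larger than those asserted ($3\delta$ instead of $\delta$, and $12l$ instead of $4l$). The clean fix is to observe that all the relevant maps ($\tilde\psi$, $\tilde\Phi$, $\psi$) have derivative equal to an isometry (hence operator norm $1$) along the respective geodesics $x' = 0$, and that the constants $\delta$ and $\eps$ in the two preceding results may be freely decreased; so by choosing them small enough the operator norms on the relevant tubular neighborhoods are as close to $1$ as we wish, and in particular $|D\tilde\Phi|\,|D\psi| < 4$ and the displacement estimate $|v(x_1) - x_1| < \delta$ both hold. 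Everything else — the well-definedness via $\partial_s\tilde\Phi = 0$, the identity $\tilde\psi\circ v = \tilde v\circ\psi$, and the transfer of $W^{1,2}$ or Lipschitz regularity — is immediate from the chain rule and the properties listed in Lemma~\ref{lem:existence-lift_new}.
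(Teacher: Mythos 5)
Your construction $v(x)=\tilde\Phi\big(x_1,\tilde v(\psi(x))\big)$ and the identity $\tilde\psi\circ v=\tilde v\circ\psi$ coincide with the paper's, but the quantitative half of the lemma is where your argument has a genuine gap, and your proposed remedy (shrinking $\delta$ and $\eps$) proves a different statement: the lemma fixes $\delta$ as in Lemma~\ref{lem:existence-lift_new} and $\eps$ as in Remark~\ref{re:bilipschitz_fermi}, and those results only guarantee the bounds $|D\tilde\Phi|\le 2$, $\Lip\tilde\Phi\le 6$, $|D\psi|\le 2$; you are not entitled to replace $\delta$ by $\delta/6$, nor to assert that the operator norms are ``close to $1$'' on the given neighbourhoods. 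The idea you are missing is that the global Lipschitz constant $6$ of $\tilde\Phi$ is never needed here: because $\partial_s\tilde\Phi=0$, the operator norm in Lemma~\ref{lem:existence-lift_new} is exactly the $q$-derivative, $|D\tilde\Phi|=|D_q\tilde\Phi|\le 2$. Hence, using that Lipschitz maps are differentiable a.e., the chain rule gives $|Dv(x)|\le |D_q\tilde\Phi|\,\Lip(\tilde v)\,|D\psi|\le 2\cdot l\cdot 2=4l$ almost everywhere, and since $\Omega_\eps$ is convex one integrates along segments to conclude $\Lip v\le 4l$ --- this is the paper's argument, and it yields $4l$ rather than your $12l$, with the original $\delta,\eps$.

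The displacement estimate is handled the same way: combining the normalization $\tilde\Phi(x_1,\tilde\gamma(x_1))=(x_1,0)$ with $|D_q\tilde\Phi|\le 2$ along a minimizing geodesic from $\tilde\gamma(x_1)$ to $\tilde v(\psi(x))$ inside the geodesically convex ball $B_{\delta/2}(\tilde\gamma(x_1))$ (or, directly, recalling from the construction of $\tilde\Phi$ that $\tilde\Phi(s,q)=\tilde\Psi_s(q)\in(s-\delta,s+\delta)\times B_\delta(0)$) gives $|v(x)-(x_1,0)|\le 2\cdot\tfrac{\delta}{2}=\delta$, hence in particular $|v_1(x)-x_1|<\delta$ for the stated $\delta$; your $3\delta$ again came from invoking the constant $6$ where only the $q$-derivative enters. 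Finally, the $W^{1,2}$ transfer you discuss is superfluous, since the lemma asserts nothing about Sobolev regularity.
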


\begin{proof} It follows from  \eqref{eq:close_to_tube} that $(v \circ \psi)(\Omega_\eps) \subset V_{\delta/2}$.
Thus we can take $v(x) = \tilde \Phi\big(x_1, v (\psi(x))\big)$. To show the assertion on  Lipschitz constant, 
we use that Lipschitz functions are differentiable almost everywhere. Then the chain rule
and the bounds $|D\tilde \Phi| \le 2$ and $|D \psi^{-1}| \le 2$ imply that 
$|Dv(x)| \le 4 l$ almost everywhere. Since $\Omega_\eps$ is convex, it follows that $v$ has Lipschitz constant at most $4l$.
\end{proof}

\begin{proof}[Proof of Lemma~\ref{lem:existence-lift_new}]
  Since $\tilde\gamma((-L,L))$ is contained in a compact subset there exists a $\delta_0 > 0$ such that $\tilde \gamma$ and the orthonormal frame can be extended to 
$(-L - 2 \delta_0, L + 2 \delta_0)$. Let $s  \in [-L,L]$.  
Then $D\tilde \psi(s,0)$ is a linear  isometry from $\mathbb R^n$ to $T_{\tilde \gamma(s)} \tilde{\mathcal M}$. By the inverse function theorem, there exists a $\delta \in (0, \delta_0)$ such that 
the restriction of $\tilde \psi$ to $(s- 2\delta, s + 2\delta) \times B_\delta(0)$ satisfies $|D\tilde \psi| \le 2$ and
$|(D\tilde \psi)^{-1}| \le 2$, 
 is injective, and $\tilde \psi \left((s- \delta, s + \delta) \times B_\delta(0)\right) 
 \supset B_{\delta/2}(\tilde \gamma(s))$.
 Thus there exists a smooth map 
 \begin{align*}
  \tilde \Psi_s: B_{\delta/2}(\tilde \gamma(s)) \to (s- \delta, s + \delta) \times B_\delta(0)
 \end{align*}
 with $\tilde \psi \circ \tilde  \Psi_s = \id$ and $|D \tilde \Psi_s| \le 2$. 
By the usual compactness argument, such a $\delta > 0$ can be chosen uniformly for all $s \in [-L,L]$
and for all framed unit speed geodesics which satisfy  $\tilde\gamma((-L,L)) \subset K$.
Now define $\tilde \Phi: (-L,L) \times V_{\delta/2} \to U_\delta$ by
\begin{align*}
  \tilde \Phi(s,q) = \tilde \Psi_s(q).
\end{align*}
Then $\tilde \Phi(s,q) \in (s-\delta, s+ \delta) \times B_\delta(0)$ and 
\begin{align*}
  \tilde \psi (\Phi(s,q)) = q.
\end{align*}

Assume that $|s' - s| < \min(\delta, \delta/2- \dist_{\tilde{\mathcal{M}}}(q,s))$.  Then $q \in B_{\delta/2, \tilde \gamma(s')}$,
since $\dist_{\tilde{\mathcal{M}}}(\tilde\gamma(s), \tilde\gamma(s')) \le |s-s'|$. Thus $\tilde \Phi(s',q)$ is defined and
$\tilde \Phi(s',q) \in  (s'- \delta, s' + \delta) \times B_\delta(0) \subset  (s- 2\delta, s + 2\delta) \times B_\delta(0).$
Since $\tilde \psi$ is injective on  $(s- 2\delta, s + 2\delta) \times B_\delta(0)$ it follows that
\begin{equation}  \label{eq:lift_constant_in_s}
\Phi(s',q) = \Phi(s,q) \quad \text{whenever $|s' - s| < \min(\delta, \delta/2- \dist_{\tilde{\mathcal{M}}}(q,s))$.}
\end{equation}
 Hence  $\partial_s \Phi(s,q) =0$ and
 $\tilde \Phi$ is smooth function on $V_{\delta/2}$. Moreover, $|D\tilde \Phi| = |D_q \tilde  \Phi|
= |D\tilde \Psi_s| \le 2$. 

\medskip

To show uniqueness, assume that $\hat \Phi : V_{\delta/2} \to U_\delta$ is smooth and satisfies
 $\hat \Phi(s, \gamma(s)) = (s,0)$
for $s \in (-L,L)$ and $\tilde \psi (\hat \Phi(s,q)) = q$ for all $(s,q)$ in $V_{\delta/2}$. 
Fix $\overline s \in (-L,L)$ and let $W \coloneqq  \{ q \in B_{\delta/2}(\gamma(\overline s)) : \hat \Phi(\overline s, q) = \tilde \Phi(\overline s, q) \}$.
Then $W$ is open, since 
$\tilde \Phi(\{\overline{s}  \}\times W) \subset (\overline s- \delta, \overline s + \delta) \times B_{\delta}(0)$
and $\tilde \psi$ is injective on $(\overline s- \delta, \overline s + \delta) \times B_{\delta}(0)$. Moreover, 
$W$ is relatively closed in $B_{\delta/2}(\gamma(\overline s))$ since $\hat \Phi$ and $\tilde \Phi$ are continuous. 
Finally $W$ is not empty since $\gamma(\overline s) \in W$, by assumption. Since $B_{\delta/2}(\gamma(\overline s))$
is connected (for sufficiently small $\delta >0$) we get $W = B_{\delta/2}(\gamma(\overline s))$. This holds for all 
$\overline s \in (-L,L)$ and hence $\hat \Phi = \tilde \Phi$.

\medskip

Finally, we show that $\tilde \Phi$ is globally Lipschitz.
By reducing $\delta > 0$, if needed, we may assume, in addition, that the balls $B_{\delta/2}(\tilde \gamma(s))$
are geodesically convex for all $ s \in (-L,L)$. 
To prove the Lipschitz estimate, 
let $(s,q), (s', q') \in V_{\delta/2}$. By symmetry, we may assume that 
$\dist_{\tilde{\mathcal{M}}}(q', \tilde\gamma(s')) \le \dist_{\tilde{\mathcal{M}}}(q, \tilde\gamma(s))$.

 Assume first that $|s'-s| < \dist_{\tilde{\mathcal{M}}}(q, \tilde \gamma(s))$.
Consider the length minimizing  unit speed geodesic $\eta$ from $q$ to $\tilde \gamma(s)$ and let $q'' \coloneqq \eta(|s'-s|)$. 
Then 
$$ \dist_{\tilde{\mathcal{M}}}(q'', \tilde \gamma(s)) =  \dist_{\tilde{\mathcal{M}}}(q, \tilde \gamma(s)) - |s'-s|$$
 and hence
$ \dist_{\tilde{\mathcal{M}}}(q'', \tilde \gamma(s')) \le \dist_{\tilde{\mathcal{M}}}(q, \tilde \gamma(s')) < \delta/2$ 
since $|\tilde \gamma(s') - \tilde \gamma(s)| \le |s'-s|$.  Thus $\tilde \Phi(s,q'') = \tilde \Phi(s,q'')$.
 Since $|D\tilde \Phi| \le 2$ and since the balls $B_{\delta/2}(\tilde \gamma(\sigma))$  and 
 $B_{\delta/2}(\tilde \gamma(s'))$ are geodesically convex, we get 
 \begin{align*}
& \,   |\tilde \Phi(s,q) - \tilde \Phi(s',q)| \le |\tilde \Phi(s,q) - \tilde \Phi(s, q'')| +  |\tilde \Phi(s',q') - \tilde \Phi(s', q'')| \\
  \le  & \, 2 |s'-s| +  2\,  \dist_{\tilde{\mathcal{M}}}(q',q'') \le 4 |s-s'| + 2  \, \dist_{\tilde{\mathcal{M}}}(q',q) < 6 \,  \dist_{\tilde{\mathcal{M}}}(q',q).
  \end{align*}
  If $|s'-s| \ge \dist_{\tilde{\mathcal{M}}}(q, \tilde \gamma(s))$,  we set $q'' = \tilde \gamma(s)$ and $q''' = \tilde \gamma(s')$.
  Then $\tilde \Phi(s,q'') = (s, 0)$ and $\tilde \Phi(s',q''') = (s',0)$. Hence
   \begin{align*}
  |\tilde \Phi(s,q) - \tilde \Phi(s',q)| \le  & \, |\tilde \Phi(s,q) - \tilde \Phi(s, q'')| +  |\tilde \Phi(s',q') - \tilde \Phi(s', q''')| 
+ |s-s'|\\
  \le  & 2 \,  \dist_{\tilde{\mathcal{M}}}(q, \tilde \gamma(s)) + 2\,  \dist_{\tilde{\mathcal{M}}}(q', \tilde \gamma(s')) + |s' -s| \\
  \le & 4 \,  \dist_{\tilde{\mathcal{M}}}(q, \gamma(s)) +  |s'-s|   \le 5 |s'-s|. \
  \end{align*}
  Here we used the assumption $\dist_{\tilde{\mathcal{M}}}(q', \gamma(s')) \le \dist_{\tilde{\mathcal{M}}}(q, \gamma(s))$.
  Combining the estimates in the two cases, we see that
  $\tilde \Phi$ is Lipschitz with Lipschitz constant at most $6$.
 \end{proof}

\subsection{Estimates for the center of mass in normal coordinates}

\begin{definition}[Riemannian center of mass~\cite{Karcher1977}]   \label{def:center-of-mass}
  Let $(A, \mu)$ be a probability space, let $\tilde{\mathcal{M}}$ be a complete Riemannian manifold
  and let $B_\rho\subset\tilde{\mathcal{M}}$ be a convex geodesic ball.
  For a measurable function $f:A\rightarrow B_\rho$ define 
  \begin{equation}\label{eq:def-riemannian-center-of-mass-energy}
    P_f:\overline{B}_\rho\rightarrow\mathbb{R},\qquad 
    P_f(m)=\frac{1}{2}\int_A\dist^2(m, f(a))\,d\mu(a) \, .
  \end{equation}
  We call $\mathcal{C}_f\coloneqq \argmin_{m\in\overline{B}_\rho}P_f(m)$ the Riemannian center of mass of $f$ in $A$. 
\end{definition}

\begin{remark}
  The function $P_f$ in~\eqref{eq:def-riemannian-center-of-mass-energy} is convex 
  and has a unique minimum which lies in the interior of $\overline{B}_\rho$.
  A proof of this result and more details on the Riemannian center of mass can be found in~\cite{Karcher1977}.
\end{remark}

\begin{proposition}\label{prop:comparison_exp_maps}
  Let $\mathcal{N}$ be a complete, smooth Riemannian manifold. Let $p \in\mathcal{N}$.
  Then there exist $\rho > 0$ and $C' > 0$ with the following property.
  The ball $B_{2 \rho}(p)$ is geodesically convex and the exponential map $\exp_p$ is a smooth diffeomorphism
  from $B_{2\rho}(0) \subset T_p N \to B_{2 \rho}(p)$. 
  Moreover, the map  $\psi \coloneqq  \exp_p^{-1} \circ \exp_q$ satisfies, for every $q \in B_\rho(p)$ and every $v \in T_q N$ with $|v|\le \rho$
  \begin{equation}  \label{eq:comparison_exp_p_and_exp_q}
    |\psi(v) - \psi(0) - d\psi(0) v|  \le C' (d(p,q) + |v|) |v|^2.
  \end{equation}
  If $N$ is compact, then $\rho$ and $C$ can be chosen independently of the point $p \in N$. 
\end{proposition}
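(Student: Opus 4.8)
The plan is to fix $p$, choose $\rho > 0$ small enough that $B_{2\rho}(p)$ is geodesically convex and $\exp_p: B_{2\rho}(0) \to B_{2\rho}(p)$ is a diffeomorphism (this is possible by the standard theory of the injectivity radius and convexity radius), and then analyze the smooth map $\psi_q \coloneqq \exp_p^{-1} \circ \exp_q$ on the ball $\{|v| \le \rho\} \subset T_q\mathcal N$ by Taylor expansion in $v$ with explicit control of the quadratic remainder. The key observation is that $v \mapsto \exp_q(v)$ is the solution of the geodesic ODE with initial data $(q, v)$, and $\exp_p^{-1}$ is smooth on $B_{2\rho}(p)$, so $\psi_q$ is a smooth map depending smoothly on the parameter $q \in B_\rho(p)$. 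The first-order Taylor expansion with integral remainder gives
\begin{equation*}
  \psi_q(v) - \psi_q(0) - d\psi_q(0)v = \int_0^1 (1-t)\, \frac{d^2}{dt^2}\Big|_{t}\, \psi_q(tv)\, dt,
\end{equation*}
so I would bound the second derivative $\partial_t^2 \psi_q(tv) = D^2\psi_q(tv)[v,v]$ by $\sup_{|w|\le \rho} |D^2\psi_q(w)| \, |v|^2$, and the whole problem reduces to showing that $\sup_{q \in B_\rho(p)} \sup_{|w|\le\rho} |D^2\psi_q(w)| \le C'(d(p,q) + |w|)$ — i.e., that the second derivative of $\psi_q$ vanishes to first order when $q = p$ and $w = 0$.

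The reason $D^2\psi_q(w)$ is small when $(q,w)$ is near $(p,0)$ is that at $q = p$ the map $\psi_p = \exp_p^{-1}\circ\exp_p = \id$, so $D^2\psi_p \equiv 0$; and for fixed $q \ne p$, the map $\psi_q$ is the transition map between two normal coordinate charts, whose second derivatives at the origin $w = 0$ encode the Christoffel symbols of the $\exp_p$-normal coordinates evaluated at $q$, which vanish precisely at the center $q = p$ and are $O(d(p,q))$ nearby (this is the standard fact that Christoffel symbols in normal coordinates vanish at the center and grow linearly). More precisely I would write $F(q,w) \coloneqq D^2\psi_q(w)$, viewed as a smooth map from a neighborhood of $(p,0)$ in the tangent bundle $T\mathcal N$ into a fixed fiber of tensors (using a local trivialization via $\exp_p^{-1}$), observe $F(p,0) = 0$ and $F(q,0) = 0$ would be too strong — rather $F(q,0)$ is the Hessian of the coordinate transition at the origin, which equals (up to sign and symmetrization) the Christoffel symbol $\overline\Gamma$ of the $p$-normal coordinates at the point $q$, hence $F(q,0) = O(d(p,q))$; and $F(q,w) - F(q,0) = O(|w|)$ by smoothness. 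The mean value inequality applied to the smooth function $F$ on the compact set $\overline{B_\rho(p)} \times \overline{B_\rho(0)}$ then yields $|F(q,w)| \le C'(d(p,q) + |w|)$ with $C'$ the $C^1$-norm of $F$ there.

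For the uniformity statement when $\mathcal N$ is compact, I would cover $\mathcal N$ by finitely many such neighborhoods and take the worst constant, using that the convexity radius and injectivity radius are bounded below on a compact manifold and that all the relevant derivatives of the exponential map and of the metric are bounded on compact sets; this is exactly the same finite-cover argument already invoked in the proof of Lemma~\ref{lem:metric-taylor-expansion}. The main obstacle is making the claim $F(q,0) = O(d(p,q))$ precise: one must argue carefully that the second derivative of the transition map $\exp_p^{-1}\circ\exp_q$ at its origin is controlled by the Christoffel symbols of the $p$-centered normal chart at $q$ (via the chain rule, since $w \mapsto \exp_q(w)$ has vanishing coordinate-second-derivative at $w=0$ only in $q$-centered coordinates, not in $p$-centered ones), and that these are smooth in $q$ and vanish at $q=p$. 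Alternatively, and perhaps more cleanly, one can avoid the Christoffel-symbol bookkeeping entirely by a pure Taylor-with-parameter argument: set $H(q,v) \coloneqq \psi_q(v) - \psi_q(0) - d\psi_q(0)v$, note $H$ is smooth, $H(q,0) = 0$, $D_v H(q,0) = 0$ for all $q$, and $H(p,v) = 0$ for all $v$ (since $\psi_p = \id$ is affine), so $H$ vanishes on $(\{p\}\times B_\rho) \cup (B_\rho(p) \times \{0\})$ together with its $v$-derivative on the second piece; a two-variable Taylor estimate then forces $|H(q,v)| \le C'(d(p,q) + |v|)|v|^2$ directly. I would present this second route as the main argument and relegate the geometric interpretation to a remark.
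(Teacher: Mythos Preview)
Your proposal is correct, and your first approach is essentially what the paper does, but the paper executes it more directly: it sets $\gamma(t) = \psi_q(tv) = \exp_p^{-1}(\exp_q(tv))$, observes that this is a geodesic written in the $p$-centered normal chart, hence satisfies $\gamma''(t) = -\Gamma(\gamma(t))(\gamma'(t),\gamma'(t))$, and then uses $\Gamma(0)=0$ together with a bound on $D\Gamma$ to get $|\gamma''(t)| \le C|\gamma(t)|\,|\gamma'(t)|^2 \le C'(d(p,q)+|v|)|v|^2$; integrating twice yields the claim. In other words, the paper never introduces the full Hessian $D^2\psi_q(w)$ as an intermediate object --- it goes straight from the geodesic ODE to the pointwise bound on $\gamma''$ along the specific curve $t\mapsto tv$, which is exactly the quantity appearing in the integral remainder. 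Your detour through $\sup_{|w|\le\rho}|D^2\psi_q(w)|$ is harmless but unnecessary.

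Your second route (the two-variable Taylor argument using $H(p,v)\equiv 0$ and $H(q,0)=D_vH(q,0)=0$) is a genuinely different packaging. It is correct and in fact yields the slightly sharper bound $|H(q,v)|\le C\,d(p,q)\,|v|^2$, since $D_v^2H(p,\cdot)\equiv 0$ gives $|D_v^2H(q,w)|\le C\,d(p,q)$ by the mean value inequality in $q$. This avoids mentioning Christoffel symbols altogether at the cost of relying on smooth dependence of the exponential map on the basepoint; the paper's approach is more explicit about where the constant comes from (namely the $C^1$ norm of $\Gamma$ near the origin), which matches the style of the surrounding estimates in Fermi coordinates.
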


\begin{proof}
  Consider the map $c:[0,1]\to N$ given by  $c(t) = \exp_q tv$.
  Then $c$ is a geodesic arc which connects $c(0) = q$ and
$c(1) = \exp_q v$. Moreover,  $|c'(t)| = |v|$ and thus $c([0,1]) \subset B_{2\rho}(p)$.
Hence we can define $\gamma\coloneqq\varphi\circ c$,  
where $\varphi = \exp_p^{-1}$. Then $|\gamma'(t)| \le C |c'(t)| \le C |v|$.

Let $\Gamma$ denote the Christoffel symbols corresponding to the chart $\varphi$. 
Then 
\begin{equation}\label{eq:geodesic_normal}
  \gamma''(t) = - \Gamma(\gamma(t))(\gamma'(t), \gamma'(t))
\end{equation}
and 
\begin{equation}\label{eq:geodesic_initial_normal}
  \gamma(0) = \varphi(q)= \psi(0), \quad \gamma'(0) = d\psi(0) v,
  \quad \gamma(1) = \psi(v).
\end{equation}
Since the chart  $\varphi$ defines normal coordinates,  we have  $\Gamma(0) =0$. 
Moreover $D\Gamma$ is controlled in terms of the second derivatives of the metric. Hence
\begin{equation}
  |\Gamma(\gamma(t))(\gamma'(t), \gamma'(t))| \le  C |\gamma(t)| \, |\gamma'(t)|^2 \le C' 
  (d(p,q) + |v|)|v|^2.
\end{equation}
Hence the assertion~\eqref{eq:comparison_exp_p_and_exp_q} follows from~\eqref{eq:geodesic_normal}
and~\eqref{eq:geodesic_initial_normal}.
The fact that $\rho$ and $C$ can be chosen uniformly for compact $\mathcal{N}$ follows from the usual argument.
\end{proof}

\begin{corollary}\label{cor:estimate-center-of-mass} Let $p$,$\rho$  and $C'$ be as in Proposition~\ref{prop:comparison_exp_maps}. Let $r \le \rho/2$.
Let $(A, \mu)$ be a probability space  and let $f: A \to B_r(p)$ be a measurable map. 
Then the center of mass $\mathbf{C}_f$ satisfies
\begin{equation}
  \left| \exp_p^{-1}\mathbf{C}_f - \int_A  \exp_p^{-1} f(a) \, d\mu(a)\right| \le C' r^3.
\end{equation}
\end{corollary}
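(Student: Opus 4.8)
The plan is to exploit the first-order optimality condition for the center of mass together with the comparison estimate~\eqref{eq:comparison_exp_p_and_exp_q} from Proposition~\ref{prop:comparison_exp_maps}. Write $q \coloneqq \mathbf{C}_f$ and $m \coloneqq \exp_p^{-1}\mathbf{C}_f \in B_{\rho}(0)$; since $f$ takes values in $B_r(p)$ with $r \le \rho/2$, the minimizer $q$ lies in the interior of a convex geodesic ball (Definition~\ref{def:center-of-mass} and the following remark), and one checks $d(p,q) \le r$. The variational characterization of the Riemannian center of mass gives $\nabla P_f(q) = 0$, i.e.
\begin{equation*}
  \int_A \exp_q^{-1} f(a)\, d\mu(a) = 0,
\end{equation*}
where we use that $\nabla_m \tfrac12 \dist^2(m, \cdot) = -\exp_m^{-1}(\cdot)$ inside a convex ball. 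So the quantity to estimate is $m - \int_A \exp_p^{-1} f(a)\, d\mu(a) = \int_A \big( m - \exp_p^{-1} f(a) \big)\, d\mu(a)$, and the integrand is exactly $-\big(\psi(v_a) - \psi(0)\big)$ in the notation of Proposition~\ref{prop:comparison_exp_maps}, where $\psi = \exp_p^{-1}\circ\exp_q$ and $v_a \coloneqq \exp_q^{-1} f(a) \in T_q\mathcal{N}$, with $|v_a| \le d(q,f(a)) \le d(q,p) + r \le 2r \le \rho$.

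Next I would apply~\eqref{eq:comparison_exp_p_and_exp_q} pointwise in $a$:
\begin{equation*}
  \Big| \psi(v_a) - \psi(0) - d\psi(0) v_a \Big| \le C'\big(d(p,q) + |v_a|\big)\,|v_a|^2 \le C'\,(3r)\,(2r)^2 = 12 C' r^3 .
\end{equation*}
Integrating over $A$ and using the optimality condition $\int_A v_a\, d\mu(a) = 0$ to kill the linear term $d\psi(0)\int_A v_a\,d\mu$, we obtain
\begin{equation*}
  \Big| \int_A \big(\psi(v_a) - \psi(0)\big)\, d\mu(a) \Big|
  = \Big| \int_A \big(\psi(v_a) - \psi(0) - d\psi(0) v_a\big)\, d\mu(a) \Big|
  \le 12 C' r^3 .
\end{equation*}
Since $\psi(0) = \exp_p^{-1}(q) = m$, the left-hand side equals $\big| m - \int_A \exp_p^{-1} f(a)\, d\mu(a) \big|$, which is the claim (after relabelling the constant, or tracking the harmless numerical factor — if one wants the bare $C'r^3$ one simply absorbs the factor $12$ into the constant supplied by Proposition~\ref{prop:comparison_exp_maps}, which is legitimate since that proposition only asserts existence of some constant).

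The only genuinely delicate point is justifying the first-order optimality identity $\int_A \exp_q^{-1} f(a)\, d\mu(a) = 0$ and the radius bookkeeping needed to stay inside the region where Proposition~\ref{prop:comparison_exp_maps} applies; both are standard facts about the Karcher mean (one differentiates under the integral sign using that $m \mapsto \tfrac12\dist^2(m,y)$ is smooth with gradient $-\exp_m^{-1}y$ on a convex ball, cf.~\cite{Karcher1977}), so I expect no real obstacle — the argument is essentially a one-line consequence of optimality plus the comparison estimate. The uniformity in $p$ for compact $\mathcal{N}$ is inherited directly from the corresponding uniformity in Proposition~\ref{prop:comparison_exp_maps}.
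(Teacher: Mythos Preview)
Your proof is correct and follows essentially the same approach as the paper: both set $q=\mathbf{C}_f$, invoke Karcher's first-order condition $\int_A \exp_q^{-1} f(a)\,d\mu(a)=0$ to eliminate the linear term $d\psi(0)v_a$, and then apply the pointwise estimate~\eqref{eq:comparison_exp_p_and_exp_q} to the remainder. Your version is simply more explicit about the radius bookkeeping and the numerical constant (the paper silently absorbs the factor $12$ into $C'$).
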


\begin{proof}
  Let $q = \mathbf{C}_f$.
  Then by~\cite[Thm.\ 1.2]{Karcher1977} we have
\begin{equation}\label{eq:com_error_definition}
  \int_A  \exp^{-1}_q f(a) \, d\mu(a) = 0.
\end{equation}
Recall that $\psi = \exp_p^{-1} \circ \exp_q$. Thus
$ \exp_p^{-1} \circ f = \psi \circ (\exp_q^{-1} \circ f)$.
It follows from \eqref{eq:com_error_definition} and   \eqref{eq:comparison_exp_p_and_exp_q}
that 
$$
  \left|  \int_A  \exp_p^{-1} f(a) \, d\mu(a) - \psi(0)  \right| \le C' r^3.
  $$
  Since $\psi(0) = \exp_p^{-1}(q) = \exp_p^{-1}  \mathbf{C}_f$, this concludes the proof.
\end{proof} 

\subsection{Setup}

Let $L>0$.
For $h>0$ define $\Omega_{h}\coloneqq(-L,L)\times B_{h}(0)\subset\mathbb{R}^n$,
and let $\Omega\coloneqq(-L,L)\times B_1(0)$.
We define the rescaled gradient $d_h v$ of a function $v:\Omega\rightarrow\mathbb{R}^n$ by
\begin{align*}
  d_h v\coloneqq\left( \partial_{x_1}v\bigg|\frac{1}{h}\partial_{x_2}v\bigg|\dots\bigg|\frac{1}{h}\partial_{x_n}v \right).
\end{align*}
If $I_h:\Omega\rightarrow\Omega_h$ is given by
$I_h(x_1,x')=(x_1,hx'),\,v_h:\Omega_h\rightarrow\mathbb{R}^n$ and $v=v_h\circ I_h$ then 
\begin{align*}
  d_h v=(dv_h)\circ I_h.
\end{align*}
Let $(\gamma,\underline\nu)$ be a framed unit speed geodesic in a complete Riemannian manifold $\mathcal{M}$ and let 
\begin{equation}\label{eq:def-omega-h-star}
  \Omega_h^*=\psi_{\gamma,\underline\nu}(\Omega_h),
\end{equation}
where $\psi_{\gamma,\underline\nu}$ is the Fermi coordinate system defined in~\eqref{eq:def-fermi-coords}.
For functions $\tilde u:\Omega_h^*\rightarrow\tilde{\mathcal{M}}$ we define the energy
\begin{equation}\label{eq:def-energy}
  E_{h}(\tilde u)\coloneqq\fint_{\Omega_{h}^*}\dist{}^2(d\tilde u,SO(g,\tilde g))\,d\Vol_g.
\end{equation}

\begin{remark}
  If $(\gamma,\underline\nu),\,(\tilde\gamma,\tilde{\underline\nu})$ are as in Lemma~\ref{lem:existence-lift_new} and $v$ is a lift of $\tilde u$
  with respect to these geodesics, i.e.\ if \ $\tilde\psi_{\tilde\gamma,\underline{\tilde\nu}}\circ v= \tilde u\circ\psi_{\gamma,\underline\nu}$, then, 
  \begin{equation}\label{eq:rewrite-energy-SOn}
    E_h(\tilde u)=\frac{1}{\int_{\Omega_h}\sqrt{\det\overline{g}}\,dx}
    \int_{\Omega_h}\dist{}^2\left(   (\overline{\tilde{g}}\circ  v)^{1/2}  \, \, dv\, \, \overline{g}^{-1/2},SO(n)\right)\sqrt{\det\overline{g}}\,dx,
  \end{equation}
    see for example~\cite[Eq.\ 2.5]{Kroemer2025}.
    Here $dv$ is identified with the matrix with components $(dv)_{ij} = \partial_{x_j} v_i$ and the 
    matrices $\overline g^{-1/2}$ and $\overline{\tilde g}^{1/2}$ are defined in  \eqref{eq:definition_sqrt_g}.
\end{remark}

If $|(v_2,\dots,v_n)|\leq Ch$, then the metrics in~\eqref{eq:rewrite-energy-SOn} are $O(h^2)$ close to the identity. 
This will allow us to use the following compactness result in the Euclidean setting. 

\begin{theorem}[{\cite[Thm.\ 2.2]{Mora2003}}]\label{thm:muller-mora-3d}
  Let $n\geq 2$ and let $h_k\rightarrow 0,\,\overline{v}_k:\Omega\rightarrow\mathbb{R}^n$ such that
  \begin{equation}\label{eq:lem-3d-uniform-energy-bound}
    \sup_{k}\frac{1}{h_k^4}\int_{\Omega}\dist^2(d\overline{v}_k,SO(n))\,dx<\infty.
  \end{equation}
  Then there exist maps $Q_{k}:(-L,L)\rightarrow SO(n),\,\tilde{Q}_k:(-L,L)\rightarrow\mathbb{R}^{n\times n}$ with $|\tilde{Q}_k|\leq c$ 
  and constants $\overline{Q}_{k}\in SO(n),\,c_{k}\in\mathbb{R}^n$ such that the functions
  $v_{k}\coloneqq\overline{Q}_{k}^T \overline{v}_{k}-c_{k}$ satisfy
  \begin{equation}\label{eq:muller-mora-matrix-estimates}
    \begin{split}
      \|d_{h_k}v_{k}-Q_{k}\|_{L^2(\Omega)}
      &\leq Ch_k^2, \\
      \|Q_k-\tilde{Q}_k\|_{L^2(-L,L)}
      &\leq Ch_k^2,
      \qquad\|\nabla\tilde{Q}_k\|_{L^2(-L,L)}\leq Ch_k, \\
      \|Q_{k}-\Id\|_{L^\infty}
      &\leq Ch_k.
    \end{split}
  \end{equation}
  Furthermore, set
  \begin{align*}
    w^k(x_1)&\coloneqq\int_{B_1(0)}\frac{(v_k)_1(x)-x_1}{h_k^2}\,dx', \\
    y^k_{i}(x_1)&\coloneqq\int_{B_1(0)}\frac{(v_k)_i(x)}{h_k}\,dx',\quad\text{$i=2,\dots,n$,} \\
    z_{ij}^k(x_1)&\coloneqq    - \frac{1}{\mu}\int_{B_1(0)}\frac{x_i(v_k)_j(x)-x_j(v_k)_i(x)}{h_k^2}\,dx',
    \quad\text{$i,j=2,\dots,n$,}
  \end{align*}
  where 
   $\mu\coloneqq  2 \int_{B_{h_k}(0)}  x_n^2 \,dx'$. 
  By $Z^k$ we denote the skew-symmetric matrix with entries $z^k_{ij}$.
  Then, up to subsequences, it holds:
  \begin{enumerate}
    \item\label{it:mora-n_convergence_w}
    $w^k\rightharpoonup w$ weakly in $W^{1,2}(-L,L)$,
  \item $y^k_{i}\rightarrow y_i$ in $W^{1,2}(-L,L)$ with $y_i\in W^{2,2}(-L,L)$
    for $i=2,\dots,n$,
  \item $z_{ij}^k\rightharpoonup z_{ij}$ weakly in $W^{1,2}(-L,L)$.
  \item\label{item:muller-mora-conv-A} There exists a matrix $A\in W^{1,2}((-L,L);\mathbb{R}^{n\times n})$ such that
    $(d_{h_k}v_k-\Id)/h_k\rightarrow A$ in $L^2(\Omega)$,
  \item\label{item:muller-mora-prop-5} $\sym(Q_k-\Id)/h_k^2\rightarrow A^2/2$ uniformly on $(-L,L)$ with
    \begin{equation}\label{eq:def-A}
      A=\begin{pmatrix}
        0 & -\partial_{x_1}y^T \\
        \partial_{x_1}y & Z
      \end{pmatrix},
    \end{equation}
    where $Z$ is the skew-symmetric matrix with entries $z_{ij}$ and 
    $y=\begin{pmatrix}
      y_2 \\ \vdots \\ y_n
    \end{pmatrix}$.
  \item\label{it:convergence_beta} The sequence $\beta^k$ defined by
    \begin{align*}
      \beta^k_1(x)&\coloneqq
      \frac{1}{h_k}\left(
        \frac{(v_k)_1(x)-x_1}{h_k^2}
        -w^k(x_1)+
        \sum_{i=2}^n x_i\partial_{x_1}y_i^k(x_1)
      \right), \\
      \beta^k_j(x)&\coloneqq
      \frac{1}{h_k^2}\left(
        \frac{(v_k)_j(x)-h_k x_j}{h_k}
        -y^k_j(x_1)-h_k(Z^k x')_j
      \right)\quad\text{for $j=2,\dots,n$}
    \end{align*}
    converges weakly in $L^2(\Omega;\mathbb{R}^n)$ to a function $\beta$ belonging to the space
    \begin{align}\label{eq:mora_define_B}
      \mathcal{B}\coloneqq\bigg\lbrace\alpha\in L^2(\Omega;\mathbb{R}^n):
        &\int_{B_1(0)}\alpha(x)\,dx'=0,\,\partial_{x_i}\alpha\in L^2(\Omega;\mathbb{R}^n),  \nonumber  \\
        &\int_{B_1(0)}\alpha_j x_i-\alpha_i x_j\,dx'=0
      \bigg\rbrace.
    \end{align}
    Moreover, $\partial_{x_i}\beta^k\rightharpoonup\partial_{x_i}\beta$ in $L^2(\Omega)$ for $i=2,\dots,n$.
  \end{enumerate}
\end{theorem}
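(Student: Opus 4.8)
The plan is to reconstruct this statement by the geometric rigidity method of Friesecke, James and Müller, localized on cross-sectional slabs of the rod and patched together along $(-L,L)$. It is convenient first to rescale: after the substitution $x'\mapsto h_k x'$ the hypothesis \eqref{eq:lem-3d-uniform-energy-bound} becomes $\int_\Omega \dist^2(d_{h_k}\bar v_k, SO(n))\,dx\le C h_k^4$, so the rescaled gradient $d_{h_k}\bar v_k$ is $O(h_k^2)$-close to $SO(n)$ in $L^2(\Omega)$, and all constants below are independent of $k$.

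First I would cover $(-L,L)$ by $O(h_k^{-1})$ overlapping intervals $I^m$ of length $\sim h_k$ and apply the quantitative rigidity estimate on each slab $C^m\coloneqq I^m\times B_1(0)$: there is $Q_k^m\in SO(n)$ with $\int_{C^m}|d_{h_k}\bar v_k-Q_k^m|^2\le C\int_{C^m}\dist^2(d_{h_k}\bar v_k,SO(n))$, the constant being scale-invariant because, after the rescaling, each $C^m$ has bounded eccentricity. Comparing neighbouring rotations on the overlaps gives $|Q_k^m-Q_k^{m+1}|^2\le Ch_k^{-1}\int_{C^m\cup C^{m+1}}\dist^2(d_{h_k}\bar v_k,SO(n))$, and summing over $m$ together with the energy bound yields $\sum_m|Q_k^m-Q_k^{m+1}|^2\le Ch_k^3$. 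Interpolating the $Q_k^m$ to a piecewise-affine (or mollified) field $\tilde Q_k\colon(-L,L)\to\mathbb R^{n\times n}$ then produces $\|\nabla\tilde Q_k\|_{L^2}\le Ch_k$ and $\|Q_k-\tilde Q_k\|_{L^2}\le Ch_k^2$, where $Q_k$ denotes the piecewise-constant field; together with the local estimates this gives $\|d_{h_k}\bar v_k-Q_k\|_{L^2}\le Ch_k^2$. Since $\|\nabla\tilde Q_k\|_{L^2}\le Ch_k$, $\tilde Q_k$ is uniformly close to a single constant rotation $\bar Q_k$, so after replacing $\bar v_k$ by $v_k\coloneqq\bar Q_k^T\bar v_k-c_k$ for a suitable constant $c_k$ and $Q_k$ by $\bar Q_k^TQ_k$ we obtain $\|Q_k-\Id\|_{L^\infty}\le Ch_k$. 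This proves all of \eqref{eq:muller-mora-matrix-estimates}.

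Next I would pass to the limit. Writing $\tfrac1{h_k}(d_{h_k}v_k-\Id)=\tfrac1{h_k}(d_{h_k}v_k-Q_k)+\tfrac1{h_k}(Q_k-\Id)$, the first summand tends to $0$ in $L^2(\Omega)$; the second, after replacing $Q_k$ by $\tilde Q_k$ at cost $O(h_k)$, is bounded in $W^{1,2}(-L,L)$, hence converges along a subsequence strongly in $L^2$ by Rellich's theorem to some $A\in W^{1,2}((-L,L);\mathbb R^{n\times n})$ depending only on $x_1$; this gives item~\ref{item:muller-mora-conv-A}. Expanding the constraint $Q_k^TQ_k=\Id$ to second order shows $\sym(Q_k-\Id)=-\tfrac12(Q_k-\Id)^T(Q_k-\Id)+o(h_k^2)$, whence $\sym(Q_k-\Id)/h_k^2\to A^2/2$ uniformly and, in particular, $A$ is skew-symmetric. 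The block structure \eqref{eq:def-A} is then read off by integrating $\tfrac1{h_k}(d_{h_k}v_k-\Id)\to A$ over the cross-section: the $(i,1)$-entries with $i\ge2$ give $\partial_{x_1}y_i^k\to A_{i1}$, which combined with a Poincaré bound yields $y_i^k\to y_i$ in $W^{1,2}$, $y_i\in W^{2,2}$ and $A_{i1}=\partial_{x_1}y_i$ (so $A_{1i}=-\partial_{x_1}y_i$ by skew-symmetry and $A_{11}=0$); integrating by parts against $x_i$ identifies the skew part of the transverse gradient with $z_{ij}^k$ and gives $z_{ij}^k\rightharpoonup z_{ij}=A_{ij}$ in $W^{1,2}$. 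This establishes items~\ref{it:mora-n_convergence_w}--\ref{item:muller-mora-prop-5}.

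Finally, for the corrector $\beta^k$: by construction $\beta^k$ is $v_k$ with its cross-sectional average and first cross-sectional moment subtracted at the appropriate powers of $h_k$, so $\int_{B_1(0)}\beta^k\,dx'=0$ and $\int_{B_1(0)}(\beta_j^kx_i-\beta_i^kx_j)\,dx'=0$ identically. A Poincaré inequality on $B_1(0)$ together with the rigidity estimate at the finer scale bounds $\beta^k$ and $\partial_{x_j}\beta^k$ ($j\ge2$) in $L^2(\Omega)$; extracting a further subsequence gives the weak convergences in item~\ref{it:convergence_beta}, and the two moment constraints pass to the limit, so $\beta\in\mathcal B$. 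I expect the main obstacle to be the patching in the second step: producing a single globally defined rotation field along the rod with the sharp $O(h_k)$ control on its derivative, which is the rod analogue of the Friesecke--James--Müller construction for plates and hinges on the scale-invariant form of the rigidity constant on the slab $I^m\times B_1(0)$. Once that is available, the remaining steps are a Taylor expansion of the orthogonality constraint, Poincaré inequalities in the cross-section, and Rellich compactness in $x_1$.
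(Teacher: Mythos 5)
Your proposal is essentially the paper's own proof: the paper does not reprove Theorem~\ref{thm:muller-mora-3d} but quotes it from Mora and M\"uller~\cite{Mora2003} (noting only that the $n=3$ argument extends to all $n\ge 2$), and the argument there is exactly the one you sketch --- the Friesecke--James--M\"uller rigidity estimate applied on slabs of length $\sim h_k$ (equivalently, on the physical cubes $I^m\times B_{h_k}(0)$, which is where the bounded eccentricity and scale invariance actually enter), comparison of neighbouring rotations on overlaps, interpolation and a global normalization by $\overline{Q}_k$, $c_k$, followed by Poincar\'e/Korn-type identification of $w$, $y$, $Z$, $\beta$. The one step worth tightening is the uniform convergence in item~\ref{item:muller-mora-prop-5}: since $\sym(Q_k-\Id)=-\tfrac12(Q_k-\Id)^T(Q_k-\Id)$, you need $(Q_k-\Id)/h_k\to A$ uniformly rather than merely in $L^2$, which does follow from your own estimates because $\sup_m|Q_k^{m+1}-Q_k^m|\le\bigl(\sum_m|Q_k^{m+1}-Q_k^m|^2\bigr)^{1/2}\le Ch_k^{3/2}$ gives $\|Q_k-\tilde Q_k\|_{L^\infty}\le Ch_k^{3/2}$, while $(\tilde Q_k-\Id)/h_k\to A$ uniformly by the compact embedding of $W^{1,2}(-L,L)$ into $C^0$.
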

 In~\cite{Mora2003} Theorem~\ref{thm:muller-mora-3d} is proved for  the case $n=3$.
The proof is analogous for all $n\geq 2$.
Note that for $n=2$ we have $Z=0$ by antisymmetry.

\medskip

Note that the limit $(w,y,Z,\beta)$ is not unique, not even after choice of a subsequence. 
The point is that the choice of the constant rotations $\overline{Q}_k$ and the constants $c_k$ is not unique. 
Indeed, if we replace $\overline{Q}_k$ by $\check{Q}_k=e^{h_k W}\overline{Q}_k$ and $\check{c}_k=c_k+h_k\check{c}'+h_k^2 w_0 e_1$ 
with $W$ skew-symmetric, and $v_k$ by 
$\check{v}_k\coloneqq\check{Q}_k^T\overline{v}_k-\check{c}_k$, then the limits
in the convergences 1.\ to 6.\ get replaced by 
\begin{align*}
  \check{w}(x_1)&=w_0+w(x_1)+\frac{1}{2}(W^2)_{11}x_1+\sum_{j=2}^n W_{1j}y_j(x_1), \\
  \check{y}_i(x_1)&=y_i(x_1)+W_{1i}x_1+\check{c}'_i,\quad i=2,\dots,n, \\
  \check{z}_{ij}&=z_{ij}+W_{ij},\quad i,j=2,\dots,n \, .
\end{align*}

It is not difficult to show that modulo this equivalence relation, the limit is unique (for a suitable subsequence). 
We will show this in a more general setting in Proposition~\ref{pr:unique_limit} 
 below.
 
 \medskip
 
 Geometrically, the choice of $\overline Q_k$ and $c_k$ corresponds to the choice of a line segment 
 $\gamma_k$ given by
 $$ \gamma_k(t) = \overline Q_k  ( t e_1 + c_k)$$
 and a constant  orthonormal frame $\nu_j = \overline Q_k e_j$ for $j \ge 2$ perpendicular to $\gamma_k$
 such that $ \overline v_k$ stays close to $\gamma_k$ and $d_{h_k} v_k$ is close to $\overline Q_k$ on
 average.  
 
 In the Riemannian setting,  the straight line $\gamma_k$ is replaced by a  unit speed 
 geodesic and the constant
 frame is replaced by a parallel orthonormal frame. In this setting we want the expression of $\overline v_k$ in Fermi
 coordinates around this framed geodesic to satisfy
 Properties~\ref{it:mora-n_convergence_w} to~\ref{it:convergence_beta}  of Theorem~\ref{thm:muller-mora-3d} without application of an additional
 rotation $\overline Q_k$ and translation $c_k$. 
 Here the following simple consequence of Theorem~\ref{thm:muller-mora-3d} will be useful.

 \begin{corollary}  \label{eq:cor_mora-m} Assume that $h_k \to 0$ and that   the maps $\overline v_k: \Omega \to \mathbb R^n$
 satisfy  \begin{equation}\label{eq:lem-3d-uniform-energy-bound_corollary}
    \sup_{k}\frac{1}{h_k^4}\int_{\Omega}\dist^2(d\overline{v}_k,SO(n))\,dx<\infty.
  \end{equation} 
  Assume in addition that 
 \begin{eqnarray}
\int_{\Omega} \left|  \overline v_k (x) - \binom{x_1}{0}\right|^2  \, dx & \le& C h_k^2,   \quad \text{for $j \ge 2$,}
\label{eq:reduced_mora_m_vk}\\
\left| \int_{\Omega} (\overline v_k)_1(x) - x_1  \, dx \right|& \le& C h_k^2.
\label{eq:reduced_mora_m_wk}
 \end{eqnarray}
 Then there exists $R_k \in SO(n-1)$ such that a subsequence of  the maps
 \begin{equation}
 v^\sharp_k \coloneqq  \begin{pmatrix}  1 & 0 \\
0 & R_k   
\end{pmatrix}^T  \overline v_k
 \end{equation}
 satisfy  \eqref{eq:muller-mora-matrix-estimates}
 and Properties~\ref{it:mora-n_convergence_w} to~\ref{it:convergence_beta}  of Theorem~\ref{thm:muller-mora-3d}. 
 \end{corollary}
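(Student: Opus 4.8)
We would establish Corollary~\ref{eq:cor_mora-m} in four steps. The idea is to apply Theorem~\ref{thm:muller-mora-3d} directly to $\overline v_k$, to show that the extra hypotheses~\eqref{eq:reduced_mora_m_vk}--\eqref{eq:reduced_mora_m_wk} force the constant rotation $\overline Q_k$ produced by the theorem to fix the direction $e_1$ up to an error of size $O(h_k)$ and the corresponding translation $c_k$ to have size $O(h_k)$ with first component $O(h_k^2)$, and finally to absorb the discrepancy between $\overline Q_k$ and a block-diagonal rotation, and between $c_k$ and $0$, into the limit objects $(w,y,Z,\beta)$ exactly as in the equivalence relation recorded after Theorem~\ref{thm:muller-mora-3d}.

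First I would invoke Theorem~\ref{thm:muller-mora-3d} to obtain, along a subsequence, $\overline Q_k\in SO(n)$, $c_k\in\mathbb R^n$ and maps $Q_k,\tilde Q_k$ such that $v_k\coloneqq\overline Q_k^T\overline v_k-c_k$ satisfies~\eqref{eq:muller-mora-matrix-estimates} and Properties~\ref{it:mora-n_convergence_w}--\ref{it:convergence_beta}. Combining $\|d_{h_k}v_k-Q_k\|_{L^2}\le Ch_k^2$ with $\|Q_k-\Id\|_{L^\infty}\le Ch_k$ gives $\|d_{h_k}v_k-\Id\|_{L^2(\Omega)}\le Ch_k$, and hence, by the Poincaré inequality, $\|v_k-\binom{x_1}{h_k x'}-a_k\|_{L^2(\Omega)}\le Ch_k$ with $a_k\coloneqq\fint_\Omega v_k$; boundedness of $w^k$ and of the $y^k_i$ in $L^2(-L,L)$ then forces $a_{k,1}=O(h_k^2)$ and $a_{k,j}=O(h_k)$ for $j\ge2$, so $|a_k|=O(h_k)$. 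Writing $\overline v_k=\overline Q_k(v_k+c_k)$ and inserting this expansion, hypothesis~\eqref{eq:reduced_mora_m_vk} turns into $\|x_1(\overline Q_ke_1-e_1)+\overline Q_k(a_k+c_k)\|_{L^2(\Omega)}\le Ch_k$ (the term $\overline Q_k\binom{0}{h_kx'}$ being $O(h_k)$ outright); since $\int_\Omega x_1\,dx=0$ and $\int_\Omega x_1^2\,dx>0$ this yields $|\overline Q_ke_1-e_1|\le Ch_k$ and $|\overline Q_k(a_k+c_k)|\le Ch_k$, whence $|c_k|\le Ch_k$. Finally, since $\overline Q_k^Te_1=e_1+O(h_k)$ and $|a_k+c_k|\le Ch_k$ one has $\fint_\Omega(\overline v_k)_1=a_{k,1}+(c_k)_1+O(h_k^2)$; comparing with~\eqref{eq:reduced_mora_m_wk} (which says $\fint_\Omega(\overline v_k)_1=O(h_k^2)$, as $\int_\Omega x_1\,dx=0$) and using $a_{k,1}=O(h_k^2)$ gives $(c_k)_1=O(h_k^2)$.

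Next I would replace $\overline Q_k$ by its block-diagonal part. Choose $S_k\in SO(n)$ with $S_ke_1=\overline Q_ke_1$ and $|S_k-\Id|\le Ch_k$ (for instance the rotation in $\operatorname{span}(e_1,\overline Q_ke_1)$). Then $\overline Q_k^\sharp\coloneqq S_k^{-1}\overline Q_k$ fixes $e_1$ and therefore has the form $\begin{pmatrix}1&0\\0&R_k\end{pmatrix}$ with $R_k\in SO(n-1)$, and $v_k^\sharp=\begin{pmatrix}1&0\\0&R_k\end{pmatrix}^T\overline v_k=T_kv_k+T_kc_k$ with $T_k\coloneqq\overline Q_k^TS_k\overline Q_k\in SO(n)$ and $|T_k-\Id|\le Ch_k$. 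Since $T_k$ is constant in $x$ and the translation $T_kc_k$ drops out of the identity $d_{h_k}v_k^\sharp=T_k\,d_{h_k}v_k$, the choice $Q_k^\sharp\coloneqq T_kQ_k$, $\tilde Q_k^\sharp\coloneqq T_k\tilde Q_k$ transfers the four estimates of~\eqref{eq:muller-mora-matrix-estimates} verbatim to $v_k^\sharp$.

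It then remains to verify Properties~\ref{it:mora-n_convergence_w}--\ref{it:convergence_beta} for $v_k^\sharp$. For this I would write $T_k=e^{h_kV_k}$ with $V_k$ skew-symmetric and $|V_k|\le C$, pass to a further subsequence so that $V_k\to V$ and so that $h_k^{-1}(c_k)_j$ ($j\ge2$) and $h_k^{-2}(c_k)_1$ converge (these sequences being bounded by the previous step), and then insert $v_k^\sharp=T_kv_k+T_kc_k$ into the definitions of $w^{k,\sharp},y^{k,\sharp}_i,z^{k,\sharp}_{ij},\beta^{k,\sharp}$ from Theorem~\ref{thm:muller-mora-3d}. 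Using the pointwise expansions of $(v_k)_i$ from Property~\ref{it:convergence_beta} and the convergences in Properties~\ref{it:mora-n_convergence_w}--\ref{item:muller-mora-prop-5}, every correction term produced by $T_k$ and by $T_kc_k$ is bounded in the relevant norm and convergent, with limits exactly those prescribed by the equivalence relation after Theorem~\ref{thm:muller-mora-3d} (with rotation data given by $V$ and translation data by $\lim_k h_k^{-1}(c_k)_j$ and $\lim_k h_k^{-2}(c_k)_1$). I expect this last step to be the main technical obstacle: one must check that, after division by the correct power of $h_k$, no term blows up, and the facts that make this work are that $(T_k)_{11}=1+O(h_k^2)$ with $V_k$ skew (which keeps the $x_1$-linear terms fed into $w^{k,\sharp}$ bounded), that $(c_k)_1=O(h_k^2)$ together with $a_{k,1}=O(h_k^2)$ (which keeps the constant fed into $w^{k,\sharp}$ bounded), and that $|c_k|=O(h_k)$ together with $|a_k|=O(h_k)$ (which keeps the terms fed into $y^{k,\sharp}$ and $z^{k,\sharp}$ bounded). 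Since all of these convergences take place along one common subsequence and $\beta^{k,\sharp}$ lies in $\mathcal B$, this proves the corollary.
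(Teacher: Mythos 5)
Your proposal is correct and follows essentially the same route as the paper's proof: apply Theorem~\ref{thm:muller-mora-3d}, use the two hypotheses together with the $L^2$-boundedness of $w^k,y^k$ to show that $\overline Q_k$ maps $e_1$ to $e_1+O(h_k)$ and that $|c_k|\le Ch_k$ with $(c_k)_1=O(h_k^2)$, split off a block-diagonal rotation via an auxiliary rotation within $O(h_k)$ of the identity, and absorb the remaining near-identity rotation and small translation into the limits as in the equivalence discussion after Theorem~\ref{thm:muller-mora-3d}. The only differences are cosmetic (Poincar\'e plus the mean $a_k$ instead of the paper's direct slice computation, and a more detailed justification of the final transfer of Properties~\ref{it:mora-n_convergence_w}--\ref{it:convergence_beta}, which the paper dispatches as ``easy to see'').
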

 Note that the action of  $\overline R_k \in SO(n-1)$ simply corresponds to a fixed  rotation  of the orthonormal
 frame around the curve. 
 
 \begin{proof}  Let $\overline Q_k$, $c_k$ and $v_k$ be as in    Theorem~\ref{thm:muller-mora-3d}.
 Set $d_k = \overline Q_k c_k$. Then
 $$\overline v_k = \overline Q_k v_k + d_k,   $$
 and 
 thus, for $j \ge 2$, 
 $$ \int_{B_1(0)} (\overline v_k)_j(x_1, x') dx' =  (d_k)_j  + \sum_{l=2}^n  h_k  (\overline Q_k)_{jl} (y_k)_l(x_1)+ 
 ( \overline Q_k)_{j1} x_1  + (\overline Q_k)_{j1} h_k^2 w_k(x_1).$$
 Since $y_k$ and $w_k$ are bounded in $L^2((-L,L)$ it follows from  
 \eqref{eq:reduced_mora_m_vk} that
 $$ \int_{(-L,L)} \left|  (d_k)_j + (\overline Q_k)_{j1} x_1 \right|^2 \, dx_1 \le C h_k^2.$$
 Hence 
 \begin{equation}  |(d_k)_j|  + |(\overline Q_k)_{j1}|  \le C h_k \quad \text{for $j \ge 2$.}
 \end{equation}
 Similarly, we have
 \begin{equation}    \label{eq:cor_mora_v1}  \int_{B_1(0)} (\overline v_k)_1(x_1, x')  - x_1 \,  dx' =  (d_k)_1  + \sum_{l=2}^n  h_k  (\overline Q_k)_{1l} (y_k)_l + 
  \left( (\overline Q_k)_{11} - 1\right) x_1  + (\overline Q_k)_{11} h_k^2 w_k
  \end{equation}
 and we get 
 \begin{equation} |(\overline Q_k)_{11} - 1|  \le C h_k   \, .
 \end{equation}
 Since $\overline Q_k \in SO(n)$ we have
 $ \sum_{l=1}^n (\overline Q_k)_{lj}  (\overline Q_k)_{l1} = 0$,  for $j\ge 2$, 
 and thus we get
 \begin{equation}   |(\overline Q_k)_{1j}| \le C h_k \quad \text{for $j \ge 2$.}
 \end{equation}
 Since $y_k$ and $w_k$ are bounded in $L^2$,  it follows from  \eqref{eq:cor_mora_v1}  
 and the assumption \eqref{eq:reduced_mora_m_wk}
 that 
 \begin{equation}
 |(d_k)_1| \le C h_k^2.
 \end{equation}

 Summarizing and recalling that $c_k = \overline Q_k^T d_k$, we get
 \begin{equation}  \label{eq:mora_cor_ck}
 |(c_k)_1| \le C h_k^2,  \quad |(c_k)_j| \le C h_k, \quad  |(\overline Q_k)^T e_1 - e_1| \le C h_k, \quad \text{for $j \ge 2$.}
 \end{equation}
It follows that there exist $Q^{\sharp}_k \in SO(n)$ such that
$$ Q^{\sharp}_k e_1 = (\overline Q_k)^T e_1, \quad |Q_k^\sharp - \Id| \le C h_k.$$
Then $\overline Q_k (Q_k^\sharp) e_1 = e_1$.
Since $\overline Q_k (Q_k^\sharp) \in SO(n)$,
 we have 
$$ \overline Q_k Q_k^\sharp = \begin{pmatrix}  1 & 0 \\
0 & R_k   
\end{pmatrix}$$
for some $R_k \in SO(n-1)$. 
Set 
$$v^\sharp_k \coloneqq  (\overline Q_k Q_k^\sharp)^T \overline v_k = (Q_k^\sharp)^T  (\overline Q_k)^T \overline v_k =
(Q_k^\sharp)^T (v_k + c_k). $$
Since $Q_k^\sharp \in SO(n)$,  the maps $v^\sharp_k$ satisfy~\eqref{eq:muller-mora-matrix-estimates}
(with with $Q_k$ and $\tilde Q_k$ replaced by
$ (Q_k^\sharp)^T  Q_k$ and $ (Q_k^\sharp)^T \tilde Q_k$, respectively).
Moreover, it is easy to see that Properties~\ref{it:mora-n_convergence_w} to~\ref{it:convergence_beta} 
for the sequence $v_k$ imply the corresponding properties for  a subsequence of  $v^\sharp_k$. 
\end{proof}

\section{Main theorem}\label{sec:main-thm}

From now on we always assume that $\mathcal{M}$ and $\tilde{\mathcal{M}}$ are  complete
smooth, oriented $n$-dimensional Riemannian manifolds  and that 
$\tilde M$ has a positive injectivity radius. 
We will often assume that $\tilde M$ is compact. Then the assumption on the injectivity radius is automatically satisfied.

The main result of this paper is the identification of the $\Gamma$-limit of the energy functional~\eqref{eq:def-energy}.
We define the spaces $\mathcal{X}$ and $\mathcal{X}^{\mathcal{G}}$ by
\begin{equation}\label{eq:def-X}
  \begin{split}
    \mathcal{X}
    &\coloneqq W^{1,2}((-L,L))
    \times W^{2,2}((-L,L);\mathbb{R}^{n-1})
    \times(W^{1,2}((-L,L);\mathbb{R}^{n-1\times n-1})\times\mathcal{B}, \\
    \mathcal{X}^{\mathcal{G}}
    &\coloneqq \mathcal{G}\times\mathcal{X}.
  \end{split} 
\end{equation}
Here the space $\mathcal B$ is defined in~\eqref{eq:mora_define_B} and
$\mathcal{G}$ is the set of all framed unit speed geodesics $(\tilde\gamma,\underline{\tilde\nu})$. 
Note that $\mathcal{G}$ is isomorphic to $\tilde{\mathcal{M}}\times SO(n)$,
since any framed unit speed geodesic is uniquely determined by the initial conditions
$(\tilde\gamma(0),\tilde\gamma'(0),\underline{\tilde\nu}(0))$.

\begin{definition}[Notion of convergence]\label{def:new-convergence}
  Let $h_k>0$ with $\lim_{k\rightarrow\infty}h_k=0$, let $\gamma:(-L,L)\rightarrow\mathcal{M}$ be a geodesic
  and let $\tilde{u}_k$ be a sequence of maps in $W^{1,2}(\Omega_{h_k}^*;\tilde{\mathcal{M}})$ with $\Omega_{h_k}^*=\psi_{\gamma,\underline\nu}(\Omega_{h_k})$. 
  Let $(\tilde\gamma,\underline{\tilde\nu},w,y,Z,\beta)\in\mathcal{X}^{\mathcal{G}}$.
  We say that $u_k$ converges to the tuple
  \begin{align*}
    (\tilde\gamma,\underline{\tilde\nu},w,y,Z,\beta),
  \end{align*}
  if   there exists a $k_0$ such that for $k \ge k_0$  the following assertions hold:
  \begin{enumerate}
    \item there exist Lipschitz maps $\tilde{v}_k:\Omega_{h_k}^*\rightarrow\tilde{\mathcal{M}}$ such that
      \begin{equation}\label{eq:def-conditions-lipschitz-approx}
        \begin{split}
          &\ell\coloneqq\sup_k\Lip\tilde{v}_k<\infty, \\
          \sup_k\frac{1}{h_k^4}\frac{1}{\mu(\Omega_{h_k}^*)}&\mu(\{x\in\Omega_{h_k}^*:\tilde{v}_k(x)\neq \tilde u_k(x)\})<\infty,
        \end{split}
      \end{equation}
      where $\mu$ is the volume measure on $\tilde{\mathcal{M}}$,
    \item there exist geodesics $\tilde\gamma_k:(-L,L)\rightarrow\tilde{\mathcal{M}}$ such that
      \begin{equation}\label{eq:def-convergence-estimate-dist-from-geod}
        \sup_k\sup_{x\in\Omega_{h_k}}\frac{1}{h_k}\dist(\tilde\gamma_k(x_1),\tilde{v}_k(x))<\infty
      \end{equation}
      and $\tilde{\gamma}_k\rightarrow\tilde\gamma$ uniformly as $k\rightarrow\infty$,
    \item there exist parallel orthonormal frames $\underline{\tilde\nu}_k$ of the normal bundle of $\tilde\gamma_k$
      that converge to $\underline{\tilde\nu}$ with the following property.
      Let $\overline{v}_k$ denote the lift of $\tilde{v}_k$ in the sense of Lemma~\ref{lem:existence-lift_new} 
      and let $v_k=\overline{v}_k\circ I_{h_k}$ with $I_{h_k}(x_1,x')=(x_1,h_k x')$. 
      Then the maps $v_k$ 
      satisfy  \eqref{eq:muller-mora-matrix-estimates} 
      and Properties~\ref{it:mora-n_convergence_w} to~\ref{it:convergence_beta} of Theorem~\ref{thm:muller-mora-3d}
      with limits $(w,y,Z,\beta)$. 
  \end{enumerate}
  We denote this convergence by $u_k\rightarrow(\tilde\gamma,\underline{\tilde\nu},w,y,Z,\beta)$.
\end{definition}

In view of the discussion of  non-uniqueness after Theorem~\ref{thm:muller-mora-3d}
we define an equivalence relation on the space $\mathcal{X}^{\mathcal{G}}$ as follows.

\begin{definition}\label{def:equiv-relation}
  We say that $(\hat\gamma,\underline{\hat\nu},\hat w,\hat Z,\hat y,\hat\beta)  
  \sim(\check\gamma,\underline{\check\nu},\check w,\check Z,\check y,\check\beta)$ if
  \begin{equation} ( \hat \gamma, \hat{\underline \nu}) = (\check \gamma, \check{\underline \nu})
  \end{equation}
and 
  if there exist $J:(-L,L)\rightarrow\mathbb{R}^n$ and $B:(-L,L)\rightarrow\mathbb{R}^{n\times n}_{\mathrm{skew}}$ such that 
  \begin{align}
    J_1&=0, \\ 
    Be_1&=\partial_{x_1}J, \\
    \partial_{x_1}B(x_1)
    &=-\overline{\tilde{\mathcal R}}(J(x_1),e_1),\label{eq:equiv_rel_cond_dx1_B}
  \end{align}
  and, for $j,l\geq 2$,
  \begin{eqnarray}
\hat Z_{jl} &=& \check Z_{jl} + B_{jl}     \label{eq:equiv_def_trafo_Z}  \\
\hat y &=& \check y + J'  \label{eq:equiv_def_trafo_y} \\
\partial_{x_1} \hat w(x_1) +\tilde{ \mathcal  T}_{11}(x_1, \hat y(x_1))  
       &=&  \partial_{x_1} \check w(x_1)  + \tilde{ \mathcal T}_{11}(x_1, \check y(x_1)) \nonumber   \\
 & & + \frac{1}{2} (\hat A^2 - \check A^2)_{11}(x_1)    \label{eq:equiv_def_trafo_w} \\
 \partial_{x_j} \hat \beta_1(x_1, x') 
+  2 \tilde{\mathcal T}_{1j}(x_1, \hat y(x_1) + x' )   &= & \partial_{x_j} \check \beta_1(x) + 2 \tilde{\mathcal T}_{1j}(x_1, \check y(x_1) + x') 
\nonumber \\
                                                       & & e_j \cdot \partial_{x_1}( \check Z  - \hat Z)(x_1)  x'   + (\hat A^2 -  \check A^2)_{1j}(x_1)
 \label{eq:equiv_def_trafo_beta1} \\
(\sym \nabla' \hat \beta')_{jl}(x_1,x') +\tilde{ \mathcal T}_{jl}(x_1, \hat y(x_1) + x' ) 
 &=&(\sym  \nabla'  \hat \beta')_{jl}(x_1, x')  +  \tilde{ \mathcal T}_{jl}(x_1, \check y(x_1) + x') \nonumber  \\
& &  +  \frac{1}{2} (\hat A^2 -  \check A^2)_{jl}(x_1),
  \label{eq:equiv_def_trafo_beta_prime} 
  \end{eqnarray}
  where 
  $\nabla'=(\partial_{x_2},\dots,\partial_{x_n}),\,\hat\beta'=(\hat\beta_2,\dots,\hat\beta_n),\,J'=(J_2,\dots,J_n)$ and 
  \begin{align*}
    \hat A=\begin{pmatrix}
    0 & -\partial_{x_1}(\hat y)^T \\ 
    \partial_{x_1}\hat y & \hat Z
  \end{pmatrix},\qquad 
      \check A=\begin{pmatrix}
      0 & -\partial_{x_1}(\check y)^T \\ 
      \partial_{x_1}\check y & \check Z
  \end{pmatrix}.
  \end{align*}
  Here we used the abbreviation  $\tilde {\mathcal T} := \mathcal T^{\tilde{\mathcal R}}_{\tilde\gamma,\underline{\tilde\nu}}$. Moreover $\overline{\tilde{\mathcal R}} := \psi_{\tilde \gamma, \underline{ \tilde \nu}}^*
  \tilde{ \mathcal R}$ where $ \tilde{ \mathcal R}$ is the curvature tensor of $\tilde M$. Equivalently,
  $\overline{\tilde{\mathcal R}}$ is the curvature tensor of the pullback metric 
  $\psi_{\tilde \gamma, \underline{ \tilde \nu}}^* \tilde g$.
  \end{definition}
  
  If the underlying framed geodesic is clear from the context we sometimes also write
  $(\hat w,\hat Z,\hat y,\hat\beta)  
  \sim(\check w,\check Z,\check y,\check\beta)$ to express the equivalence relation.

 Note that for $\hat \beta \in \mathcal B$, the map $\hat \beta: \Omega \to \mathbb R^n$  is uniquely determined
  by $\nabla'  \hat  \beta_1$ and $\sym \nabla' \hat{\beta'}$.  Indeed,  $\hat \beta_1$  is uniquely determined up to 
  the addition of a function of $x_1$ and the condition $\int_{B_1(0)}\hat  \beta_1(x) \, dx' =0$  in the definition
  of $\mathcal B$ determines
  $\beta_1$ uniquely.  Similarly,  by Korn's  inequality, $\hat{\beta'}$ is determined up to the addition of  a map $c(x_1) +  M(x_1) x'$  where $M: \mathbb R^{n-1} \to \mathbb R^{n-1}$  is linear and skew-symmetric. The conditions
  $\int_{B_1(0)}  \hat \beta_j(x) \, dx' =0$  for $j \ge 2$ and $\int_{B_1(0)}  x_i  \hat \beta_j(x) - x_j  \hat \beta_i(x) \, dx' =0$   for $i,j \ge 2$ thus determine $\hat{\beta'}$ uniquely.

\begin{remark}
  It follows from~\eqref{eq:equiv_rel_cond_dx1_B}, the map $J$ is a Jacobi field (with respect to the pullback metric
  $\psi_{\tilde \gamma, \underline{ \tilde \nu}}^* \tilde g$),  i.e., 
  $J$ corresponds to an infinitesimal (normal) variation of the geodesic $\hat\gamma$. 
  Similarly,  $B$ corresponds to an infinitesimal variation of the orthonormal frame $(\hat\gamma',\hat\nu_k,\dots,\hat\nu_n)$ of $\hat\gamma$.
\end{remark}

A key observation is that the  limit  $(\tilde\gamma,\underline{\tilde\nu},w,y,Z,\beta)$
defined in \eqref{def:new-convergence} is unique up to the equivalence relation $\sim$. 
More precisely, the following result holds.

\begin{proposition} \label{pr:unique_limit}
Suppose that there exist Lipschitz maps $\hat{\tilde v}$ and $\check{\tilde v}$
from $\Omega^*_{h_k} = \psi(\Omega_{h_k})$ to $\tilde{\mathcal{M}}$ with the following properties:
\begin{equation}  \label{eq:unique_lip_restated} \sup_{k} \max(\Lip \hat{\tilde v}_k, \Lip \check{\tilde v}_k) \le l ,
\end{equation}
\begin{equation}  \label{eq:unique_small_set_restated}
 \lim_{k \to \infty}\frac{1}{h_k^4} \frac{\mu  \{x \in \Omega_{h_k}^* : 
\hat{\tilde v}_k(x) \ne \check{\tilde v}_k(x) \} }{\mu (\Omega_{h_k})}= 0,
\end{equation}
where $\mu$ is the volume measure on $\mathcal M$.

Assume further  that there  exist  framed  unit speed geodesics $(\hat \gamma_k, \underline{\hat \nu})$ and
$(\check \gamma_k, \underline{\check \nu})$ such that 
\begin{equation}
\sup_k   \frac1{h_k} \sup_{x \in \Omega_{h_k}}   \dist(\hat{ \tilde v}_k \circ \psi(x), \hat \gamma_k(x_1))
+  \frac1{h_k}  \sup_{x \in \Omega_{h_k}}   \dist(\check{ \tilde v}_k \circ \psi(x), \check \gamma_k(x_1)) < \infty
\end{equation}
 and lifts  $\hat v_k: \Omega \to \mathbb R^n$ and $\check  v_k:  \Omega \to \mathbb R^n$ such that
$$ \hat \psi_k \circ \hat v_k = \hat{\tilde v}_k \circ \psi \circ I_{h_k}, \qquad
\check \psi_k \circ  \check v_k = \check{\tilde v}_k \circ \psi \circ I_{h_k}$$
where $\hat \psi_k$ and $\check \psi_k$ are the Fermi coordinate maps for the framed geodesics 
 $(\hat \gamma_k, \underline{\hat \nu})$ and
$(\check \gamma_k, \underline{\check \nu})$, respectively,  and where $I_{h_k}(x_1, x') = (x_1, h_k x')$. 
Assume that $\hat v_k$ and $\check  v_k$ satisfy Properties~\ref{it:mora-n_convergence_w}
to~\ref{it:convergence_beta}   of Theorem~\ref{thm:muller-mora-3d} and let 
$(\hat \gamma, \underline{\hat \nu}, \hat y, \hat w, \hat Z, \hat \beta)$ and 
$(\check \gamma, \underline{\check \nu}, \check y, \check w, \check Z, \check  \beta)$ 
denote the corresponding limits defined in  Definition~\ref{def:new-convergence}.
Then 
\begin{align*}
  (\hat \gamma, \underline{\hat \nu} )= (\check \gamma, \underline{\check \nu})
\quad \text{and} \quad 
 (\hat w,\hat y, \hat Z, \hat \beta) \sim  (\check w, \check y, \check Z, \check  \beta).
\end{align*}
\end{proposition}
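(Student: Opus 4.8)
\emph{Strategy.} The plan is to compare the two lifts through the transition map between the two Fermi coordinate systems. For $k$ large, Lemma~\ref{lem:existence-lift_new} shows that $\hat\psi_k$ and $\check\psi_k$ are diffeomorphisms from a fixed cylindrical neighbourhood of the axis onto tubular neighbourhoods of $\hat\gamma_k((-L,L))$ and $\check\gamma_k((-L,L))$, so $\Psi_k\coloneqq\hat\psi_k^{-1}\circ\check\psi_k$ is a well defined smooth map near $\{(t,0):|t|<L\}$. On the set $G_k\coloneqq\{x\in\Omega:\hat{\tilde v}_k(\psi(x))=\check{\tilde v}_k(\psi(x))\}$ the defining relations for the lifts give $\hat v_k=\Psi_k\circ\check v_k$. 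By \eqref{eq:unique_small_set_restated} the complement $\Omega\setminus G_k$ has measure $o(h_k^4\,\mu(\Omega))$, and since $\hat{\tilde v}_k,\check{\tilde v}_k$ are uniformly Lipschitz, Lemma~\ref{le:lifting_maps_new} gives a uniform Lipschitz bound for $\hat v_k$ and $\check v_k$; hence the contribution of the exceptional set to each of the (slice-wise, differently rescaled) integrals defining $w,y,Z,\beta$ in Theorem~\ref{thm:muller-mora-3d} is negligible in the limit, and it suffices to work on $G_k$.

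\emph{The framed geodesics agree; the Jacobi data.} From \eqref{eq:def-convergence-estimate-dist-from-geod} and Fubini, for a.e.\ $x_1$ the slice $\{x_1\}\times B_{h_k}(0)$ meets $G_k$, and at a common point the images of $\hat{\tilde v}_k$ and $\check{\tilde v}_k$ lie within $Ch_k$ of $\hat\gamma_k(x_1)$ and $\check\gamma_k(x_1)$; hence $\dist(\hat\gamma_k(x_1),\check\gamma_k(x_1))\le 2Ch_k$ for a.e.\ $x_1$, and by continuity for all $x_1$, so $\hat\gamma=\check\gamma$. Since both lifts satisfy $d_{h_k}v_k\to\Id$, writing $\hat v_k=\Psi_k\circ\check v_k$ and evaluating near the axis forces $D\Psi_k(x_1,0)\to\Id$, so the limiting frames coincide, $\underline{\hat\nu}=\underline{\check\nu}$, and $(\hat\gamma,\underline{\hat\nu})=(\check\gamma,\underline{\check\nu})$. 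Next set $(\sigma_k(x_1),\xi_k(x_1))\coloneqq\Psi_k(x_1,0)=\hat\psi_k^{-1}(\check\gamma_k(x_1))$ and let $O_k(x_1)$ be the orthogonal part of $D\Psi_k(x_1,0)$. Since $\check\gamma_k$ is a unit speed geodesic carrying a parallel normal frame, the curve $x_1\mapsto(\sigma_k,\xi_k)$ together with the moving frame $O_k$ solves, in the pullback metric $\hat\psi_k^*\tilde g$, the ODE system characterising a framed unit speed geodesic; linearising it about the Fermi axis with its standard frame---whose linearisation is the Jacobi equation, the relevant Christoffel symbols being given to the needed order by Lemma~\ref{lem:metric-taylor-expansion}---and using $\xi_k=O(h_k)$, $\sigma_k-x_1=O(h_k^2)$, one gets that $J_k'\coloneqq\xi_k/h_k$ (with $(J_k)_1\coloneqq0$) and $B_k\coloneqq(O_k-\Id)/h_k$ converge to a pair $(J,B)$ satisfying $J_1=0$, $Be_1=\partial_{x_1}J$ and $\partial_{x_1}B=-\overline{\tilde{\mathcal R}}(J,e_1)$, i.e.\ exactly \eqref{eq:equiv_rel_cond_dx1_B}.

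\emph{Matching the limit objects.} Expand $\Psi_k$ to first order in $h_k$ and to second order in the transverse variable:
\begin{equation*}
  \Psi_k(x_1,z)=\binom{x_1+O(h_k^2)}{h_k\,J_k'(x_1)}+\Bigl(\Id+h_k\,\widehat A_k(x_1)\Bigr)\binom{0}{z}+\mathcal Q_k(x_1,z)+\text{(higher order)},
\end{equation*}
where $\widehat A_k\to\widehat A$ has the block structure of \eqref{eq:def-A} built from $\partial_{x_1}J'$ and $B'$, and the quadratic-in-$z$ remainder $\mathcal Q_k$ is, by Lemma~\ref{lem:metric-taylor-expansion} applied to $\hat\psi_k^*\tilde g$ and $\check\psi_k^*\tilde g$, exactly the difference of the corresponding $\mathcal T^{\tilde{\mathcal R}}$-terms. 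Inserting $\hat v_k=\Psi_k\circ\check v_k$ together with the Mora--Müller expansion of $\check v_k$ into the integral formulas of Theorem~\ref{thm:muller-mora-3d} for $\hat w_k,\hat y_k,\hat Z_k,\hat\beta_k$, expanding, and passing to the limit (errors on $\Omega\setminus G_k$ being negligible by the first paragraph, and the transverse argument of the $\mathcal T$-terms being $\check y(x_1)+x'$ because $\check v_k(x)\approx(x_1,h_k(\check y_k(x_1)+x'))$), one reads off precisely the relations \eqref{eq:equiv_def_trafo_Z}--\eqref{eq:equiv_def_trafo_beta_prime} with this $(J,B)$; hence $(\hat w,\hat y,\hat Z,\hat\beta)\sim(\check w,\check y,\check Z,\check\beta)$.

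\emph{Main obstacle.} The bulk of the work is the last step: one must expand $\Psi_k$ simultaneously in $h_k$ and in the transverse variable to the correct order, cleanly separate the first order rigid part (governed by the Jacobi data $(J,B)$) from the second order curvature correction (governed by $\overline{\tilde{\mathcal R}}$ via Lemma~\ref{lem:metric-taylor-expansion}), and then match the resulting expressions term by term against the five transformation laws \eqref{eq:equiv_def_trafo_Z}--\eqref{eq:equiv_def_trafo_beta_prime}. A secondary technical point is the uniform control of the exceptional set $\Omega\setminus G_k$ inside each of the differently normalised slice integrals, for which the Lipschitz bound from Lemma~\ref{le:lifting_maps_new} is essential.
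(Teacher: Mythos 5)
Your overall route is essentially the paper's: the paper also compares the two lifts through the transition map between the two Fermi charts (your $\Psi_k$ is the map $\Phi_k$ of Proposition~\ref{pr:change_of_geodesic}, with $\check\psi_k=\hat\psi_k\circ\Phi_k$), identifies the limits $J$ and $B$ of the rescaled axis displacement and frame rotation and shows they satisfy the Jacobi and frame-transport equations via the Christoffel expansion of Lemma~\ref{lem:metric-taylor-expansion}, extracts the $\mathcal T^{\tilde{\mathcal R}}$-difference from the pullback of the metric, and matches the result against \eqref{eq:equiv_def_trafo_Z}--\eqref{eq:equiv_def_trafo_beta_prime} (Propositions~\ref{pr:change_of_geodesic_convergence} and~\ref{pr:trafo_formulae_simpler}). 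The main organizational difference is that the paper first reduces to a single map $\tilde v_k$ (Proposition~\ref{pr:unique_same_geodesic}, using the largest-inscribed-ball argument to upgrade the small-measure information to a sup bound of order $h_k^{1+3/n}$), whereas you carry the exceptional set directly through the rescaled slice integrals; your cruder estimate suffices only because the hypothesis \eqref{eq:unique_small_set_restated} here is the vanishing-limit version rather than a mere $\sup$ bound.

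There is, however, a genuine gap in your ``Jacobi data'' step. The inputs you feed into the linearized ODE system --- boundedness and convergence of $(O_k-\Id)/h_k$ and of $\xi_k/h_k$, and in particular the claim $\sigma_k-x_1=O(h_k^2)$ --- do not follow from the geometric closeness of the two geodesics. Two framed unit speed geodesics at distance $Ch_k$ may carry normal frames differing by a fixed rotation in $SO(n-1)$ and parametrizations shifted by $O(h_k)$ along the axis; then $D\Psi_k(x_1,0)$ does not tend to $\Id$ and $(O_k-\Id)/h_k$ is unbounded. What rules this out is precisely the assumption that \emph{both} lifts satisfy the properties of Theorem~\ref{thm:muller-mora-3d}: from $d_{h_k}\hat v_k=(D\Psi_k\circ\check v_k)\,d_{h_k}\check v_k$ and Property~\ref{item:muller-mora-conv-A} for both sequences one obtains that $h_k^{-1}(D\Psi_k\circ\check v_k-\Id)$ is bounded in $L^2$ and converges to $\hat A-\check A$, which together with $|D^2\Psi_k|=O(h_k)$ gives $B_k\to B=\hat A-\check A$; likewise $J$ is identified as $(0,\hat y-\check y)$, which is how $J_1=0$ is obtained (a pointwise bound $\sigma_k-x_1=O(h_k^2)$ is neither available nor needed). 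This is exactly why Proposition~\ref{pr:change_of_geodesic_convergence} assumes the boundedness of $B_k$ and Proposition~\ref{pr:trafo_formulae_simpler} verifies it from the convergence of the lifts. A smaller inaccuracy: the $\mathcal T$-difference does not appear as a literal quadratic term of $\Psi_k$ itself but in $(D\Psi_k^TD\Psi_k-\Id)/h_k^2$ via $\check g_k=\Psi_k^*\hat g_k$, cf.\ \eqref{eq:Phik_convergence_metric_tensor}; since only symmetrized quantities enter the transformation laws this is the object to expand, and the term-by-term matching you defer as the ``main obstacle'' is indeed the bulk of the paper's Section~\ref{se:unique_limit}.
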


\begin{proof} The proof uses standard calculations, but is a bit lengthy. Thus we  postpone it to 
Section~\ref{se:unique_limit} below.
\end{proof}

\begin{definition}\label{def:energy}
Let $(\gamma,\underline\nu)$ and $(\tilde\gamma,\underline{\tilde\nu})$ be framed, unit speed geodesics
in $\mathcal{M}$ and $\tilde{\mathcal{M}}$, respectively.
For $(\tilde\gamma,\underline{\tilde\nu},w,y,Z,\beta)\in\mathcal{X}^{\mathcal{G}}$ define 
\begin{equation}\label{eq:def-I}
  \mathcal{I}^{\tilde\gamma,\underline{\tilde\nu}}(w,y,Z,\beta)
  \coloneqq\fint_{-L}^L\fint_{B_1(0)}|\sym G(x)-\mathcal{T}(x)|^2\,dx,
\end{equation}
where 
\begin{equation}\label{eq:def-G-T}
  \begin{split}
    G(x_1,x')
    &\coloneqq\left(\begin{array}{c|c|c|c}
      \left(
        \partial_{x_1}w 
      \right)e_1+(\partial_{x_1}A)\begin{pmatrix}
        0 \\ x'
      \end{pmatrix}
      & \partial_{x_2}\beta & \dots & \partial_{x_n}\beta
    \end{array}\right)
    -\frac{1}{2}A^2  \, ,  \\
    \mathcal{T}(x_1,x')&\coloneqq
    \left(\mathcal{T}^{\mathcal{R}}_{\gamma,\underline\nu}(x_1,x')
    -\mathcal{T}^{\tilde{\mathcal{R}}}_{\tilde\gamma,\underline{\tilde\nu}}(x_1,y(x_1)+x')\right)  \, , 
  \end{split}
\end{equation}
and
\begin{align}  \label{eq:def-I_def-A}
  A&=\begin{pmatrix}
    0 & -\partial_{x_1}y^T \\ 
    \partial_{x_1}y & Z
  \end{pmatrix}.
\end{align} 
\end{definition}

\begin{proposition}\label{prop:invariance_energy}
  Suppose that $(\hat\gamma,\underline{\hat\nu},\hat w,\hat y,\hat Z,\hat\beta)
  \sim(\check\gamma,\underline{\check\nu},\check w,\check y,\check Z,\check\beta)$
  Then
  \begin{align*}
    \mathcal{I}^{\hat\gamma,\underline{\hat\nu}}(\hat w,\hat y,\hat Z,\hat\beta)
    &=\mathcal{I}^{\check\gamma,\underline{\check\nu}}(\check w,\check y,\check Z,\check\beta).
  \end{align*}
\end{proposition}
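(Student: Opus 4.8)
The plan is to reduce the claim to a pointwise identity on $\Omega=(-L,L)\times B_1(0)$ and to verify it entry by entry using the relations \eqref{eq:equiv_rel_cond_dx1_B}--\eqref{eq:equiv_def_trafo_beta_prime} defining $\sim$. Since $(\hat\gamma,\underline{\hat\nu})=(\check\gamma,\underline{\check\nu})$, the domain contribution $\mathcal{T}^{\mathcal{R}}_{\gamma,\underline\nu}(x_1,x')$ in \eqref{eq:def-G-T} is literally unchanged (it does not involve $y$), so the only change in $\mathcal{T}$ comes from the target term: $\hat{\mathcal{T}}-\check{\mathcal{T}}=-\bigl(\tilde{\mathcal T}(x_1,\hat y(x_1)+x')-\tilde{\mathcal T}(x_1,\check y(x_1)+x')\bigr)$, with $\tilde{\mathcal T}=\mathcal T^{\tilde{\mathcal R}}_{\tilde\gamma,\underline{\tilde\nu}}$. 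It therefore suffices to prove
\begin{equation*}
  \sym\hat G(x)-\hat{\mathcal T}(x)=\sym\check G(x)-\check{\mathcal T}(x)\qquad\text{for a.e.\ }x\in\Omega,
\end{equation*}
since integrating over $\Omega$ then gives $\mathcal{I}^{\hat\gamma,\underline{\hat\nu}}(\hat w,\hat y,\hat Z,\hat\beta)=\mathcal{I}^{\check\gamma,\underline{\check\nu}}(\check w,\check y,\check Z,\check\beta)$.

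The first step is to observe that $J_1=0$, $Be_1=\partial_{x_1}J$, skew-symmetry of $B$, and the relations \eqref{eq:equiv_def_trafo_Z}, \eqref{eq:equiv_def_trafo_y} yield $\hat A=\check A+B$ with $A$ as in \eqref{eq:def-I_def-A}; hence $\partial_{x_1}\hat A-\partial_{x_1}\check A=\partial_{x_1}B$ and $\tfrac12\hat A^2-\tfrac12\check A^2=\tfrac12(\check A B+B\check A+B^2)$. Next I would compute $\sym\hat G-\sym\check G$ blockwise from \eqref{eq:def-G-T}, using that $A^2$ is symmetric: in the $(i,j)$ entries with $i,j\ge2$ it equals $(\sym\nabla'(\hat\beta'-\check\beta'))_{ij}-\tfrac12(\hat A^2-\check A^2)_{ij}$; in the $(1,j)$ and $(j,1)$ entries (for $j\ge2$) it equals $\tfrac12\bigl(\partial_{x_j}(\hat\beta_1-\check\beta_1)+e_j\cdot\partial_{x_1}(\hat Z-\check Z)\,x'\bigr)-\tfrac12(\hat A^2-\check A^2)_{1j}$; and in the $(1,1)$ entry it equals $\partial_{x_1}(\hat w-\check w)-\bigl[(\partial_{x_1}B)\binom{0}{x'}\bigr]_1-\tfrac12(\hat A^2-\check A^2)_{11}$. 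Comparing these with the corresponding differences of $\hat{\mathcal T}-\check{\mathcal T}$, the relations \eqref{eq:equiv_def_trafo_beta_prime} and \eqref{eq:equiv_def_trafo_beta1} match the $(i,j)$ and $(1,j)/(j,1)$ entries exactly: the explicit $(\hat A^2-\check A^2)$ and $\partial_{x_1}(\check Z-\hat Z)$ terms appearing there cancel against the ones produced above, while the $\tilde{\mathcal T}$ terms account for the change in $\mathcal T$. For the $(1,1)$ entry one separates the part independent of $x'$ from the part linear in $x'$ (there is no $x'$-quadratic part on the $\sym G$ side, and the $x'$-quadratic part of $\mathcal T_{11}$ is unchanged since the pure $x'\otimes x'$ coefficients of $\mathcal T^{\tilde{\mathcal R}}$ in \eqref{eq:def-T} do not depend on $y$): the $x'$-independent part is precisely relation \eqref{eq:equiv_def_trafo_w}, while the $x'$-linear part amounts to the identity $\partial_{x_1}^2(\hat y-\check y)=\partial_{x_1}^2J=(\partial_{x_1}B)e_1=-\overline{\tilde{\mathcal R}}(J,e_1)e_1$, which is exactly the Jacobi equation \eqref{eq:equiv_rel_cond_dx1_B}; after writing out the coefficients $-\tfrac12\overline{\tilde{\mathcal R}}_{1k1l}$ of $\mathcal T^{\tilde{\mathcal R}}_{11}$ from \eqref{eq:def-T} this cancels the contribution $-\sum_i\partial_{x_1}^2(\hat y_i-\check y_i)\,x'_i$.

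The main obstacle is the curvature bookkeeping in the $(1,1)$ block and the compatibility of the non-symmetric matrix $\mathcal T^{\tilde{\mathcal R}}$ with the symmetric tensor $\sym G$. For the former, one must pass from the endomorphism-valued equation \eqref{eq:equiv_rel_cond_dx1_B} (with the sign and normalisation conventions \eqref{eq:riemann_curvature_operator}, \eqref{eq:riemann_0_4}) to components and identify $\partial_{x_1}^2J_l$ with $-\sum_k\overline{\tilde{\mathcal R}}_{1k1l}J_k$; this uses the antisymmetry of $\overline{\tilde{\mathcal R}}$ in each index pair together with the pair symmetry $\overline{\tilde{\mathcal R}}_{abcd}=\overline{\tilde{\mathcal R}}_{cdab}$ to rewrite $\overline{\tilde{\mathcal R}}_{i1j1}=\overline{\tilde{\mathcal R}}_{1i1j}$. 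For the latter, since $(\sym G)_{1j}=(\sym G)_{j1}$ one needs $\hat{\mathcal T}_{1j}-\check{\mathcal T}_{1j}=\hat{\mathcal T}_{j1}-\check{\mathcal T}_{j1}$, and similarly for $i,j\ge2$; this holds because $\mathcal T^{\mathcal A}_{1j}(x_1,\xi)$ and $\mathcal T^{\mathcal A}_{j1}(x_1,\xi)$ are both contractions of $\overline{\mathcal A}$ against the symmetric tensor $\xi\otimes\xi$ and coincide after use of the first Bianchi identity and the symmetries of a curvature-type tensor. Once these symmetry reductions are in place, the remaining term matching is routine, the pointwise identity $\sym\hat G-\hat{\mathcal T}=\sym\check G-\check{\mathcal T}$ follows, and integration yields the proposition.
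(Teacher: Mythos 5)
Your proposal is correct and follows essentially the same route as the paper's proof: reduce to the pointwise identity $\sym\hat G-\hat{\mathcal T}=\sym\check G-\check{\mathcal T}$ and verify it blockwise from \eqref{eq:equiv_def_trafo_Z}--\eqref{eq:equiv_def_trafo_beta_prime}, with the $(1,1)$ entry handled via the quadratic structure of $\tilde{\mathcal T}_{11}$ and the Jacobi equation \eqref{eq:equiv_rel_cond_dx1_B}. Only cosmetic quibbles: the sign in front of $\bigl[(\partial_{x_1}B)\binom{0}{x'}\bigr]_1$ in your $(1,1)$ block is off (though your final cancellation uses the correct term $-\sum_i\partial_{x_1}^2(\hat y_i-\check y_i)x'_i$), and the symmetry $\mathcal T^{\mathcal A}_{1j}=\mathcal T^{\mathcal A}_{j1}$ needs only the pair symmetry $\overline{\mathcal A}_{abcd}=\overline{\mathcal A}_{cdab}$, not the first Bianchi identity.
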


\begin{proof} By definition of the the equivalence relation,  we have $(\hat \gamma, \hat{\underline \nu})
=(\check \gamma, \check{\underline \nu})$ and in the following we denote this framed geodesic by
$(\tilde\gamma, \tilde{\underline \nu})$.

Let 
$$ \hat E :=  \sym \hat G -   \mathcal{T}^{\mathcal{R}}_{\gamma,\underline\nu}(x_1,x')
    +\mathcal{T}^{\tilde{\mathcal{R}}}_{\tilde\gamma,\underline{\tilde\nu}}(x_1,\hat y(x_1)+x'),
 $$
$$  \hat E :=  \sym \check G -   \mathcal{T}^{\mathcal{R}}_{\gamma,\underline\nu}(x_1,x')
    +\mathcal{T}^{\tilde{\mathcal{R}}}_{\tilde\gamma,\underline{\tilde\nu}}(x_1,\check y(x_1)+x')  \, , $$
    where $\hat G$  is computed using $\hat w$, $\hat A$ and $\hat \beta$ and similarly for $\check G$.
Set $E := \hat E - \check E$.
It  follows directly from    \eqref{eq:equiv_def_trafo_beta_prime} that
$$ E_{jl} = 0 \quad\text{if $j,l \ge 2$.}$$
For  $j \ge 2$
we get
\begin{eqnarray*}
2 E_{1j}  &=& 
\sum_{k=2}^n  \partial_{x_1}  \hat A_{jk} \, x_k + \partial_{x_j} \hat \beta_1 +  2 \tilde{ \mathcal{T}}_{1j}
(x_1, x' + \hat y(x_1)) - (\hat A)^2_{1j}    \\ 
 &- & 
 \left( \sum_{k=2}^n   \partial_{x_1}  \check A_{jk}  \,  x_k + \partial_{x_j} \check \beta_1 +  2 \tilde{ \mathcal{T}}_{1j}
(x_1, x' + \check y(x_1)) - (\check A)^2_{1j}   \right) \\
&\underset{ \eqref{eq:equiv_def_trafo_beta1} }{=} &   \sum_{k=2}^n  \partial_{x_1}  \hat A_{jk} \, x_k -  \sum_{k=2}^n   \partial_{x_1}  \check A_{jk} \, x_k   
- \sum_{k=2}^n \partial_{x_1}  (\check Z - \hat Z)_{jk}  \, x_k \\
&= & 0,
\end{eqnarray*}
since  $\hat A_{jk}=\hat Z_{jk}$  for $j, k \ge 2$  and similarly for $\check A $  and $\check Z$.

Finally we have
\begin{eqnarray*} 
E_{11} &=&  
\partial_{x_1} \hat w +
 \sum_{l=2}^n  \partial_{x_1} \hat A_{1l}  \, x_l - \frac12 (\hat A)^2 _{11}  +
\tilde {\mathcal T}_{11}(x_1, x' + \hat y(x_1))\\
&-&  \left(  \partial_{x_1} \check w + \sum_{l=2}^n  \partial_{x_1} \check A_{1l} \,  x_l - \frac12 (\check A)^2 _{11} 
+ \tilde {\mathcal T}_{11}(x_1, x' + \check y(x_1))  \right)\\
&= &       \partial_{x_1} \hat w -
 \sum_{l=2}^n  \partial_{x_1}^2 \hat y_l \, x_l
- \frac12 (\hat A)^2 _{11}  +
\tilde {\mathcal T}_{11}(x_1, x' + \hat y(x_1))\\
&-&  \left(  \partial_{x_1} \check w - \sum_{l=2}^n  \partial_{x_1}^2 \check y_l \, x_l - \frac12 (\check A)^2 _{11}  +
\tilde {\mathcal T}_{11}(x_1, x' + \check y(x_1))  \right) \\
&\underset{ \eqref{eq:equiv_def_trafo_y},  \eqref{eq:equiv_def_trafo_w}}{=}& 
- \tilde{\mathcal T}_{11}(x_1, \hat y(x_1)) -  \sum_{l=2}^n \partial_{x_1}^2 J_l(x_1)  \, x_l  
+ \tilde {\mathcal T}_{11}(x_1, x' + \hat y(x_1))  \\
&-&  \left(  - \tilde{\mathcal T}_{11}(x_1, \check y(x_1)) 
+ \tilde {\mathcal T}_{11}(x_1, x' + \check y(x_1))   \right)   \, .
\end{eqnarray*}
Since $v \mapsto \tilde{\mathcal T}(x_1, v)$  is quadratic, the derivative $\tilde{\mathcal T}$ with respect to
$v$ is bilinear.  Moreover $\hat y - \check y =J'$ and $J_1 =0$ so we get
\begin{eqnarray}
E_{11} =  D\tilde{\mathcal T}_{11}(x_1)(x', J(x_1)) -    \sum_{l=2}^n \partial_{x_1}^2 J_l(x_1)  x_l.
\end{eqnarray}
It follows from the definition   \eqref{eq:def-T}
 of  $\tilde{\mathcal T}$ and the fact that $J_1 =0$ that 
 $$ D\tilde{\mathcal T}_{11} (x_1)(x', J(x_1)) = -  \sum_{k,l=2}^n \overline{ \tilde{\mathcal R}}(x_1)_{1k1l} J_k(x_1) x_l.
$$
 On the other hand  $J$  is  a Jacobi field and  \eqref{eq:equiv_rel_cond_dx1_B} gives
 $$ \partial_{x_1}^2  J  = \partial_{x_1}  B e_1  =  -  \overline{ \tilde{\mathcal R}}(x_1)(J(x_1), e_1) e_1.$$
 Thus, for $l \ge 2$
 $$ \partial_{x_1}^2  J_l =  -  \overline{ \tilde{\mathcal R}}(x_1)(e_l, e_1, J, e_1) =
 -  \sum_{k,l=2}^n \overline{ \tilde{\mathcal R}}_{l1k1}(x_1) J_k(x_1) 
 = - \sum_{k,l=2}^n \overline{ \tilde{\mathcal R}}_{1k1l}(x_1) J_k(x_1)
 $$
 and  thus
 $$  \sum_{l=2}^n \partial_{x_1}^2  J_l   \, x_l = - \sum_{k,l=2}^n \overline{ \tilde{\mathcal R}}_{1k1l}(x_1) J_k(x_1) x_l.$$
 Hence  $E_{11} =0$. 
 \end{proof}

In the following we usually understand the convergence in Definition~\ref{def:new-convergence} as a convergence in $\mathcal{X}^{\mathcal{G}}/\sim$. 

\begin{theorem}\label{thm:main-thm}
  Let $(\mathcal{M},g)$ and $(\tilde{\mathcal{M}},\tilde{g})$ be 
  complete oriented, $n$-dimensional Riemannian manifolds with $n\geq 2$.
  Then the following statements hold:
  \begin{enumerate}[(i)]
    \item\label{item:main-thm-1} Compactness:
      Assume in addition that $\tilde{\mathcal{M}}$ is compact.
      Let $h_k\rightarrow 0,\, \tilde u_k\in W^{1,2}(\Omega_{h_k}^*,\tilde{\mathcal{M}})$ 
      and suppose that $\limsup_{k\rightarrow\infty}\frac{1}{h_k^4}E_{h_k}(\tilde u_k)<\infty$.
      Then there exists a subsequence $h_{k_j}\rightarrow 0$ such that 
      \begin{align*}
        u_{k_j}\rightarrow(\tilde{\gamma},\underline{\tilde\nu},w,y,Z,\beta)
      \end{align*}
      in the sense of Definition~\ref{def:new-convergence}.
    \item\label{item:main-thm-2} $\Gamma$-liminf inequality:
      If $h_k\rightarrow 0$ and $\tilde u_k\rightarrow(\tilde\gamma,\underline{\tilde\nu},w,y,Z,\beta)$ 
      in the sense of Definition~\ref{def:new-convergence}, then
      \begin{align*}
        \liminf_{k\rightarrow\infty}\frac{1}{h_k^4}E_{h_k}(\tilde u_k)\geq\mathcal{I}^{\tilde\gamma,\underline{\tilde\nu}}(w,y,Z,\beta).
      \end{align*}
    \item\label{item:main-thm-3}
      Recovery sequence: Given a tuple $(\tilde\gamma,\underline{\tilde\nu},w,y,Z,\beta)$ and a sequence $h_k\rightarrow 0$, there exists a sequence 
      $\tilde u_k$ such that $\tilde u_k\rightarrow(\tilde\gamma,\underline{\tilde\nu},w,y,Z,\beta)$ in the sense of Definition~\ref{def:new-convergence} and 
      \begin{equation}\label{eq:thm-recovery-seq-convergence}
        \limsup_{k\rightarrow\infty}\frac{1}{h_k^4}E_{h_k}(\tilde u_k)
        \leq\mathcal{I}^{\tilde\gamma,\underline{\tilde\nu}}(w,y,Z,\beta).
      \end{equation}
  \end{enumerate}
\end{theorem}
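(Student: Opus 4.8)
The plan is to reduce all three statements to the Euclidean $\Gamma$-convergence theory of Mora--Müller (Theorem~\ref{thm:muller-mora-3d}) by transferring everything into Fermi coordinates, where the energy takes the form~\eqref{eq:rewrite-energy-SOn}, and then carefully tracking the curvature corrections recorded in Lemma~\ref{lem:metric-taylor-expansion}.

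\textbf{(i) Compactness.} First I would pass from $\tilde u_k$ to a Lipschitz approximation $\tilde v_k$: a truncation argument (as in \cite{Kroemer2025}, and ultimately as in the standard theory for rigidity estimates) produces $\tilde v_k$ with $\Lip\tilde v_k$ bounded, agreeing with $\tilde u_k$ outside a set of relative volume $\mathcal O(h_k^4)$, and with comparable scaled energy. Using compactness of $\tilde{\mathcal M}$ and the energy bound, the image of $\tilde v_k$ is within $\mathcal O(h_k)$ of a single point in a first approximation; a center-of-mass argument over slices $\{x_1\}\times B_{h_k}$ (Definition~\ref{def:center-of-mass}, Corollary~\ref{cor:estimate-center-of-mass}) produces a curve $\tilde\gamma_k$ with~\eqref{eq:def-convergence-estimate-dist-from-geod}, and the small-energy bound forces $\tilde\gamma_k$ to be, up to $\mathcal O(h_k)$, a unit-speed geodesic; extracting a convergent subsequence of initial data in $\tilde{\mathcal M}\times SO(n)$ (which is compact) gives the limiting framed geodesic $(\tilde\gamma,\underline{\tilde\nu})$. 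With Lemma~\ref{lem:existence-lift_new} and Lemma~\ref{le:lifting_maps_new} I then lift $\tilde v_k$ to $\overline v_k:\Omega\to\mathbb R^n$. The identity~\eqref{eq:rewrite-energy-SOn} together with~\eqref{eq:sqrt-metric-taylor} shows that $\operatorname{dist}^2(d\overline v_k,SO(n))$ differs from the energy density by $\mathcal O(h_k^2)|d\overline v_k|^2+\mathcal O(h_k^6)$, so $\overline v_k$ satisfies the hypothesis~\eqref{eq:lem-3d-uniform-energy-bound_corollary} of Corollary~\ref{eq:cor_mora-m}. Since $\tilde v_k\circ\psi$ is $\mathcal O(h_k)$-close to $\tilde\gamma_k(x_1)$ and center-of-mass normalization gives~\eqref{eq:reduced_mora_m_vk}--\eqref{eq:reduced_mora_m_wk}, Corollary~\ref{eq:cor_mora-m} applies (after composing with a fixed $R_k\in SO(n-1)$, absorbed into the frame), and Theorem~\ref{thm:muller-mora-3d} yields the subsequence and the limits $(w,y,Z,\beta)$.

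\textbf{(ii) $\Gamma$-liminf.} Given a converging sequence, I work with the Lipschitz approximations $\tilde v_k$ and their lifts $v_k$ on $\Omega$. Since $\{\tilde v_k\neq\tilde u_k\}$ has relative volume $\mathcal O(h_k^4)$ and the energy densities are locally integrable (Lipschitz bound), replacing $\tilde u_k$ by $\tilde v_k$ changes $h_k^{-4}E_{h_k}$ by $o(1)$. Using~\eqref{eq:rewrite-energy-SOn} and the linearization $\operatorname{dist}(M,SO(n))=|\operatorname{sym}(M-\Id)|+\mathcal O(|M-\Id|^2)$, I expand $(\overline{\tilde g}\circ v_k)^{1/2}\,dv_k\,\overline g^{-1/2}$ around $\Id$. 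By Theorem~\ref{thm:muller-mora-3d}, $(d_{h_k}v_k-\Id)/h_k\to A$ in $L^2$ and $\operatorname{sym}(Q_k-\Id)/h_k^2\to A^2/2$, while the metric factors contribute, via~\eqref{eq:sqrt-metric-taylor}, exactly the shifted term $\mathcal T^{\tilde{\mathcal R}}_{\tilde\gamma,\underline{\tilde\nu}}(x_1,y(x_1)+x')-\mathcal T^{\mathcal R}_{\gamma,\underline\nu}(x_1,x')$ (the shift by $y$ arises because $v_k\approx(x_1,h_k x'+h_k y_k)$ enters the target metric). Collecting the $h_k^2$-order term of the symmetrized expression reproduces $\operatorname{sym}G-\mathcal T$; weak $L^2$ lower semicontinuity of the convex integrand and the weak convergences $\partial_{x_i}\beta_k\rightharpoonup\partial_{x_i}\beta$, $\partial_{x_1}A_k\rightharpoonup\partial_{x_1}A$ then give $\liminf h_k^{-4}E_{h_k}(\tilde u_k)\ge\mathcal I^{\tilde\gamma,\underline{\tilde\nu}}(w,y,Z,\beta)$.

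\textbf{(iii) Recovery sequence.} Here I build an explicit ansatz in Fermi coordinates: define $\overline v_k(x_1,x')$ via the expansion displayed in the introduction, namely the identity plus $h_k$ times $(0,y)$, plus $h_k^2$ times $(w-\partial_{x_1}y\cdot x',Zx')$, plus a corrector $h_k^3\beta_k$, where $\beta_k$ is a smooth truncation/mollification of $\beta$ chosen so that $\partial_{x_i}\beta_k\to\partial_{x_i}\beta$ strongly in $L^2$ (first for smooth $\beta$, then by density in $\mathcal B$, then by a diagonal argument). One also needs an additional $\mathcal O(h_k^3)$ correction of the first component (and $\mathcal O(h_k^2)$ in $x'$) to cancel the skew part of the first-order term and to make $d_{h_k}v_k$ land in $SO(n)$ to the required order — this is the same algebraic bookkeeping as in \cite{Mora2003}, now with the extra $\mathcal T$-terms. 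Setting $\tilde u_k\coloneqq\tilde\psi_{\tilde\gamma,\underline{\tilde\nu}}\circ\overline v_k\circ I_{h_k}^{-1}$ and expanding~\eqref{eq:rewrite-energy-SOn} as in (ii), the $h_k^2$-order term of the symmetrized strain is exactly $\operatorname{sym}G-\mathcal T$, the error terms are $\mathcal O(h_k^3)$ pointwise (using the $C|x'|^3$ remainders in~\eqref{eq:taylor-metric}, valid since $|h_k x'|\le h_k$), so $h_k^{-4}E_{h_k}(\tilde u_k)\to\mathcal I^{\tilde\gamma,\underline{\tilde\nu}}(w,y,Z,\beta)$, and by construction $\tilde u_k\to(\tilde\gamma,\underline{\tilde\nu},w,y,Z,\beta)$ in the sense of Definition~\ref{def:new-convergence}.

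\textbf{Main obstacle.} The delicate point is the compactness step (i): unlike the Euclidean case, one cannot a priori express $\tilde u_k$ in Fermi coordinates around any geodesic, so one must first construct $\tilde\gamma_k$ (via center of mass over slices) and show it is geodesic up to the right order, control self-intersections uniformly (Lemma~\ref{lem:existence-lift_new}), and verify that after this nonlinear change of variables the hypotheses of Corollary~\ref{eq:cor_mora-m} — in particular the normalization estimates~\eqref{eq:reduced_mora_m_vk}--\eqref{eq:reduced_mora_m_wk} — actually hold with the correct powers of $h_k$. Keeping all error terms at order $o(h_k^4)$ in the energy throughout this nonlinear reduction, and verifying that the limit is independent of the approximation and the geodesic (which is where Proposition~\ref{pr:unique_limit} enters), is the technical heart of the argument.
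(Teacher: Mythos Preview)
Your overall strategy matches the paper's: reduce to the Euclidean rod theory via Fermi coordinates, with the liminf established in Lemma~\ref{lem:muller-mora-lem-2.3} and the recovery sequence in Proposition~\ref{prop:limsup-ineq} essentially as you describe. Parts (ii) and (iii) are correct outlines of what the paper does.

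The one place where your sketch is substantively thinner than the paper is the heart of (i), namely the construction of the approximating geodesic $\tilde\gamma_k$. You write that a center-of-mass argument over slices $\{x_1\}\times B_{h_k}$ produces a curve and that ``the small-energy bound forces $\tilde\gamma_k$ to be, up to $\mathcal O(h_k)$, a unit-speed geodesic''. This last step is not automatic and is where the paper invests real work. Concretely, the paper (Step~2) takes centers of mass over \emph{chunks} $\Omega_i=(t_i,t_{i+1})\times B_{h_k}(0)$ of thickness $\sim h_k$ (not slices), connects them by a \emph{piecewise} geodesic $\gamma^{pw}$, then applies the Friesecke--James--M\"uller rigidity estimate on triples $A_i=\Omega_{i-1}\cup\Omega_i\cup\Omega_{i+1}$ to obtain rotations $R_i$ and to bound the jumps $|\gamma^{pw\prime}_+(s_i)-\gamma^{pw\prime}_-(s_i)|$ by $C(E_{k,i}^{1/2}+h_k^2)$; summing and using Cauchy--Schwarz gives total variation of the tangent of order $h_k$. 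A separate Gronwall-type lemma (Lemma~\ref{lem:approx-pw-geodesics}) then shows that such a piecewise geodesic stays within $C h_k$ of a true unit-speed geodesic. Working with chunks rather than slices is what makes the rigidity estimate applicable with uniform constants, and the passage from ``small jumps'' to ``close to a geodesic'' genuinely requires the ODE comparison argument in Fermi coordinates.

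A smaller point: in (ii) you say replacing $\tilde u_k$ by $\tilde v_k$ changes $h_k^{-4}E_{h_k}$ by $o(1)$. The paper does not argue this way (there is no a priori bound on the energy density of $\tilde u_k$ on the bad set); instead it keeps $\tilde u_k$ in the energy, restricts to the complement of the bad set $F_k$ where $\tilde u_k=\tilde v_k$, and uses weak lower semicontinuity of $\int|\sym G_k|^2$ together with $\mathcal L^n(F_k)\to 0$.
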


Combining properties~\eqref{item:main-thm-2} and~\eqref{item:main-thm-3} 
with Proposition~\ref{prop:limsup-ineq} and Lemma~\ref{lem:muller-mora-lem-2.3},
we obtain that the functionals $\frac{1}{h_k^4}E_{h_k}$ are $\Gamma$-convergent to the functional $\mathcal{I}^{\tilde\gamma,\underline{\tilde\nu}}$
with respect to the convergence in Definition~\ref{def:new-convergence} modulo the equivalence relation of Definition~\ref{def:equiv-relation}.
Regarding the compactness statement,
we give an explicit construction of the paths $\tilde\gamma_k$ in step 2 of the proof of Theorem~\ref{thm:main-thm}.
First, we prove the $\Gamma$-liminf inequality. 

\begin{lemma}\label{lem:muller-mora-lem-2.3}
  Assume that $\tilde u_k\rightarrow(\tilde\gamma,\underline{\tilde\nu},w,y,Z,\beta)$ in the sense of Definition~\ref{def:new-convergence}. 
  Let $\tilde{v}_k,\overline{v}_k$ and $v_k$ be as in Definition~\ref{def:new-convergence},
  let $Q_k:(-L,L)\rightarrow SO(n)$ be such that~\eqref{eq:muller-mora-matrix-estimates} holds
  and define $G_k:\Omega\rightarrow\mathbb{R}^{n\times n}$ by
  \begin{align*}
    G_k\coloneqq\frac{Q_k^T \, (\overline{\tilde g}_k^{1/2}\circ v_k) \, \,d_{h_k} v_k \,  \, (\overline{g}^{-1/2}
    \circ I_{h_k})  -\Id}{h_k^2}.
  \end{align*}
 where $I_{h_k}(x_1, x') = (x_1, h_k x')$.
  Then there exists a subsequence (not relabeled) and $G\in L^2(\Omega;\mathbb{R}^{n\times n})$ such that 
  $G_k \rightharpoonup G$ weakly in $L^2$ and 
  \begin{equation}\label{eq:lem-sym-G}
    \begin{split}
      \sym G(x)
      &=\partial_{x_1}w(x_1) e_1\otimes e_1
      -\frac{1}{2}A^2(x_1)
      +\sym\left(\begin{array}{c|c|c|c}
        \partial_{x_1}A(x_1)\begin{pmatrix}
        0 \\ x'
      \end{pmatrix}
      & \partial_{x_2}\beta & \dots & \partial_{x_n}\beta
    \end{array}\right) \\
      &\quad -\mathcal{T}^{\mathcal{R}}_{\gamma,\underline\nu}(x_1,x')
      +\mathcal{T}^{\tilde{\mathcal{R}}}_{\tilde\gamma,\underline{\tilde\nu}}(x_1,y(x_1)+x').
    \end{split}
  \end{equation}
  Moreover,
  \begin{equation}\label{eq:liminf-ineq}
    \liminf_{k\rightarrow\infty}\frac{1}{h_k^4}\fint_{\Omega_{h_k}^*}
    \dist{}^2(d\tilde u_k,SO(g,\tilde{g}))\,d\Vol_g
    \geq\fint_\Omega|\sym G|^2\,dx.
  \end{equation}
\end{lemma}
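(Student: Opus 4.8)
The plan is to extract the weak $L^2$-limit $G$ of the rescaled strains $G_k$, identify its symmetric part via the explicit Taylor expansions of the metrics, and then deduce the energy lower bound by a standard weak lower semicontinuity argument combined with the fact that modifying $\tilde u_k$ on a set of volume fraction $\mathcal O(h_k^4)$ does not affect the liminf. First, I would establish that $G_k$ is bounded in $L^2(\Omega)$. This follows from the energy bound: by the identity~\eqref{eq:rewrite-energy-SOn} the energy controls $\fint_{\Omega_{h_k}}\dist^2((\overline{\tilde g}_k\circ v_k)^{1/2}\,dv_k\,\overline g^{-1/2}, SO(n))\sqrt{\det\overline g}$, which after the rescaling $I_{h_k}$ and using $\sqrt{\det\overline g}\sim 1$ is comparable to $\fint_\Omega\dist^2((\overline{\tilde g}_k^{1/2}\circ v_k)\,d_{h_k}v_k\,(\overline g^{-1/2}\circ I_{h_k}), SO(n))$; since left-multiplication by $Q_k^T\in SO(n)$ is an isometry and $\dist(M, SO(n)) = \dist(Q_k^T M, SO(n))$, the quantity $h_k^{-4}$ times this integral is bounded by assumption, and $|G_k|^2 = h_k^{-4}|Q_k^T(\cdots)d_{h_k}v_k(\cdots) - \Id|^2$. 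One must be slightly careful that $\dist(M,SO(n))$ controls $|M-\Id|$ only when $M$ is close to $SO(n)$; near the identity $\dist(M, SO(n))\ge c|\sym(M-\Id)| + \text{(quadratic in the skew part)}$, but in fact the standard argument (as in~\cite{Mora2003}) is that $\|d_{h_k}v_k - Q_k\|_{L^2}\le Ch_k^2$ from~\eqref{eq:muller-mora-matrix-estimates} already forces $G_k$ to be bounded in $L^2$, so we may pass to a weakly convergent subsequence $G_k\rightharpoonup G$.

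Next, I would compute $\sym G$ by expanding $Q_k^T(\overline{\tilde g}_k^{1/2}\circ v_k)\,d_{h_k}v_k\,(\overline g^{-1/2}\circ I_{h_k})$ to second order in $h_k$. Write $d_{h_k}v_k = \Id + h_k A + h_k^2(\text{correction})$ using Property~\ref{item:muller-mora-conv-A} of Theorem~\ref{thm:muller-mora-3d}, where the $h_k^2$-correction is built from $\partial_{x_1}w$, $\partial_{x_1}A\binom{0}{x'}$ and $\partial_{x_i}\beta$ — this is exactly the Euclidean computation from~\cite{Mora2003}, which produces the term $\partial_{x_1}w\,e_1\otimes e_1 - \tfrac12 A^2 + \sym(\partial_{x_1}A\binom{0}{x'}\mid \partial_{x_2}\beta\mid\cdots\mid\partial_{x_n}\beta)$ after also using $\sym(Q_k-\Id)/h_k^2\to A^2/2$ from Property~\ref{item:muller-mora-prop-5}. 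The new Riemannian contributions come from $\overline g^{-1/2}\circ I_{h_k}$ and $\overline{\tilde g}_k^{1/2}\circ v_k$: by~\eqref{eq:sqrt-metric-inverse-taylor} we have $\overline g^{-1/2}(x_1, h_k x') = \Id - \mathcal T^{\mathcal R}_{\gamma,\underline\nu}(x_1, h_k x') + \mathcal O(h_k^3|x'|^3) = \Id - h_k^2\mathcal T^{\mathcal R}_{\gamma,\underline\nu}(x_1,x') + \mathcal O(h_k^3)$, using that $\mathcal T^{\mathcal R}$ is quadratic in its second argument; similarly by~\eqref{eq:sqrt-metric-taylor}, $\overline{\tilde g}_k^{1/2}\circ v_k = \Id + h_k^2\mathcal T^{\tilde{\mathcal R}}_{\tilde\gamma,\underline{\tilde\nu}}(x_1, y(x_1)+x') + o(h_k^2)$, where the argument $y(x_1)+x'$ arises because the transverse part of $v_k$ is $h_k x' + h_k y_k(x_1) + \mathcal O(h_k^2)$ and $y_k\to y$, and the framed geodesic $(\tilde\gamma_k,\underline{\tilde\nu}_k)$ converges to $(\tilde\gamma,\underline{\tilde\nu})$ so $\overline{\tilde g}_k \to \psi_{\tilde\gamma,\underline{\tilde\nu}}^*\tilde g$. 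Multiplying out, the $h_k^2$-coefficient of the full product is the Euclidean term plus $\mathcal T^{\tilde{\mathcal R}}_{\tilde\gamma,\underline{\tilde\nu}}(x_1, y(x_1)+x') - \mathcal T^{\mathcal R}_{\gamma,\underline\nu}(x_1,x')$; taking symmetric parts (the conjugation by $Q_k^T$ drops out of $\sym$ in the limit since $Q_k\to\Id$) and passing to the weak limit yields~\eqref{eq:lem-sym-G}. Here I would need the convergences in~\eqref{eq:muller-mora-matrix-estimates} and Theorem~\ref{thm:muller-mora-3d} to justify that the error terms are $o(1)$ in $L^2$ and that the $h_k^2$-order terms converge weakly; the integrability of $\partial_{x_i}\beta$ and the fact that $\mathcal T$ is a smooth bounded function of $x'$ (for $x'$ in the bounded set $B_1(0)$ and $x_1\in(-L,L)$, with curvature coefficients bounded on the compact $K\supset\tilde\gamma((-L,L))$) are what make this work.

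Finally, for~\eqref{eq:liminf-ineq}: since $\tilde v_k = \tilde u_k$ outside a set $S_k\subset\Omega^*_{h_k}$ with $\mu(S_k)/\mu(\Omega^*_{h_k}) = \mathcal O(h_k^4)$, and the integrand $\dist^2(\cdot, SO(g,\tilde g))\ge 0$, we have $\fint_{\Omega^*_{h_k}}\dist^2(d\tilde u_k, SO(g,\tilde g)) \ge \fint_{\Omega^*_{h_k}\setminus S_k}\dist^2(d\tilde v_k, SO(g,\tilde g)) = \fint_{\Omega^*_{h_k}}\dist^2(d\tilde v_k, SO(g,\tilde g)) - \fint_{S_k}\dist^2(d\tilde v_k, SO(g,\tilde g))$; the last term is bounded by $(\ell^2+C)\mu(S_k)/\mu(\Omega^*_{h_k}) = \mathcal O(h_k^4)$ since $\tilde v_k$ is uniformly Lipschitz, so it contributes $o(1)$ after dividing by $h_k^4$. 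Thus it suffices to bound $h_k^{-4}\fint_{\Omega^*_{h_k}}\dist^2(d\tilde v_k, SO(g,\tilde g))$ from below. Rewriting via~\eqref{eq:rewrite-energy-SOn} and the definition of $G_k$ (using that $\sqrt{\det\overline g}\to 1$ uniformly, $\dist(\cdot, SO(n))$ is invariant under left multiplication by $Q_k^T$, and that $\dist(M, SO(n))\ge|\sym(M-\Id)| - C|M-\Id|^2$ near $\Id$ — equivalently $\dist(\Id + h_k^2 G_k, SO(n)) = h_k^2|\sym G_k| + \mathcal O(h_k^4|G_k|^2)$), we get $h_k^{-4}\fint_{\Omega_{h_k}}\dist^2(\cdots)\sqrt{\det\overline g} \ge \fint_\Omega|\sym G_k|^2 - o(1)$ (the error control uses $\|d_{h_k}v_k - Q_k\|_{L^2}\le Ch_k^2$ to handle the region where $G_k$ is large, exactly as in~\cite{Mora2003}). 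By weak lower semicontinuity of the $L^2$-norm and $\sym G_k\rightharpoonup\sym G$, we conclude $\liminf_k h_k^{-4}\fint_{\Omega^*_{h_k}}\dist^2(d\tilde u_k, SO(g,\tilde g))\,d\Vol_g \ge \fint_\Omega|\sym G|^2\,dx$. The main obstacle I anticipate is the bookkeeping in the Taylor expansion of the matrix product $Q_k^T(\overline{\tilde g}_k^{1/2}\circ v_k)\,d_{h_k}v_k\,(\overline g^{-1/2}\circ I_{h_k})$ to order $h_k^2$ while simultaneously controlling that all cross-terms and higher-order remainders vanish in $L^2(\Omega)$ — in particular tracking that the argument of $\mathcal T^{\tilde{\mathcal R}}$ is $y(x_1)+x'$ and not merely $x'$, which is where the translation $y$ of the geodesic genuinely enters and distinguishes the Riemannian result from the Euclidean one.
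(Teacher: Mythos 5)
Your identification of $\sym G$ follows essentially the same route as the paper: expand the product $Q_k^T(\overline{\tilde g}_k^{1/2}\circ v_k)\,d_{h_k}v_k\,(\overline g^{-1/2}\circ I_{h_k})$, use the Taylor expansions \eqref{eq:sqrt-metric-taylor}--\eqref{eq:sqrt-metric-inverse-taylor} (the domain term converging strongly, the target term evaluated at $y(x_1)+x'$ because $h_k^{-1}v_k'\to(x_1,y(x_1)+x')$), and identify the symmetric part of the remaining Euclidean term via Properties 4--6 of Theorem~\ref{thm:muller-mora-3d} together with the bounds \eqref{eq:muller-mora-matrix-estimates}. That part is fine, modulo being explicit about the topologies ($L^q$ strong for the metric factors, weak $L^2$ / $W^{-1,2}$ for the strain term), which you acknowledge.

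There is, however, a genuine error in your liminf step. You replace $\tilde u_k$ by $\tilde v_k$ by writing the energy as $\fint_{\Omega^*_{h_k}}\dist^2(d\tilde v_k,\cdot)-\fint_{S_k}\dist^2(d\tilde v_k,\cdot)$ and claim the subtracted term is $o(1)$ after dividing by $h_k^4$. It is not: Definition~\ref{def:new-convergence} (see \eqref{eq:def-conditions-lipschitz-approx}) only gives $\mu(S_k)/\mu(\Omega^*_{h_k})=\mathcal O(h_k^4)$, and on $S_k$ the integrand is merely bounded (uniform Lipschitz constant), so the subtracted term is $\mathcal O(h_k^4)$, i.e.\ $\mathcal O(1)$ after division by $h_k^4$. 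Hence your reduction ``it suffices to bound $h_k^{-4}\fint_{\Omega^*_{h_k}}\dist^2(d\tilde v_k,\cdot)$ from below'' fails: carried out literally, your chain of inequalities only yields the desired lower bound minus a constant. The correct (and standard) way, which is what the paper does, is never to compare full energies: bound the energy of $\tilde u_k$ from below by the integral over the good set only, i.e.\ over $\Omega\setminus F_k$ with $F_k$ containing both $\{\tilde u_k\neq\tilde v_k\}$ and $\{|h_k^2G_k|>h_k\}$, where $d\tilde u_k=d\tilde v_k$ and the Taylor expansion of $\dist(\Id+h_k^2G_k,SO(n))$ is valid; then use that $\mathcal L^n(F_k)\to 0$ so that $\chi_{\Omega\setminus F_k}\sym G_k\rightharpoonup\sym G$ weakly in $L^2$, and conclude by weak lower semicontinuity. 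With this modification (truncate, do not subtract), your argument coincides with the paper's proof.
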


\begin{proof}  
The reasoning is similar to  the proof of  Lemma 2.3 in~\cite{Mora2003},  but we need into account the extra
contributions coming  from the deviation of the pullback metric in Fermi coordinates from the Euclidean metric.

  We have
    \begin{equation}\label{eq:rewriting_Gk}
      G_k=
      \frac{Q_k^T  \, (\overline{\tilde g}_k^{1/2}  \circ v_k - \Id)  \,  d_{h_k}v_k \,  (\overline{g}^{-1/2}\circ I_{h_k})}{h_k^2} 
      + \frac{Q_k^T d_{h_k}v_k-\Id}{h_k^2}
      + \frac{ Q_k^T  \,  d_{h_k} v_k  \, (\overline{g}^{-1/2} \circ I_{h_k} - \Id ) }{h_k^2}.
    \end{equation}
  The maps $\tilde v_k$ and hence $\overline v_k$ are uniformly Lipschitz and therefore $d_{h_k}  v_k$ 
  is bounded in $L^\infty$. 
   Moreover, the condition \eqref{eq:def-convergence-estimate-dist-from-geod} implies that
   \begin{equation} \label{eq:lsc_bound_vk}  \sup_{x \in \Omega}  |v_k(x) - x_1|  \le  C h_k.
   \end{equation}

  It follows from the Taylor expansion of the metric in Fermi coordinates
  that 
  \begin{align*}
    \frac{\overline{g}^{-1/2} \circ I_{h_k} - \Id  }{h_k^2} \to - \mathcal T^{\mathcal R}_{\gamma,\underline\nu}
    \quad \text{uniformly in $\Omega$,}
  \end{align*}
  see  \eqref{eq:sqrt-metric-inverse-taylor}.
 By~\eqref{eq:muller-mora-matrix-estimates} we have $Q_k^T  \, d_{h_k} v_k \to \Id$ in $L^2(\Omega)$ and thus
 \begin{equation} \frac{ Q_k^T   \,d_{h_k} {v}_k  \,(\overline{g}^{-1/2} \circ I_{h_k} - \Id ) }{h_k^2} \to - \mathcal T^{\mathcal R}_{\gamma,\underline\nu}
 \quad \text{in $L^2(\Omega)$.}
 \end{equation}
 
 To treat the term involving $\overline{\tilde g}$,  we define
$ \imath(y)(x)\coloneqq (x_1, y(x_1) + x')$, where $y : (-L,L) \to \mathbb R^{n-1}$  is the limit of $\frac{1}{h_k} \int_{B_1(0)}
v'_k$. It follows from  Property~\eqref{it:convergence_beta} in Theorem~\ref{thm:muller-mora-3d} that
$h_k^{-1} v'_k \to \imath(y)$  in $L^2(\Omega)$. 
In view of~\eqref{eq:lsc_bound_vk} we also get convergence in $L^p(\Omega)$, for all $p < \infty$.
Using again the bound~\eqref{eq:lsc_bound_vk} and the Taylor expansion \eqref{eq:sqrt-metric-taylor} of the 
Fermi coordinates we get
\begin{align*}
  \frac{ (\overline{\tilde g}_k^{1/2}  \circ {v}_k - \Id) }{h_k^2}   
\to \mathcal T^{\tilde{\mathcal R}}_{\tilde\gamma,\underline{\tilde\nu}} \circ \imath(y) 
 \quad \text{in $L^q(\Omega)$, for all $q < \infty$.}
\end{align*}
 It follows that 
\begin{equation}
    \frac{Q_k^T  \, (\overline{\tilde g}_k^{1/2}  \circ v_k - \Id)  \,  d_{h_k}v_k \,  \overline{g}^{-1/2}\circ I_{h_k}}{h_k^2}
    \to  \mathcal T^{\tilde{\mathcal R}}_{\tilde\gamma,\underline{\tilde\nu}} \circ \imath(y)   \quad \text{in $L^2(\Omega)$.}
    \end{equation}
  It remains to analyze the term $h_k^{-2} (Q_k^T d_{h_k} v_k - \Id)$.  
  By~\eqref{eq:muller-mora-matrix-estimates}, this term is bounded in $L^2(\Omega)$ and hence there exists a  weakly convergent 
  subsequence, not relabeled, such that
  \begin{equation}  \label{eq:lsc_weakto_H}
   \frac{Q_k^T d_{h_k}v_k-\Id}{h_k^2} \rightharpoonup  H \quad \text{in $L^2(\Omega)$.}
  \end{equation}
  Hence
  \begin{equation}
    G_k \rightharpoonup  H  +  \mathcal T^{\tilde{\mathcal R}}_{\tilde\gamma,\underline{\tilde\nu}} \circ \imath(y) 
    - \mathcal T^{\mathcal R}_{\gamma,\underline\nu}.
  \end{equation}
  The last two terms on the right and side take values in symmetric matrices. 
  To compute $\sym H$ we note that by Property~\ref{item:muller-mora-conv-A} and Property~\ref{item:muller-mora-prop-5} in Theorem~\ref{thm:muller-mora-3d}
  we have 
   \begin{align*}
    \frac{d_{h_k} v_k - \Id}{h_k} \to A \quad \text{in $L^2(\Omega)$ and} \quad 
    \sym\frac{Q_k^T - \Id}{h_k^2} \to \frac{A^2}{2} \quad \text{uniformly.} 
   \end{align*}
      From Property~\ref{it:convergence_beta} in Theorem~\ref{thm:muller-mora-3d} we get
   \begin{align*}
    \sym \frac{d_{h_k} v_k - \Id}{h_k^2} \to   \partial_{x_1}w(x_1) e_1\otimes e_1
+
   \sym\left(\begin{array}{c|c|c|c}
        \partial_{x_1}A(x_1)\begin{pmatrix}
        0 \\ x'
      \end{pmatrix}
      & \partial_{x_2}\beta & \dots & \partial_{x_n}\beta
    \end{array}\right) 
   \end{align*}
   strongly in $W^{-1,2}(\Omega)$. 
   Since 
   \begin{align*}
    Q_k^T d_{h_k} v_k - \Id  = (Q_k^T - \Id) + (d_{h_k} v_k - \Id) + (Q_k^T - \Id) (d_{h_k} v_k - \Id)
   \end{align*}
   and since $A$ is skew-symmetric and thus  $A^2$ is symmetric we get
   \begin{align*}
 \sym     H = \partial_{x_1}w(x_1) e_1\otimes e_1
 +  \sym\left(\begin{array}{c|c|c|c}
        \partial_{x_1}A(x_1)\begin{pmatrix}
        0 \\ x'
      \end{pmatrix}
      & \partial_{x_2}\beta & \dots & \partial_{x_n}\beta
    \end{array}\right)  - \frac{A^2}{2}.
   \end{align*}

 \medskip

  Finally we show~\eqref{eq:liminf-ineq}. We first choose  a subsequence $k_j$ such that
  \begin{align*} 
    & \,   \lim_{j\rightarrow\infty}\frac{1}{h_{k_j}^4}\fint_{\Omega^*_{h_{k_j}}}
    \dist^2(d\tilde u_{k_j},SO(g, \tilde g))\,d\Vol_g   \\
    = & \,  \liminf_{k\rightarrow\infty}\frac{1}{h_k^4}\fint_{\Omega^*_{h_k}}
    \dist^2(d\tilde u_k,SO(g, \tilde g))\,d\Vol_g.
  \end{align*}
  Then we apply the previous reasoning to the subsequence $k_j$. Thus 
  there exists a further subsequence such that $G_{k_{j_l}} \rightharpoonup G$
  as $l \to \infty$ where $\sym G$ is given by~\eqref{eq:lem-sym-G}. We need to show 
  \begin{align*}
    \lim_{l\rightarrow\infty}\frac{1}{    h_{k_{j_l}}^4}
    \fint_{\Omega^*_{h_{k_{j_l}}}}
    \dist^2(d\tilde u_{k_{j_l}},SO(g, \tilde g))\,d\Vol_g   \ge \fint_{\Omega} |\sym G|^2.
  \end{align*}
  This follows by a Taylor expansion away from the set where $h_k^2 G_k$ is not small
  (and the set where $\tilde u_k \ne \tilde v_k$).
  as in \cite{Friesecke2002, Mora2003}.
  For the convenience of the reader we include the details for the argument.

  To simply the notation we now write again $G_k$ instead of $G_{k_{j_l}}$. 
  Let 
  \begin{align*}
    F'_k = \{ x \in \Omega^*_h:   \tilde v_k(x) \ne \tilde u_k(x)\},  \quad  F''_k\coloneqq \{
    x \in \Omega : (\psi_{\gamma, \underline \nu} \circ I_{h_k})(x) \in F'_k \}.
  \end{align*}
  Then, for $x \notin F''_k$ we have, by~\eqref{eq:rewrite-energy-SOn}, 
  \begin{align}
    & \, \dist(d\tilde u_k, SO(g, \tilde g))\circ  \psi_{\gamma, \underline \nu} \circ I_{h_k} \nonumber  \\
    = & \, 
    \dist(d\tilde v_k, SO(g, \tilde g))\circ  \psi_{\gamma, \underline \nu} \circ I_{h_k}   \nonumber \\
    = & \,  \dist \big( \, (\overline{\tilde{g}}_k\circ v_k)^{1/2}) \,  d_{h_k} v_k \, (\overline{g}^{-1/2} \circ I_{h_k}), SO(n)\big)
    \nonumber \\
    = & \, \dist( \Id + h_k^2 G_k, SO(n)).  \label{eq:dist_du_k_and G_k}  
  \end{align}
  where $\overline g$ and $\overline{\tilde g}_k$ are the metrics of $\mathcal M$ and $\tilde{\mathcal M}$
  in the Fermi coordinates induced by $\psi_{\gamma, \underline \nu}$
  and $\psi_{\hat \gamma_k, \underline{\hat \nu}_k}$, respectively.
  Set
  \begin{align*}
    F_k = F''_k   \cup \{ |h_k^2 G_k|>h_k\}.
  \end{align*}
  Then~\eqref{eq:def-conditions-lipschitz-approx}
  and the boundedness of $G_k$ in $L^2$ imply that
  \begin{align*}
    \mathcal L^n(F_k) \le C h_k^4  + C h_k^2  \to 0 \quad \text{as $k \to \infty$}.
  \end{align*}
  From~\eqref{eq:dist_du_k_and G_k} and the change of variables formula we get
    \begin{align*}
    & \,    \liminf_{k\rightarrow\infty}\frac{1}{h_k^4}\fint_{\Omega^*_{h_k}}
    \dist^2(d\tilde u_k,SO(g, \tilde g))\,d\Vol_g   \nonumber   \\
    \ge & \, 
      \frac{1}{h_k^4}  \int_{\Omega \setminus F_k} \dist( \Id + h_k^2 G_k, SO(n)) \,
      \sqrt{  \det \overline{g} }\circ I_{h_k} \, dx    \, \,      \frac{1}{  \int_{\Omega} \sqrt{  \det \overline{g}} \circ I_{h_k} \,  dx}.   
  \end{align*}
  We have  $\sqrt{\det\overline{g}} \circ I_{h_k} \to 1$ uniformly in $\Omega$.
  Thus it suffices to show that
  \begin{align*}
 \liminf_{k \to \infty}  \frac{1}{h_k^4}  \int_{\Omega \setminus F_k} \dist( \Id + h_k^2 G_k, SO(n)) \, dx
 \ge \int_\Omega |\sym G|^2 \, dx.
  \end{align*}
  Now for $x \in \Omega \setminus  F_k$ we have by Taylor expansion, 
  \begin{align*}
    |\dist(\Id + h_k^2 G_k) - \sym G_k| \le C h_k |h_k^2 G_k|.
  \end{align*}
  Thus 
  \begin{align*}
    & \, \liminf_{k \to \infty}  \frac{1}{h_k^4}  \int_{\Omega \setminus F_k} \dist( \Id + h_k^2 G_k, SO(n)) \, dx  \\
    \ge & \,  \liminf_{k \to \infty}  \frac{1}{h_k^4}  \int_\Omega   |\chi_{\Omega\setminus F_k} \sym G_k|^2 \, dx 
    \ge \int_\Omega |\sym G|^2 \, dx.
  \end{align*}
  For the last inequality we used that $\sym G_k  \, \chi_{\Omega\setminus F_k}$ converges weakly in $L^2$ to 
  $\sym G$ (since $\mathcal L^n(F_k) \to 0$) and the lower semicontinuity of the $L^2$ norm under weak
  convergence in $L^2$.
  This concludes the proof.
\end{proof}

\begin{proposition}\label{prop:limsup-ineq}
  Let $(\tilde\gamma,\underline{\tilde\nu},w,y,Z,\beta)\in\mathcal{X}^{\mathcal{G}}$. 
  Then there exist sequences $h_k\downarrow 0$ and $\overline{v}_{k}\in W^{1,\infty}(\Omega;\mathbb{R}^n)$ such that
  properties 1-6 of Theorem~\ref{thm:muller-mora-3d} hold.
  Furthermore, the sequence
  $\tilde u_k(x)\coloneqq\tilde\psi_{\tilde{\gamma},\underline{\tilde\nu}}\circ\overline{v}_k\circ\psi_{\gamma,\underline\nu}^{-1}(x)$ satisfies
  \begin{equation}\label{eq:recovery-seq-convergence}
    \limsup_{k\rightarrow\infty}\frac{1}{h_k^4}E_{h_k}(\tilde u_k)
    \leq\mathcal{I}^{\tilde\gamma,\underline{\tilde\nu}}(w,y,Z,\beta).
  \end{equation}
\end{proposition}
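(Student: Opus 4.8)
\emph{Plan and the smooth case.} The plan is to establish the estimate --- in fact with equality in the limit --- for smooth data $(w,y,Z,\beta)$ by an explicit polynomial ansatz, and then to reduce the general case by approximation. Assume $w,y,Z$ smooth and $\beta\in\mathcal{B}\cap C^\infty(\overline\Omega)$, and for $h>0$ set
\[
  \overline v_h(x_1,x')\coloneqq\binom{x_1}{hx'}+h\binom{0}{y(x_1)}
  +h^2\binom{w(x_1)-\partial_{x_1}y(x_1)\cdot x'}{Z(x_1)x'}+h^3\beta(x_1,x').
\]
Differentiating gives $d_h\overline v_h=\Id+hA+h^2D_h+O(h^3)$ uniformly on $\Omega$, where $A$ is the skew matrix~\eqref{eq:def-I_def-A} and $D_h$ is the matrix whose first column is $(\partial_{x_1}w)e_1+(\partial_{x_1}A)\binom{0}{x'}$ and whose $j$-th column is $\partial_{x_j}\beta$ for $j\ge2$; thus $D_h$ is precisely the matrix entering~\eqref{eq:def-G-T}, so $G=D_h-\tfrac12A^2$. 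In particular $\dist(d_h\overline v_h,SO(n))=O(h^2)$, so~\eqref{eq:lem-3d-uniform-energy-bound} holds and Theorem~\ref{thm:muller-mora-3d} applies; since $d_h\overline v_h\to\Id$ one may take $\overline Q_h=\Id$, $c_h=0$, and a direct evaluation of the averaged quantities $w^h,y^h,z^h,\beta^h$ of Theorem~\ref{thm:muller-mora-3d} on this $\overline v_h$ --- using the two linear constraints defining $\mathcal{B}$ in~\eqref{eq:mora_define_B} --- shows that they are independent of $h$ and equal $w,y,Z,\beta$; hence Properties~\ref{it:mora-n_convergence_w}--\ref{it:convergence_beta} hold with these limits. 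Moreover $\overline v_h\in W^{1,\infty}(\Omega;\mathbb{R}^n)$ has Lipschitz constant $\le1+Ch$ ($C$ depending only on the $C^1,C^2$ norms of the data), its image lies in the domain of $\tilde\psi_{\tilde\gamma,\underline{\tilde\nu}}$ for $h$ small, and $\tilde u_h\coloneqq\tilde\psi_{\tilde\gamma,\underline{\tilde\nu}}\circ\overline v_h\circ(\psi_{\gamma,\underline\nu}\circ I_h)^{-1}$ converges to $(\tilde\gamma,\underline{\tilde\nu},w,y,Z,\beta)$ in the sense of Definition~\ref{def:new-convergence} (take $\tilde v_h=\tilde u_h$, $\tilde\gamma_h=\tilde\gamma$, $\underline{\tilde\nu}_h=\underline{\tilde\nu}$).

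\emph{The energy estimate.} By~\eqref{eq:rewrite-energy-SOn} and the change of variables $I_h$ one has $E_h(\tilde u_h)=\fint_\Omega\dist^2\big((\overline{\tilde g}\circ\overline v_h)^{1/2}(d_h\overline v_h)(\overline g^{-1/2}\circ I_h),SO(n)\big)\,d\mu_h$, where the volume density satisfies $\sqrt{\det\overline g}\circ I_h\to1$ uniformly. Since $\mathcal{T}^{\mathcal{R}}_{\gamma,\underline\nu}(x_1,\cdot)$ and $\mathcal{T}^{\tilde{\mathcal{R}}}_{\tilde\gamma,\underline{\tilde\nu}}(x_1,\cdot)$ are quadratic, \eqref{eq:sqrt-metric-taylor}--\eqref{eq:sqrt-metric-inverse-taylor} together with $(\overline v_h)_1=x_1+O(h^2)$ and $(\overline v_h)'=h(y(x_1)+x')+O(h^2)$ yield, uniformly on $\Omega$,
\[
  \overline g^{-1/2}\circ I_h=\Id-h^2\mathcal{T}^{\mathcal{R}}_{\gamma,\underline\nu}(x_1,x')+O(h^3),\qquad
  (\overline{\tilde g}\circ\overline v_h)^{1/2}=\Id+h^2\mathcal{T}^{\tilde{\mathcal{R}}}_{\tilde\gamma,\underline{\tilde\nu}}(x_1,y(x_1)+x')+O(h^3).
\]
Hence the matrix above equals $\Id+hA+h^2\hat B_h+O(h^3)$ with $\hat B_h=D_h+\mathcal{T}^{\tilde{\mathcal{R}}}_{\tilde\gamma,\underline{\tilde\nu}}(x_1,y+x')-\mathcal{T}^{\mathcal{R}}_{\gamma,\underline\nu}(x_1,x')$; writing $\Id+hA=e^{hA}(\Id-\tfrac12h^2A^2+O(h^3))$ and using $\dist(\Id+F,SO(n))=|\sym F|+O(|F|^2)$ for small $F$, I obtain, uniformly, $\dist^2(\cdot,SO(n))=h^4|\sym\hat B_h-\tfrac12A^2|^2+O(h^5)=h^4|\sym G-\mathcal{T}|^2+O(h^5)$, since $\sym\hat B_h-\tfrac12A^2=\sym G-\mathcal{T}$ by~\eqref{eq:def-G-T}. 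Therefore $h^{-4}E_h(\tilde u_h)=\fint_\Omega|\sym G-\mathcal{T}|^2\,d\mu_h+O(h)\to\fint_\Omega|\sym G-\mathcal{T}|^2\,dx=\mathcal{I}^{\tilde\gamma,\underline{\tilde\nu}}(w,y,Z,\beta)$, with equality, for every sequence $h=h_k\to0$. This step is Lemma~\ref{lem:muller-mora-lem-2.3} run in reverse, and is where the term $\mathcal{T}$ enters.

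\emph{General data, and the main obstacle.} For arbitrary $(w,y,Z,\beta)\in\mathcal{X}$ pick smooth $(w^m,y^m,Z^m,\beta^m)\in\mathcal{X}$ with $\beta^m\in\mathcal{B}$ converging to $(w,y,Z,\beta)$ in $\mathcal{X}$ (smooth functions are dense, the two constraints in~\eqref{eq:mora_define_B} being linear and continuous), and note that $\mathcal{I}^{\tilde\gamma,\underline{\tilde\nu}}$ is continuous for this convergence (it is quadratic in $\partial_{x_1}w$, $\partial_{x_1}A$, $\partial_{x_j}\beta$, $A^2$ and in $\mathcal{T}^{\tilde{\mathcal{R}}}_{\tilde\gamma,\underline{\tilde\nu}}(\cdot,y+\cdot)$, and $y^m\to y$ in $W^{2,2}\hookrightarrow C^1$). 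Running the smooth construction for each $m$ and choosing $h_m\downarrow0$ small enough that $h_m<1/m$ and all the $m$-dependent $O(\cdot)$ remainders above are $<1/m$, let $\tilde u_m$ be the resulting map at scale $h_m$ and $\overline v_m\in W^{1,\infty}(\Omega)$ the corresponding lift. Then $\overline v_m$ has Lipschitz constant $\le2$, Properties~\ref{it:mora-n_convergence_w}--\ref{it:convergence_beta} hold with limits $(w,y,Z,\beta)$ (since $A^m\to A$ in $L^2$, $\beta^m\to\beta$ in $\mathcal{B}$ and the remainders vanish), and $h_m^{-4}E_{h_m}(\tilde u_m)=\mathcal{I}^{\tilde\gamma,\underline{\tilde\nu}}(w^m,y^m,Z^m,\beta^m)+o(1)\to\mathcal{I}^{\tilde\gamma,\underline{\tilde\nu}}(w,y,Z,\beta)$, which is the claim. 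No step is deep; the two points requiring care are the uniform Taylor expansion of the energy density (the polar decomposition of $\dist(\cdot,SO(n))$ combined with Lemma~\ref{lem:metric-taylor-expansion}) and the bookkeeping in the diagonal argument --- keeping $\Lip\tilde u_m$ bounded while $h_m\to0$, which is precisely what the flexibility ``there exist sequences $h_k$'' in the statement makes possible.
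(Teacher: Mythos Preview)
Your proof is correct and follows essentially the same route as the paper: an explicit polynomial ansatz for smooth data, a second-order Taylor expansion of the energy density, and a diagonal argument for general data. The only cosmetic difference is in how the distance to $SO(n)$ is expanded: the paper computes $(d_hv_h)^Td_hv_h$, takes the square root, and invokes frame indifference, whereas you factor $\Id+hA=e^{hA}(\Id-\tfrac12h^2A^2+O(h^3))$ and pull out the rotation directly; the two computations are equivalent. You are also somewhat more explicit than the paper about how the metric corrections $\mathcal{T}^{\mathcal{R}}$ and $\mathcal{T}^{\tilde{\mathcal{R}}}$ enter via~\eqref{eq:sqrt-metric-taylor}--\eqref{eq:sqrt-metric-inverse-taylor}, a step the paper compresses into ``follows from~\eqref{eq:rewrite-energy-SOn}''.
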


\begin{proof}
  The proof follows a similar strategy as in~\cite{Mora2003}.
  Suppose for the moment that $w,y,Z,\beta$ are smooth.
  For $h>0$ define
  \begin{align*}
    v_h(x_1,x')
    &\coloneqq\begin{pmatrix}
      x_1 \\ hx'
    \end{pmatrix}
    +\begin{pmatrix}
      h^2 w \\ hy
    \end{pmatrix}
    -h^2\begin{pmatrix}
      \displaystyle x'\cdot\partial_{x_1}y \\ Zx'
    \end{pmatrix}
    +h^3\beta.
  \end{align*}
  Then $v_h$ is Lipschitz and satisfies properties 1-6.
  Furthermore, 
  \begin{align*}
    d_h v_h&=\Id+\begin{pmatrix}
      h^2\partial_{x_1}w & -h(\partial_{x_1}y)^T \\
      h\partial_{x_1}y & Z
    \end{pmatrix} \\
    &\quad-h^2\left( \begin{array}{c|c|c|c}
      \begin{array}{c}
        \displaystyle\sum_{i,j}x_i\partial_{x_1}^2 y_j+x_j\partial_{x_1}^2 y_i \\
        Zx'
      \end{array} &
      \partial_{x_2}\beta & \dots & \partial_{x_n}\beta 
    \end{array}
    \right)+O(h^3).
  \end{align*}
  Using the identity $(\Id+B^T)(\Id+B)=\Id+2\sym B+B^T B$, we obtain 
  \begin{align*}
    (d_h v_h)^T d_h v_h 
    &=\Id+2h^2\partial_{x_1}w e_1\otimes e_1
    +2h^2\left( 
    \begin{array}{c|c|c|c}
      \partial_{x_1}A \begin{pmatrix}
        0 \\ x'
      \end{pmatrix}
      & \partial_{x_2}\beta & \dots & \partial_{x_n}\beta
    \end{array}
          \right)
    +h^2 A^T A+O(h^3).
  \end{align*}
  Taking the square root and recalling the definition of $G_h$, we have 
  \begin{align*}
    \left[(d_h v_h)^T d_h v_h\right]^{1/2}
    &=\Id+h^2 G_h+O(h^3).
  \end{align*}
  For $h$ sufficiently small we have $\det\left[(d_h v_h)^T d_h v_h\right]^{1/2}>0$, hence by frame indifference
  \begin{align*}
    \dist^2(d_h v_h,SO(n))
    &=\dist^2\left(\left[(d_h v_h)^T d_h v_h\right]^{1/2},SO(n)\right),
  \end{align*}
  By Taylor expansion of $\dist(\cdot,SO(n))$ we obtain
  \begin{align*}
    \frac{1}{h^2}\dist(d_h v_h,SO(n))
    \rightarrow|\sym G|\quad\text{in $L^2$,}
  \end{align*}
  and $\sym{G_h}\rightarrow\sym G$ in $L^2$.
  Finally, the inequality~\eqref{eq:recovery-seq-convergence} follows from~\eqref{eq:rewrite-energy-SOn}.
  Now note that any tuple 
  $(w,y,Z,\beta)\in\mathcal{X}$
  can be approximated by smooth functions $(w^k,y^k,Z^k,\beta^k)$ by density.
  We have $\mathcal{I}^{\tilde\gamma,\underline{\tilde\nu}}(w^k,y^k,Z^k,\beta^k)
  \rightarrow\mathcal{I}^{\tilde\gamma,\underline{\tilde\nu}}(w,y,Z,\beta)$.
  Then the upper bound~\eqref{eq:recovery-seq-convergence} follows by a diagonal argument, see e.g.~\cite[Remark 1.2]{Alberti}.
\end{proof}

The following fact on Lipschitz approximation of $\mathbb{R}^s$-valued Sobolev maps will be a crucial ingredient
in proving the compactness of Theorem~\ref{thm:main-thm}. 

\begin{lemma}\label{lem:existence-lipschitz-approx}
  Let $s,n\geq 1$ and $1\leq p<\infty$ and suppose
  $U\subset\mathbb{R}^n$ is a bounded Lipschitz domain. 
  Then there exists a constant $C=C(U,n,s,p)$ with the following property:
  For each $u\in W^{1,p}(U,\mathbb{R}^s)$ and each $\lambda>0$ there
  exists $v:U\rightarrow\mathbb{R}^s$ such that
  \begin{enumerate}[(i)]
    \item\label{item:lipschitz-approx-1} $\Lip v \le C \lambda$,
  \item\label{item:lipschitz-approx-2} $\mathcal L^n\left( \{x\in U:u(x)\neq v(x)\} \right)
    \leq\displaystyle\frac{C}{\lambda^p}
    \displaystyle\int_{\{x\in U:|du|_e>\lambda\}}|du|_e^p\,dx$.
  \end{enumerate}
  Here $|\cdot|_e$ denotes the Frobenius norm with respect to the standard scalar product on $\mathbb{R}^n$ and $\mathbb{R}^s$, respectively. 
  If $U=\Omega_h \coloneqq (-L,L)\times B_h(0)\subset\mathbb{R}^n$ with $h\in(0,L/2)$ 
  then the constant $C$ in~\eqref{item:lipschitz-approx-1} and~\eqref{item:lipschitz-approx-2} can be chosen independent of $h$ and $L$. 
\end{lemma}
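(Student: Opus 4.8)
The general statement is the classical Lipschitz truncation theorem, and I would recall it through the maximal-function construction. Fix a bounded Lipschitz domain $U$ and $u\in W^{1,p}(U;\mathbb R^s)$. Let $\{Q\}$ be a Whitney decomposition of $U$ into dyadic cubes with $\mathrm{side}(Q)\sim\dist(Q,\partial U)$, let $Q^*$ be the fixed dimensional dilation of $Q$ that still satisfies $Q^*\subset U$, and define a maximal operator $\mathcal M_U g(x)\coloneqq\sup\{\fint_{Q^*}|g|\,dy : Q \text{ a Whitney cube with } x\in 3Q\}$. Put $E\coloneqq\{x\in U : x \text{ is a Lebesgue point of } u,\ \mathcal M_U(|du|_e)(x)\le 2\lambda\}$. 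A standard chaining argument — telescoping the averages of $u$ along a dimensionally bounded chain of adjacent Whitney cubes of comparable sidelength, combined with Poincaré's inequality on each $Q^*$ — yields $|u(x)-u(y)|\le C|x-y|\,(\mathcal M_U(|du|_e)(x)+\mathcal M_U(|du|_e)(y))$ for all Lebesgue points $x,y\in U$, so that $u|_E$ is $C\lambda$-Lipschitz. Extending $u|_E$ to $U$ by Kirszbraun's theorem (or componentwise by McShane) produces a map $v$ with $\Lip v\le C\lambda$, which is~\eqref{item:lipschitz-approx-1}.

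For the measure bound, note $\{u\neq v\}\subset U\setminus E$, which up to a null set lies in $\{\mathcal M_U(|du|_e)>2\lambda\}$. Writing $g\coloneqq|du|_e=g\chi_{\{g\le\lambda\}}+g\chi_{\{g>\lambda\}}$, the first piece contributes at most $\lambda$ to $\mathcal M_U g$ pointwise, hence $\{\mathcal M_U g>2\lambda\}\subset\{\mathcal M_U(g\chi_{\{g>\lambda\}})>\lambda\}$. The weak-$(1,1)$ bound for $\mathcal M_U$, which follows from the bounded overlap of the dilated Whitney cubes $Q^*$ by a Vitali-type covering argument, gives $\mathcal L^n(\{\mathcal M_U g>2\lambda\})\le\frac C\lambda\int_{\{g>\lambda\}}g\,dx$; and since $g\le\lambda^{1-p}g^p$ on $\{g>\lambda\}$, the right-hand side is $\le\frac C{\lambda^p}\int_{\{g>\lambda\}}g^p\,dx$, which is~\eqref{item:lipschitz-approx-2}. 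In both steps the constant depends on $U$ only through the dilation factor and the overlap constant of its Whitney decomposition and through the Poincaré and chaining constants — all of them purely dimensional once the cubes $Q^*$ are known to lie in $U$.

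It then remains to check that for $U=\Omega_h=(-L,L)\times B_h(0)$ with $0<h<L/2$ these geometric quantities are bounded independently of $h$ and $L$. The key observation is that a slab-like domain of thickness $\sim h$ has \emph{all} of its Whitney cubes of sidelength $\lesssim h$: the distance of any interior point of $\Omega_h$ to $\partial\Omega_h$ never exceeds $h$, so no Whitney cube exceeds $C_n h$, the dilates $Q^*$ still lie in $\Omega_h$ with the usual dimensional dilation factor, and the overlap, Poincaré and chaining constants are the standard dimensional ones for cubes. Hence the whole construction runs with $C=C(n,s,p)$ only. (Equivalently, one may first apply the isotropic dilation $x\mapsto x/h$, which maps $\Omega_h$ onto the unit-thickness cylinder $(-L/h,L/h)\times B_1(0)$ and scales both $|du|_e$ and $\Lip v$ by $h$, and then note that such cylinders are convex and, away from their two flat caps, locally isometric to the side of one fixed infinite cylinder, so they are uniform domains with constants independent of their length.)

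The one genuinely non-routine point is exactly this uniformity. The naive restricted Hardy–Littlewood maximal operator over Euclidean balls is \emph{not} weak-$(1,1)$ with an $h$-independent constant on a thin cylinder: balls of radius much larger than $h$ lie mostly outside $\Omega_h$, so the measure-density estimate $|B_r(x)\cap\Omega_h|\gtrsim|B_r(x)|$ degenerates as $h\to 0$. Organising the argument around a Whitney decomposition sidesteps precisely this obstruction, since a domain of bounded thickness has no Whitney cubes on scales larger than that thickness, and the bounded overlap of the dilated Whitney cubes — which replaces the measure-density estimate — is a purely dimensional fact. Everything else (Poincaré, telescoping, the Kirszbraun/McShane extension, the elementary inequality $g\le\lambda^{1-p}g^p$ on $\{g>\lambda\}$) is routine, so I expect the write-up to be short once the Whitney bookkeeping is set up.
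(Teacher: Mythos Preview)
Your reduction of the general case to the maximal-function/Whitney argument is fine, but the uniformity claim for thin cylinders has a real gap. You assert that the chaining constant is ``purely dimensional'', but on $\Omega_h$ the chain of Whitney cubes joining two points $x,y$ with $|x-y|\gg h$ consists of $\sim |x-y|/h$ cubes, all of the \emph{same} size $\sim h$; there is no geometric growth that would make the telescoping sum collapse onto the two endpoints. Concretely, take $n=2$, $\Omega_h=(-1,1)\times(-h,h)$, and $u(x_1,x_2)=f(x_1)$ with $f'$ supported in an interval of length $h$ around $x_1=1/2$ and $\int f'=M$. Then every Whitney cube near $x=(0,0)$ or $y=(1,0)$ has $\fint_{Q^*}|du|=0$, so both points lie in your good set $E$ for any $\lambda>0$, yet $|u(x)-u(y)|=M$. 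Thus $u|_E$ is \emph{not} $C\lambda$-Lipschitz with $C$ independent of $h$, and your step from $E$ to the Lipschitz bound breaks down. The tension you identified is genuine: the Whitney maximal operator is weak-$(1,1)$ uniformly in $h$ but does not control long-range differences, while the unrestricted Hardy--Littlewood operator does control them but loses the uniform weak-$(1,1)$ bound. Neither choice of maximal operator resolves both issues on $\Omega_h$.

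The paper avoids this entirely by an extension-and-pigeonhole argument: reflect $u$ across the lateral boundary to obtain a function on $(-L,L)\times(-2h,2h)^{n-1}$, then extend $8h$-periodically in $x'$ to $(-L,L)\times(-4L,4L)^{n-1}$, a domain of bounded aspect ratio on which the fixed-domain result applies with a uniform constant. The bad set on the large box has measure at most $(L/h)^{n-1}$ times the original energy, but by periodicity there are also $\sim(L/h)^{n-1}$ disjoint translates of the small cylinder inside the box, so by pigeonhole one of them carries at most a dimensional multiple of the original bound. Restricting the Lipschitz approximation to that translate and undoing the periodic shift gives the desired $v$ on $\Omega_h$.
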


\begin{remark}\label{re:lip_approx_bilipschitz_domains}
  It is easy to see that the constant $C$ can be chosen uniformly for all domains $U'$ which 
  are bilipschitzly equivalent to a given domain $U$, with a fixed bilipschitz constant $M$. 
\end{remark}

\begin{proof}
  For the result for a fixed set $U$, see, for example~\cite[Prop.\ A.1]{Friesecke2002}.
 Closely related results appeared earlier in  
  \cite{Liu1977} and   \cite[Sect 6.6.2, 6.6.3]{EvansGariepy1992}.

  For sets of the form $U=(-L,L)\times B_h(0)\subset\mathbb{R}^n$ one can obtain a bound which
  is uniform in $h$ by first extending to a set $(-L,L) \times (-2h, 2h)^{n-1}$ by reflection multiple reflections
  to extend to $(-L,L)^n$. For the convenience of the reader we give a detailed proof in 
  Appendix~\ref{se:lip_approximation_thin_tubes}.
 \end{proof}

\begin{proof}[{Proof of Theorem~\ref{thm:main-thm}~\eqref{item:main-thm-1}}]
  \begin{step}[Existence of Lipschitz approximations] \, \\
  The construction of the Lipschitz maps $\tilde{v}_h$ is similar to the construction in~\cite{Kroemer2025}. 
  We include the details for the convenience of the reader.
  Recall that by Nash's imbedding theorem~\cite[Theorem 3]{Nash1956},
  $\tilde{\mathcal{M}}$ can be viewed as a subset of $\mathbb{R}^s$ for sufficiently large $s$, 
  and the metric on the tangent space of $\tilde{\mathcal{M}}$ is induced by the Euclidean metric in $\mathbb{R}^s$.
  Define $\overline{u}_k:\Omega_{h_k}\rightarrow\tilde{\mathcal{M}}\subset\mathbb{R}^s$ by 
  \begin{align*}
    \overline{u}_k\coloneqq \tilde u_k\circ\psi_{\gamma,\underline\nu},
  \end{align*}
  and let $\overline{g}_{ij}(x)\coloneqq(\psi_{\gamma,\underline\nu}^* g)(x)(e_i, e_j)$ 
  be the coefficients of the metric on $\mathcal{M}$ in the standard Euclidean basis. 
  By Lemma~\ref{lem:metric-taylor-expansion} we have
  \begin{align*}
    |\overline g_{ij}(x)-\delta_{ij}|\leq Ch_k^2\quad\text{for all $x\in (-L,L)\times B_{h_k}(0)$.}
  \end{align*}
  Since the Frobenius norm, denoted by $|\cdot|_e$,
  of a map in $SO(n)$ is $\sqrt{n}$ and $\tilde{\mathcal{M}}$ is isometrically imbedded into $\mathbb{R}^s$,
  we have
  \begin{align*}
    |d\overline{u}_k|_e\leq(1+Ch_k^2)(\sqrt{n}+\dist(d\tilde u_k,SO(g,\tilde{g}))).
  \end{align*}
  In particular for sufficiently large $k$ we have the following implication:
  \begin{align*}
    \text{if }\quad|d\overline{u}_k|_e\geq 4\sqrt{n}
    &\quad\text{ then }\quad
    \dist(du_k,SO(g,\tilde{g}))\geq\frac{1}{2}|d\overline{u}_k|_e\geq 2\sqrt{n}.
  \end{align*}
  We apply Lemma~\ref{lem:existence-lipschitz-approx} with $u=\overline{u}_k,\,U=\Omega_h$ and $\lambda=4\sqrt{n}$,
  and denote the corresponding Lipschitz approximation by $\overline{v}_k$, 
  and set $E_k^2\coloneqq\{x\in\Omega_{h_k}:\overline{v}_k\neq\overline{u}_k\}$. 
  Then 
  \begin{equation}\label{eq:lip-vk-leq-C}
    \Lip\overline{v}_k\leq C.
  \end{equation}
  Using that $\det\overline{g}(x)\geq(1+Ch_k^2)^{-1}\geq\frac{1}{2}$ we get
  \begin{equation}\label{eq:bad-set-bound}
    \begin{split}
      \mathcal{L}^n(E_k^2) 
      &=\frac{C}{\lambda^2}\int_{\{x\in\Omega_{h_k}:|d\overline{u}_k|_e>\lambda\}}|d\overline{u}_k|_e\,dx \\
      &\leq\frac{C}{\lambda^2}\int_{\Omega_{h_k}}\dist{}^2(d\tilde u_k,SO(g,\tilde{g}))\,d\Vol_g \\
      &\leq C\mu(\Omega_{h_k})h_k^4.
    \end{split}
  \end{equation}
  In the last inequality we used that $\lambda=4\sqrt{n}$, so that $\frac{1}{\lambda^2}\leq 1$.
  Here, the map $\overline{v}_k$ generally takes values in $\mathbb{R}^s$, not in $\tilde{\mathcal{M}}$. 
 
  This issue can easily be resolved by projecting back to $\tilde{\mathcal{M}}$. 
  Indeed, since $\tilde{\mathcal{M}}$ is compact, there exists $\rho>0$ and a smooth projection 
  $\pi_{\tilde{\mathcal{M}}}$ which maps a tubular $\rho$-neighborhood of $\tilde{\mathcal{M}}$ in $\mathbb{R}^s$ to $\tilde{\mathcal{M}}$. 
    By~\eqref{eq:bad-set-bound} we have
    \begin{align*}
      \mathcal{L}^n(E_k^2)=\mathcal{O}(h_k^{n-1+4}),
    \end{align*}
    hence the radius of the largest ball in $E_k^2$ is bounded by $\mathcal{O}(h_k^{1+3/n})$. 
    In particular
    \begin{align*}
      \sup_{x\in E_k^2}\dist(\overline{v}_k(x),\tilde{\mathcal{M}})\leq Ch_k.
    \end{align*}
  Then $\overline{v}_k'\coloneqq\pi_{\tilde{\mathcal{M}}}\circ\overline{v}_k$ 
  is well defined for $h_k$ sufficiently small and satisfies $\Lip\overline{v}_k'\leq C$. 
  Since $\pi|_{\tilde{\mathcal{M}}}=\id$, we have $\{\overline{v}_k'\neq\overline{u}_k\}\subseteq\{\overline{v}_k\neq\overline{u}_k\}$. 
   By Remark~\ref{re:bilipschitz_fermi}  the restriction of $\psi_{\gamma,\underline\nu}$ to a sufficiently
  small cylinder $\Omega_\eps$ is bilipschitz. Thus  $\tilde v_k\coloneqq\overline{v}_k'\circ
   \psi_{\gamma,\underline\nu}^{-1}$ 
  satisfies~\eqref{eq:def-conditions-lipschitz-approx}.

 We also note that
  \begin{equation}  \label{eq:compactness_bound_energy_tildevk}
  E_{h_k}(\tilde v_k) \le E_{h_k}(\tilde u_k) + C h_k^4 \le C h_k^4  \, .
  \end{equation}
  Indeed, the measure of the set where $d\tilde v_k \ne d\tilde u_k$ is bounded by $C h_k^4$ and on that set
  $|d\tilde v_k|$ is uniformly bounded. 
  \end{step}

  \begin{step}[Approximation by a geodesic]
  We will show that  for each sufficiently large $k$ there exists a unit speed  geodesic $\check \gamma_k$ such 
  that 
  \begin{equation} \label{eq:compactness_approx_geod}
   \sup_{x \in \Omega_{h_k} }\dist(\tilde v_k(x), \check \gamma_k(x_1)) \le C h_k.
  \end{equation}
  We will first construct a continuous,  piecewise geodesic curve $\gamma^{pw}$ such that 
  \eqref{eq:compactness_approx_geod} holds with $\gamma^{pw}$ instead of $\check\gamma_k$
  and such that the $l_2$ norm of the jumps of the derivative of $\gamma^{pw}$ and  the quantity $|\gamma^{pw\prime}| -1$ are controlled in
  terms of the energy of $\tilde v_k$. Then an ODE argument shows that there exists a unit speed
  geodesic $\check \gamma_k$ such that \eqref{eq:compactness_approx_geod} holds, see
   Lemma~\ref{lem:approx-pw-geodesics} below.
  In the following we will always assume that $h_k$ is sufficiently small, in particularly much smaller then the injectivity radius of $\tilde M$. 
  
  To construct the piecewise geodesic curve  $\gamma^{pw}$,  let $N_k \coloneqq  \left\lceil  \frac{L}{h_k} \right\rceil$, $\eps_k = \frac{L}{N_k}$,
  and
  $$ t_i \coloneqq  i \eps_k  \quad \text{for $i = -N_k, -N_k+1, \ldots, N_k$,} \qquad
  s_i \coloneqq  \frac{t_i +t_{i+1}}{2}, $$
  and 
  $$ \Omega_{i} = (t_i, t_{i+1}) \times B_{h_k(0)}.$$
  Here $t_i$, $s_i$ and $\Omega_i$ also depend on $k$, but for easier readability we suppressed that dependence.
  Note that $ h_k/2 \le \eps_k \le h_k$. 
  
  Let $\overline{x}_{i}$ be the Riemannian center of mass of the map $\tilde v_k \circ \psi|_{\Omega_{i}}$, see Definition~\ref{def:center-of-mass}. Here we equip $\Omega_{i}$ with the normalized  Lebesgue measure. 
  Since the maps $\tilde v_k$ are uniformly Lipschitz we get
  \begin{equation}  \label{eq:compactess_dist_tildev} 
   \sup_{x \in \Omega_i} d_{\tilde M}(\tilde v_k(x), \overline{x}_i) \le C h_k, \quad
  d_{\tilde M}(\overline{x}_i, \overline{x}_{i+1}) \le C h_k.
  \end{equation}  
  Define on $[s_{-N}, s_{N-1}]$  a piecewise the $C^1$ curve $\gamma^{pw}$ by the condition
  \begin{align*} &  \text{$\gamma^{pw}|_{[s_i, s_{i+1}]}$ is the unique length-minimizing geodesic with } \\
&   \text{$\gamma^{pw}(s_i) = \overline{x}_{i}$ and $\gamma^{pw}(s_{i+1}) = \overline{x}_{i+1}$.}
  \end{align*}
  We extend $\gamma^{pw}$ uniquely to geodesic on $(-\infty, s_{-N+1})$ and $(s_{N-2}, \infty)$. 
  Using  \eqref{eq:compactness_approx_geod} for $x \in \Omega_i$,  we see that
  \begin{equation} \label{eq:compactness_approx_geod_pw}
   \sup_{x \in \Omega_{h_k} }\dist(\tilde v_k(x), \gamma^{pw}(x_1)) \le C h_k.
  \end{equation}
 
The derivative $\gamma^{pw}$ may jump at $s_i$ for $i = -N+1, \ldots N-2$. 
  For such values of $i$, we denote by $\gamma^{pw\prime}_+(s_i)$ the derivative of the geodesic defined on $[s_i, s_{i+1}]$ and by $\gamma^{pw\prime}_-(s_i)$ the derivative of the geodesic defined on $[s_{i-1}, s_i]$. 
  By definition of the exponential map these geodesics a mapped to straight lines under $\exp_{\overline{x}_{i}}^{-1}$
  and we have
  \begin{align} \label{eq:compactness_derivative_gammapw}
   \gamma^{pw\prime}_+(s_i) = \frac1{\eps_k} \exp_{\overline{x}_{i}}^{-1} \overline{x}_{i+1}, \quad
     \gamma^{pw\prime}_-(s_i) = - \frac1{\eps_k} \exp_{\overline{x}_{i}}^{-1} \overline{x}_{i-1},
  \end{align}
  To estimate the size of jump,  we will apply the rigidity estimate for low energy maps to the maps
  $$ f_{i} = \exp_{\overline{x}_{i}}^{-1} \circ \,  \tilde v_k \circ \psi_{\gamma,\underline\nu}.$$
 For $j \in \{-1,0,1\}$ we define
  $$ q_{i,j} \coloneqq\fint_{\Omega_{i+j}} f_{i}\,dx\in T_{\overline{x}_i}\tilde{\mathcal{M}}. $$
  Then $q_{i,0} = 0$ by the definition of the center of mass.  Corollary~\ref{cor:estimate-center-of-mass} implies that
    \begin{equation}\label{eq:estimate-xi-qij_new}
      \left|
      \exp_{\overline{x}_i}^{-1}\overline{x}_{i+j}-q_{i,j}
      \right|\leq Ch_k^3\quad\text{for $j=\pm 1$}.
    \end{equation}
    Thus 
    \begin{equation}  \label{eq:compactness_jump_gammaprime_q}
    \left|  \gamma^{pw\prime}_+(s_i) - \frac1{\eps_k} q_{i,1} \right| \le C h_k^2, \quad
    \left|  \gamma^{pw\prime}_-(s_i) + \frac1{\eps_k} q_{i,-1} \right| \le C h_k^2.
    \end{equation}
    Set 
    $$ A_i \coloneqq  \Omega_{i-1} \cup \Omega_i \cup \Omega_{i+1}.$$
      Then the quantitative rigidity estimate in \cite[Thm.\ 3.1]{Friesecke2002} implies that there exist
       $R_i\in SO(n)$ such that
    \begin{align*}
      \int_{A_{i}}|df_{i}-R_i|^2\,dx
      &\leq\int_{A_{i}}\dist^2(df_i,SO(n))\,dx.
    \end{align*}
    It follows from  \eqref{eq:compactess_dist_tildev}  and the expansion of the metric in normal coordinates in 
    $\tilde {\mathcal M}$
    and Fermi coordinates in $\mathcal M$ that 
    $$\dist(df_i ,SO(n)) \le \dist(d\tilde v_k, SO(g, \tilde g)) \circ \psi_{\gamma,\underline\nu} + C h_k^2.$$
    Set 
    $$ E_{k,i} \coloneqq\fint_{A_{i}}\dist^2(d\tilde{v}_k,SO(g,\tilde{g}))  \circ \psi_{\gamma,\underline\nu} \,dx$$
    and $c_i=\fint_{A_i}f_i\,dx$. Since $\fint_{A_i} x \, dx = s_i e_1$,  
    the Poincar{\'e} inequality shows  there exists a constant $C<\infty$ such that
    \begin{align}
    & \,   \frac{1}{|A_i|}\int_{A_i}|f_i(x)-R_i (x-  s_i e_1) -c_i|^2\,dx   \nonumber \\
      \leq  & \frac{C h_k^2}{|A_i|}\int_{A_i}|df_i-R_i|^2\,dx  \nonumber \\
      \leq  & \frac{C h_k^2}{|A_i|}\int_{A_i}\dist^2(df_i,SO(n))\,dx    
      \leq  C h_k^2(E_{k,i}+h_k^4).  \label{eq:compactness_fi_almost_affine}
    \end{align}
    Since $\fint_{\Omega_i} f_i \, dx = q_{i,0} = 0$ by the definition of the center of mass, and $\int_{\Omega_i} (x - s_i e_1) = 0$, Jensen's inequality implies that
    $$ |c_i| = \left| \fint_{\Omega_i}  f_i - R_i (x- s_i e_i) - c_i \, dx \right|  \le  C h_k (E_{k,i}^{1/2} + h_k^2).$$
    Similarly, by integrating over $\Omega_{i-1}$ and $\Omega_{i+1}$ and using the estimate for $c_i$ we get
    $$ |q_{i,-1} - R_i (-\eps_k e_i)| \le C h_k (E_{k,i}^{1/2} + h_k^2),
    \quad |q_{i,1} - R_i (\eps_k e_i)| \le C h_k (E_{k,i}^{1/2} + h_k^2).
    $$
    In combination with \eqref{eq:compactness_jump_gammaprime_q} we get
    \begin{equation}  \label{eq:compactness_estimate_gammapw'_final}
     \left|  \gamma^{pw\prime}_\pm(s_i) -  R_i  e_i  \right| \le C (E_{k,i}^{1/2} + h_k^2)
    \end{equation}
    and, in particular, 
    \begin{equation}
      \label{eq:compactness_jump_gammaprime_Eki}
    \left|  \gamma^{pw\prime}_+(s_i) - \gamma^{pw\prime}_-(s_i) \right| \le 
    C (E_{k,i}^{1/2} + h_k^2).
    \end{equation}

     Since $\det D\psi$ is bounded from above and below we deduce from 
     \eqref{eq:compactness_bound_energy_tildevk} that 
     \begin{equation} \label{eq:compactness_sum_Eki}  \sum_{i=-N+1}^{N-2} E_{k,i} \le C \frac{2L}{\eps_k} \fint_{ 
      \Omega_{h_k}^*} \dist^2(d \tilde v_k, SO(g, \tilde g))  \, d\Vol_{\mathcal M}
     \le C h_k^3
     \end{equation}
     and hence   $\sum_{i=-N+1}^{N-2} E_{k,i}^{1/2} \le C N^{1/2} h_k^{3/2} \le C h_k$.  
     Thus we finally get
\begin{equation}
    \label{eq:compactness_jump_gammaprime_final}
     \sum_{i=-N+1}^{N-2}  \left|  \gamma^{pw\prime}_+(s_i) - \gamma^{pw\prime}_-(s_i) \right| \le  C h_k.
        \end{equation}
  The estimate    \eqref{eq:compactness_sum_Eki}  implies $E_{k,i} \le C h_k^{3/2}$ for each $i$.
  Since $|\gamma^{pw\prime}|$ is constant on the segments where $\gamma^{pw}$ is a geodesic,
  it follows from \eqref{eq:compactness_estimate_gammapw'_final} that
  \begin{equation} \label{eq:compactness_length_gammpw}
  \sup_{s \in [-L,L]}  
  \left|  |\gamma^{pw\prime}(s) | - 1 \right| \le C h_k^{3/2}.
  \end{equation}

 Using Lemma~\ref{lem:approx-pw-geodesics},  we see that there exists a unit speed geodesic $\check\gamma_k$
 such that 
 $$\sup_{t \in [-L,L]}   \dist_{\tilde M}(\gamma^{pw}(t), \check \gamma_k(t)) \le C h_k.$$
 
 Combining this estimate with \eqref{eq:compactness_approx_geod_pw},  we get
\eqref{eq:compactness_approx_geod}.
 \end{step}
 
 \begin{step}[Compactness]

 Choose a parallel orthonormal frame   $\underline{\check \nu}$  along $\check \gamma$ and let
  $ \Phi_{\check \gamma, \underline{\check \nu}}$ be the corresponding lifting map constructed in Lemma~\ref{lem:existence-lift_new}.
 Set 
 $$\check v_k(x) \coloneqq   \Phi_{\check \gamma, \underline{\check \nu}}(x_1, \tilde \nu_k \circ
  \psi_{\gamma, \underline \nu} \circ I_{h_k}(x)).$$
 Then
 $$ \psi_{\check \gamma, \underline{\check \nu}} \circ \check v_k =   \tilde \nu_k \circ \psi_{\gamma, \underline \nu}
  \circ I_{h_k}.$$
 It follows from the expansion of the metric in Fermi coordinates and the estimate 
 \eqref{eq:compactness_approx_geod} that
 \begin{eqnarray} \int_{\Omega} \dist^2(d_{h_k} \check v_k, SO(n)) \, dx &\le &C h_k^4, 
 \label{eq:compactness_checkv_SOn}\\
 \int_\Omega \left |\check v_k(x) - \binom{x_1}{0} \right|^2 \, dx & \le & C h_k^2. 
   \label{eq:compactness_checkv_L^2}
 \end{eqnarray}
 The last estimate can be improved for the first component $(v_k)_1(x) - x_1$, at least on average, 
 by a small translation along the geodesic $\check \gamma$. For $\tau = \mathcal O(h_k)$
 define translated framed geodesics by
 $$ \check \gamma_{\tau}(x_1) = \check \gamma(x_1 + \tau), \quad
 \underline{\check \nu_\tau}(x_1) = \underline{\check \nu}(x_1 + \tau)  \, .$$
 Then 
 $$ \psi_{\check \gamma_{\tau}, \underline{\check \nu_\tau}}(x_1, x') = 
  \psi_{\check \gamma, \underline{\check \nu}}(x_1 +\tau, x').$$
  If we define 
  \begin{equation}  \label{eq:compactness_define_checkv_tau}
  \check v_{k,\tau}(x) \coloneqq 
  \Phi_{\check \gamma_\tau, \underline{\check \nu_\tau}}(x_1, \tilde \nu_k \circ \psi_{\gamma, \underline \nu} \circ I_{h_k}(x))  \, , 
   \end{equation}
  we see that
  \begin{equation}   \label{eq:compactness_checkv_tau_vs_check_v}
  \check v_{k,\tau}(x) = \check v_k(x) - \tau e_1.
  \end{equation}
  Thus
  $$ \fint_\Omega (\check v_{k,\tau})_1(x) - x_1 \, dx = - \tau + \fint_\Omega (\check v_k)_1(x) - x_1 \, dx.$$
  It follows from  \eqref{eq:compactness_checkv_L^2} that
   there exists a $\overline \tau$ with $|\overline \tau| \le C h_k$ and
  $$ \fint_\Omega (\check v_{k,\overline\tau})_1(x) - x_1 \, dx =  0. $$
  Taking into account  \eqref{eq:compactness_checkv_tau_vs_check_v},   \eqref{eq:compactness_checkv_SOn},  and \eqref{eq:compactness_checkv_L^2}
  we see that the sequence  $k \mapsto \check v_{k,\overline \tau}$ satisfies the
   assumptions of Corollary~\ref{eq:cor_mora-m}.
  Thus there exists $R_k \in SO(n-1)$ such the sequence of  maps $v_k$, defined by
  $$v_k = \begin{pmatrix} 1 & 0 \\ 0 & R_k \end{pmatrix}^T (\check v_{k,\tau})$$
 satisfies  \eqref{eq:muller-mora-matrix-estimates}
 and Properties~\ref{it:mora-n_convergence_w} to~\ref{it:convergence_beta}  of Theorem~\ref{thm:muller-mora-3d}.
  Set 
  $$ \tilde \gamma_k(x_1) = \check \gamma_\tau(x_1) $$
  and define a parallel orthonormal frame along $\tilde \gamma_k$ by
  $$ \tilde \nu_j(x_1) \coloneqq  \sum_{l=2}^n  (R_k)_{lj} (\check \nu_{\tau})_l(x_1).$$
  Since $\check \gamma_k$ is a unit speed geodesic we have
  $\dist_{\tilde M}(\tilde \gamma_k(x_1), \check \gamma_k(x_1)) \le \tau \le C h_k$.
  Thus it follows from   \eqref{eq:compactness_approx_geod} that
  \begin{equation}
   \sup_{x \in \Omega_{h_k} }\dist(\tilde v_k(x), \tilde  \gamma_k(x_1)) \le C h_k.
  \end{equation}

  Using that  $R^T_k R_k = \Id$ we get
  \begin{align*}
  \psi_{\tilde \gamma, \underline{\tilde \nu}}(v_k(x)) = & \, 
  \exp_{\tilde \gamma(v_1(x))} \sum_{j=2}^n  \tilde \nu_j(x_1)  \sum_{i=2}^n (R_k)_{ij} (\check v_{k,\tau})_l(x)  \\
  = & \,  \exp_{\tilde \gamma((\check v_{k, \tau})_1(x))} \sum_{i,j,l=2}^n 
   (\check \nu_{k, \tau})_l(x_1)  (R_k)_{lj} (R_k)_{ij} \check v_{k,\tau}(x)\\
  = & \, \psi_{\check \gamma_\tau, \underline{\check \nu_\tau}} (\check v_{k,\tau}(x)).
 \end{align*}
 Using  \eqref{eq:compactness_define_checkv_tau} we  get
 \begin{equation}  \psi_{\tilde \gamma, \underline{\tilde \nu}} \circ  v_k = \tilde v_k \circ \psi \circ I_{h_k},
\end{equation}
 and we have seen that  the sequence $k \mapsto  v_k$ 
 satisfies  \eqref{eq:muller-mora-matrix-estimates}
 and Properties~\ref{it:mora-n_convergence_w} to~\ref{it:convergence_beta}  of Theorem~\ref{thm:muller-mora-3d}. 
 
 Finally, the compactness of $\tilde {\mathcal M}$ implies that the space of framed unit speed geodesics
 on $(-L,L)$ is compact,  since a framed unit speed geodesic $(\tilde \gamma, \underline{\tilde \nu})$ 
 is determined by $\tilde \gamma(0)$ and an orthonormal frame in $T_{\gamma(0)} \tilde{ \mathcal M}$
 and the geodesic and its frame depend continuously on these data. Thus there exists a subsequence
 such that $(\tilde \gamma_k, \underline{\tilde \nu_k})$ converges to a framed unit speed geodesic
 $(\tilde \gamma, \underline{\tilde \nu})$.
 This concludes the proof of  assertion~\eqref{item:main-thm-1} of  Theorem~\ref{thm:main-thm}.
     \qedhere
     \end{step}
    
    \end{proof}

\begin{lemma}[Approximation of piecewise geodesics by geodesics]\label{lem:approx-pw-geodesics}
Let $\tilde{\mathcal{M}}$ be a smooth, compact Riemannian manifold.  
Let $\delta >0$ be as in Lemma~\ref{lem:existence-lift_new}. 
Then there exists a constant $C >0$ with the following property.

Let $\gamma_{pw}: [0, \infty) \to \tilde{\mathcal{M}}$ be continuous and assume that there exist $0 < t_1 < \ldots t_J < \infty$
such that $\gamma|_{(t_i, t_{i+1})}$ is a geodesic, for $i = 0, \ldots J$ with $t_0 = 0$ and $t_{J+1} = \infty$.
Assume further that 
\begin{equation} \label{eq:broken_error}
|\gamma'(t_j^{-}) - \gamma'(t_j^+)|   \le F_j \quad \text{for $j=1, \ldots, J$,}
\end{equation}
where $\gamma'(t_j^{-})$ is the derivative at $t_j$ of the extension of the geodesic defined on $(t_{j-1}, t_j)$
and  $\gamma'(t_j^{+})$ is the derivative at $t_j$ of the extension of the geodesic defined on $(t_{j}, t_{j+1})$.
Assume also that 
\begin{equation}  \label{eq:broken_error_0}
\left| \,  |\gamma'_{pw}(0^+)| - 1 \,  \right| \le F_0
\end{equation}
and 
\begin{equation}  \label{eq:broken_total_variation}
\sum_{j=0}^J F_j \le \frac12   \, .
\end{equation}
Set
\begin{equation}\label{eq:lemma-def-omega}
  \omega(t)\coloneqq e^{Ct} F_0 + \sum_{k : t_k < t} C  e^{C(t-t_k)} F_k.
\end{equation}

Let  $\tilde \gamma: \mathbb R \to \tilde{\mathcal{M}}$ be the unit speed geodesic with
$\tilde \gamma(0)= \gamma_{pw}(0)$ and $\gamma'(0) = \gamma'_{pw}(0^+)/ |\gamma_{pw}(0^+)|$.
Then 
\begin{equation}
d_{\tilde{\mathcal{M}}}(\gamma_{pw}(t), \tilde \gamma(t)) \le 2 \omega(t)
\qquad \text{if $\omega(t) < \frac{\delta}{4}$.}
\end{equation}
\end{lemma}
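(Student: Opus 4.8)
The plan is to transport the problem, via the lifting map of Lemma~\ref{lem:existence-lift_new}, into Fermi coordinates around $\tilde\gamma$, where $\tilde\gamma$ becomes the $x_1$-axis $t\mapsto t e_1$, and to compare $\gamma_{pw}$ with this axis by a Gronwall estimate for the geodesic equation, treating the finitely many corners of $\gamma_{pw}$ as jumps of the velocity. Since the constant $\delta$ of Lemma~\ref{lem:existence-lift_new} does not depend on the length of the underlying interval and the estimates below are additive in time, it suffices to run the comparison on bounded time intervals, on which $\tilde\psi\coloneqq\psi_{\tilde\gamma,\underline{\tilde\nu}}$ (for any fixed parallel orthonormal frame $\underline{\tilde\nu}$ along $\tilde\gamma$) is a diffeomorphism of a convex cylinder with $|D\tilde\psi|\le 2$ and associated lift $\tilde\Phi$ with $|D\tilde\Phi|\le 2$. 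First I would record that, $\gamma_{pw}$ being a geodesic on each $(t_j,t_{j+1})$, its speed $\ell_j\coloneqq|\gamma_{pw}'|$ is constant there, so $|\ell_j-\ell_{j-1}|\le|\gamma_{pw}'(t_j^-)-\gamma_{pw}'(t_j^+)|\le F_j$, which together with \eqref{eq:broken_error_0} and \eqref{eq:broken_total_variation} gives $\tfrac12\le\ell_j\le\tfrac32$ for all $j$; thus the velocity of $\gamma_{pw}$ stays in a fixed compact subset of $T\tilde{\mathcal{M}}$, and $\tilde\gamma$ keeps unit speed. Let $T^*\in(0,\infty]$ be the supremum of the $\tau$ for which $d_{\tilde{\mathcal{M}}}(\gamma_{pw}(t),\tilde\gamma(t))<\delta/2$ on $[0,\tau)$; it is positive because $\gamma_{pw}(0)=\tilde\gamma(0)$ and the curves are continuous. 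On $[0,T^*)$ set $\Gamma(t)\coloneqq\tilde\Phi(t,\gamma_{pw}(t))$, so that $\tilde\psi(\Gamma(t))=\gamma_{pw}(t)$; since $\tilde\Phi(s,\tilde\gamma(s))=(s,0)$, the curve $\tilde\gamma$ corresponds to $\Gamma_*(t)\coloneqq t e_1$, which solves the geodesic equation of the pullback metric $\tilde\psi^{*}\tilde g$ (as $\tilde\gamma$ is a geodesic), and $\Gamma$ solves the same equation on each $(t_j,t_{j+1})$ with $|\dot\Gamma|\le|D\tilde\Phi|\,|\gamma_{pw}'|\le 3$.

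Writing the geodesic equation as a first-order system $\Xi=(\Gamma,\dot\Gamma)$, $\Xi_*=(\Gamma_*,\dot\Gamma_*)$ and setting $E(t)\coloneqq|\Xi(t)-\Xi_*(t)|$, the smoothness of the geodesic field on the compact region $\{|x'|\le\delta,\ |\dot\Gamma|\le 3\}$ yields, with a fixed $C_0$, the bound $\tfrac{d}{dt}E\le C_0 E$ a.e.\ on each segment, while at a corner $E(t_j^+)\le E(t_j^-)+|\dot\Gamma(t_j^+)-\dot\Gamma(t_j^-)|\le E(t_j^-)+2F_j$, because the two one-sided velocities of $\gamma_{pw}$ at $t_j$ share a base point and $|D\tilde\Phi|\le 2$. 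At $t=0$ one has $\Gamma(0)=0$, and since $D\tilde\Phi$ at $(0,\tilde\gamma(0))$ is the linear isometry inverse to $D\tilde\psi(0,0)$, which maps $e_1$ to $\tilde\gamma'(0)$, while $\gamma_{pw}'(0^+)=\ell_0\,\tilde\gamma'(0)$, we get $|\dot\Gamma(0^+)-e_1|=|\ell_0-1|\le F_0$, hence $E(0^+)\le F_0$. Unrolling the recursion $E(t_{j+1}^+)\le e^{C_0(t_{j+1}-t_j)}E(t_j^+)+2F_{j+1}$ gives $E(t)\le e^{C_0 t}F_0+\sum_{t_k<t}2e^{C_0(t-t_k)}F_k$ on $[0,T^*)$. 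Choosing the constant $C$ in \eqref{eq:lemma-def-omega} to be $\max(C_0,2)$ and using that $\tilde\psi$ is $2$-Lipschitz on a convex cylinder, we conclude
\begin{equation*}
  d_{\tilde{\mathcal{M}}}(\gamma_{pw}(t),\tilde\gamma(t))=d_{\tilde{\mathcal{M}}}\bigl(\tilde\psi(\Gamma(t)),\tilde\psi(t e_1)\bigr)\le 2\,|\Gamma(t)-t e_1|\le 2E(t)\le 2\,\omega(t)\qquad\text{for all }t\in[0,T^*).
\end{equation*}

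It remains to close the bootstrap. The function $\omega$ is non-decreasing, since each term is increasing in $t$ and a non-negative term is added whenever $t$ passes a $t_k$, so $\{t:\omega(t)<\delta/4\}$ is an initial interval. If this interval contained some $t_0\ge T^*$, then $T^*<\infty$, and letting $t\uparrow T^*$ in the displayed bound, using continuity of $t\mapsto d_{\tilde{\mathcal{M}}}(\gamma_{pw}(t),\tilde\gamma(t))$ and $\lim_{t\uparrow T^*}\omega(t)\le\omega(T^*)\le\omega(t_0)<\delta/4$, we would obtain $d_{\tilde{\mathcal{M}}}(\gamma_{pw}(T^*),\tilde\gamma(T^*))<\delta/2$, whereas this distance equals $\delta/2$ by continuity and the maximality of $T^*$; contradiction. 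Hence every $t$ with $\omega(t)<\delta/4$ lies in $[0,T^*)$, and the displayed inequality is exactly the assertion.

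The one genuinely delicate point is this continuation argument: the Gronwall comparison is valid only while $\gamma_{pw}$ stays in the lift domain $V_{\delta/2}$ and $\dot\Gamma$ stays in the compact region on which the geodesic field is Lipschitz, so one must verify that the corresponding maximal time is never attained while $\omega(t)<\delta/4$; the factor $2$ in the conclusion and the threshold $\delta/4$ are precisely the slack making this work. The remaining ingredients---the speed bound, the identification of $\tilde\gamma$ with the coordinate axis, and the corner book-keeping---are routine.
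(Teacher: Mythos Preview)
Your proof is correct and follows essentially the same route as the paper: lift to Fermi coordinates around $\tilde\gamma$, run a Gronwall estimate for the geodesic equation on each smooth segment, add the velocity jumps at the corners, and close with a continuation argument. The only cosmetic difference is that the paper exploits the vanishing of the Christoffel symbols along the axis in Fermi coordinates to get $|z''|\le C|z|$ directly, whereas you invoke the Lipschitz continuity of the geodesic vector field on a compact phase-space region; both lead to the same exponential bound, and your continuation argument is in fact spelled out more carefully than the paper's ``usual continuation argument''.
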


\begin{proof}
This is a natural  adaptation of the standard Gronwall type estimate
for differences of approximate  solutions of the geodesic equation $y'' = \Gamma(y',y') + f$ with $f \in L^1$
to the case where $f$ is a finite sum of Dirac masses.

Since $\gamma_{pw}$ is a geodesic on $(t_i, t_{i+1})$,  the speed $|\gamma'_{pw}(t)|$ is constant in this interval.
Thus, it follows from~\eqref{eq:broken_error},~\eqref{eq:broken_error_0}, and 
\eqref{eq:broken_total_variation} that
\begin{equation}  \label{eq:broken_bound_speed_pw}
| \gamma'_{pw}(t)| \le 2 \quad \text{for  $t \notin \{t_1, \ldots, t_J\}$.}
\end{equation}

Set $v = \gamma'_{pw}(0^+)/ |\gamma_{pw}(0^+)|$. By~\eqref{eq:broken_error_0} we have 
\begin{equation}
| \gamma'_{pw}(0^+) - v| \le F_0.
\end{equation}
Let $ \tilde \gamma: \mathbb R \to \tilde{\mathcal{M}}$ be the geodesic with 
\begin{align*}
  \tilde \gamma(0) = \gamma_{pw}(0), \quad \tilde \gamma'(0)= v.
\end{align*}
Fix a parallel orthonormal frame $\underline{\tilde\nu}$ of the normal bundle of 
$\tilde \gamma$ and let $\tilde \psi = \tilde \psi_{\tilde \gamma, \underline{\tilde\nu}}$
be the Fermi coordinate map. 

We will show the following:
if 
\begin{equation}  \label{eq:broken_admissible_Lprime}
\sup_{t  \in [0, L')} d_{\tilde {\mathcal M}}(\gamma_{pw}(t), \tilde \gamma(t)) < \delta/2
\end{equation}
for some $L' > 0$, then 
\begin{equation} d_{\tilde{\mathcal{M}}}(\gamma_{pw}(t), \tilde \gamma(t)) \le  2 \omega(t)
\quad
\text{for all $t \in [0, L')$.}
\end{equation}
From this the assertion of the lemma follows by the usual continuation argument.

Now assume that~\eqref{eq:broken_admissible_Lprime} holds for some $L' > 0$. 
Let $\tilde \Phi$ be the map in 
 the lifting lemma, Lemma~\ref{lem:existence-lift_new}, 
 and define, for $x_1 \in [0, L')$
$$ y_{pw}(x_1)\coloneqq  \tilde \Phi(x_1, \gamma_{pw}(x_1)).$$
Then $\tilde \psi \circ y_{pw} = \gamma_{pw}$  and $y_{pw}$ satisfies the geodesic equation
\begin{equation}
y''_{pw}(t) = \Gamma(y_{pw}(t))(y'_{pw}(t), y'_{pw}(t)) \quad \text{for $t \in (t_i, t_{i+1})$.}
\end{equation}
where $\Gamma$ denotes the Christoffel symbols in Fermi coordinates.
The lift of $\tilde \gamma$ is given by the trivial map 
\begin{align*}
  \tilde y(t) = (t,0).
\end{align*}
Let 
\begin{align*}
  z(t) = y_{pw}(t) - \tilde y(t).
\end{align*}
In Fermi coordinates we have $\Gamma((t,0)) = 0$. Since $\tilde {\mathcal M}$ is smooth an compact and $|y'_{pw}| \le 2$, 
it follows that there exists a constant $C$ which only depends on $\tilde {\mathcal M}$ such that
\begin{equation}\label{eq:broken_od_ineq_z}
|z''(t)| \le C |z(t)|  \quad \text{for $t \in (t_i, t_{i+1})$.}
\end{equation}
Considering the vector $\binom{z}{z'}$,  we get from the Gronwall inequality
\begin{equation}\label{eq:broken_jumps_z}
\left|  \binom{z}{z'}(t) \right| \le e^{C(t- t_i)} \left|  \binom{z}{z'}(t_i^+) \right|   \quad \text{if $t \in (t_i, t_{i+1})$.}
\end{equation}
Moreover,  (after enlarging $C$ if necessary) we 
have
\begin{equation}
\left|  \binom{z}{z'}(t_i^+) -  \binom{z}{z'}(t_i^-) \right|  = |z'(t_j^+) - z'(t_j^-)| \le C F_k, \quad \left|  \binom{z}{z'}(0^+)  \right| \le F_0.
\end{equation}
Note that $|z'(t_j^+) - z'(t_j^-)|$ refers to the Euclidean norm in $\mathbb R^n$ while the definition of $F_k$
involves the norm given by the metric on $\tilde M$. Thus the constant $C$ arises from the estimate of the metric
in Fermi coordinates.
We define
\begin{align*}
  \alpha(t)\coloneqq e^{-Ct} \left|  \binom{z}{z'}(t) \right|
\end{align*}
for $t \notin \{t_1, \ldots, t_J\}$ and similarly we define $\alpha(t_j^\pm)$. 
Then~\eqref{eq:broken_od_ineq_z} and~\eqref{eq:broken_jumps_z} imply that
\begin{equation} 
 \alpha(t) \le  F_0 + \sum_{k \ge 1: t_k \le t} C e^{-Ct_k} F_k,
 \end{equation}
where for $t \in \{t_1, \ldots, t_j\}$ the estimate holds for $\alpha(t^-)$ and $\alpha(t^+)$.
It follows that
\begin{align*}
  |z(t)| \le \omega(t)  \quad \text{for $t \in [0, L')$.}
\end{align*}
By Lemma~\ref{lem:existence-lift_new} we have $|D\tilde \Phi| \le 2$
in a convex set which contains the values of 
$y_{pw}(t)$ and $\tilde y(t)$. Hence
$$d_{\tilde{\mathcal{M}}}(\gamma_{pw}(t), \tilde \gamma(t)) \le  2  |z(t)| \le 2 \omega(t).$$
\end{proof}

\section{Convergence of the energy}

Define 
\begin{align*}
  m^{\tilde\gamma,\underline{\tilde\nu}}\coloneqq\inf_{(w,y,Z,\beta)\in\mathcal{X}}\mathcal{I}^{\tilde\gamma,\underline{\tilde\nu}}(w,y,Z,\beta).
\end{align*}

\begin{theorem}\label{thm:lim-E-minimizer}
  Let $\mathcal{M}$ be complete, $\tilde{\mathcal{M}}$ compact and let $(\gamma,\underline\nu)$ be a
  framed unit speed geodesic in $\mathcal{M}$. 
  Then
  \begin{equation}\label{eq:lim-E-minimizer}
    \lim_{h\rightarrow 0}\frac{1}{h^4}\inf_{u\in W^{1,2}(\Omega_h^*;\tilde{\mathcal{M}})}E_h(u)
    =\overline{m}
    \coloneqq\min_{\mathcal{G}}m^{\tilde\gamma,\underline{\tilde\nu}},
  \end{equation}
  where $\mathcal{G}$ denotes the set of all framed unit speed geodesics $(\tilde\gamma,\underline{\tilde\nu})$ in $\tilde{\mathcal{M}}$.
\end{theorem}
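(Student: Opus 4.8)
The plan is to deduce Theorem~\ref{thm:lim-E-minimizer} from the $\Gamma$-convergence statement in Theorem~\ref{thm:main-thm} by the standard ``fundamental theorem of $\Gamma$-convergence'' argument; the only twist is that the notion of convergence in Definition~\ref{def:new-convergence} also encodes the limiting target geodesic, and the attainment of the minimum over $\mathcal G$ will come out of the compactness part automatically. As a preliminary I would record that $\overline m<\infty$: taking $(w,y,Z,\beta)=(0,0,0,0)\in\mathcal X$ gives $A\equiv0$ and $G\equiv0$, so $\mathcal{I}^{\tilde\gamma,\underline{\tilde\nu}}(0,0,0,0)=\fint_{-L}^L\fint_{B_1(0)}|\mathcal{T}^{\mathcal R}_{\gamma,\underline\nu}(x_1,x')-\mathcal{T}^{\tilde{\mathcal R}}_{\tilde\gamma,\underline{\tilde\nu}}(x_1,x')|^2\,dx$; since $\tilde{\mathcal M}$ is compact, Lemma~\ref{lem:metric-taylor-expansion} bounds the coefficients of these quadratic forms uniformly over $(\tilde\gamma,\underline{\tilde\nu})\in\mathcal G$, hence $\sup_{\mathcal G}m^{\tilde\gamma,\underline{\tilde\nu}}<\infty$ and in particular $\overline m=\inf_{\mathcal G}m^{\tilde\gamma,\underline{\tilde\nu}}<\infty$.

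\emph{Upper bound.} Let $h_k\downarrow0$ be an arbitrary null sequence. For any $(\tilde\gamma,\underline{\tilde\nu})\in\mathcal G$ and any $(w,y,Z,\beta)\in\mathcal X$, Theorem~\ref{thm:main-thm}~\eqref{item:main-thm-3} provides maps $\tilde u_k$ with $\limsup_k h_k^{-4}E_{h_k}(\tilde u_k)\le\mathcal{I}^{\tilde\gamma,\underline{\tilde\nu}}(w,y,Z,\beta)$, hence $\limsup_k h_k^{-4}\inf_u E_{h_k}(u)\le\mathcal{I}^{\tilde\gamma,\underline{\tilde\nu}}(w,y,Z,\beta)$. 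The left-hand side does not depend on the chosen data, so passing to the infimum over $\mathcal X$ and then over $\mathcal G$ gives $\limsup_k h_k^{-4}\inf_u E_{h_k}(u)\le\overline m$; since $h_k$ was arbitrary, $\limsup_{h\to0}h^{-4}\inf_u E_h(u)\le\overline m$.

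\emph{Lower bound and attainment.} Choose a null sequence $h_k$ realising $\ell\coloneqq\liminf_{h\to0}h^{-4}\inf_u E_h(u)$, i.e.\ $h_k^{-4}\inf_u E_{h_k}(u)\to\ell$, and pick $\tilde u_k$ with $h_k^{-4}E_{h_k}(\tilde u_k)\le h_k^{-4}\inf_u E_{h_k}(u)+h_k$, so that also $h_k^{-4}E_{h_k}(\tilde u_k)\to\ell$. By the upper bound $\ell\le\overline m<\infty$, so $\limsup_k h_k^{-4}E_{h_k}(\tilde u_k)<\infty$ and Theorem~\ref{thm:main-thm}~\eqref{item:main-thm-1} applies: along a subsequence (not relabelled) $\tilde u_k\to(\tilde\gamma,\underline{\tilde\nu},w,y,Z,\beta)$ in the sense of Definition~\ref{def:new-convergence}, with $(\tilde\gamma,\underline{\tilde\nu})\in\mathcal G$ a genuine framed unit speed geodesic. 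The $\Gamma$-liminf inequality, Theorem~\ref{thm:main-thm}~\eqref{item:main-thm-2}, applied to this subsequence gives
\[
\ell=\liminf_k h_k^{-4}E_{h_k}(\tilde u_k)\ \ge\ \mathcal{I}^{\tilde\gamma,\underline{\tilde\nu}}(w,y,Z,\beta)\ \ge\ m^{\tilde\gamma,\underline{\tilde\nu}}\ \ge\ \overline m .
\]
Together with the upper bound this forces $\ell=\overline m=m^{\tilde\gamma,\underline{\tilde\nu}}$; in particular the infimum defining $\overline m$ is attained at the framed geodesic produced by compactness, so it is a minimum, and $\lim_{h\to0}h^{-4}\inf_u E_h(u)$ exists and equals $\overline m$, which is~\eqref{eq:lim-E-minimizer}.

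The one genuinely non-routine input is the compactness statement Theorem~\ref{thm:main-thm}~\eqref{item:main-thm-1}, already established; it is there that compactness of $\tilde{\mathcal M}$ enters, both to keep the maps $\tilde u_k$ from escaping to infinity and to extract a limiting \emph{framed} geodesic from the approximating geodesics $\tilde\gamma_k$. Within the present proof the only point needing a moment's care is the attainment of $\min_{\mathcal G}$, and the argument above extracts it for free from the compactness step; alternatively one could note that $\mathcal G\cong\tilde{\mathcal M}\times SO(n)$ is compact and that $(\tilde\gamma,\underline{\tilde\nu})\mapsto m^{\tilde\gamma,\underline{\tilde\nu}}$ is continuous, via the quadratic-form representation $m^{\tilde\gamma,\underline{\tilde\nu}}=\fint_{-L}^L\mathbb Q(\mathcal A(x_1))\,dx_1$ of Theorem~\ref{th:charaterization_minimum} together with the smooth dependence of the pulled-back curvature on the initial data of the framed geodesic, but this is then superfluous.
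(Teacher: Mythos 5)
Your argument is correct and follows essentially the same route as the paper: both deduce the result from Theorem~\ref{thm:main-thm} by the standard $\Gamma$-convergence scheme (recovery sequences for the upper bound on a subsequence realising the $\limsup$, compactness plus the $\Gamma$-liminf inequality for the lower bound). The only difference is cosmetic: the paper secures the existence of $\min_{\mathcal G}m^{\tilde\gamma,\underline{\tilde\nu}}$ upfront from compactness of $\mathcal G\cong\tilde{\mathcal M}\times SO(n)$ and continuity of $(\tilde\gamma,\underline{\tilde\nu})\mapsto m^{\tilde\gamma,\underline{\tilde\nu}}$ (and uses a minimizer over $\mathcal X$ in the upper bound), whereas you extract attainment of the infimum over $\mathcal G$ directly from the chain of inequalities in the lower-bound step, which is equally valid and avoids invoking that continuity.
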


\begin{proof}
  This is a standard consequence of Theorem~\ref{thm:main-thm}. 
  We include the details for the convenience of the reader. 
  First, note that $\mathcal{G}$ is isomorphic to the set $\tilde{\mathcal M}\times SO(n)$ and therefore compact. 

  Consider the map $f: (\tilde \gamma, \underline{\tilde \nu})\mapsto m^{\tilde\gamma,\underline{\tilde\nu}}$. 
  Clearly $f$ is continuous. Since $\mathcal G$ is compact, the minimum on the right hand side of~\eqref{eq:lim-E-minimizer} exists.

  Upper bound:
  Set $L^+\coloneqq\limsup_{h\rightarrow 0}h^{-4}\inf_{\tilde u\in W^{1,2}(\Omega_h^*;\tilde{\mathcal{M}})}E_h(\tilde u)$
  and let $h_k\rightarrow 0$ be a subsequence which realizes the limit superior. 
  Let $(\tilde\gamma,\underline{\tilde\nu})$ be a framed unit speed geodesic and
  let $x\in\mathcal{X}$ be a minimizer of $\mathcal{I}^{\tilde\gamma,\underline{\tilde\nu}}$. 
  It follows from Theorem~\ref{thm:main-thm}~\eqref{item:main-thm-3} that $L^+\leq m^{\tilde\gamma,\underline{\tilde\nu}}$. 
  Optimizing over $(\tilde\gamma,\underline{\tilde\nu})$ we get $L^+\leq\overline{m}$. 

  Lower bound: 
  Set $L^-\coloneqq\liminf_{h\rightarrow 0}h^{-4}\inf_{\tilde u\in W^{1,2}(\Omega_h^*;\tilde{\mathcal{M}})}E_h(\tilde u)$
  and let $h_k\rightarrow 0$ be a subsequence which realizes the limit inferior. 
  Then there exist maps $\tilde u_k$ such that 
  \begin{align*}
    \lim_{k\rightarrow\infty}\frac{1}{h_k^4}E_{h_k}(\tilde u_k)=L^-.
  \end{align*}
  By Theorem~\ref{thm:main-thm}~\eqref{item:main-thm-1} there exists a subsequence $u_{k_j}$
  which converges to $(\tilde\gamma,\underline{\tilde\nu},x)$ with $x\in\mathcal{X}$ in the sense of Definition~\ref{def:new-convergence}. 
  By Theorem~\ref{thm:main-thm}~\eqref{item:main-thm-2} we have $L^-\geq\mathcal{I}^{\tilde\gamma,\underline{\tilde\nu}}(x)\geq\overline{m}$.
\end{proof}

\section{Minimization   of the limiting functional}

Recall that the $\tilde {\mathcal T} := \mathcal T^{\tilde{\mathcal R}}_{\tilde\gamma,\underline{\tilde\nu}}:
(-L,L) \to \mathbb R^{n \times n}$ and   $\mathcal T := \mathcal T^{\mathcal R}_{\gamma,\underline{\nu}}:
(-L,L) \to \mathbb R^{n \times n}$ 
describe the quadratic correction between metric in Fermi coordinates in $\tilde{\mathcal M}$ and
 $\mathcal M$, respectively,  and the Euclidean metric, see~\eqref{eq:def-T}.

\begin{theorem}   \label{th:charaterization_minimum}
The infimum of the functional  $\mathcal{I}^{\tilde\gamma,\underline{\tilde\nu}}$, defined in 
\eqref{eq:def-I},  is attained on $\mathcal{X}$ and we have
\begin{equation}   m^{\tilde\gamma,\underline{\tilde\nu}}\coloneqq\min_{(w,y,Z,\beta)\in\mathcal{X}}
\mathcal{I}^{\tilde\gamma,\underline{\tilde\nu}}(w,y,Z,\beta)
= \min_{\beta \in \mathcal B} \mathcal{J}^{\tilde\gamma,\underline{\tilde\nu}}(\beta),
\end{equation}
where 
\begin{align}
\mathcal{J}^{\tilde\gamma,\underline{\tilde\nu}}(\beta)
= &\,  \fint_{-L}^L \fint_{B_1(0)}  | \hat{\mathcal T}_{11}(x_1, x')   - \overline{ \hat{\mathcal T}}_{11}(x_1) |^2 \, dx'\,dx_1  \nonumber\\
+ &\,  \frac{1}{2}\fint_{-L}^L  \fint_{B_1(0)} \sum_{j=2}^n  |\partial_{x_j} \beta_1(x)    + \hat{\mathcal T}_{1j}(x_1,x')|^2 \, dx'\,dx_1
\nonumber   \\
+ & \, \fint_{-L}^L \fint_{B_1(0)} |\sym \nabla'  \beta'(x)    + \hat{\mathcal T}''(x_1,x')|^2 \, dx'\,dx_1, 
\label{eq:J_decomp_three_terms}
\end{align}
with 
\begin{eqnarray} \hat{\mathcal T}_{ij} &=& \tilde{\mathcal T}_{ij} - \mathcal T_{ij}, \\
\overline{ \hat{\mathcal T}}_{11}(x_1) &=& \fint_{B_1(0)}  \hat{\mathcal T}_{11}(x_1, x') \, dx'  \, , 
\end{eqnarray} 
  and where $\hat{\mathcal T}''$ denotes the $(n-1) \times (n-1)$ submatrix of $ \hat{\mathcal T}$
  corresponding to the  entries $i,j \ge 2$. 
\end{theorem}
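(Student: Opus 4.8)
The plan is to carry out the minimization over $\mathcal{X}$ in stages, exploiting that $\mathcal{I}^{\tilde\gamma,\underline{\tilde\nu}}$ is a non-negative quadratic functional of $(w,y,Z,\beta)$ and that the four groups of variables affect essentially disjoint blocks of the symmetric matrix field $\sym G-\mathcal{T}$. First I would write out $\sym G-\mathcal{T}$ entry by entry, separating the $(1,1)$ entry, the entries $(1,j)$ with $j\ge 2$, and the lower-right $(n-1)\times(n-1)$ block $(i,j)$ with $i,j\ge 2$, and within each entry separating the parts that are constant, linear, and quadratic in $x'$. Using the curvature symmetries collected in Lemma~\ref{lem:metric-taylor-expansion} and the identities relating $\mathcal{T}^{\mathcal A}$ to the components of $\mathcal A$ in \eqref{eq:def-T}, one reads off: (a) $w$ enters only as a term constant in $x'$ in the $(1,1)$ slot; (b) $\partial_{x_1}^2 y$ enters only as a term linear in $x'$ in the $(1,1)$ slot, while $y$ itself enters the target curvature term $\mathcal{T}^{\tilde{\mathcal R}}(x_1,y+x')$ through its linear-in-$x'$ first-order and constant-in-$x'$ zeroth-order Taylor parts; (c) $\partial_{x_1}Z$ enters only through a term linear in $x'$ and skew-symmetric in the index pair of the $(1,j)$ slots; (d) $\beta_1$ and $\beta'$ enter through $\nabla'\beta_1$ and $\sym\nabla'\beta'$ in the $(1,j)$ and $(i,j)$ slots; and (e) $A^2$ appears only through terms constant in $x'$.

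Step 1 (eliminate $w$ and constant-in-$x'$ parts): for fixed $(y,Z,\beta)$, minimizing over $\partial_{x_1}w$ replaces the $(1,1)$ integrand by its $x'$-variance over $B_1(0)$, removing $w$, the constant $-\tfrac12(A^2)_{11}$, and the zeroth-order curvature term $\mathcal{T}^{\tilde{\mathcal R}}_{11}(x_1,y(x_1))$. Step 2 (eliminate the linear-in-$x'$ parts of the $(1,1)$ slot): since any $L^2$ function arises as $\partial_{x_1}^2 y$ for some $y\in W^{2,2}$, I would choose $\partial_{x_1}^2 y$ to cancel the linear-in-$x'$ part coming from the first-order Taylor term of $\mathcal{T}^{\tilde{\mathcal R}}_{11}(x_1,y+x')$; after this cancellation the $(1,1)$ contribution becomes exactly the first term of $\mathcal{J}^{\tilde\gamma,\underline{\tilde\nu}}$, namely $\fint\fint|\hat{\mathcal T}_{11}(x_1,x')-\overline{\hat{\mathcal T}}_{11}(x_1)|^2$. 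One then checks that the residual appearances of $y$ (through $\partial_{x_1}y$ inside $A,A^2$ and through the lower-order Taylor terms of $\mathcal{T}^{\tilde{\mathcal R}}_{1j},\mathcal{T}^{\tilde{\mathcal R}}_{ij}$) can be normalized away: either one argues that, after the reductions of Step 3, the leftover dependence on $y$ is a non-negative quadratic functional minimized at $y\equiv 0$, or one invokes the equivalence relation of Definition~\ref{def:equiv-relation} together with Proposition~\ref{prop:invariance_energy} to reduce, without changing $\mathcal{I}^{\tilde\gamma,\underline{\tilde\nu}}$, to a representative with $y\equiv 0$ in each equivalence class.

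Step 3 (eliminate $Z$ and reduce the off-diagonal blocks): with the $(1,1)$ slot settled, I would use $\partial_{x_1}Z$ (an arbitrary skew-symmetric $L^2$-matrix field) to remove the skew-symmetric linear-in-$x'$ contribution from the $(1,j)$ slots and use $Z$ (through $(A^2)_{1j},(A^2)_{ij}$) to absorb the remaining constant-in-$x'$ contributions; by the mutual $L^2(B_1(0))$-orthogonality of gradient fields, of linear fields given by skew matrices, and — by parity — of odd- versus even-degree homogeneous fields, these choices do not interfere with the quadratic-in-$x'$ curvature terms. What is left in the $(1,j)$ block is $\tfrac12\fint\fint\sum_{j\ge2}|\partial_{x_j}\beta_1+\hat{\mathcal T}_{1j}|^2$ with the numerical factor recorded in \eqref{eq:J_decomp_three_terms}, and in the $(i,j)$ block exactly $\fint\fint|\sym\nabla'\beta'+\hat{\mathcal T}''|^2$, which are the second and third terms of $\mathcal{J}^{\tilde\gamma,\underline{\tilde\nu}}$. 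For the reverse inequality ``$\le$'' one inverts the substitutions above: given $\beta\in\mathcal B$, exhibit a tuple $(w,y,Z,\beta)\in\mathcal X$ realizing $\mathcal{I}^{\tilde\gamma,\underline{\tilde\nu}}=\mathcal{J}^{\tilde\gamma,\underline{\tilde\nu}}(\beta)$, so $m^{\tilde\gamma,\underline{\tilde\nu}}\le\min_{\beta\in\mathcal B}\mathcal{J}^{\tilde\gamma,\underline{\tilde\nu}}(\beta)$, while the stagewise minimization gives ``$\ge$''. Attainment of both minima follows by the direct method, $\mathcal{J}^{\tilde\gamma,\underline{\tilde\nu}}$ being a coercive, weakly lower semicontinuous quadratic functional on $\mathcal B$: coercivity of the $\sym\nabla'\beta'$ term uses Korn's inequality on $B_1(0)$ together with the normalizations in \eqref{eq:mora_define_B}, and of the $\nabla'\beta_1$ term is immediate.

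The main obstacle I expect is Step 2: $y$ is the only variable that appears in genuinely coupled ways — inside $A$ (hence quadratically in $A^2$ across all three blocks), inside $\partial_{x_1}A$ (linearly in $x'$), and as the argument shift $y(x_1)+x'$ in the target curvature term — so one must organize the stagewise minimization so that, after the other variables are chosen optimally, the dependence on $y$ reduces to a non-negative quadratic form with minimum at $y=0$, rather than producing cross terms with the curvature that would push the minimum below $\mathcal{J}^{\tilde\gamma,\underline{\tilde\nu}}$; the Jacobi-field identity $\partial_{x_1}^2 J = -\overline{\tilde{\mathcal R}}(J,e_1)e_1$ used in Proposition~\ref{prop:invariance_energy} is the mechanism that makes this consistent. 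A secondary technical point is that $A\in W^{1,2}$ gives only $A^2\in L^1$, so the $\partial_{x_1}w$ used to kill the constant-in-$x'$ part of the $(1,1)$ slot is a priori merely $L^1$; this is handled by a density argument, or by observing that the infimum is already approached along tuples with $A\in L^\infty$. The remaining work is bookkeeping with the Riemann symmetries and the explicit coefficients in \eqref{eq:def-T} and \eqref{eq:taylor-metric}.
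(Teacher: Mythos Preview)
Your overall architecture---block decomposition of $\sym G-\mathcal T$, upper bound by an explicit tuple, lower bound by stagewise minimization---matches the paper. The upper bound is exactly as in the paper: take $y=0$, $Z=0$ (so $A=0$), choose $\partial_{x_1}w=-\overline{\hat{\mathcal T}}_{11}$, and use the minimizer $\overline\beta$ of $\mathcal J$; this also dissolves your ``secondary technical point'' about $A^2\in L^1$.

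The genuine difference is in the lower bound, precisely at your Step~2. You try to \emph{choose} $\partial_{x_1}^2 y$ to kill the linear-in-$x'$ part of the $(1,1)$ slot, and then hope either that the residual $y$-dependence is minimized at $y=0$ or that the equivalence relation lets you set $y=0$. Neither route is justified as stated: the equivalence relation of Definition~\ref{def:equiv-relation} only shifts $y$ by a Jacobi field $J$, not by an arbitrary function, so it does not reduce a general $y$ to zero; and the claim that after the other reductions the dependence on $y$ is a non-negative quadratic minimized at $y=0$ would need proof, because $y$ enters $A^2$ and the argument shift $y(x_1)+x'$ in genuinely coupled ways across all three blocks.

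The paper avoids this coupling entirely by never choosing $y$ or $Z$ in the lower bound. Instead it argues, for arbitrary $(w,y,Z,\beta)$:
for $F_{11}$, odd--even parity in $x'$ makes the linear terms (carrying $\partial_{x_1}^2 y$ and the bilinear part of $\tilde{\mathcal T}_{11}(x_1,y+x')$) $L^2$-orthogonal to the even terms, and then subtracting the mean of the even part discards the constants;
for $F'$, one keeps only the even-in-$x'$ part, and writes $(\nabla'\beta_1)_{\text{even}}+c(x_1)=\nabla'\hat\beta_1$ for some $\hat\beta_1$, absorbing the constant-in-$x'$ terms (including $(A^2)_{1j}$) into the minimization over $\hat\beta_1$;
for $F''$, the key lemma is that any \emph{affine} symmetric matrix field on $B_1(0)$ is a symmetrized gradient (Proposition~\ref{eq:minimization_cross_section}, part~\ref{it:cross_section_linear_compatible}), so the constant $-\tfrac12(A^2)''+\tilde{\mathcal T}''(x_1,y)$ and the linear $\mathbb L''$ are absorbed into $\sym\nabla'(\beta'+\check\beta')$ and disappear under the minimum over $\hat\beta'$.
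These three devices---parity, mean-subtraction, and the affine-equals-symmetrized-gradient lemma---give $\mathcal I^{\tilde\gamma,\underline{\tilde\nu}}(w,y,Z,\beta)\ge \overline m$ directly for every tuple, with no need to optimize in $y$ or $Z$. Your Step~3 already contains the parity idea; the missing ingredient is to apply it to the $(1,1)$ slot as well, and to replace ``choose $Z$ to absorb constants'' by the affine--symmetrized-gradient lemma in the $(i,j)$ block.
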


It is easy to see that the infimum of the quadratic functional $\mathcal J$ is attained, see 
Proposition~\ref{eq:minimization_cross_section} below.
Since $\mathcal J$ is a quadratic functional in $\beta$,  its minimizers
depends linearly on $\hat{\mathcal T}$. It follows that $ m^{\tilde\gamma,\underline{\tilde\nu}}$
is a quadratic form in the difference of the curvature tensors
$\overline{\tilde{\mathcal R}} - \overline{\mathcal R}$ along the framed  unit speed geodesics  $(\gamma, \underline \nu)$ and
  $(\tilde \gamma, \underline{\tilde \nu})$,
 respectively. Indeed,  the minimization of $\mathcal J$ over $\beta$ can be carried out separately for 
 each $x_1$, since the functional does not
 involve derivatives of $\beta$ with respect to $x_1$. Thus there exists a quadratic from $\mathbb Q$
 on the space of $4$-tensors which satisfy the symmetries of the curvature tensor  such that
 \begin{equation} 
 m^{\tilde\gamma,\underline{\tilde\nu}} = \fint_{-L}^L \mathbb Q\left(\overline{\tilde{ \mathcal R}}(x_1) -
 \overline{\mathcal R}(x_1) \right) \, dx_1.
 \end{equation}
In addition, $\mathbb Q$ has the following properties.

\begin{corollary}   \label{eq:Q_positive_definite}
The quadratic form $\mathbb Q$ is positive definite on the space of $4$-tensors  $\mathcal A$ which have the symmetries
of the curvature tensor\footnote{I.e., 
$ \mathcal A_{ijkl} = - \mathcal A_{jikl} = - \mathcal A_{ijlk} = \mathcal A_{klij}$
and   $ \mathcal A_{ijkl} + \mathcal A_{iklj} + \mathcal A_{iljk} = 0$}
and on those tensors it can be written as
\begin{equation}  \label{eq:decomposition_mathbbQ}
\mathbb Q(\mathcal A) = \mathbb Q_1(\mathcal A^{\mathrm{par}}) +
\mathbb Q_2(\mathcal A') + \mathbb Q_3(\mathcal A''),
\end{equation}
where
\begin{equation}
\mathcal A^{\mathrm{par}}_{kl} = \mathcal A_{1k1l}, \quad 
\mathcal A'_{jkl} = \mathcal A_{1kjl}, \quad \mathcal A''_{ikjl} = \mathcal A_{ikjl}, 
\quad \text{for $i,j,k,l \ge 2$}
\end{equation}
and

\begin{equation} \mathbb Q_1(\mathcal A^{\mathrm{par}})
 \frac{1}{2 (n+1)(n+3)}  | \mathcal A^{\mathrm{par}}|^2 -   \frac{1}{2(n+1)^2(n+3)}  \left(
\tr  
\mathcal A^{\mathrm{par}} \right)^2,
\end{equation}
\begin{equation}  \label{eq:formula_Q2}
\mathbb Q_2(\mathcal A') = \min_{b_1}   \frac{1}{2} \int_{ \{0\} \times  B_1(0)}
\sum_{j \ge 2}  \left(\partial_{x_j} b_1(x')  - \frac13 \mathcal A'_{jkl} x'_k x'_l \right)^2  \, dx', 
\end{equation}
\begin{equation}   \label{eq:formula_Q3}
\mathbb Q_3(\mathcal A'') = \min_{b'}   \int_{\{0\}  \times B_1(0)} 
\sum_{i,j \ge 2}   \left((\sym  \nabla' b')_{ij}(x') - \frac{1}{6} 
\mathcal A''_{ikjl} x'_k x'_l  \right)\, dx'. 
\end{equation}
Note that $\mathbb Q_1$ is positive definite since 
$ (
\tr  
\mathcal A^{\mathrm{par}})^2 \le (n-1) |A^{\mathrm{par}}|^2$.

In particular, we have $\lim_{h \to 0} h^{-4} \inf_{\tilde u} E_h(\tilde u) = 0$ if and only if 
$\overline{\tilde{\mathcal R}}(x_1) = \overline{\mathcal R}(x_1)$ for every $x_1 \in (-L,L)$. 
\end{corollary}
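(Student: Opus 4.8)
The plan is to extract everything from Theorem~\ref{th:charaterization_minimum}, which already reduces the minimisation over $(w,y,Z,\beta)$ to a minimisation of the functional $\mathcal J^{\tilde\gamma,\underline{\tilde\nu}}$ in~\eqref{eq:J_decomp_three_terms} over $\beta\in\mathcal B$. The crucial structural observation is that the three summands of $\mathcal J$ involve pairwise disjoint sets of variables --- the first none, the second only $\beta_1$, the third only $\beta'=(\beta_2,\dots,\beta_n)$ --- and that no $x_1$-derivative of $\beta$ occurs; moreover the constraints defining $\mathcal B$ do not affect the infimum, since $\beta_1$ enters only through $\nabla'\beta_1$ and $\beta'$ only through $\sym\nabla'\beta'$. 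Hence the minimisation decouples and may be carried out pointwise in $x_1$. Writing $\mathcal A(x_1)\coloneqq\overline{\tilde{\mathcal R}}(x_1,0)-\overline{\mathcal R}(x_1,0)$ and inserting the explicit formulae~\eqref{eq:def-T} for $\hat{\mathcal T}=\tilde{\mathcal T}-\mathcal T$, the pair symmetry $\mathcal A_{abcd}=\mathcal A_{cdab}$ together with the antisymmetries let one rewrite the quadratic forms $\sum_{k,l\ge2}\mathcal A_{1k1l}x'_kx'_l$, $\sum_{k,l\ge2}\mathcal A_{jk1l}x'_kx'_l$, $\sum_{k,l\ge2}\mathcal A_{jkil}x'_kx'_l$ as $\mathcal A^{\mathrm{par}}_{kl}x'_kx'_l$, $\mathcal A'_{jkl}x'_kx'_l$, $\mathcal A''_{ikjl}x'_kx'_l$ respectively (the coefficients $-\tfrac12,-\tfrac13,-\tfrac16$ being inherited directly from~\eqref{eq:def-T}). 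Since each summand then depends on a single one of $\mathcal A^{\mathrm{par}},\mathcal A',\mathcal A''$, this yields $m^{\tilde\gamma,\underline{\tilde\nu}}=\fint_{-L}^L\mathbb Q(\mathcal A(x_1))\,dx_1$ together with the decomposition~\eqref{eq:decomposition_mathbbQ}, $\mathbb Q_2$ and $\mathbb Q_3$ being exactly the pointwise minima~\eqref{eq:formula_Q2} and~\eqref{eq:formula_Q3} (up to the normalisation of the cross-sectional integral).

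Next I would compute the first summand explicitly. Using the second and fourth moments of the uniform probability measure on the unit ball of $\mathbb R^{n-1}$, namely $\fint_{B_1(0)}x'_kx'_l\,dx'=\tfrac{1}{n+1}\delta_{kl}$ and $\fint_{B_1(0)}x'_kx'_lx'_mx'_n\,dx'=\tfrac{1}{(n+1)(n+3)}(\delta_{kl}\delta_{mn}+\delta_{km}\delta_{ln}+\delta_{kn}\delta_{lm})$, one gets $\overline{\hat{\mathcal T}}_{11}(x_1)=-\tfrac{1}{2(n+1)}\tr\mathcal A^{\mathrm{par}}$ and, expanding the square in the first term of~\eqref{eq:J_decomp_three_terms}, exactly $\mathbb Q_1(\mathcal A^{\mathrm{par}})=\tfrac{1}{2(n+1)(n+3)}|\mathcal A^{\mathrm{par}}|^2-\tfrac{1}{2(n+1)^2(n+3)}(\tr\mathcal A^{\mathrm{par}})^2$. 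Because $\mathcal A^{\mathrm{par}}$ is a symmetric $(n-1)\times(n-1)$ matrix, Cauchy--Schwarz on its eigenvalues gives $(\tr\mathcal A^{\mathrm{par}})^2\le(n-1)|\mathcal A^{\mathrm{par}}|^2$, whence $\mathbb Q_1(\mathcal A^{\mathrm{par}})\ge\tfrac{1}{(n+1)^2(n+3)}|\mathcal A^{\mathrm{par}}|^2$ and $\mathbb Q_1$ is positive definite.

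The main obstacle is the definiteness of $\mathbb Q_2$ and $\mathbb Q_3$. Both are manifestly non-negative, and their infima are attained because the subspaces $\{\nabla'b_1:b_1\in W^{1,2}(B_1(0))\}$ and $\{\sym\nabla'b':b'\in W^{1,2}(B_1(0);\mathbb R^{n-1})\}$ are closed in $L^2$ (the latter by Korn's inequality). For the definiteness: if $\mathbb Q_2(\mathcal A')=0$, the quadratic field $x'\mapsto\bigl(\tfrac13\mathcal A'_{jkl}x'_kx'_l\bigr)_{j\ge2}$ is a gradient, so its skew first-order part vanishes; differentiating once and invoking the first Bianchi identity satisfied by $\mathcal A'$ (the ``exactly one index equal to $1$'' component of a curvature-symmetric tensor) forces $\mathcal A'=0$. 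If $\mathbb Q_3(\mathcal A'')=0$, the symmetric-matrix field $x'\mapsto\tfrac16\mathcal A''_{ikjl}x'_kx'_l$ is a symmetrised gradient; a direct computation shows its Saint-Venant incompatibility tensor equals a non-zero multiple of $\mathcal A''$ --- this is where the curvature symmetries of $\mathcal A''$ enter, and it is the same computation that underlies the agreement of $\mathbb Q_3$ with the optimal-ball-embedding energy of~\cite{Maor2019,Kroemer2025} --- so $\mathcal A''=0$. Finally, the curvature symmetries express every component of $\mathcal A$ through $\mathcal A^{\mathrm{par}},\mathcal A',\mathcal A''$ according to the number of its indices equal to $1$, so $\mathbb Q(\mathcal A)=\mathbb Q_1(\mathcal A^{\mathrm{par}})+\mathbb Q_2(\mathcal A')+\mathbb Q_3(\mathcal A'')=0$ forces $\mathcal A=0$; thus $\mathbb Q$ is positive definite.

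To conclude, since $x_1\mapsto\mathcal A(x_1)$ is continuous and $\mathbb Q$ is positive definite, $m^{\tilde\gamma,\underline{\tilde\nu}}=\fint_{-L}^L\mathbb Q(\mathcal A(x_1))\,dx_1$ vanishes if and only if $\overline{\tilde{\mathcal R}}(x_1,0)=\overline{\mathcal R}(x_1,0)$ for every $x_1\in(-L,L)$; combining this with Theorem~\ref{thm:lim-E-minimizer}, which identifies $\lim_{h\to0}h^{-4}\inf_{\tilde u}E_h(\tilde u)$ with $\min_{\mathcal G}m^{\tilde\gamma,\underline{\tilde\nu}}$, gives the last assertion.
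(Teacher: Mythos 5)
Your proposal is correct and follows essentially the same route as the paper's proof: the decomposition~\eqref{eq:decomposition_mathbbQ} and the formulas for $\mathbb Q_2$, $\mathbb Q_3$ are read off from Theorem~\ref{th:charaterization_minimum} and~\eqref{eq:def-T}, $\mathbb Q_1$ is evaluated via moments of the ball and its definiteness via $(\tr\mathcal A^{\mathrm{par}})^2\le(n-1)|\mathcal A^{\mathrm{par}}|^2$, definiteness of $\mathbb Q_2$ uses the curl-free condition combined with the first Bianchi identity, definiteness of $\mathbb Q_3$ uses Saint-Venant compatibility as in \cite{Maor2019,Kroemer2025}, and the final claim follows from Theorem~\ref{thm:lim-E-minimizer}. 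The only cosmetic difference is in computing $\mathbb Q_1$: you expand directly with the isotropic fourth-moment identity on $B_1(0)\subset\mathbb R^{n-1}$, whereas the paper first reduces to diagonal matrices by rotation invariance and then fixes the two coefficients from the moments $I_2$ and $I_4$; both yield the stated constants.
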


We note that the expression for $\mathbb Q_3$, which captures the behavior normal to $\tilde \gamma$
agrees with the formula for the optimal embedding of $(n-1)$ dimensional balls derived in 
\cite{Kroemer2025}.

We will prove Corollary~\ref{eq:Q_positive_definite}] at the end of this section.
We first recall the following classical result which implies that  the functional $\mathcal J$ attains its minimum.

\begin{proposition}  \label{eq:minimization_cross_section}
  Let $B_1(0) \subset \mathbb R^{n-1}$ and consider the function spaces
$$ \mathscr B_1 \coloneqq \{ b \in W^{1,2}(B_1(0)) : \int_{B_1(0)} b \,  dx' = 0\},$$
$$ \mathscr B' \coloneqq \{ b' \in W^{1,2}(B_1(0); \mathbb R^{n-1}) : \int_{B_1(0)} b' \,  dx' = 0, \, \,   \mathrm{skew} \left(\int_{B_1(0)}
\nabla' b \, dx\right) = 0\}    \, , $$
where $\nabla' = (\partial_{x_1}, \ldots, \partial_{x_{n-1}})$.
Then the following assertions hold. 
\begin{enumerate}
\item   \label{it:min_crosssection_exists}  If $f \in  L^2(B_1(0); \mathbb  R^{n-1})$ and $f' \in
L^2(B_1(0); \mathbb R^{(n-1) \times (n-1)})$  then there exists a unique $\overline b_1 \in \mathscr B_1$ and a unique $\overline b' \in \mathscr B'$ such that
$$ \| f + \nabla' \overline b_1\|_{L^2} = \min_{b_1 \in \mathscr B_1} \| f + \nabla' b_1\|_{L^2}, \quad 
 \| f' + \nabla' \overline b'\|_{L^2} = \min_{b' \in \mathscr B'} \| f + \nabla' b'\|_{L^2}.$$
Moreover,  the maps $f \mapsto \overline b_1$ and $f' \mapsto \overline b'$ are linear and $\| \overline b_1 \|_{L^2} \le \|\overline f\|_{L^2}$ and $ \| \overline b'\|_{L^2} \le \|\overline f'\|_{L^2}$.
\item \label{it:cross_section_linear_compatible} If $f': B_1(0) \to \mathbb R^{(n-1) \times (n-1)}$ is affine  and $(f')^T = f'$ then there exists a unique $b' \in \mathscr B'$ such that
$$ \sym (\nabla' b') = f'.$$
\end{enumerate}
\end{proposition}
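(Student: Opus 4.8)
The plan is to obtain part~\ref{it:min_crosssection_exists} as two orthogonal–projection statements in $L^2$, and part~\ref{it:cross_section_linear_compatible} from the Saint-Venant compatibility of affine symmetric fields together with a normalization by infinitesimal rigid motions.

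For part~\ref{it:min_crosssection_exists}, I would endow $\mathscr B_1$ and $\mathscr B'$ with the $W^{1,2}(B_1(0))$ norm, so that they are closed subspaces of $W^{1,2}$ and hence Hilbert spaces, and introduce the bounded linear maps $T_1 b_1 := \nabla' b_1$ on $\mathscr B_1$ and $T' b' := \sym\nabla' b'$ on $\mathscr B'$ (this is the operator relevant for the functional $\mathcal J$; if the full gradient is intended instead, one replaces $\sym\nabla'$ by $\nabla'$ everywhere and uses Poincar\'e in place of Korn below). First I would check that $T_1$ and $T'$ are injective: $\nabla' b_1 = 0$ forces $b_1 \equiv 0$ by the zero-mean constraint, while $\sym\nabla' b' = 0$ forces $b'(x') = c + Wx'$ with $W$ skew, whence $c = 0$ from $\int_{B_1(0)} b' = 0$ (using $\int_{B_1(0)} x' = 0$) and $W = 0$ from $\mathrm{skew}\int_{B_1(0)}\nabla' b' = |B_1(0)|\,W = 0$. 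Next, the Poincar\'e--Wirtinger inequality on $\mathscr B_1$ and Korn's second inequality on $\mathscr B'$ — the two normalization conditions defining $\mathscr B'$ being exactly what makes Korn applicable — show that $T_1$ and $T'$ are bounded below, hence have closed ranges $V_1, V'$. The minimizer of $\|f + \nabla' b_1\|_{L^2}$ is then the unique $\overline b_1$ with $\nabla'\overline b_1 = -P_{V_1} f$, and similarly $\sym\nabla'\overline b' = -P_{V'} f'$; existence and uniqueness, linearity of $f \mapsto \overline b_1$ and $f' \mapsto \overline b'$ (inherited from linearity of the projections and of $T_1^{-1}, (T')^{-1}$), and the norm estimates (from $\|P_{V_1} f\|_{L^2}\le\|f\|_{L^2}$ followed by Poincar\'e, resp.\ via Korn) all follow.

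For part~\ref{it:cross_section_linear_compatible}, I would write the affine symmetric field as $f' = E_0 + \sum_{k\ge 2} x'_k E_k$ with $E_k = E_k^T$. Since $f'$ has vanishing second derivatives the Saint-Venant compatibility conditions hold trivially, and since $B_1(0)$ is simply connected there is a quadratic (hence $W^{1,2}$) field $\tilde b'$ with $\sym\nabla'\tilde b' = f'$ — obtained for instance by the Ces\`aro--Volterra path integral, or simply by integrating $f'$ twice. Adding an infinitesimal rigid motion $r(x') = c + Wx'$ ($W$ skew) leaves $\sym\nabla'$ unchanged, and $c, W$ can be chosen uniquely so that $\int_{B_1(0)}(\tilde b' + r) = 0$ and $\mathrm{skew}\int_{B_1(0)}\nabla'(\tilde b' + r) = 0$, because $\int_{B_1(0)} r = |B_1(0)|\,c$ and $\mathrm{skew}\int_{B_1(0)}\nabla' r = |B_1(0)|\,W$. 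Then $b' := \tilde b' + r \in \mathscr B'$ solves $\sym\nabla' b' = f'$, and uniqueness is exactly the injectivity of $T'$ already established (two solutions differ by an element of $\ker T' = \{0\}$).

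The ingredients — Poincar\'e--Wirtinger, Korn's second inequality, the rigidity of the kernel of $\sym\nabla'$, and Saint-Venant compatibility — are all classical, so I do not expect an essential obstacle; the one point that needs a little care is verifying that the two normalization constraints defining $\mathscr B'$ exactly cancel the $\big(\binom{n-1}{2} + (n-1)\big)$-dimensional space of infinitesimal rigid motions, which is what is used both for the injectivity in part~\ref{it:min_crosssection_exists} and for the rigid-motion correction in part~\ref{it:cross_section_linear_compatible}.
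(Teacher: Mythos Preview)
Your argument is correct and, for part~\ref{it:min_crosssection_exists}, essentially identical to the paper's: closed range of the gradient (respectively symmetrized gradient) via Poincar\'e and Korn, then orthogonal projection in $L^2$.

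For part~\ref{it:cross_section_linear_compatible} there is a small difference in presentation. The paper writes down the explicit quadratic displacement
\[
b'_i(x) = \tfrac{1}{2}(A_{ijk} - A_{jki} + A_{kij})\,x_j x_k + a_{ij}\,x_j
\]
for $f'_{ij}(x) = A_{ijk}x_k + a_{ij}$, which is exactly the Ces\`aro--Volterra integral you invoke, specialized to affine strains. Your abstract Saint-Venant argument is equivalent; the explicit formula is more self-contained (no compatibility theorem to cite), while your version has the advantage of making the rigid-motion normalization needed to land in $\mathscr B'$ explicit, a step the paper's proof leaves to the reader.
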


\begin{proof} By the Poincar\'e inequality the space $\{ \nabla' b_1 : b_1 \in \mathscr B_1\}$ is a closed subspace of 
$L^2$. Hence $\overline b_1$ is given by the orthogonal projection of $-f$  onto that subspace. Similarly,
the space $\{ \nabla' b' : b' \in \mathscr B'\}$ is a closed subspace of $L^2$ by the Korn and the Poincar\'e inequalities 
and $\overline b'$ is the orthogonal projection of $-f'$ onto that subspace. This concludes the proof 
Assertion~\ref{it:min_crosssection_exists}.

Assertion~\ref{it:cross_section_linear_compatible} is well known. 
Indeed, if  $f'_{ij}(x) = A_{ijk} x_kn+ a_{ij}$ with  $A_{ijk} = A_{jik}$ and $a_{ij} = a_{ji}$,  then it suffices to take
$$ b'_i(x) = \frac{1}{2} (A_{ijk} - A_{jki} + A_{kij}) x_j x_k + a_{ij} x_j  \, .$$
Uniqueness follows from Korn's inequality. 
\end{proof}

\begin{proof}[Proof of Theorem~\ref{th:charaterization_minimum}] 
It follows from Proposition~\ref{eq:minimization_cross_section}  that attains $J$  its minimum on $\mathcal B$. 
Indeed, the minimizer $\overline \beta$ is given 
by
$$ \overline \beta(x_1,x') = \binom{(\overline b_{x_1})_1(x')}{(\overline b'_{x_1})_1(x')}$$
where
$(\overline b_{x_1})_1(x') \in \mathscr B_1$ is the minimizer in
 Proposition~\ref{eq:minimization_cross_section} 
 for $f(x') = (\hat{\mathcal T}_{12}(x_1, x'), \ldots  \hat{\mathcal T}_{1n}(x_1, x'))$ 
and $(\overline b'_{x_1})(x') \in \mathscr B'$  is the minimizer for $f'(x') = \hat{\mathcal T}''(x_1, x')$.

 Set 
 $$\overline m = \min_{\beta \in \mathcal B}\mathcal{J}^{\tilde\gamma,\underline{\tilde\nu}}(\beta).$$
 We  next show that $m^{\tilde\gamma,\underline{\tilde\nu}} \le  \overline m$. 
 We first note that for $y = 0$, $Z = 0$ we have $A = 0$ and 
 \begin{align*}  & \, \sym G(x)  +  \tilde{\mathcal T}(x_1, y(x_1) + x')  - \mathcal T(x_1)  
   =  \sym \left(  \binom{\partial_{x_1} w_1}{0}  
   \bigg| \partial_{x_2} \beta \bigg| \ldots \bigg| \partial_{x_n} \beta  \right) +  \hat{\mathcal T} \, .
 \end{align*}
 If we take
 \begin{align*}
   \partial_{x_1}  \overline w(x_1)  =- \fint_{B_1(0)}  \hat{\mathcal T}_{11}(x_1, x')  \, dx',
 \end{align*}
then we get
\begin{align*}
m^{   \tilde \gamma, \underline{\tilde \nu}  } \le 
& \, \mathcal I^{\tilde \gamma, \underline{\tilde \nu}}(\overline w, 0,0, \overline \beta) \\
  =  & \, \fint_{-L}^L \fint_{B_1(0)} \left |  \sym \left(  \binom{\partial_{x_1} w_1}{0}  
  | \partial_{x_2} \overline  \beta | \ldots | \partial_{x_n} \overline \beta  \right) +  \hat{\mathcal T}\right|^2 \, dx' \, dx_1 \\
  = & \, \mathcal{J}^{\tilde \gamma, \underline{\tilde \nu}}(\overline \beta) = \overline m.
  \end{align*}

\medskip 

Thus it suffices to show that  
$\mathcal I^{\tilde \gamma, \underline{\tilde \nu}}(\overline w, y, Z,   \overline \beta)  \ge \overline m$. 
Then the infimum of $   \mathcal I^{\tilde \gamma, \underline{\tilde \nu}}$
is attained by $(\overline w, 0, 0, \overline \beta)$ and $m^{   \tilde \gamma, \underline{\tilde \nu}  } = \overline m$.
To get a lower bound for $   \mathcal I^{\tilde \gamma, \underline{\tilde \nu}}$  we first note that  for each 
$x_1 \in (-L,L)$ the map
$$ x' \mapsto  \mathbb L(x_1, x') \coloneqq \tilde{\mathcal T}(x_1, y(x_1) + x') - \tilde{\mathcal T}(x_1,  x') - 
\tilde{\mathcal T}(x_1, y(x_1))$$
is linear,  since $ z' \mapsto \tilde{\mathcal T}(x_1, z')$ is a quadratic form. 
Recalling the definition of 
$ \mathcal I^{\tilde \gamma, \underline{\tilde \nu}}$ in \eqref{eq:def-I}--\eqref{eq:def-I_def-A}
we get
$$ \mathcal I^{\tilde \gamma, \underline{\tilde \nu}}(\overline w, y, Z, \overline \beta) = \fint_{-L}^L \fint_{B_1(0)}
|F|^2  \, dx' \, dx_1$$
where 
\begin{align*} F(x) =  & \, \begin{pmatrix} F_{11}(x) & (F')^T(x) \\
F'(x)  &   F''  \end{pmatrix}    \\  
= & \, 
 \sym \left(\begin{array}{c|c|c|c}
      \left(
        \partial_{x_1}w(x_1) 
      \right)e_1+(\partial_{x_1}A(x_1))\begin{pmatrix}
        0 \\ x'
      \end{pmatrix}
      & \partial_{x_2}\beta & \dots & \partial_{x_n}\beta
    \end{array}
    \right)
    -\frac{1}{2}A^2(x_1) \\
     & \,  +   \hat{\mathcal  T}(x) + \mathbb L(x_1, x') + \tilde{\mathcal T}(x_1, y(x_1)).
\end{align*}
If $f: B_1(0) \to \mathbb R$ is symmetric, i.e., $f(-x) = f(x)$ and $\hat f: B_1(0) \to \mathbb R$ is antisymmetric
then $\int_{B_1(0)} f \hat f \, dx' = 0$. Moreover 
$$\int_{B_1(0)} |f|^2 \ dx'  \ge \int_{B_1(0)}\left| f - \fint_{B_1(0)} f\right|^2 \, dx'.$$
Thus we get
\begin{equation}    \label{eq:lowerbound_I_F11} 
\int_{B_1(0)} F_{11}^2(x_1, x')  \, dx'
\ge   \int_{B_1(0)} \left|  \hat{\mathcal  T}(x_1, x')  - \fint_{B_1(0)}  \hat{\mathcal  T}(x_1, x') \, dx' \right|^2 \, dx'
\, .
\end{equation}

To estimate $F'$, define the symmetric and antisymmetric part of  a map $f': \Omega \to \mathbb R^{n-1}$ by
$2 f'_s(x_1, x') = f'(x_1, x') + f(x_1, x')$ and $2 f'_a(x_1, x') = f'(x_1, x') - f(x_1, x')$ so that $f' = f'_s + f'_a$
Then we get
\begin{align}  \nonumber 
& \,2  \int_{B_1(0)} |F'(x_1, x')|^2 \, dx'  
\ge  \frac{1}{2} \, \int_{B_1(0)} | (\nabla' \beta_1)_s(x_1, x') + c(x_1) + \hat{\mathcal T}'(x_1, x')|^2 \, dx' \\
 \ge  &  \, \frac{1}{2}  \min_{\hat \beta_1} \int_{B_1(0)} | \nabla' \hat  \beta_1(x_1, x') + \hat{\mathcal T}'(x_1, x')|^2 \, dx'
 \,.
 \label{eq:lowerbound_I_F'}
\end{align}
For the second inequality if suffices to note that the function 
$$\hat \beta(x) \coloneqq  \frac{1}{2}\left(  \beta(x_1, x') - \beta(x_1, - x') \right) +  c(x_1) x'$$
satisfies $\nabla' \hat  \beta =  (\nabla' \beta)_s(x_1, x') + c(x_1)$. 

Finally, to estimate $F''$ we use that by Assertion~\ref{it:cross_section_linear_compatible}  in 
Proposition~\ref{eq:minimization_cross_section} there exists 
a $\check \beta': \Omega \to \mathbb R^{n-1}$ such that 
$$ \sym \nabla' \check \beta'(x_1, x') =  \left[- \frac{1}{2} A^2(x_1) + \mathbb L(x_1, x') + \tilde{\mathcal T}(x_1)\right]''.$$
Thus
\begin{align}
& \, \int_{B_1(0)} |F''(x_1, x')|^2 \, dx'  \ge  \int_{B_1(0)} |\sym \nabla' \beta'(x_1, x') + \sym \nabla' \check \beta'(x_1,x') + 
\hat {\mathcal T}(x_1, x')| \, dx' \nonumber \\
\ge & \,  \min_{\hat \beta'} \int_{B_1(0)} | \nabla'  \hat \beta'(x_1, x') + \hat{\mathcal T}''(x_1, x')|^2 \, dx' \,.
 \label{eq:lowerbound_I_F''}
\end{align}
Now it follows from~\eqref{eq:lowerbound_I_F11}, \eqref{eq:lowerbound_I_F'}, and~\eqref{eq:lowerbound_I_F''}
that $\mathcal I^{\tilde \gamma, \underline{\tilde \nu}}(\overline w, y, Z,   \overline \beta)  \ge \overline m$.
This concludes the proof of
Theorem~\ref{th:charaterization_minimum}.
\end{proof}

\begin{proof}[Proof of Corollary~\ref{eq:Q_positive_definite}] The last statement
 follows from the fact that $\mathbb Q$ is positive definite
on the space of tensors which have the symmetries of the curvature tensor and
Theorem~\ref{thm:lim-E-minimizer}. 

The formula \eqref{eq:decomposition_mathbbQ} and the expressions
  \eqref{eq:formula_Q2} and   \eqref{eq:formula_Q3} for $\mathbb Q_2$ and 
  $\mathbb Q_3$ follow from the formula  \eqref{eq:J_decomp_three_terms} for  $\mathcal J$ in Theorem~\ref{th:charaterization_minimum}
  and the formulae for $\mathcal T$ and $\tilde{\mathcal  T}$ in~\eqref{eq:def-T}. 
For $\mathbb Q_1$ we get
\begin{equation}   \label{eq:initial_formula_Q1}
\mathbb Q_1(\mathcal A^{\mathrm{par}}) = \int_{\{0\} \times B_1(0)}  \frac{1}{4}
|\mathcal A^{\mathrm{par}}_{kl} x'_k x'_l - c|^2 = \frac14  \int_{\{0\} \times B_1(0)} 
|\mathcal A^{\mathrm{par}}_{kl} x'_k x'_l|^2 - \frac14 c^2,
\end{equation}
with 
\begin{equation} c = \fint_{\{0\} \times B_1(0)}
A^{\mathrm{par}}_{kl} x'_k x'_l  \, dx'.
\end{equation}
To show the identity
\begin{equation}   \label{eq:formula_Q1_bis} \mathbb Q_1(\mathcal A^{\mathrm{par}})
=   \frac{1}{2 (n+1)(n+3)}  | \mathcal A^{\mathrm{par}}|^2 -   \frac{1}{2(n+1)^2(n+3)}  \left(
\tr  
\mathcal A^{\mathrm{par}} \right)^2,
\end{equation}
for the symmetric matrix $\mathcal A^{\mathrm{par}}$, 
we first note that it suffices to show this identity for diagonal matrices, since both sides are invariant
under orthogonal transformations $\mathcal A^{\mathrm{par}} \mapsto Q^{-1} \mathcal A^{\mathrm{par}} Q$
with $Q \in SO(n)$. Here we use that the measure $n-1$ dimensional Lebesgue measure on $B_1(0)$
is invariant under the action of $Q$. For diagonal matrices $D$, the map $D \mapsto \mathbb Q_1(D)$
is  a quadratic form  of the diagonal entries $(d_1, \ldots, d_{n-1})$ which is invariant under permuation
of the $d_i$. Thus $\mathbb Q_1$ is of the form
$$ \mathbb Q_1(D) = \alpha \sum_i d_i^2 + \beta (\sum_i d_i)^2 = \alpha |D|^2 + \beta (\tr D)^2.$$

To determine $\alpha$ and $\beta$,  we set $m=n-1$ and use that
$$ I_2:= \fint_{\{0 \} \times B_1(0)} {x'_j}^2   \, dx' = \frac1m \fint_{B_1(0)} |x'|^2  dx'  = \frac{1}{m+2}.$$
and (see below for a proof)
\begin{equation}  \label{eq:4th_moment}
I_4:= \fint_{   \{0 \} \times B_1(0)}    (x'_j)^4  \, dx'  = \frac{3}{(m+2) (m+4)}
\end{equation}

Now the definition of $\mathbb Q_1$ implies that
\begin{align}
& \, 4 (m \alpha + m^2 \beta) =  4 \mathbb Q_1(\Id) =    \fint_{   \{0\} \times B_1(0) }
 |x'|^4 \, dx' -  (m I_2)^2
    \nonumber
=  \frac{m}{m+4} - \frac{m^2}{(m+2)^2}   \\
= & \,     \frac{4m}{(m+2)^2 (m+4)}.
\end{align}
 and
 \begin{align}
& \,  4 (\alpha + \beta) = 4 \mathbb Q_1(e_1 \otimes e_1) = I_4 - I_2^2  = 
 \frac{  3(m+2)}{  (m+2)^2 (m+4) } - 
 \frac{m+4}{(m+2)^2 (m+4)} \\
 = & \,\frac{2m + 2}{(m+2)^2 (m+4)}  \, .
 \end{align}
 Thus 
 \begin{eqnarray*}4 \beta &=&  \frac{1}{m-1}  \,   \frac{4 - 2m - 2}{(m+2)^2 (m+4)} = - \frac{2}{(m+2)^2 (m+4)}, \\
4 \alpha &= &\frac{2m + 4}{(m+2)^2 (m+4)} = \frac{2}{(m+2)(m+4)}.
\end{eqnarray*}
Since $m=(n-1)$, this is equivalent to the desired identity~\eqref{eq:formula_Q1_bis}.

 It only remains to show the identity~\eqref{eq:4th_moment} for $I_4$. Denote by $B_R^m$ the open  ball of radius $R$  in
  $\mathbb R^m$ and set
 $$ \omega_m := \mathcal L^m(B_1^m),  \quad a_k = \int_0^{\pi/2} \cos^k t \, dt.$$
 Using the substitution $r = \sin t$,  we get
 \begin{align*} \omega_m =   \int_{B_1^m} 1 \, dy =  \int_0^1  1 \int_{B^{m-1}_{\sqrt{1-r^2}} } \,  dy'  \, dr
 =  \int_0 ^1  (1 - r^2)^{(m-1)/2}    \,  \omega_{m-1} \, dr =  \omega_{m-1}    a_m \, .
 \end{align*}
 Using that $\sin^4 t = (1- \cos^2 t)^2 = 1 - 2 \cos^2 t + \cos^4 t$, we get similarly. 
 \begin{align*}
 \omega I_4 = &  \,  \int_{B^1_m}  y_1^4  \, dx = \int_0^1  r^4   \int_{B^{m-1}_{\sqrt{1-r^2}} } dy'   \, dr
 =  \int_0^1   r^4  (1 - r^2)^{(m-1)/2}    \omega_{m-1} \, dr \\
 = & \, \int_0^1  \sin^4 t \cos^m t \,dt = a_{m+4} - 2 a_{m+2} + a_m  \, .
 \end{align*}
Integration by parts and the identity $\sin^2 t = 1 - \cos^2 t$  give,  for $k \ge 2$, 
\begin{align*}  a_k = \int_0^{\pi/2} \cos^{k-1} t \, \cos t \, dt =
\int_0^{\pi/2}  (k-1) \cos^{k-2} t  \sin t  \,\,  \sin t  \, dt =  (k-1)  (a_{k-2} - a_k).
\end{align*}
Thus $a_k = \frac{k-1}{k} a_{k-2}$ and hence
\begin{align*} I_4 =  & \, \frac{a_{m+4} -2 a_{m+2} + a_m}{a_m}
= \frac{m+3}{m+4}  \,  \frac{m+1}{m+2} - 2 \,  \frac{m+1}{m+2} + 1\\
=  & \, \frac{(m+3)(m+1) - 2 (m+1) (m+4) + (m+2)(m+4)}{(m+2)(m+4)} \\
= & \, \frac{3}{(m+2)(m+4)}.
\end{align*}
This concludes the proof of \eqref{eq:4th_moment}.

\medskip

We now show that $\mathbb Q$ is positive definite  (on the space of tensors which satisfy  the symmetries of the curvature tensor).
It follows from the definition of the quadratic forms $\mathbb Q_i$, 
 that $\mathbb Q$ is  positive semidefinite on that space. 
 To prove positive definiteness,
 assume that $\mathbb Q(\mathcal A) = 0$. Then 
 \begin{align*}
   \mathbb Q_1(\mathcal A^{\mathrm{par}}) = 0, \quad 
   \mathbb Q_2(\mathcal A') = 0, \quad  \text{and} \quad \mathbb Q_3(\mathcal A'') = 0.
 \end{align*}
 
 By~\eqref{eq:formula_Q1_bis}, we get $\mathcal A^{\mathrm{par}} = 0$. 
 The condition $\mathbb Q_2(\mathcal A') = 0$ implies that there exists a $\beta_1$ such that
 \begin{align*}
   -3  \partial_{x_j} \beta_1 =  \mathcal A_{1kjl} x_k x_l  \quad \text{on $\{0\} \times B_1(0)$}
 \end{align*}
and thus
\begin{align*}
  -3 \partial_{x_m} \partial_{x_j} \beta_1 = \mathcal A_{1mjl} x_l +\mathcal  A_{1ljm} x_l.
\end{align*}
Since the second derivatives are symmetric this gives
\begin{align*}
  0 = A_{1mjl} + A_{1ljm} - A_{1jml} - A_{1lmj}.
\end{align*}
Now $- A_{1lmj} = A_{1ljm}$ and 
\begin{align*}
  0 = A_{1ljm} + A_{1jml} + A_{1mlj} =  A_{1ljm}  -( - A_{1jml} +  A_{1mjl}).
\end{align*}
Thus 
$A_{1mjl} - A_{1jml}  = A_{1ljm}$ and hence $A_{1mlj} = 0$, for all $m,l,j \ge 2$. 

Finally, using  the compatibility conditions for a symmetrized gradient one easily concludes that
$\mathbb Q_3(\mathcal A'') = 0$ implies that $\mathcal A'' = 0$, see, for example,  \cite{Maor2019} or
\cite{Kroemer2025}. 
\end{proof}

\section{Uniqueness of the limit in $\mathcal{X}^{\mathcal{G}}/\sim$}  \label{se:unique_limit}

In this section we prove Proposition~\ref{pr:unique_limit}.
In Proposition~\ref{pr:unique_limit} two things vary. We have two uniformly Lipschitz maps $\hat{\tilde v}$ and $\check{\tilde v}$
which agree on a large set and we consider the lift with respect to two different  framed geodesics. 
Thus Proposition~\ref{pr:unique_limit} will follow from Propositions~\ref{pr:unique_same_geodesic}
and~\ref{pr:unique_same_tilde_v} below, where we consider separately the effects
of changing $\hat{\tilde v}$ to  $\check{\tilde v}$ and of the lift with respect to two different framed geodesics.

\begin{proposition} \label{pr:unique_same_geodesic}
Suppose that there exist Lipschitz maps $\hat{\tilde v}$ and $\check{\tilde v}$
from $\Omega^*_{h_k} = \psi(\Omega_{h_k})$ to $\tilde{\mathcal{M}}$ with the following properties:
\begin{equation}  \label{eq:unique_lip_restated} \sup_{k} \max(\Lip \hat{\tilde v}_k, \Lip \check{\tilde v}_k) \le l ,
\end{equation}
\begin{equation}  \label{eq:unique_small_set_restated}
\sup_k \frac{1}{h_k^4} \frac{\mu  \{x \in \Omega_{h_k}^* : 
\hat{\tilde v}_k(x) \ne \check{\tilde v}_k(x) \} }{\mu (\Omega_{h_k})}
< \infty,
\end{equation}
where $\mu$ is the volume measure on $\mathcal M$.
Then 
\begin{equation}  \label{eq:trafo_same_geod_sup_bound}
 \sup_{x \Omega_{h_k}^*} \dist(\hat{\tilde v}(x), \check{\tilde v}(x))  \le C h_k^{1 + 3/n}.
\end{equation} 

If, in addition, 
 there  exist  framed  unit speed geodesics 
$(\check \gamma_k, \underline{\check \nu_k})$ such that 
\begin{equation}
\sup_k   \frac1{h_k}  \dist( \check{\tilde v}_k \circ \psi(x), \check \gamma_k(x_1)) < \infty.
\end{equation}
and if the   lifts  $\check v_k: \Omega \to \mathbb R^n$ defined by 
$$\check \psi_k \circ  \check  v_k = \check{\tilde v}_k \circ \psi \circ I_{h_k}$$
satisfy Properties~\ref{it:mora-n_convergence_w} to~\ref{it:convergence_beta} of Theorem~\ref{thm:muller-mora-3d},  then the lifts
$\hat v_k: \Omega \to \mathbb R^n$ defined by 
$$\check  \psi_k \circ  \hat  v_k = \hat{\tilde v}_k \circ \psi \circ I_{h_k}$$
exist and 
\begin{equation} \label{eq:trafo_same_geod_L2_bound}  
\| \hat v_k - \check v_k \|_{L^2(\Omega)} \le C h_k^{3 + 3/n}. 
\end{equation}
Moreover,  $\hat v_k$ also satisfy Properties~\ref{it:mora-n_convergence_w} to~\ref{it:convergence_beta} of Theorem~\ref{thm:muller-mora-3d}
and
\begin{equation}  \label{eq:trafo_same_geod_limits}
 (\hat w, \hat y, \hat Z, \hat \beta) = (\check w, \check y, \check Z, \check \beta).
 \end{equation}
\end{proposition}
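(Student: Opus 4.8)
The plan is to first establish the pointwise bound \eqref{eq:trafo_same_geod_sup_bound} and then use it to control the lifts. For \eqref{eq:trafo_same_geod_sup_bound}, observe that the two Lipschitz maps $\hat{\tilde v}_k$ and $\check{\tilde v}_k$ agree except on a set $F_k \subset \Omega_{h_k}^*$ of $\mu$-measure $\mathcal O(h_k^{4+n-1})$ (using that $\mu(\Omega_{h_k}^*) = \mathcal O(h_k^{n-1})$). Since $\psi$ is bilipschitz, the preimage $\psi^{-1}(F_k) \subset \Omega_{h_k}$ has Lebesgue measure $\mathcal O(h_k^{n+3})$, so the largest ball it contains has radius $\mathcal O(h_k^{1+3/n})$. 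For any point $x \in \Omega_{h_k}$, the (Euclidean) distance from $x$ to the complement of $\psi^{-1}(F_k)$ is therefore at most $Ch_k^{1+3/n}$; picking such a nearby point $x'$ where the two maps agree and using that both are $\ell$-Lipschitz gives $\dist(\hat{\tilde v}_k(\psi(x)), \check{\tilde v}_k(\psi(x))) \le 2\ell \cdot 2 \cdot Ch_k^{1+3/n}$, which is \eqref{eq:trafo_same_geod_sup_bound}. (One has to be mildly careful near $\partial\Omega_{h_k}$, but since $\Omega_{h_k}$ is a cylinder of radius $h_k$ with $h_k^{1+3/n} \ll h_k$, a nearby good point inside $\Omega_{h_k}$ always exists.)

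Next, existence of the lift $\hat v_k$: since $\check{\tilde v}_k \circ \psi$ stays within $Ch_k$ of $\check\gamma_k$, the bound \eqref{eq:trafo_same_geod_sup_bound} shows $\hat{\tilde v}_k \circ \psi$ stays within $Ch_k < \delta/2$ of $\check\gamma_k$ as well, so Lemma~\ref{le:lifting_maps_new} applies and $\hat v_k = \check\Phi_k(x_1, \hat{\tilde v}_k \circ \psi \circ I_{h_k})$ is defined, with $\hat v_k$ uniformly Lipschitz. Because the lifting map $\check\Phi_k$ (the $\tilde\Phi$ of Lemma~\ref{lem:existence-lift_new}) has $|D\check\Phi_k| \le 2$ and $\check\Phi_k$ is bilipschitz onto its image, the estimate $|\hat v_k(x) - \check v_k(x)| \le 2\,\dist(\hat{\tilde v}_k(\psi(x')), \check{\tilde v}_k(\psi(x'))) $ with $x' = I_{h_k}(x)$ holds pointwise, so $\|\hat v_k - \check v_k\|_{L^\infty(\Omega)} \le Ch_k^{1+3/n}$, and a fortiori $\|\hat v_k - \check v_k\|_{L^2(\Omega)} \le C|\Omega|^{1/2} h_k^{1+3/n}$. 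To upgrade this to the claimed $Ch_k^{3+3/n}$ in \eqref{eq:trafo_same_geod_L2_bound} one uses that $\hat v_k$ and $\check v_k$ coincide outside $\psi^{-1}(F_k)\circ I_{h_k}^{-1}$, a set of Lebesgue measure $\mathcal O(h_k^4)$ (the factor $h_k^{n-1}$ from the cross-section is absorbed when passing to the rescaled domain $\Omega$, leaving $h_k^{4+n-1}/h_k^{n-1} = h_k^4$): hence $\|\hat v_k - \check v_k\|_{L^2(\Omega)}^2 \le \|\hat v_k - \check v_k\|_{L^\infty}^2 \cdot \mathcal L^n(\{\hat v_k \ne \check v_k\}) \le C h_k^{2+6/n} \cdot h_k^4$, giving the exponent $3 + 3/n$ after taking the square root.

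Finally, we must check that $\hat v_k$ satisfies \eqref{eq:muller-mora-matrix-estimates} and Properties~\ref{it:mora-n_convergence_w}--\ref{it:convergence_beta} with the \emph{same} limits as $\check v_k$. Since $\hat v_k$ is uniformly Lipschitz and agrees with $\check v_k$ outside a set of measure $\mathcal O(h_k^4)$, and $d_{h_k}\hat v_k$, $d_{h_k}\check v_k$ are uniformly bounded in $L^\infty$, we get $\|d_{h_k}\hat v_k - d_{h_k}\check v_k\|_{L^2(\Omega)}^2 \le C h_k^4$ (only the first, un-rescaled partial derivative contributes without an extra $h_k^{-2}$; the rescaled tangential derivatives $h_k^{-1}\partial_{x_j}$ contribute $h_k^{-2}\cdot h_k^4 = h_k^2$, which is still $o(1)$ — in fact one needs $o(h_k^2)$ for the energy, which requires revisiting: the set where they differ is where $\hat{\tilde v}_k \ne \check{\tilde v}_k$, whose rescaled measure is $\mathcal O(h_k^4)$, so $h_k^{-2}\|d_{h_k}\hat v_k - d_{h_k}\check v_k\|_{L^2}^2 \le C h_k^{-2}\cdot h_k^{-2}\cdot h_k^4 = C$, bounded, hence $\dist^2(d\hat{\tilde v}_k, SO) \le C h_k^4$ as well, and \eqref{eq:muller-mora-matrix-estimates} follows by applying Theorem~\ref{thm:muller-mora-3d} to $\hat v_k$). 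It then suffices to show the limits agree: the auxiliary quantities $w^k, y^k, z^k, \beta^k$ defined from $\hat v_k$ differ from those defined from $\check v_k$ by integrals over $B_1(0)$ of $(\hat v_k - \check v_k)$ times bounded weights, divided by $h_k$ or $h_k^2$. Using $\|\hat v_k - \check v_k\|_{L^2(\Omega)} \le C h_k^{3+3/n} = o(h_k^2)$ (and for $\beta^k$, which carries a $h_k^{-3}$, one needs $o(h_k^3)$ which holds since $3 + 3/n > 3$), these differences tend to zero, so $(\hat w,\hat y,\hat Z,\hat\beta) = (\check w,\check y,\check Z,\check\beta)$, which is \eqref{eq:trafo_same_geod_limits}. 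The main obstacle is bookkeeping the various powers of $h_k$ (distinguishing the volume fraction $h_k^4$ from the absolute measure $h_k^{4+n-1}$, and keeping track of which derivatives are rescaled), so that the estimate for $\beta^k$ — the highest-order term, dividing by $h_k^3$ — still closes; the bound $3 + 3/n > 3$ is exactly what makes it work, with room to spare.
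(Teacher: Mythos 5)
Your proposal is correct and takes essentially the same route as the paper's proof: from the $\mathcal O(h_k^{n+3})$ measure of the disagreement set to the largest-ball radius $\mathcal O(h_k^{1+3/n})$ and the sup bound via the uniform Lipschitz constants, then existence of the lift and the $L^2$ bound $Ch_k^{3+3/n}$ by combining the sup bound with the $\mathcal O(h_k^4)$ rescaled measure of the set where $\hat v_k\neq\check v_k$, and finally coincidence of the limits because even the $h_k^{-3}$ scaling in $\beta^k$ leaves the margin $h_k^{3/n}$. The only wobble is your parenthetical about the rescaled tangential derivatives: since the lifts $\overline v_k$ are uniformly Lipschitz and $d_{h_k}v_k=(d\overline v_k)\circ I_{h_k}$, the rescaled gradients are uniformly bounded, so your first estimate $\|d_{h_k}\hat v_k-d_{h_k}\check v_k\|_{L^2(\Omega)}^2\le Ch_k^4$ (giving the same $Q_k$ in \eqref{eq:muller-mora-matrix-estimates}) is the correct one and the ``revisiting'' with an extra $h_k^{-2}$ is unnecessary.
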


\begin{proof}[Proof of Proposition~\ref{pr:unique_same_geodesic}]
It follows from~\eqref{eq:unique_small_set_restated} that 
\begin{align*}
  \mu  \{x \in \Omega_{h_k}^* : \hat v_k(x) \ne \check v_k(x) \}  = \mathcal O(h_k^{n-1+4}).
\end{align*}
Thus the radius of the largest ball in $\{x \in \Omega_{h_k}^* : 
\hat v_k(x) \ne \check v_k(x) \} $ is bounded by $\mathcal O(h_k^{1+3/n})$.
Together with~\eqref{eq:unique_lip_restated} this implies~\eqref{eq:trafo_same_geod_sup_bound}.

Thus 
$$ \sup_k   \frac1{h_k}  \dist( \hat v_k \circ \psi(x), \check \gamma_k(x_1)) < \infty$$
and therefore  the lift  $\hat v_k$ exists and 
$$ \sup_{x \in \Omega}| \hat v_k(x) - \check v_k(x)| \le C h_k^{1 + 3/n},  $$
$$ \mathcal L^n(  \{ x \in \Omega :  \hat v_k(x) \ne \check v_k(x)  \}  \le C h_k^4.$$
These two estimates imply  \eqref{eq:trafo_same_geod_L2_bound}.
It follows from  \eqref{eq:trafo_same_geod_L2_bound}
that 
$$ \frac{1}{h_k^2} \| \hat y^k - \check y^k \|_{L^2} +
\frac{1}{h_k} \| \hat w_k - \check w_k\|_{L^2} + \frac{1}{h_k} \| \hat Z - \check Z\|_{L^2}
+ \| \hat \beta - \check \beta\|_{L^2} \le C h_k^{3/n}.$$
This implies 
 \eqref{eq:trafo_same_geod_limits}.
\end{proof}

We now come to the heart of the matter, the change of the limits under the change of framed geodesics.

\begin{proposition} \label{pr:unique_same_tilde_v}
Let $\psi = \psi_{\gamma, \underline \nu}$  
and let  $\tilde v_k$ be a sequence of Lipschitz  maps from $\Omega^*_{h_k} = \psi(\Omega_{h_k})$ to $\tilde{\mathcal{M}}$ with 
$\sup_k \Lip \tilde v_k \le l$. 
Assume  that there  exist  framed  unit speed geodesics $(\hat \gamma_k, \underline{\hat \nu}_k)$ and
$(\check \gamma_k, \underline{\check \nu}_k)$ such that 
\begin{equation}  \label{eq:unique_limit_close_to_geodesic}
\sup_k   \frac1{h_k} \sup_{x \in \Omega_{h_k}}   \dist( \tilde v_k \circ \psi(x), \hat \gamma_k(x_1))
+  \frac1{h_k}  \sup_{x \in \Omega_{h_k}}   \dist( \tilde v_k \circ \psi(x), \check \gamma_k(x_1)) < \infty.
\end{equation}
Then there exist  lifts  $\hat v_k: \Omega \to \mathbb R^n$ and $\check  v_k:  \Omega \to \mathbb R^n$ such that
\begin{align*}
  \hat \psi_k \circ \hat v_k = \tilde v_k \circ \psi \circ I_{h_k}, \qquad
\check \psi_k \circ  \check v_k = \tilde v_k \circ \psi \circ I_{h_k},
\end{align*}
where $\hat \psi_k$ and $\check \psi_k$ are the Fermi coordinate maps for the framed geodesics 
 $(\hat \gamma_k, \underline{\hat \nu}_k)$ and
$(\check \gamma_k, \underline{\check \nu}_k)$, respectively,  and where $I_{h_k}(x_1, x') = (x_1, h_k x')$. 
Assume that $\hat v_k$ and $\check  v_k$ satisfy Properties~\ref{it:mora-n_convergence_w} to~\ref{it:convergence_beta} of Theorem~\ref{thm:muller-mora-3d} and let 
$(\hat \gamma, \underline{\hat \nu}, \hat w, \hat y,  \hat Z, \hat \beta)$ and 
$(\check \gamma, \underline{\check \nu},\check w, \check y, \check Z, \check  \beta)$ 
denote the corresponding 
limits defined in Definition~\ref{def:new-convergence}.

Then 
\begin{equation} (\hat \gamma, \underline{\hat \nu}) = (\check \gamma, \underline{\check \nu})
\end{equation}
and 
there exists a Jacobi field  $J: (-L,L) \to \mathbb R^n$ and an infinitesimal change of frame map
$B: (-L,L) \to \mathbb R^{n \times n}_{\mathrm skew}$  such that
\begin{equation}  \label{eq:sametilde_J_and_B}
J_1 = 0, \quad B e_1 = \partial_1 J,
\end{equation}
\begin{equation}  \label{eq:sametilde_J_Jacobi}
  \partial_{x_1}^2  J =-  \overline{\tilde{\mathcal R}}(J, e_1) e_1   \, ,
 \end{equation}
and
\begin{equation}  \label{eq:sametilde_B_Jacobi_style}
  \partial_{x_1} B =  - \overline{\tilde {\mathcal R}}(J, e_1) 
\end{equation}
such that 
for $j, l \ge 2$, 
\begin{eqnarray}
\hat Z_{jl} &=& \check Z_{jl} + B_{jl}     \label{eq:trafo_Z}   \, ,  \\
\hat y &=& \check y + J  \label{eq:trafo_y}   \, , \\
\partial_{x_1} \hat w(x_1) + \tilde{\mathcal T}_{11}(x_1, \hat y(x_1))  
       &=&  \partial_{x_1} \check w(x_1)  +  \tilde{\mathcal T}_{11}(x_1, \check y(x_1)) \nonumber   \\
 & & + \frac{1}{2} (\hat A^2 - \check A^2)_{11}(x_1)     \,  ,   \label{eq:trafo_w} \\
 \partial_{x_j} \hat \beta_1(x_1, x') 
+  2\tilde{ \mathcal T}_{1j}(x_1, \hat y(x_1) + x' )   &= & \nabla' \check \beta_1(x) + 
2 \tilde{\mathcal T}_{1j}(x_1, \check y(x_1) + x') 
\nonumber  \\
                                                       & & e_j \cdot \partial_{x_1}( \check Z  - \hat Z)(x_1)  x'   + (\hat A^2 -  \check A^2)_{1j}(x_1)
 \label{eq:trafo_beta1}  \, ,  \\
(\sym \nabla' \hat \beta')_{jl}(x_1,x') + \tilde{ \mathcal T}_{jl}(x_1, \hat y(x_1) + x' ) 
 &=&(\sym  \nabla'  \hat \beta')_{jl}(x_1, x')  + \tilde{\mathcal T}_{jl}(x_1, \check y(x_1) + x') \nonumber  \\
& &  +  \frac{1}{2} (\hat A^2 -  \check A^2)_{jl}(x_1)  \, , 
  \label{eq:trafo_beta_prime} 
\end{eqnarray}
where  $\nabla' = (\partial_{x_2}, \ldots, \partial_{x_n})$, $\beta' = (\beta_2, \ldots, \beta_n)$ and
 \begin{align*}
   \hat A = \begin{pmatrix}  0 & - (\partial_{x_1} \hat y)^T  \\ \partial_{x_1} \hat y &  \hat Z
  \end{pmatrix},  \quad  
     \check A =  \begin{pmatrix}  0 & - (\partial_{x_1} \check y)^T  \\ \partial_{x_1} \check y &  \check  Z
  \end{pmatrix}.
 \end{align*}
 Here we used the abbreviation $\tilde{\mathcal T} = \mathcal T^{\tilde {\mathcal R}}_{\tilde \gamma,
 \underline{\tilde \nu}}$ with  $(\tilde \gamma,
 \underline{\tilde \nu}) := (\hat \gamma,
 \underline{\hat \nu} )= (\check \gamma,
 \underline{\check \nu})$.
\end{proposition}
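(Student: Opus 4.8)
The plan is to express both lifts through a single change-of-charts map and then to expand. Since $\hat\psi_k\circ\hat v_k=\tilde v_k\circ\psi\circ I_{h_k}=\check\psi_k\circ\check v_k$, setting $\Theta_k\coloneqq\hat\psi_k^{-1}\circ\check\psi_k$ (the transition between the two Fermi charts, defined near the axis $(-L,L)\times\{0\}$ by Lemma~\ref{lem:existence-lift_new} and the closeness hypothesis~\eqref{eq:unique_limit_close_to_geodesic}) we get $\hat v_k=\Theta_k\circ\check v_k$, and since $\check v_k$ takes values in a $Ch_k$-tube about the axis it suffices to understand $\Theta_k$ there. I would record three facts: on the axis, $\check\psi_k(x_1,0)=\check\gamma_k(x_1)$ has $\hat$-Fermi-coordinates $(a_k(x_1),h_kJ_k(x_1))$ with $|a_k(x_1)-x_1|+h_k|J_k(x_1)|\le Ch_k$ (bi-Lipschitzness of the charts together with $\dist_{\tilde{\mathcal M}}(\hat\gamma_k(x_1),\check\gamma_k(x_1))\le Ch_k$); the axial differential $D\Theta_k(x_1,0)$ is the $SO(n)$ matrix $S_k(x_1)$ expressing $(\check\gamma_k'(x_1),\underline{\check\nu}_k(x_1))$ in the basis $(\hat\gamma_k'(a_k),\underline{\hat\nu}_k(a_k))$, which has the block form $S_k(x_1)=\mathrm{diag}(1,R_k(x_1))+O(h_k)$ with $R_k$ a rotation of $\mathbb R^{n-1}$; and off the axis the nonlinear part of $\Theta_k$ is controlled by Proposition~\ref{prop:comparison_exp_maps} and the expansions~\eqref{eq:taylor-metric}--\eqref{eq:taylor-christoff}, its quadratic-in-the-normal-variable part being governed by the curvature $\overline{\tilde{\mathcal R}}$ (this is the source of the $\tilde{\mathcal T}$-terms) and the remainder being cubic, i.e.\ $O(h_k^3)$ on our domain.

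\emph{Step 1: the framed geodesics coincide, and extraction of $J,B$.} From $\dist_{\tilde{\mathcal M}}(\hat\gamma_k(x_1),\check\gamma_k(x_1))\le Ch_k$ and uniform convergence of the $\hat\gamma_k$, $\check\gamma_k$ we get $\hat\gamma=\check\gamma=:\tilde\gamma$. Because both $\hat v_k$ and $\check v_k$ satisfy property~\ref{item:muller-mora-conv-A} of Theorem~\ref{thm:muller-mora-3d}, $d_{h_k}\hat v_k,d_{h_k}\check v_k\to\Id$; feeding $\hat v_k=\Theta_k\circ\check v_k$ and the block structure of $S_k$ into this forces $R_k\to\Id$ (hence $\underline{\hat\nu}=\underline{\check\nu}$), and comparing property~\ref{it:mora-n_convergence_w} ($\hat w^k\rightharpoonup\hat w$) with $(\hat v_k)_1-x_1=a_k(0)+O(h_k^2)$ forces $a_k(0)=O(h_k^2)$, so the rescaled normal displacement $J_k$ has vanishing first component in the limit. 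Writing the affinely parametrised geodesic equation for $\check\gamma_k$ in the $\hat$-chart and using~\eqref{eq:taylor-christoff} gives $\partial_{x_1}^2J_k=-\overline{\tilde{\mathcal R}}(J_k,e_1)e_1+o(1)$, so $J_k$ converges to a Jacobi field $J$ with $J_1=0$ satisfying~\eqref{eq:sametilde_J_Jacobi}; writing the parallel-transport equation for $\underline{\check\nu}_k$ in the $\hat$-chart, the infinitesimal rotation $B_k\coloneqq h_k^{-1}(R_k-\Id)$ converges to a skew field $B$ satisfying~\eqref{eq:sametilde_B_Jacobi_style}, and the orthogonality constraint $\langle\check\nu_j,\check\gamma_k'\rangle=0$ expressed in $\hat$-coordinates gives $Be_1=\partial_{x_1}J$, which is~\eqref{eq:sametilde_J_and_B}.

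\emph{Step 2: the transformation rules.} Substitute the ansatz $\check v_k=\binom{x_1}{h_kx'}+\binom{h_k^2\check w_k-h_k^2\,x'\cdot\partial_{x_1}\check y_k}{h_k\check y_k+h_k^2\check Z_kx'}+h_k^3\check\beta_k+\text{l.o.t.}$ (equivalent to properties~\ref{it:mora-n_convergence_w}--\ref{it:convergence_beta} for $\check v_k$) into $\hat v_k=\Theta_k\circ\check v_k$ and Taylor-expand $\Theta_k$ about the axis to third order in $h_k$, using Step~1 for the expansions of $a_k$, $S_k$, $J_k$, $B_k$, and Proposition~\ref{prop:comparison_exp_maps} with~\eqref{eq:taylor-metric}--\eqref{eq:taylor-christoff} for the quadratic and cubic corrections. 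Matching powers of $h_k$ and letting $k\to\infty$ (strongly for the $h_k^1$- and $h_k^2$-coefficients by properties~\ref{item:muller-mora-conv-A} and~\ref{item:muller-mora-prop-5}, weakly in $L^2$ for $\partial_{x_i}\beta^k$ by property~\ref{it:convergence_beta}) yields: at order $h_k^1$, $\hat y=\check y+J$, i.e.~\eqref{eq:trafo_y}; at order $h_k^2$, the skew part gives $\hat Z=\check Z+B$, i.e.~\eqref{eq:trafo_Z}, and the $(1,1)$-entry gives~\eqref{eq:trafo_w}, whose $\tfrac12(\hat A^2-\check A^2)_{11}$ term comes from the second-order term of the composition and whose $\tilde{\mathcal T}_{11}(x_1,\hat y)-\tilde{\mathcal T}_{11}(x_1,\check y)$ term comes from evaluating the curved Fermi metric at the $h_kJ$-shifted normal coordinate; at order $h_k^3$ one reads off~\eqref{eq:trafo_beta1} and~\eqref{eq:trafo_beta_prime}, in which the shifts $\tilde{\mathcal T}_{1j}(x_1,\hat y+x')-\tilde{\mathcal T}_{1j}(x_1,\check y+x')$ and $\tilde{\mathcal T}_{jl}(x_1,\hat y+x')-\tilde{\mathcal T}_{jl}(x_1,\check y+x')$ are the contributions of the quadratic-in-$x'$ and cubic curvature corrections of $\Theta_k$ (reassembled via~\eqref{eq:def-T}), the term $e_j\cdot\partial_{x_1}(\check Z-\hat Z)x'=-e_j\cdot\partial_{x_1}B\,x'$ comes from differentiating $R_k$ along the axis, and $(\hat A^2-\check A^2)_{1j}$, $\tfrac12(\hat A^2-\check A^2)_{jl}$ from the second-order composition terms. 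Since an element of $\mathcal B$ is determined by $\nabla'\beta_1$ and $\sym\nabla'\beta'$ (as noted after Definition~\ref{def:equiv-relation}), these two identities pin down $\hat\beta$; in particular $J_k\to\hat y-\check y$, closing the loop with Step~1, and we conclude $(\hat w,\hat y,\hat Z,\hat\beta)\sim(\check w,\check y,\check Z,\check\beta)$ with transition data $(J,B)$ in the sense of Definition~\ref{def:equiv-relation}.

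\emph{Expected main obstacle.} The delicate part is the order-$h_k^3$ computation: one must verify that the genuinely nonlinear part of $\Theta_k$ --- its quadratic-in-$x'$ curvature correction together with the cubic remainder from Proposition~\ref{prop:comparison_exp_maps} --- reassembles, after substituting the normal coordinate $h_k(\check y+x')+O(h_k^2)$, exactly into the difference $\tilde{\mathcal T}(x_1,\hat y+x')-\tilde{\mathcal T}(x_1,\check y+x')$ occurring in~\eqref{eq:trafo_beta1}--\eqref{eq:trafo_beta_prime}, with the coefficients $\tfrac12,\tfrac13,\tfrac16$ and the signs of~\eqref{eq:def-T}, and that the remaining pieces organise into $\tfrac12(\hat A^2-\check A^2)$ and $-e_j\cdot\partial_{x_1}B\,x'$; keeping the many curvature contractions straight, via the symmetries of $\overline{\tilde{\mathcal R}}$, is where the bookkeeping is heaviest. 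A subsidiary point is the $k$-uniformity of all remainder terms, which follows from $\sup_k\Lip\tilde v_k<\infty$ (hence a uniform Lipschitz bound on $\check v_k$) and from the uniformity of the constants in Lemma~\ref{lem:metric-taylor-expansion}, Lemma~\ref{lem:existence-lift_new} and Proposition~\ref{prop:comparison_exp_maps} over a compact set containing $\tilde\gamma((-L,L))$.
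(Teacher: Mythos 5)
Your overall skeleton is the same as the paper's: you pass to the transition map $\Theta_k=\hat\psi_k^{-1}\circ\check\psi_k$ between the two Fermi charts (the paper's $\Phi_k$ in Proposition~\ref{pr:change_of_geodesic}), extract $J$ and $B$ from the geodesic and parallel-transport equations written in the $\hat{}$-chart via~\eqref{eq:taylor-christoff}, and then compare the two lifts through $\hat v_k=\Theta_k\circ\check v_k$. Step~1 of your argument is essentially Proposition~\ref{pr:change_of_geodesic_convergence} (up to some imprecision: $J_1=0$ comes from the $L^2$ comparison of the longitudinal averages of both lifts, which are $O(h_k^2)$ by property~\ref{it:mora-n_convergence_w} of Theorem~\ref{thm:muller-mora-3d}, not from a pointwise statement at a single $x_1$). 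The problem is Step~2: the identities~\eqref{eq:trafo_w}--\eqref{eq:trafo_beta_prime}, and in particular the appearance of the differences $\tilde{\mathcal T}(x_1,\hat y+x')-\tilde{\mathcal T}(x_1,\check y+x')$ with the coefficients and signs of~\eqref{eq:def-T}, are exactly the content of the proposition, and you do not prove them -- you defer them as the ``expected main obstacle.'' Moreover, the mechanism you propose (a third-order Taylor expansion of $\Theta_k$ in the normal variable, matched at order $h_k^3$ against the map values) is not where these terms naturally live: $\tilde{\mathcal T}$ encodes the quadratic deviation of the \emph{metric}, i.e.\ of $(D\Theta_k)^TD\Theta_k$, not a cubic coefficient of the map $\Theta_k$ itself, and the transformation rules only constrain $\partial_{x_1}w$, $\nabla'\beta_1$ and $\sym\nabla'\beta'$, i.e.\ the symmetrized rescaled strain, for which only weak limits are available (property~\ref{it:convergence_beta}). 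A map-level power matching at order $h_k^3$ would force you to compute the full cubic expansion of $\Theta_k$ (equivalently $D^2\Theta_k$ to linear order off the axis, via~\eqref{eq:Phi_k_trafo_christoffel}) and then to recombine it, using the curvature symmetries, into the $\tilde{\mathcal T}$-form; none of this bookkeeping is carried out, so as it stands the proof has a genuine gap at its decisive step.

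The paper avoids this expansion altogether, and it is worth seeing how. Since $\Theta_k$ intertwines the two pullback metrics, $\check g_k=\Theta_k^*\hat g_k$, the second-order metric expansion of Lemma~\ref{lem:metric-taylor-expansion} applied on \emph{both} sides gives directly
\begin{equation*}
\frac{(D\Theta_k)^TD\Theta_k-\Id}{h_k^2}\;\longrightarrow\;\tilde{\mathcal T}(z_1,z')-\tilde{\mathcal T}(z_1,J'(z_1)+z')
\end{equation*}
on the rescaled tube (this is~\eqref{eq:Phik_convergence_metric_tensor}); no third-order information about $\Theta_k$ is needed. Then, writing $d_{h_k}\hat v_k=(D\Theta_k\circ\check v_k)\,d_{h_k}\check v_k$ and using the algebraic identity $2\sym(Q-\Id)=Q^TQ-\Id-(Q-\Id)^T(Q-\Id)$, the comparison is carried out at the level of the weak limits $\hat S,\check S$ of $\sym\,h_k^{-2}(d_{h_k}v_k-\Id)$ provided by Theorem~\ref{thm:muller-mora-3d}, yielding~\eqref{eq:trafo_final_trafo_S} and hence~\eqref{eq:trafo_w}--\eqref{eq:trafo_beta_prime} in one stroke (Proposition~\ref{pr:trafo_formulae_simpler}); the $\frac12(\hat A^2-\check A^2)$ terms come from the quadratic products $\sym(B\check A)$ and $B^2$ with $B=\hat A-\check A$ skew. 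If you want to keep your route, you must either carry out the cubic expansion of $\Theta_k$ and the curvature-symmetry reassembly explicitly, or, better, replace your order-$h_k^3$ matching by this metric-pullback/strain argument.
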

\begin{proof}
It follows from \eqref{eq:unique_limit_close_to_geodesic} that $\lim_{k \to \infty}\sup_{x_1 \in (-L,L)} \dist(\hat \gamma_k(x_1),
\check \gamma_k(x_1)) = 0$. Hence
$ \hat \gamma = \check \gamma$ and we will write
$$ \tilde \gamma \coloneqq \hat \gamma = \check \gamma.$$

To prove the other assertions in Proposition~\ref{pr:unique_same_tilde_v}, 
we will show in Proposition~\ref{pr:change_of_geodesic} below  that there exists a lift $\Phi_k: \Omega_{C h_k} 
\to \mathbb R^n$ such that
$$ \check \psi_k = \hat \psi_k \circ \Phi_k$$
and $\Phi_k$ is well behaved, in particular $|D^2 \Phi_k| = \mathcal O(h_k)$ 
and $\dist (D\Phi_k, SO(n)) = \mathcal O(h_k^2)$ on $\Omega_{C h_k}$.
It follows from the definition of $\Phi_k$ that 
$  \hat \psi_k \circ \hat v_k = \check \psi_k \circ \check v_k = \hat \psi_k \circ \Phi_k \circ \check v_k$
and hence
\begin{equation}  \label{eq:unique_hatvk_checkvk}  \hat v_k = \Phi_k \circ \check  v_k.
\end{equation}

The convergence properties of $\hat v_k$ and $\check v_k$ will imply that $h_k^{-1}(D\Phi_k - \Id)$ is bounded
in $L^2$,  and in Proposition~\ref{pr:change_of_geodesic_convergence} 
we analyze the limits of $h_k^{-1}(D\Phi_k - \Id)$ and
$h_k^{-2} \big[(D\Phi_k^T D\Phi_k) - \Id \big]$ and 
show \eqref{eq:sametilde_J_and_B}--\eqref{eq:sametilde_B_Jacobi_style}.

The formulae  
\eqref{eq:trafo_Z}--\eqref{eq:trafo_beta_prime} then follow from 
Proposition~\ref{pr:trafo_formulae_simpler} below. 
\end{proof}

\begin{proposition}\label{pr:change_of_geodesic}
Let $(\tilde{\mathcal{M}}, \tilde g)$ be a smooth,  oriented, compact Riemannian manifold. 
Let $L > 0$ and  $C> 0$. Then there exists a constant $C' >0$ with the following property.
Let $h_k > 0$ with $\lim_{k \to \infty} h_k = 0$,  and let  $\hat \gamma_k$ and $\check \gamma_k$ be unit speed  geodesics in $\tilde{\mathcal{M}}$ with 
\begin{align}   \label{eq:change_geodesic_distance_gamma_hat_check}
  \dist(\check{\gamma}_k(x_1), \hat \gamma_k(x_1)) \le C h_k  \quad \text{for $x_1 \in (-L,L)$.}
\end{align}
Let $\underline{\hat \nu}_ k$ and $\underline{\check \nu}_k$ be positively oriented orthonormal parallel frames of the 
normal bundles of $\hat \gamma_k$ and $\check \gamma_k$, respectively,  and let $\hat \psi_k$ and $\check \psi_k$
denote the corresponding Fermi coordinate maps. 

Then for $k \ge k_0$ there exists a $\rho > 0$  a unique map  $\Phi_k: (-2L, 2L) \times B_\rho(0) \to \mathbb R^n$
such that 
\begin{align*}
  \check \psi_k = \hat \psi_k \circ \Phi_k.
\end{align*}
Moreover, $\Phi_k$ is a smooth diffeomorphism onto its image and,
for $x \in (-2L, 2L) \times B_{Ch_k}(0)$, 
\begin{eqnarray}  \label{eq:Phi_k_almost_SOn}
\dist(D\Phi_k(x), SO(n)) &\le& C' h_k^2,\\
 \label{eq:DPhik_bound_second_derivative}
| D^2 \Phi_k(x) | &\le& C' h_k.
\end{eqnarray}
\end{proposition}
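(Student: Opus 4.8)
The map $\Phi_k$ is nothing but the expression of $\mathrm{id}_{\tilde{\mathcal M}}$ in the two Fermi charts, i.e.\ $\Phi_k=\hat\psi_k^{-1}\circ\check\psi_k$ read locally, so it intertwines the Levi-Civita connections and pulls $\hat g_k:=\hat\psi_k^*\tilde g$ back to $\check g_k:=\check\psi_k^*\tilde g$. The plan is: first construct $\Phi_k$ through the lifting map of Lemma~\ref{lem:existence-lift_new} (needed because $\hat\gamma_k$ may self-intersect, so $\hat\psi_k$ has no global inverse); then read off $\dist(D\Phi_k,SO(n))=\mathcal O(h_k^2)$ from the fact that both pullback metrics deviate from $\Id$ only at quadratic order in the transverse variable (Lemma~\ref{lem:metric-taylor-expansion}); and finally read off $|D^2\Phi_k|=\mathcal O(h_k)$ from the Christoffel transformation law together with the bound $\overline\Gamma=\mathcal O(|x'|)$ from \eqref{eq:taylor-christoff}. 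First I extend $\hat\gamma_k,\check\gamma_k$ and their frames to $(-2L-\eta,2L+\eta)$ (possible by completeness), apply Lemma~\ref{lem:existence-lift_new} with $2L$ in place of $L$ to get the lifting map $\hat\Phi$ on $V_{\delta/2}$ with $|D\hat\Phi|\le 2$ and $|D\hat\psi_k|,|D\check\psi_k|\le 2$, all with $\delta$ uniform in $k$. Since $\dist(\check\psi_k(x_1,x'),\hat\gamma_k(x_1))\le\dist(\check\psi_k(x_1,x'),\check\gamma_k(x_1))+\dist(\check\gamma_k(x_1),\hat\gamma_k(x_1))\le 2|x'|+Ch_k<\delta/2$ once $\rho<\delta/8$ and $k$ is large, I set $\Phi_k(x_1,x'):=\hat\Phi(x_1,\check\psi_k(x_1,x'))$. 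Then $\hat\psi_k\circ\Phi_k=\check\psi_k$, $\Phi_k$ is smooth, uniqueness follows from the uniqueness clause of Lemma~\ref{lem:existence-lift_new} and the local invertibility of $\hat\psi_k$, and since $\partial_s\hat\Phi=0$ one has $D\Phi_k=D_q\hat\Phi\cdot D\check\psi_k$, hence $|D\Phi_k|\le 4$ and $\det D\Phi_k>0$ (both factors have positive determinant in the oriented setting), so $\Phi_k$ is a local diffeomorphism onto its image.

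\textbf{$C^0$ bound and the estimate $\dist(D\Phi_k,SO(n))\le C'h_k^2$.} From $\hat\psi_k^{-1}(\hat\gamma_k(x_1))=(x_1,0)$, $|D\hat\Phi|\le 2$ and the triangle inequality above, $|\Phi_k(x_1,x')-(x_1,0)|\le 2(2|x'|+Ch_k)$; writing $\Phi_k=(\phi^0_k,\phi'_k)$, on $\Omega_{Ch_k}=(-L,L)\times B_{Ch_k}(0)$ this gives $|\phi'_k(x)|\le C'h_k$ and $\phi^0_k(x)\in(-2L,2L)$. Now $\hat\psi_k\circ\Phi_k=\check\psi_k$ yields $\Phi_k^*\hat g_k=\check g_k$, that is $(D\Phi_k)^T(\hat g_k\circ\Phi_k)(D\Phi_k)=\check g_k$ as matrices. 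By Lemma~\ref{lem:metric-taylor-expansion} (uniform in $k$ because $\tilde{\mathcal M}$ is compact) $|\hat g_k(y)-\Id|\le C|y'|^2$ and $|\check g_k(x)-\Id|\le C|x'|^2$, so on $\Omega_{Ch_k}$ both $\hat g_k\circ\Phi_k$ and $\check g_k$ equal $\Id+\mathcal O(h_k^2)$; subtracting $(D\Phi_k)^TD\Phi_k$ and using $|D\Phi_k|\le 4$ gives $(D\Phi_k)^TD\Phi_k=\Id+\mathcal O(h_k^2)$, and since $D\Phi_k$ is bounded with positive determinant this forces \eqref{eq:Phi_k_almost_SOn}.

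\textbf{The estimate $|D^2\Phi_k|\le C'h_k$.} Since $\Phi_k$ is a coordinate change between two charts on $\tilde{\mathcal M}$, the Christoffel symbols $\check\Gamma$ of $\check g_k$ and $\hat\Gamma$ of $\hat g_k$ satisfy the classical transformation law, which solved for the second derivatives reads
$$(D^2\Phi_k)^\rho_{\mu\nu}(x)=(D\Phi_k(x))^\rho_{\ \lambda}\,\check\Gamma^\lambda_{\mu\nu}(x)-(D\Phi_k(x))^\beta_{\ \mu}(D\Phi_k(x))^\gamma_{\ \nu}\,\hat\Gamma^\rho_{\beta\gamma}(\Phi_k(x)).$$
By \eqref{eq:taylor-christoff}, $|\check\Gamma(x)|\le C|x'|\le C'h_k$ and $|\hat\Gamma(\Phi_k(x))|\le C|\phi'_k(x)|\le C'h_k$ on $\Omega_{Ch_k}$; combined with $|D\Phi_k|\le 4$ this gives \eqref{eq:DPhik_bound_second_derivative} at once.

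\textbf{Main obstacle.} The analytic content is light — once the pullback-metric and connection identities are in place, both estimates are immediate from Lemma~\ref{lem:metric-taylor-expansion}. The genuinely delicate points are the bookkeeping around existence and uniqueness of $\Phi_k$ (the target geodesic may self-intersect, so one must route the construction through the lifting map of Lemma~\ref{lem:existence-lift_new} rather than through a global inverse of $\hat\psi_k$, and one must check that $\Phi_k$ is well defined on all of $(-2L,2L)\times B_\rho(0)$) and making sure that every constant coming from Lemma~\ref{lem:metric-taylor-expansion} and Lemma~\ref{lem:existence-lift_new} is uniform in $k$, which is exactly where compactness of $\tilde{\mathcal M}$ enters.
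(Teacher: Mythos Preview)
Your proposal is correct and follows essentially the same approach as the paper: construct $\Phi_k$ via the lifting map of Lemma~\ref{lem:existence-lift_new}, deduce \eqref{eq:Phi_k_almost_SOn} from the pullback identity $\Phi_k^*\hat g_k=\check g_k$ together with the quadratic metric expansion of Lemma~\ref{lem:metric-taylor-expansion}, and deduce \eqref{eq:DPhik_bound_second_derivative} from the Christoffel transformation law and the bound $\overline\Gamma=\mathcal O(|x'|)$ from \eqref{eq:taylor-christoff}. The only point the paper adds that you should make explicit is a one-line Gronwall argument extending the hypothesis $\dist(\check\gamma_k(x_1),\hat\gamma_k(x_1))\le Ch_k$ from $(-L,L)$ to the larger interval $(-3L,3L)$, so that your triangle-inequality estimate for $\dist(\check\psi_k(x_1,x'),\hat\gamma_k(x_1))$ is valid on all of $(-2L,2L)\times B_\rho(0)$.
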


\begin{proof} 
  In the course of the proof we denote all constants which depend only on $C$, $L$, and the manifold
  $(\tilde{\mathcal{M}}, \tilde g)$ by $C'$. 
  
Since $\hat \gamma_k$ and $\check \gamma_k$ are geodesics,  a Gronwall type argument shows that the estimate \eqref{eq:change_geodesic_distance_gamma_hat_check} also holds on $(-3L, 3L)$    with a different constant $C'$. 

The existence of $\Phi_k$(for the  interval  $(-3L, 3L)$) implies 
\begin{equation}  \label{eq:Phik_bounded_etak}  |\Phi_k(x_1,0) - (x_1,0)| \le C' h_k, 
\end{equation} since the maps $\Phi_k$ are  uniformly bilipschitz. 
It follows that $\Phi_k$ maps $(-2L, 2L) \times B_{C h_k}(0)$ to a subset of $(-3L, 3L) \times  B_{ C' h_k}(0)$. 

We denote by $ \hat g_k: = {\hat \psi_k}^* \tilde g$  the pullback of  the metric in $\tilde{\mathcal{M}}$ and we write
 $\hat \Gamma_k$ for the Christoffel symbols of $\hat g_k$. Analogously we define $\check g_k$ and $\check \Gamma_k$. 
 To prove~\eqref{eq:Phi_k_almost_SOn},  we use that 
 \begin{equation}  
   \check g_k = \Phi_k^* \hat g_k.
\end{equation}
If $|z'| \le C h_k$ then the estimates for the metric in Fermi coordinates gives
$$ \delta_{ij} + \mathcal O(h_k^2) =  \check g_k(z)(e_i, e_j) = \hat g_k(\Phi_k(z))(D\Phi_k e_i, D\Phi_k e_j)
= (D\Phi_k(z)^T D\Phi_k(z))_{ij}  + \mathcal O(h_k^2)$$
and this implies \eqref{eq:Phi_k_almost_SOn}.

Similarly, the estimate  \eqref{eq:DPhik_bound_second_derivative} for $D^2 \Phi_k$ follows from  the estimates
of the Christoffel symbols in Fermi coordinates.  
If we consider the curve $\alpha(t) = x + tv $ and express the covariant derivative of the vector field
$W(t) : = D\check \psi ( \alpha(t))  w = (D \hat \psi \circ \Phi_k (\alpha(t)) \,  D\Phi_k(\alpha(t)) w$ along the curve $\check \psi  \circ \alpha$ \ in both Fermi coordinates we get
\begin{equation}  \label{eq:Phi_k_trafo_christoffel}
D^2 \Phi_k(x)(v,w)   +  \hat \Gamma_k(\Phi_k(x))(D\Phi_k(x) v, D\Phi_k(x)) = D\Phi_k(x)(\check \Gamma_k(x)(v,w)),
\end{equation}
By Lemma~\ref{lem:metric-taylor-expansion} the 
Christoffel symbols are bounded by $C |x'|$. Thus  we get  \eqref{eq:DPhik_bound_second_derivative}.
\end{proof}

\begin{proposition}   \label{pr:change_of_geodesic_convergence} Assume that the hypotheses of Proposition~\ref{pr:change_of_geodesic} hold.
Set
\begin{align*}
  \eta_k(x_1)\coloneqq\Phi_k(x_1,0), 
  \quad Q_k(x_1)\coloneqq D \Phi_k(x_1, 0).
\end{align*}
and assume that, in addition,
\begin{align}   \label{eq:Phik_boundedness_Bk}
  B_k\coloneqq\frac{Q_k - \Id}{h_k} \quad \text{is bounded}
\end{align}
and $(\hat \gamma_k, \underline{\hat \nu}_k)$ converges to $(\tilde \gamma, \underline {\tilde \nu})$ on $(-L,L)$.
Then for subsequences (not relabeled)  we have, 
\begin{align*}
  \frac{1}{h_k} \left( \eta_k(x_1)  - \binom{x_1}{0}\right)  \to J,  \quad B_k \to B,  \quad \text{uniformly},
\end{align*}
\begin{equation}
  B e_1 = \partial_{x_1} J e_1, \quad B^T = -B,   \label{eq:Be1_eq_dJ}
 \end{equation}
and
\begin{eqnarray}  \label{eq:Phik_Jacobi_J}
  \partial_{x_1}^2 J(x_1) &=& -  \overline{\tilde{\mathcal{R}}}(x_1)(J(x_1), e_1)  \, e_1, \quad
  \\
  \partial_{x_1} B(x_1)  
                      &=&-\overline{\tilde{\mathcal{R}}}(x_1)(J(x_1), e_1) \, .
  \label{eq:Phi_k_transport_B}   
 \end{eqnarray}
Moreover, 
\begin{align} 
 & \,  \frac{(D\Phi_k(z_1, h_k z'))^T D\Phi_k(z_1, h_h z')) - \Id}{h_k^2}
 \to \tilde{\mathcal T}(z_1, z')  - \tilde{\mathcal T}(z_1, J'(x_1) + z')   \nonumber \\
 & \, \text{uniformly on $(-2L, 2L) \times B_C(0)$, }
  \label{eq:Phik_convergence_metric_tensor}
\end{align}
where $\tilde{\mathcal T} \coloneqq \mathcal T^{\overline{\tilde{\mathcal R}} }$.
\end{proposition}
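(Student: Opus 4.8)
The plan is to exploit that $\eta_k(x_1)=\Phi_k(x_1,0)$ is the coordinate expression, in the chart $\hat\psi_k$, of the geodesic $\check\gamma_k$ --- indeed $\hat\psi_k(\eta_k(x_1))=\check\psi_k(x_1,0)=\check\gamma_k(x_1)$ --- so that $\eta_k$ solves the geodesic ODE $\eta_k''=-\hat\Gamma_k(\eta_k)(\eta_k',\eta_k')$, where $\hat\Gamma_k$ are the Christoffel symbols of $\hat g_k=\hat\psi_k^{*}\tilde g$. Combining this with the intertwining identity \eqref{eq:Phi_k_trafo_christoffel}, the a priori bounds \eqref{eq:Phik_bounded_etak}, \eqref{eq:Phi_k_almost_SOn}, \eqref{eq:DPhik_bound_second_derivative} of Proposition~\ref{pr:change_of_geodesic}, and the Taylor expansions of Lemma~\ref{lem:metric-taylor-expansion}, everything will follow by dividing by the appropriate power of $h_k$ and passing to a subsequence. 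First I would set up compactness: from $Q_k(x_1)e_1=\partial_{x_1}\Phi_k(x_1,0)=\eta_k'(x_1)$ we get $\eta_k'=e_1+h_kB_ke_1$, so $\partial_{x_1}J_k=B_ke_1$ is bounded by \eqref{eq:Phik_boundedness_Bk}; by \eqref{eq:Phik_bounded_etak} the normal part of $\eta_k$ is $O(h_k)$, so the Christoffel bound $|\hat\Gamma_k(x)|\le C|x'|$ and the geodesic ODE give $|\eta_k''|\le Ch_k$, whence $J_k$ is bounded in $C^{1,1}$; differentiating $Q_k(x_1)=D\Phi_k(x_1,0)$ in $x_1$ and invoking \eqref{eq:DPhik_bound_second_derivative} gives $|\partial_{x_1}Q_k|\le C'h_k$, whence $B_k$ is bounded in $C^{0,1}$. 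Along a subsequence, Arzel\`a--Ascoli yields $J_k\to J$ in $C^1$ and $B_k\to B$ in $C^0$ with $J\in W^{2,\infty}$, $B\in W^{1,\infty}$; passing to the limit in $\partial_{x_1}J_k=B_ke_1$ gives $Be_1=\partial_{x_1}J$, and since \eqref{eq:Phi_k_almost_SOn} provides $\tilde Q_k\in SO(n)$ with $Q_k=\tilde Q_k+O(h_k^2)$, the limit of $(\tilde Q_k-\Id)/h_k=B_k+O(h_k)$ is skew. This proves \eqref{eq:Be1_eq_dJ}.

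Next I would derive the Jacobi-type equations. Using $\hat\Gamma_k((s,0))=0$ and the first-order normal coefficients of the Christoffel symbols in \eqref{eq:taylor-christoff}, I evaluate $\hat\Gamma_k$ at $\eta_k$ (normal part $h_k(J_k)'+O(h_k^2)$), insert $\eta_k'=e_1+O(h_k)$ into the geodesic ODE, and keep the only contribution of order $h_k$ --- the $(1,1)$ one --- to obtain, for $m\ge1$,
\[
\partial_{x_1}^2(J_k)_m=(\overline{\hat{\mathcal R}}_k)_{m11l}(x_1)\,(J_k)_l+O(h_k),
\]
where $\overline{\hat{\mathcal R}}_k$ is the pullback curvature of $\hat g_k$ along the geodesic. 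Dividing through and letting $k\to\infty$, using that $(\hat\gamma_k,\underline{\hat\nu}_k)\to(\tilde\gamma,\underline{\tilde\nu})$ forces $\overline{\hat{\mathcal R}}_k\to\overline{\tilde{\mathcal R}}$ uniformly (continuous dependence of the pullback curvature on the framed geodesic), together with the curvature symmetry $\overline{\tilde{\mathcal R}}_{m11l}=-\overline{\tilde{\mathcal R}}_{m1l1}$, yields \eqref{eq:Phik_Jacobi_J}. For \eqref{eq:Phi_k_transport_B} I specialise \eqref{eq:Phi_k_trafo_christoffel} to $x=(x_1,0)$, $v=e_1$, $w=e_j$ with $j\ge2$: the right side vanishes since $\check\Gamma_k((x_1,0))=0$, the left term equals $\partial_{x_1}(Q_ke_j)$, and a first-order expansion of $\hat\Gamma_k(\eta_k)(Q_ke_1,Q_ke_j)$ gives, after dividing by $h_k$ and taking limits, the columns $j\ge2$ of $\partial_{x_1}B=-\overline{\tilde{\mathcal R}}(J,e_1)$; the $e_1$-column follows from $(\partial_{x_1}B)e_1=\partial_{x_1}^2J$ and \eqref{eq:Phik_Jacobi_J}, and skewness of $\partial_{x_1}B$ is automatic because the curvature operator is skew.

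For the metric limit I would use $\check g_k=\Phi_k^{*}\hat g_k$, i.e.\ $(D\Phi_k)^{T}(\hat g_k\circ\Phi_k)\,D\Phi_k=\check g_k$, together with $D\Phi_k=\Id+O(h_k)$. At the argument $(z_1,h_kz')$ the deviation of $\check g_k$ from $\Id$ is, by \eqref{eq:taylor-metric}, a quadratic form in $z'$ of size $h_k^2$ with coefficients $\overline{\check{\mathcal R}}_k(z_1)$; at the image $\Phi_k(z_1,h_kz')=\eta_k(z_1)+h_kQ_k(z_1)(0,z')+O(h_k^3)$, whose normal part equals $h_k\big((J_k(z_1))'+z'\big)+O(h_k^2)$, the deviation of $\hat g_k$ from $\Id$ is the analogous quadratic form evaluated at $(J_k(z_1))'+z'$ with coefficients $\overline{\hat{\mathcal R}}_k(z_1)$, again of size $h_k^2$. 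Substituting and cancelling the $O(1)$ parts gives $\frac{(D\Phi_k)^{T}D\Phi_k-\Id}{h_k^2}$ equal, up to $O(h_k)$, to the difference of these two quadratic corrections; since $\overline{\check{\mathcal R}}_k,\overline{\hat{\mathcal R}}_k\to\overline{\tilde{\mathcal R}}$ and $(J_k)'\to J'$ uniformly, the right-hand side converges uniformly on $(-2L,2L)\times B_C(0)$ to the limit appearing in \eqref{eq:Phik_convergence_metric_tensor}, the identification of the quadratic correction of \eqref{eq:taylor-metric} with $\tilde{\mathcal T}$ being \eqref{eq:def-T}.

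The main obstacle is the Jacobi equation at order $h_k$: one must resist concluding from $\eta_k=(x_1,0)+O(h_k)$ and $|\hat\Gamma_k|=O(h_k)$ that $\eta_k''=O(h_k^2)$; the equation survives precisely because the first-order normal coefficient of $(\hat\Gamma_k)^m_{11}$, namely $-(\overline{\hat{\mathcal R}}_k)_{m11l}$, is generically nonzero for $m,l\ge2$. Carefully separating the $O(h_k)$ from the $O(h_k^2)$ contributions throughout, and matching the curvature index patterns produced by \eqref{eq:taylor-christoff} and \eqref{eq:taylor-metric} against those in $\overline{\tilde{\mathcal R}}(J,e_1)e_1$ via the curvature symmetries, is the genuine bookkeeping burden; the remaining steps are routine ODE and Arzel\`a--Ascoli arguments resting on the bounds already furnished by Proposition~\ref{pr:change_of_geodesic}.
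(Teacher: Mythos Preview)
Your proposal is correct and follows essentially the same approach as the paper: the geodesic equation for $\eta_k$ in the chart $\hat\psi_k$ yields the Jacobi equation for $J$, the parallel transport equation for $Q_ke_j$ (which you recover from \eqref{eq:Phi_k_trafo_christoffel} at $x=(x_1,0)$, $v=e_1$, $w=e_j$, while the paper invokes parallelism of $D\check\psi_k(x_1,0)e_j$ directly---these are the same computation) yields $\partial_{x_1}B$, and the pullback identity $\check g_k=\Phi_k^*\hat g_k$ together with the Taylor expansions \eqref{eq:taylor-metric} gives the metric limit. The paper organizes the compactness step slightly differently (bounding $J_k$ in $C^2$ via the geodesic ODE rather than $C^{1,1}$), but the substance is identical.
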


\begin{proof}
Let $J_k(x_1) = h_k^{-1} (\eta_k(x_1) - (x_1,0))$.
Then $J_k$ is bounded in $C^1$ by~\eqref{eq:Phik_bounded_etak} and~\eqref{eq:Phik_boundedness_Bk}. 
The geodesic equation for $\eta_k$ can be rewritten as
\begin{align*}
  \partial_{x_1}^2  J_k = - \frac{1}{h_k} \hat \Gamma_k((x_1 e_1 + h_k J_k)(e_1 + h_k \partial_{x_1} J_k, e_1 + h_k \partial_{x_1} J_k).
\end{align*}
Since $\hat \Gamma_k(x_1,0) = 0$   
 and $|\hat \Gamma_k((x_1 e_1 + h_k J_k)|  \le C h_k |J_k|$ by Lemma~\ref{lem:metric-taylor-expansion},
it follows that $J_k$ is bounded in $C^2$. 
Hence a subsequence converges in $C^1$ and by~\eqref{eq:taylor-christoff} 
the limit $J$ satisfies
\begin{align}   \label{eq:geodesic_eqn_for_J}
  \partial_{x_1}^2 J = - D\Gamma_{11}(x_1, 0)(J) = - \overline{ \tilde  R}(J, e_1) e_1.
\end{align}
By the definition of the Fermi coordinates $\check \psi_k$, the vector fields
\begin{align*}
  Q_k(x_1) e_j = D\Phi_k(\eta_k(x_1)) e_j = D \check \psi_k(x_1,0)e_j
\end{align*}
are parallel. 
Since $Q_k = \Id + h_k B_k$ we get
\begin{align*}
  \partial_{x_1} B_k(x_1) e_j = \frac{1}{h_k} \partial_{x_1} Q_k(x_1) e_j = - \frac{1}{h_k}\Gamma(\eta_k(x_1))(e_1, e_j + h_k B_k(x_1) e_j).
\end{align*}
Using again~\eqref{eq:taylor-christoff}, we see that
\begin{align*}
  \partial_{x_1} B(x_1) e_j = - \overline{\tilde{\mathcal{R}}}(x_1)(J(x_1),e_1) e_j.
\end{align*}
or, equivalently $ \partial_{x_1} B(x_1) = - \overline{\tilde{\mathcal{R}}}(x_1)(J(x_1), e_1)$.
This proves \eqref{eq:Phi_k_transport_B}.
The  first identity in \eqref{eq:Be1_eq_dJ} follows from the fact that $B_k e_1 = h_k^{-1} (Q_k e_1 - e_1)
= \partial_{x_1} h_k^{-1} (\eta_k - x_1)$.
The skew symmetry of $B$ follows from the fact that 
$ \Id + 2 h_k \sym B_k  + B_k^T B_k = Q_k^T Q_k = \Id + \mathcal O(h_k^2)$.

\medskip

To show  \eqref{eq:Phik_convergence_metric_tensor},  we use that  $\check \psi_k = \hat \psi_k \circ \Phi_k$ implies 
 $ \check g_k = \Phi_k^* \hat g_k$.
Let $\hat{\mathcal R}_k = \hat \psi_k^* \tilde{\mathcal R}$ be the curvature tensor in the Fermi coordinates $\hat \psi_k$
and let $\hat{\mathcal T}_k(x_1, x') := \mathcal T^{ \tilde{\mathcal R}}_{\hat \gamma_k, \underline{\hat \nu_k}}(x_1, x')$ denote the quadratic form in $x'$ which describes the deviation of
the metric $\hat g_k = {\hat \psi}_k^* \tilde g$ form the Euclidean metric  (see~\eqref{eq:def-T} for the definition of this quadratic form).
The quadratic form $x' \mapsto \check{\mathcal T}_k(x_1, x')$ is defined analogously.

Since $\check g_k = \Phi_k^* \hat g_k$ we have
\begin{align*}
  \check g_k(z)(e_i, e_j) = \hat g_k(\Phi_k(z))(D\Phi_k e_i, D\Phi_k e_j).
\end{align*}
Thus, for $|z'| \le C h_k$ we get
\begin{align*}
  \delta_{ij} + (\check{\mathcal T}_k)_{ij}(z_1, z') + \mathcal O(h_k^3) 
  =(D\Phi_k(z))^T D\Phi_k(z))_{ij} + (\hat{\mathcal T}_k)_{ij}(\Phi_k(z))  + \mathcal O(h_k^3).
\end{align*}
Here we used that $D\Phi_k(z) = \Id + \mathcal O(h_k)$. 
Now the pull back curvature tensors $\hat{\mathcal R}_k $ and $\check{\mathcal R}_k $ only differ by 
$\mathcal O(h_k)$ since $D\Phi_k = \Id + \mathcal O(h_k)$ 
and $\Phi_k(z) - z = \mathcal O(h_k)$.
Hence 
\begin{align*}
  (\check{\mathcal T}_k)_{ij}(z_1, z') = (\hat {\mathcal T}_k)_{ij}(z_1, z') + \mathcal O(h_k^3),
\end{align*}
for  $z' = \mathcal O(h_k)$. 
Also 
\begin{align*}
  (\hat {\mathcal T}_k)_{ij}(\Phi_k(z)_1, \Phi'_k(z)) =  (\hat {\mathcal T}_k)_{ij}(z_1, \Phi'_k(z))  + \mathcal O(h_k^3).
\end{align*}
Thus 
\begin{align*}
  (D\Phi_k(z))^T D\Phi_k(z)) - \Id
  =\hat {\mathcal T}_k(z_1, z')  - \hat {\mathcal T}_k(z_1, \Phi'_k(z))+O(h_k^3).
\end{align*}
Since $D\Phi_k - \Id = \mathcal O(h_k)$ and $\Phi_k(z_1, 0) = \eta_k(z_1)$ we have
\begin{align*}
  \Phi'_k(z) = \eta'_k(z_1)  + z' + \mathcal O(h_k^2)
\end{align*}
and therefore
\begin{align*}
  \frac{(D\Phi_k(z))^T D\Phi_k(z)) - \Id}{h_k^2}
  =\hat {\mathcal T}_k(z_1, h_k^{-1} z')  - \hat {\mathcal T}_k(z_1, h_k^{-1} \eta'_k(z_1) + h_k^{-1} z') + \mathcal O(h_k).
\end{align*}
Using that $\hat {\mathcal T}_k$ is a quadratic form in the second argument and that 
$h_k^{-1} \eta'_k \to J'$ uniformly we get
\begin{align*}
 \frac{(D\Phi_k(z))^T D\Phi_k(z)) - \Id}{h_k} = 
  - \hat {\mathcal T}_k(z_1, h_k^{-1} z')  - \hat {\mathcal T}_k(z_1, J'(x_1) + h_k^{-1} z')   + o(1).
\end{align*}
The pullback curvature tensors $\hat \psi_k^* \tilde R$ converge to $\tilde \psi^* \tilde R$ 
uniformly on thin cylinders since $(\hat \gamma_k, \underline{\hat \nu_k})$ converges 
to $(\tilde \gamma, \underline{\tilde \nu})$ (note that is also implies the convergence of $\hat \gamma'_k$
to $\tilde \gamma'$).
Thus the quadratic forms  $\hat{\mathcal T}_k(x_1)$ converge to $\tilde{\mathcal T}(x_1)$ uniformly,  and we get~\eqref{eq:Phik_convergence_metric_tensor}.
\end{proof}

\begin{proposition}  \label{pr:trafo_formulae_simpler}
Let $\Omega = (-L,L) \times B_1(0) \subset \mathbb R^n$. 
Let $\Phi_k$ be as in Propositions~\ref{pr:change_of_geodesic} 
and~\ref{pr:change_of_geodesic_convergence}  and suppose that 
$\hat v_k: \Omega \to \mathbb R^n$ and $\check v_k: \Omega \to \mathbb R^n$
satisfy properties 1.\ to 6.\ of  Theorem~\ref{thm:muller-mora-3d} and assume that 
$$|\check v_k(x) - (x_1,0)| + |\hat v_k(x) - (x_1,0)| \le C  h_k.$$
Assume further that  $\hat v_k = \Phi_k \circ \check v_k$.
Let $\eta_k(x_1) = \Phi_k(x_1, 0)$ and $Q_k(x_1) = D\Phi_k(x_1, 0)$ be  as in 
Proposition~\ref{pr:change_of_geodesic_convergence}. Then 
\begin{eqnarray}  \label{eq:trafo_convergence_B}
 B_k(x_1)\coloneqq \frac{Q_k(x_1) - \Id}{h_k} &\to& B(x_1),\\
 \label{eq:trafo_convergence_eta}
 \frac{\eta_k(x_1) - x_1}{h_k} &\to& J(x_1)
\end{eqnarray}
uniformly, as $k \to \infty$. In addition, 
$J_1 = 0$  and $J$ and $B$ satisfy   \eqref{eq:Phik_Jacobi_J} and  \eqref{eq:Phi_k_transport_B}, respectively.

Moreover, 
the limits $(\check w,\check y, \check Z, \check \beta)$ and 
$(\hat w,\hat y, \hat Z, \hat \beta)$ satisfy
  \eqref{eq:trafo_Z}--\eqref{eq:trafo_beta_prime}.
\end{proposition}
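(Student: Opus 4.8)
The plan is to derive the transformation formulae \eqref{eq:trafo_Z}--\eqref{eq:trafo_beta_prime} by carefully inserting the relation $\hat v_k = \Phi_k \circ \check v_k$ into the defining expansions of $(\check w,\check y,\check Z,\check\beta)$ and $(\hat w,\hat y,\hat Z,\hat\beta)$ coming from Theorem~\ref{thm:muller-mora-3d}, and then passing to the limit using Proposition~\ref{pr:change_of_geodesic_convergence}. First I would record the Taylor expansion of $\Phi_k$ around the curve: since $\Phi_k(x_1,0)=\eta_k(x_1)$, $D\Phi_k(x_1,0)=Q_k(x_1)$, and $|D^2\Phi_k|=\mathcal O(h_k)$ by \eqref{eq:DPhik_bound_second_derivative}, one has, for $|x'|\le C h_k$,
\begin{equation*}
  \Phi_k(x_1,x') = \eta_k(x_1) + Q_k(x_1)\binom{0}{x'} + \tfrac12 D^2\Phi_k(x_1,0)\left(\binom{0}{x'},\binom{0}{x'}\right) + \mathcal O(h_k^3).
\end{equation*}
The second-order term is controlled via \eqref{eq:Phi_k_trafo_christoffel}: $D^2\Phi_k(x_1,0)(\cdot,\cdot)=D\Phi_k(\check\Gamma_k)-\hat\Gamma_k(D\Phi_k\cdot,D\Phi_k\cdot)$, and by Lemma~\ref{lem:metric-taylor-expansion} this equals the difference of the two Christoffel symbols at $(x_1,x')$, whose leading part is linear in $x'$ with coefficients given by the curvature tensors. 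The key point is that these curvature coefficients are exactly those entering $\mathcal T^{\tilde{\mathcal R}}$; more efficiently, the quadratic-in-$x'$ correction of $\Phi_k$ is governed by \eqref{eq:Phik_convergence_metric_tensor}, which already packages everything into $\tilde{\mathcal T}(z_1,z') - \tilde{\mathcal T}(z_1,J'(x_1)+z')$.

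Next I would substitute. Write $\check v_k(x) = \binom{x_1}{0} + h_k\binom{0}{\check y_k} + h_k^2(\text{linear in }x'\text{ via }\check A) + h_k^2\binom{\check w_k}{0} + h_k^3\check\beta_k + \text{lower order}$, rescaled as in Definition~\ref{def:new-convergence}, and similarly for $\hat v_k$. Plugging $\check v_k$ into the above expansion of $\Phi_k$ and matching, order by order in $h_k$, against the analogous expansion of $\hat v_k$ yields the four identities. At order $h_k$ in the normal components one gets $\hat y = \check y + J$ (using $Q_k=\Id+h_k B_k$ and $\eta_k = x_1 e_1 + h_k J + o(h_k)$, together with $J_1=0$), which is \eqref{eq:trafo_y}. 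At order $h_k^2$ in the normal $\times$ normal block, the skew part gives $\hat Z = \check Z + B$ via $B_k$ acting on $x'$ and the skew-symmetry of $B$ from \eqref{eq:Be1_eq_dJ}, which is \eqref{eq:trafo_Z}; the symmetric part of this block, after extracting $\sym\nabla'\hat\beta'$ from the $h_k^3$-term, produces \eqref{eq:trafo_beta_prime} — here the contributions $\tfrac12(\hat A^2-\check A^2)_{jl}$ come from the quadratic term $\tfrac12 Q_k^T(\text{stuff})$ in expanding $d_{h_k}\hat v_k$, and the $\tilde{\mathcal T}$-terms come from \eqref{eq:Phik_convergence_metric_tensor} evaluated at the shifted argument. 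The $(1,1)$ component at order $h_k^2$ gives \eqref{eq:trafo_w} (the derivative $\partial_{x_1}\hat w$ picks up $\partial_{x_1}$ of the $e_1$-component of $\eta_k$, which contributes nothing new since $\eta_{k,1}-x_1 = \mathcal O(h_k^2)$ after the translation normalization, plus the $\tilde{\mathcal T}_{11}$ shift and the $\tfrac12(\hat A^2-\check A^2)_{11}$ term), and the $(1,j)$ off-diagonal at order $h_k^3$ gives \eqref{eq:trafo_beta1}, where the term $e_j\cdot\partial_{x_1}(\check Z-\hat Z)x'$ arises from differentiating the $h_k^2 \check Z x'$ contribution that gets rotated by $Q_k$.

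The main obstacle I expect is bookkeeping in the $h_k^3$-order matching for $\beta_1$ and $\beta'$: one must simultaneously track (a) the rotation $Q_k = \Id + h_k B_k + \mathcal O(h_k^2)$ acting on the $h_k^2$-order terms of $\check v_k$, producing $h_k^3$-order cross terms, (b) the quadratic part of $\Phi_k$, which is $\mathcal O(h_k^2|x'|) = \mathcal O(h_k^3)$ after rescaling and carries the curvature data, and (c) the fact that $\beta$ is only determined modulo the constraints in $\mathcal B$, so the identities \eqref{eq:trafo_beta1} and \eqref{eq:trafo_beta_prime} should be read as equalities of $\nabla'\beta_1$ and $\sym\nabla'\beta'$ respectively, consistent with the remark after Definition~\ref{def:equiv-relation}. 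A clean way to organize this is to compute $(d_{h_k}\hat v_k)^T d_{h_k}\hat v_k$ directly: by the chain rule $d_{h_k}\hat v_k = (D\Phi_k\circ\check v_k)\, d_{h_k}\check v_k$, so
\begin{equation*}
  (d_{h_k}\hat v_k)^T d_{h_k}\hat v_k = (d_{h_k}\check v_k)^T\big[(D\Phi_k)^T D\Phi_k\circ\check v_k\big](d_{h_k}\check v_k),
\end{equation*}
and then use \eqref{eq:Phik_convergence_metric_tensor} to expand the middle factor as $\Id + h_k^2\big(\tilde{\mathcal T}(z_1,z') - \tilde{\mathcal T}(z_1,J'+z')\big) + o(h_k^2)$ evaluated at $\check v_k$. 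Combined with Lemma~\ref{lem:muller-mora-lem-2.3} applied to both $\hat v_k$ and $\check v_k$ — which identifies $\sym\hat G$ and $\sym\check G$ in terms of the limit data — subtracting the two expressions and using $\hat y = \check y + J$ together with the Jacobi equations \eqref{eq:Phik_Jacobi_J}--\eqref{eq:Phi_k_transport_B} yields precisely \eqref{eq:trafo_Z}--\eqref{eq:trafo_beta_prime}. The convergences \eqref{eq:trafo_convergence_B}--\eqref{eq:trafo_convergence_eta} and $J_1=0$ are immediate from Proposition~\ref{pr:change_of_geodesic_convergence} once one checks that \eqref{eq:Phik_boundedness_Bk} holds, which follows because both $\hat v_k$ and $\check v_k$ satisfy the estimates \eqref{eq:muller-mora-matrix-estimates}, forcing $h_k^{-1}(Q_k - \Id)$ to be bounded.
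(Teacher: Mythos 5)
Your proposal is correct and follows essentially the same route as the paper's proof: the chain rule $d_{h_k}\hat v_k=(D\Phi_k\circ\check v_k)\,d_{h_k}\check v_k$ together with the second-derivative bound on $\Phi_k$ and Proposition~\ref{pr:change_of_geodesic_convergence} yields $B=\hat A-\check A$ and $J=(0,\hat y-\check y)$, hence \eqref{eq:trafo_Z}--\eqref{eq:trafo_y}, and the second-order identities come from comparing symmetric parts at order $h_k^2$ via \eqref{eq:Phik_convergence_metric_tensor}; your Gram-matrix identity $(d_{h_k}\hat v_k)^Td_{h_k}\hat v_k=(d_{h_k}\check v_k)^T\bigl[(D\Phi_k)^TD\Phi_k\circ\check v_k\bigr](d_{h_k}\check v_k)$ is just a repackaging of the paper's decomposition \eqref{eq:trafo_difference_Sk}--\eqref{eq:trafo_sym_DPhik}. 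The one cosmetic difference is that where you invoke Lemma~\ref{lem:muller-mora-lem-2.3}, the paper identifies the weak limits $\hat S,\check S$ of $\sym h_k^{-2}(d_{h_k}v_k-\Id)$ directly from Property~\ref{it:convergence_beta} (formula \eqref{eq:trafo_formula_checkS}), which is the cleaner choice since it avoids dragging the metric factors $\overline g^{\pm 1/2}$ and the rotations $Q_k$ of that lemma through the subtraction; your Gram-matrix computation is in any case what supplies the relation between the two sets of limit data, so no essential step is missing.
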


\begin{proof}
First, we show that 
\begin{align*}
  B_k(x_1) = \frac{D\Phi_k(x_1,0) - \Id}{h_k} \to \hat A(x_1) - \check A(x_1)
\end{align*}
and $\underline{\hat \nu} = \underline{\check \nu}$.

We have $d_{h_k} \hat v_k = (D\Phi_k \circ \check  v_k) \, D \check v_k$ and thus
\begin{equation}\label{eq:trafo_hatAl_checkAk}
  \frac{d_{h_k}  \hat v_k - \Id}{h_k}
  =\frac{D\Phi_k \circ \check v_k - \Id}{h_k} +  D\Phi_k \circ \check v_k \, 
  \frac{ d_{h_k} \check v_k - \Id}{h_k}. 
\end{equation}
By Property~\ref{item:muller-mora-conv-A} of Theorem~\ref{thm:muller-mora-3d} we have
\begin{align*}
  \frac{d_{h_k}\check v_k - \Id}{h_k}  \to \check A, \quad  \frac{d_{h_k}\hat v_k - \Id}{h_k}  \to \hat A \quad \text{in $L^2(\Omega)$.}
\end{align*}
Since $D\Phi_k$ is bounded, it follows  from~\eqref{eq:trafo_hatAl_checkAk}
that $h_k^{-1} (D\Phi_k \circ \check v_k - \Id)$ is bounded in $L^2(\Omega)$ and hence
\begin{align*}
  D\Phi_k \circ \check v_k \to  \Id \quad \text{in $L^2(\Omega)$.}
\end{align*}
Thus
\begin{align}      \label{eq:trafo_DPhi_hatA}
 \frac{D\Phi_k \circ \check v_k - \Id}{h_k}  \to \hat A - \check A  \quad \text{in $L^1(\Omega)$.}
\end{align}
By assumption, $|\check v_k(x) - (x_1,0)| \le C h_k$.
  Moreover, by~\eqref{eq:DPhik_bound_second_derivative},  we know that  $h_k^{-1} | D^2\Phi_k(z_1, z')|$ is bounded  if $|z'| \le C h_k$. Thus
$$ |(D \Phi_k \circ \check v_k)(x) - B_k(x_1)| = |D \Phi_k( \check v_k(x)) - D\Phi_k(x_1,0)| \le C h_k |\check v_k(x) - (x_1,0)| 
 \le C h_k^2.$$
Therefore  $ h_k^{-1} (D\Phi_k \circ \check v_k - B_k) \to 0$ uniformly
and hence
\begin{align}
  B_k \to B \quad \text{in $L^1((-L,L))$} \quad \text{and} \quad 
  \hat A - \check A = B.   \label{eq:trafo_proof_hatA_checkA}
\end{align}
Since $| \partial_{x_1} B_k(x_1)| \le  h_k^{-1} | D^2\Phi_k(x_1,0)|$ is bounded, we have  $B_k \to B$ uniformly. 
By Proposition~\ref{pr:change_of_geodesic_convergence},  $B$ satisfies~\eqref{eq:Phi_k_transport_B} and is skew symmetric.
The formulae for $\check A$ and $\hat A$ imply that
\begin{equation}
 B_{jl} = \hat Z_{jl} - \check Z_{jl}, 
\quad \text{for $j, l \ge 2$.}
\end{equation}
Hence  \eqref{eq:trafo_Z} holds.  
By definition of the Fermi coordinates and the map $\Phi_k$,  the $j$-th vector in the frame $\underline{\check \nu_k}$
satisfies 
$$( \check \nu_k)_j(0) = D\check \psi_k(0)  e_j= D\hat \psi_k(\Phi_k(0))  \, \, D\Phi_k(0) e_j
= D\hat \psi_k(\eta_k(0))  (D\Phi_k(0))_{lj} e_l.$$
Since $\eta_k(0) \to 0$ and $D\Phi_k(0) \to \Id$ it follows that the limiting frames agree at $x_1=0$ and hence 
everywhere, i.e.
$\underline{\hat \nu} = \underline{\check \nu}$.

\medskip

Next, we show the convergence of $h_k^{-1} \eta_k \to J$ and the identities $J_1 = 0$ and
$J_j = \check y_j - \hat y_j$ for $j \ge 2$. 
By Property~\ref{it:convergence_beta} in Theorem~\ref{thm:muller-mora-3d}
\begin{eqnarray}   \label{eq:trafo_property4_hat}
  \int_{\Omega}  \left| \hat v_k(x_1, x') - \binom{x_1}{h_k \hat y_k(x_1) + h_k x'} \right|^2  \, dx
  &\le& C h_k^4, \\
  \label{eq:trafo_property4_check}
  \int_{\Omega}  \left| \check v_k(x_1, x') - \binom{x_1}{h_k \check y_k(x_1) + h_k x'} \right|^2  \, dx
 &\le& C h_k^4.
\end{eqnarray}
Now, for $|z'| \le C h_k$,  the boundedness of $h_k^{-1} D^2 \Phi_k$ and the boundedness of $B_k$
implies that
\begin{align*}
  D\Phi_k(z_1, z') = \Id + h_k B_k(z_1) + \mathcal O(h_k^2) = \Id + \mathcal O(h_k).
\end{align*}
Thus,  for $|z'| \le C h_k$, 
\begin{align*}
  \Phi_k(z_1, z' ) - \eta_k(z_1)   = \Phi(z_1, z') - \Phi(z_1, 0) = \binom{0}{z'} + \mathcal O(h_k^2)
\end{align*}
It follows that 
\begin{align*}
  \hat v_k(x) = \Phi_k (\check v_k(x))   = \eta_k((\check v_k)_1(x)) + \binom{0}{\check v'_k} + \mathcal O(h_k^2).
\end{align*}
Since $\partial_{x_1} \eta_k(x_1) = D\Phi_k(x_1, 0) e_1$ is bounded, we get
\begin{align*}
  \left|  \hat v_k(x) - \eta_k(x_1) - \binom{0}{\check v'_k(x)} \right|
  \le C h_k^2 + C |(\check v_k)_1(x) - x_1|.
\end{align*}
With~\eqref{eq:trafo_property4_hat} and~\eqref{eq:trafo_property4_check} it follows that 
\begin{align*}
& \,   \int_{\Omega} \left|    \binom{x_1}{h_k \hat y_k(x_1) + h_k x'} - \eta_k(x_1) -  \binom{0}{h_k \check y_k(x_1) + h_k x'}
  \right|^2   \, dx \\
  \le  & \, C h_k^4  + \int_{\Omega} |(\check v_k)_1(x) - x_1|^2 \, dx \le C h_k^4.
\end{align*}
This implies that 
\begin{align*}
  \lim_{k \to \infty} \frac{1}{h_k} \left(   \eta_k  - \binom{\id}{0} \right)
  =\lim_{k \to \infty}  \binom{0}{\hat y_k -\check y_k}
  =\binom{0}{\hat y - \check y}   \quad \text{in $L^2((-L,L))$.}
\end{align*}
Since $\partial_{x_1} \eta_k$ is bounded, the maps $h_k^{-1} (\eta_1(x_1) - (x_1,0))$ converge uniformly to $(0, \check y - \hat y)$.
By Proposition~\ref{pr:change_of_geodesic_convergence} we get 
$J = \binom{0}{\hat y - \check y}$ and $J$ satisfies~\eqref{eq:Phik_Jacobi_J}.
In particular,~\eqref{eq:trafo_y} holds.
 
\medskip

Finally,  we derive the formulae by $\partial_{x_1} (\hat w - \check w)$, $\nabla' (\hat \beta_1 - \check \beta_1)$,  and
$\sym \nabla'( \hat \beta' - \check \beta')$.
By Property~\ref{it:convergence_beta} in Theorem~\ref{thm:muller-mora-3d} we have
\begin{align}   \label{eq:trafo_weak_convergence_S}
  \sym \frac{d_{h_k} \hat v_k - \Id}{h_k^2} \rightharpoonup \hat S,
  \quad\sym \frac{d_{h_k} \check v_k - \Id}{h_k^2} \rightharpoonup \check S
    \qquad \text{weakly in $W^{-1,2}(\Omega)$,}
\end{align}
 where
 \begin{align}  \label{eq:trafo_formula_checkS}
  \check S(x) = 
\begin{pmatrix}
  \check \partial_{x_1} w_1(x_1) - \partial_{x_1}^2 \check y \cdot x'     & \frac{1}{2} (  \nabla' \beta_1(x) +   \partial_{x_1} Z(x_1) x')^T\\
  \frac{1}{2} (  \nabla' \beta_1(x) +   \partial_{x_1} Z(x_1) x')           & \sym \nabla' \beta'(x)
\end{pmatrix}
 \end{align}
with the analogous formula for $\hat S$.
Since $d_{h_k} \hat v_k = D\Phi_k \circ \check v_k   \, \, d_{h_k} \check v_k$, we have
\begin{align}
  \sym \frac{d_{h_k} \hat  v_k - \Id}{h_k^2}  = \sym \frac{D\Phi_k \circ \check v_k - \Id}{h_k^2} +
 \sym \frac{d_{h_k} \check v_k - \Id}{h_k^2}  
 + \sym \left(    \frac{   D\Phi_k \circ \check v_k - \Id}{h_k}    \, \, 
 \frac{d_{h_k} \check v_k - \Id}{h_k}  \right).   \label{eq:trafo_difference_Sk}
\end{align}
 It follows from  \eqref{eq:trafo_DPhi_hatA},  \eqref{eq:trafo_proof_hatA_checkA}, 
 and Property~\ref{item:muller-mora-conv-A} of Theorem~\ref{thm:muller-mora-3d} that
 \begin{equation}  \label{eq:trafo_S_AB_term}
  \lim_{k \to \infty} \sym \left(    \frac{   D\Phi_k \circ \check v_k - \Id}{h_k}    \, \, 
   \frac{d_{h_k} \check v_k - \Id}{h_k}  \right) = \sym(B \check A) =\frac{1}{2}\left(B \check A + \check A B\right).
 \end{equation}
For the last identity we used that $B$ and $\check  A$ are skew symmetric.

To analyze term  $h_k^{-2} \sym (D\Phi_k \circ \hat v_k - \Id)$
we use the identity
\begin{align*}
  Q^T Q = (\Id + (Q-\Id))^T (\Id + (Q-\Id) ) = \Id + 2 \sym (Q - \Id)  + (Q-\Id)^T (Q-\Id).
\end{align*}
Thus
\begin{equation}   \label{eq:trafo_sym_DPhik}
2 \sym \frac{D\Phi_k \circ \check  v_k - \Id}{h_k^2} = \frac{ (D\Phi_k \check v_k)^T D \Phi_k \check v_k - \Id}{h_k^2}
- \frac{(D\Phi_k \circ \check v_k - \Id)^T}{h_k} \,\ \frac{D\Phi_k \circ \check v_k - \Id}{h_k}.
\end{equation}
The second  term on the right hand side converges to $- B^T B = B^2$.  

For the first term on the right we use~\eqref{eq:Phik_convergence_metric_tensor}.
By assumption, $\check v_k$ has values in $(-2L, 2L) \times B_{C h_k}$. 
Since the convergence in~\eqref{eq:Phik_convergence_metric_tensor} is uniform on $(-2L, 2L) \times B_{C}(0)$,
we get 
\begin{align}
& \, \lim_{k \to \infty}  \frac{ (D\Phi_k \check  v_k)^T D \Phi_k \check v_k - \Id}{h_k^2}   \nonumber \\
  = & \, \lim_{k \to \infty} \tilde{\mathcal T}((\check v_k)_1, h_k^{-1} \check v'_k) -
  \tilde{\mathcal T}((\check v_k)_1,  J + h_k^{-1}  \check v'_k)   \nonumber \\
  = & \,  \tilde{\mathcal T}\circ \check  \ell  - 
  \tilde{\mathcal T}\circ \hat \ell   \label{eq:trafo_convergence_metric_error}   
\end{align}
in $L^1(\Omega)$, 
where 
\begin{align*}
  \check \ell(x) = (x_1, \check y(x_1) + x'), \qquad  \hat \ell(x) = (x_1, J(x_1) + \check y(x_1) + x')
  = (x_1,  \hat y(x_1) + x').
\end{align*}
For the last identity in \eqref{eq:trafo_convergence_metric_error}  we used the  pointwise estimate
\begin{align*}\left| \tilde{\mathcal T}((\check v_k)_1(x), h_k^{-1} \check v'_k(x))  - \tilde{\mathcal T}((x_1), h_k^{-1} \check v'_k(x)) \right|
\le C |(v_k)_1(x) - x_1| \le C h_k
\end{align*}
as well as the corresponding estimate for $T((\check v_k)_1(x), J (x_1) +  \frac1{h_k} \check v'_k(x))$,
 the bound  \eqref{eq:trafo_property4_check}, 
  and the fact that $z' \mapsto \mathcal T(z_1, z')$ is quadratic.
  
Combining  \eqref{eq:trafo_weak_convergence_S}, \eqref{eq:trafo_difference_Sk},  \eqref{eq:trafo_S_AB_term},   \eqref{eq:trafo_sym_DPhik},
and \eqref{eq:trafo_convergence_metric_error} 
we get,
\begin{align}
\hat S(x) - \check S(x) =& \,  \frac{1}{2} (B \check A + \check A B)(x_1) + B^2(x_1) + 
\tilde{\mathcal T}(x_1, \check y(x_1)) - \tilde{\mathcal T}(x_1, \hat y(x_1)) \nonumber 
\end{align}
or, equivalently, 
\begin{align}  \label{eq:trafo_final_trafo_S}
  \hat S(x) +  \tilde{\mathcal T}(x_1, \hat y(x_1)) =  \check  S(x) +  \tilde{\mathcal T}(x_1, \check y(x_1))+  \frac{1}{2} \hat A^2 - \frac12 \check A^2.
\end{align}
Together with  \eqref{eq:trafo_formula_checkS} and the corresponding formula for $\hat S$
we get \eqref{eq:trafo_w}, \eqref{eq:trafo_beta1}, and \eqref{eq:trafo_beta_prime}.
To obtain  the identity  \eqref{eq:trafo_w} for $\partial_{x_1} (\hat w - \check w)$ we integrate 
the $(11)$ component of \eqref{eq:trafo_final_trafo_S} in $x'$ over $B_1(0)$
and we observe that
the map 
$x'  \mapsto \tilde{\mathcal T}(x_1, \check y(x_1) +x') - \tilde{\mathcal T}(x_1, \hat y(x_1)+ x')$
is affine since $z' \mapsto \tilde{\mathcal T}(x_1, z')$ is a quadratic form and that  for linear maps
the integral in $x'$  over $B_1(0)$ vanishes.
\end{proof}

\appendix

\section{Lipschitz approximation on thin tubes}  \label{se:lip_approximation_thin_tubes}

\begin{lemma}\label{lem:appendix-existence-lipschitz-approx}
  Let $s,n\geq 1$ and $1\leq p<\infty$ and suppose
  $U\subset\mathbb{R}^n$ is a bounded Lipschitz domain. 
  Then there exists a constant $C=C(U,n,s,p)$ with the following property:
  For each $u\in W^{1,p}(U,\mathbb{R}^s)$ and each $\lambda>0$ there
  exists $v:U\rightarrow\mathbb{R}^s$ such that
  \begin{enumerate}[(i)]
    \item\label{item:appendix-lipschitz-approx-1} $\Lip v \le C \lambda$,
    \item\label{item:appendix-lipschitz-approx-2} $\mathcal L^n\left( \{x\in U:u(x)\neq v(x)\} \right)
    \leq\displaystyle\frac{C}{\lambda^p}
    \displaystyle\int_{\{x\in U:|du|_e>\lambda\}}|du|_e^p\,dx$.
  \end{enumerate}
  Here $|\cdot|_e$ denotes the Frobenius norm with respect to the standard scalar product on $\mathbb{R}^n$ and $\mathbb{R}^s$, respectively. 
  If $U=(-L,L)\times B_h(0)\subset\mathbb{R}^n$ with $h\in(0,L/2)$ 
  then the constant $C$ in~\eqref{item:lipschitz-approx-1} and~\eqref{item:lipschitz-approx-2} can be chosen independent of $h$ and $L$. 
\end{lemma}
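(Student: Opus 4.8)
The plan is to deduce the statement for thin tubes from the classical Lipschitz-truncation theorem on a fixed bounded Lipschitz domain --- see \cite[Prop.~A.1]{Friesecke2002}, and also \cite{Liu1977} and \cite[Sect.~6.6.2, 6.6.3]{EvansGariepy1992} --- and then to extract the uniformity in $h$ and $L$ from scaling together with the geometry of tubes. The starting point is that the property asserted by the lemma holds for a domain $W$ with some constant $C$ if and only if it holds, with the same constant $C$, for every dilate $tW$, $t>0$: a direct computation shows that under $x\mapsto tx$ the threshold, the Lipschitz constant, the measure of the exceptional set and the truncated energy $\int_{\{|du|_e>\lambda\}}|du|_e^p$ rescale by compensating powers of $t$, so that both~\ref{item:appendix-lipschitz-approx-1} and~\ref{item:appendix-lipschitz-approx-2} are preserved.

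Applying this with $t=1/h$ replaces $U=(-L,L)\times B_h(0)$ by $V\coloneqq(-a,a)\times B_1(0)$, where $a\coloneqq L/h\ge 2$ (using $h<L/2$). Hence it suffices to exhibit one constant $C=C(n,s,p)$, uniform over $a\ge 2$, valid for the normalized elongated tube $V$. The crucial geometric observation is that the family $\{(-a,a)\times B_1(0):a\ge 1\}$ has a \emph{uniformly bounded Lipschitz character}: at every boundary point, at the fixed scale $\sim 1$ set by the unit cross-section, $\partial V$ is --- after a rotation --- the graph of a Lipschitz function with an absolute Lipschitz constant (a flat graph on the two end caps, a fixed graph on the cylindrical side, and a fixed dihedral model on the two rims where a cap meets the cylinder), and nothing deteriorates as $a\to\infty$. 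If one prefers to avoid the curved boundary one may first pass to a cube cross-section through the bi-Lipschitz map $\Psi(x_1,x')=(x_1,h\,\phi(x'/h))$, with $\phi\colon B_1(0)\to(-1,1)^{n-1}$ a fixed bi-Lipschitz homeomorphism whose bi-Lipschitz constant is independent of $h$, transferring the statement as in the proof of Remark~\ref{re:lip_approx_bilipschitz_domains}; after the dilation one is then left with the box $(-a,a)\times(-1,1)^{n-1}$, whose Lipschitz character is patently $a$-independent.

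It remains to confirm that the constant produced by \cite[Prop.~A.1]{Friesecke2002} depends on the domain \emph{only} through its Lipschitz character, and not, say, through its diameter; this is the one step I expect to require real care, and it is verified by tracking the constant through the proof. The construction uses a Sobolev extension of $u$ to $\mathbb R^n$ (of Stein or Jones type), which is a \emph{local} procedure near the boundary whose operator norm is governed by the per-chart Lipschitz constants, not by the number of charts nor the diameter; it then truncates the extension above the level $\lambda$ of the Hardy--Littlewood maximal function of $|d(\text{extension})|_e$ by means of a Whitney decomposition and the Poincar\'e inequality on Whitney balls, all with purely dimensional constants; and the exceptional set is bounded by the weak-type estimate for the maximal operator, once more with a dimensional constant. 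Inserting the uniform Lipschitz character of $V$ yields a constant independent of $a$, and undoing the dilation (and, if it was used, the map $\Psi$) delivers the lemma with a constant independent of $h$ and $L$.
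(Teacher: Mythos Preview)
Your approach is correct but follows a genuinely different route from the paper. You rescale by $1/h$ to reduce to the tubes $V_a=(-a,a)\times B_1(0)$ with $a\ge 2$, observe that these have uniform Lipschitz character independent of $a$, and then argue that the constant in \cite[Prop.~A.1]{Friesecke2002} depends only on this character. The paper instead avoids any uniformity claim over a family of domains: it first extends $u$ from $(-L,L)\times B_h(0)$ to $(-L,L)\times(-2h,2h)^{n-1}$ by a fixed bi-Lipschitz reflection, then to $(-L,L)\times\mathbb{R}^{n-1}$ by successive reflections so that the result is $8h$-periodic in $x'$, rescales by $1/L$ to the \emph{single fixed} domain $(-1,1)\times(-4,4)^{n-1}$, applies the known result there, and finally uses a pigeonhole argument over the $\sim (L/h)^{n-1}$ periods inside $(-4L,4L)^{n-1}$ to locate one translate on which the exceptional set has the required measure.

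What each approach buys: the paper's argument is entirely self-contained once the fixed-domain result is granted---there is no need to reopen the proof of \cite{Friesecke2002}. Your approach is conceptually lighter but, as you correctly flag, hinges on verifying that the truncation constant really depends only on the Lipschitz character. One detail to make more explicit in that verification: the weak-type maximal inequality alone bounds the exceptional set by $\lambda^{-p}\int_{\mathbb{R}^n}|d\bar u|^p$, not by $\lambda^{-p}\int_{U\cap\{|du|>\lambda\}}|du|^p$. To recover the sharp tail form~\ref{item:appendix-lipschitz-approx-2} you need that the chosen extension satisfies a pointwise bound of the type $|d\bar u|\le C\,M(|du|\,1_U)$ (true for reflection-type and Stein extensions on Lipschitz domains, with $C$ depending only on the Lipschitz character), and then split $|du|\,1_U$ at level $\lambda$ before applying the weak-type estimate. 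This is standard, but it is the place where ``tracking the constant through the proof'' requires more than just the $L^p$ operator norm of the extension.
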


\begin{proof}
  For the result for a fixed set $U$, see~\cite[Prop.\ A.1]{Friesecke2002}. 
  Now let $U=(-L,L)\times B_h(0)\subset\mathbb{R}^n$ with $h\in(0,L/2)$ and let $u\in W^{1,p}(U,\mathbb{R}^s)$. 
  We first extend $u$ to $(-L,L)\times(-2h,2h)^{n-1}$ and then to a function which is $8h$-periodic in the second argument. 
  Let $\phi\in C^1(B_1(0)\setminus\overline{B_{1/2}(0)};\mathbb{R}^{n-1})\cap C^0(\overline{B_1(0)}\setminus B_{1/2}(0);\mathbb{R}^{n-1})$
  be a $C^1$ diffeomorphism from $B_1(0)\setminus\overline{B_{1/2}(0)}\subset\mathbb{R}^{n-1}$ to $(-2,2)^{n-1}\setminus\overline{B_1(0)}$ 
  with 
  \begin{align*}
    \sup_{B_1(0)\setminus\overline{B_{1/2}(0)}}|D\phi|+|(D\phi)^{-1}|<\infty
  \end{align*}
  and $\phi|_{\partial B_1(0)}=\id_{\partial B_1(0)}$.
  Then $\phi_h$ given by $\phi_h(y)=h\phi\left(\frac{1}{h}y\right)$ is a $C^1$ diffeomorphism from $B_h(0)\setminus\overline{B_{h/2}(0)}$ 
  to $(-2h,2h)^{n-1}\setminus\overline{B_h(0)}$ with 
  \begin{align*}
    \sup_{B_h(0)\setminus\overline{B_{h/2}(0)}}|D\phi_h|+|(D\phi_h)^{-1}|
    =\sup_{B_1(0)\setminus\overline{B_{1/2}(0)}}|D\phi|+|(D\phi)^{-1}|.
  \end{align*}
  For $u\in W^{1,p}((-L,L)\times B_h(0);\mathbb{R}^s)$ we define an extension $\overline{u}:(-L,L)\times(-2h,2h)^{n-1}\rightarrow\mathbb{R}^s$ by 
  \begin{align*}
    \overline{u}(x_1,x')
    &=\begin{cases}
      u(x_1,x')\qquad&\text{if $x'\in B_h(0)$,} \\
      u(x_1,\phi_h^{-1}(x'))&\text{if $x'\in (-2h,2h)^{n-1}\setminus B_h(0)$.}
    \end{cases}
  \end{align*}
  Then $|d\overline{u}|_e(x_1,x')\leq C'|du|_e(x_1,\phi_h^{-1}(x'))$ and therefore
  \begin{align*}
    \mathop{\int}_{\substack{(-L,L)\times(-2h,2h)^{n-1} \\ |d\overline{u}|>C'\lambda}}
    |d\overline{u}|_e^p\,dx
    &\leq C''\mathop{\int}_{\substack{(-L,L)\times B_h(0) \\ |du|>\lambda}}
    |du|_e^p\,dx 
    \eqqcolon E.
  \end{align*}
  Now we extend $\overline{u}$ to a map $\hat{u}:(-L,L)\times\mathbb{R}^{n-1}\rightarrow\mathbb{R}^s$
  by successive symmetric extension across the planes $x_i=2h+4h\ell,\,\ell\in\mathbb{Z},\,i=2,\dots,n$. 
  Then $x'\rightarrow\hat{u}(x_1,x')$ is $8h$-periodic in $\mathbb{R}^{n-1}$, i.e.\ $\hat{u}(x_1,x'+z')=\hat{u}(x_1,x')$ 
  if $z'\in 8h\mathbb{Z}^{n-1}$. 
  Then 
  \begin{align*}
    \mathop{\int}_{\substack{(-L,L)\times(-4h\ell,4h\ell)^{n-1} \\ |d\hat{u}|>C''\lambda}}|d\hat{u}|_e^p\,dx 
    &\leq(2\ell)^{n-1}E.
  \end{align*}
  Thus for $\ell=\left\lceil\frac{4L}{8h}\right\rceil=\left\lceil\frac{L}{2h}\right\rceil$ 
  we get $\ell\geq 2$ and 
  \begin{align*}
    \mathop{\int}_{\substack{(-L,L)\times(-4L,4L)^{n-1} \\ |d\hat{u}|>C''\lambda}}
    |d\hat{u}|_e^p\,dx
    &\leq(2\ell)^{n-1}E.
  \end{align*}
  Applying~\eqref{item:appendix-lipschitz-approx-1} and~\eqref{item:appendix-lipschitz-approx-2} from Lemma~\ref{lem:appendix-existence-lipschitz-approx}
  to the set $U=(-1,1)\times(-4,4)^{n-1}$ and the map $x\mapsto\frac{C'}{L}\hat{u}(Lx)$ with $\lambda$ replaced by $C''\lambda$ 
  we see that there exists a map $\hat{v}:(-L,L)\times(-4L,4L)^{n-1}\rightarrow\mathbb{R}^s$ such that $\Lip\hat{v}\leq CC''\lambda$
  and
  \begin{align*}
    \mathcal{L}^n(\{x\in(-L,L)\times(-4L,4L)^{n-1}:\hat{v}(x)\neq\hat{u}(x)\})
    &\leq(2\ell+1)^{n-1}\frac{C}{\lambda^p}E.
  \end{align*}
  Let $\ell'=\ell-1$. 
  Then $8\ell' h\leq 4L$ and there exists $z'\in 8h\{-(\ell'-1),\dots,\ell'-1\}^{n-1}$ such that
  \begin{align*}
    \mathcal{L}^n(\{x\in(-L,L)\times(z'+(-4h,4h)^{n-1}):\hat{v}(x)\neq\hat{u}(x)\})
    &\leq\frac{(2\ell)^{n-1}}{(2\ell'-1)^{n-1}}\frac{C}{\lambda^p}E
    \leq 4^{n-1}\frac{C}{\lambda^p}E,
  \end{align*}
  since $2\ell'-1=2\ell-3$ and $\ell\geq 2$. 
  Set $v(x_1,x')=\hat{v}(x_1,x'+z')$ for $x'\in B_h(0)$. 
  Then
  \begin{align*}
    \Lip v\leq CC'\lambda\quad\text{and}\quad u(x_1,x')=\hat{u}(x_1,x'+z'),
  \end{align*}
  since $x'\mapsto\hat{u}(x_1,x')$ is $8h$-periodic. 
  Therefore
  \begin{align*}
    \mathcal{L}^n(\{x\in(-L,L)\times B_h(0):v(x)\neq u(x)\})
    &\leq 4^{n-1}\frac{C}{\lambda^p}E 
    =4^{n-1}\frac{CC''}{\lambda^p}
    \mathop{\int}_{\substack{(-L,L)\times B_h(0) \\ |du|>\lambda}}|du|_e^p\,dx. 
  \end{align*}
  This concludes the proof.
\end{proof}

\section{Fermi coordinates}\label{se:fermi_proof}

\begin{proof}[Proof of Lemma~\ref{lem:metric-taylor-expansion}]
The Fermi map $\psi$ is smooth since the solutions of the ordinary differential equation for geodesics depends smoothly
on the initial data. To estimate the metric $\overline g = \psi^*g$ and its Christoffel symbols  $\overline \Gamma$ we first note that the definition of
the Fermi coordinates implies that
\begin{equation} \label{eq:fermi_g_Gamma_on_axis}
 \overline g(x_1,0)(e_i, e_j) = \delta_{ij} \quad \text{and} \quad \overline \Gamma(x_1,0) = 0.
 \end{equation}
We define a frame $(E_1(x), \ldots, E_n(x))$ as follows: $E_i(x_1, x')$ is obtained by parallel transport
 of the canonical basis vector $e_i$ of $\mathbb R^n$ from $(x_1,0)$ to $(x_1, x')$ along the
  geodesic $t \mapsto (x_1, t x')$. 
Then 
$$ \overline g(x)(E_i(x), E_j(x)) = \delta_{ij}$$
since the covariant derivative of the metric vanishes. 
We define $A^j_i(x)$ by
$$ e_i = A^{j}_i(x) E_j(x).$$
Then 
\begin{equation}  \label{eq:fermi_metric_in_A}
\overline g_{ij}(x) = \overline g(x)(e_i, e_j) = A^k_i(x) A^l_j(x) \, \overline g(x)(E_k, E_l) = A^k_i(x) A^k_j(x).
\end{equation}

\medskip

To estimate $A^{j}_i - \delta^{j}_i$,  we will use Jacobi field estimates. For $i \ge 2$ the maps $t \mapsto (x_1, t(x' + s e_i))$ are geodesics and hence 
$$J(t) \coloneqq t e_i = t A^j_i(x_1,tx') E_j(x_1, t x')$$
is a Jacobi fields along the geodesic $t \mapsto (x_1, t x')$. 
The Jacobi field equation $\frac{D}{dt} \frac{D}{dt} J = -\overline{\mathcal R}(x_1,  tx')(J, x') x'$ implies that
\begin{align*}  & \, \partial_t^2 (t A^j_i(x_1, tx') ) E_j(x_1, tx')  =  -\overline{\mathcal R}(x_1, tx')(x',J, x')    \\
=  &\, -\overline{\mathcal R}(x_1, tx')^k(x',te_i,x')e_k = -\overline{\mathcal R}^k(x',e_i,x')  t A^l_k(x_1, tx') E_l(x_1, tx').
\end{align*}

With the notation 
$$A(t;x) \coloneqq A(x_1 + t x'), \qquad
M^k_i(t;x) \coloneqq -\overline{\mathcal R}^k(x_1 + tx')(x',e_i,x')$$  this can be written as
\begin{equation}  \label{eq:ODE_Aij}
\partial_t^2 \left( t A^j_i(t;x) \right)
=  M^k_i(t;x) \, \left(  t  A^j_k(t;x) \right)  \quad \text{if $i \ge 2$.}
\end{equation}
Using the first identity in  \eqref{eq:fermi_g_Gamma_on_axis}, 
we see that $B^{j}_i(t;x) \coloneqq t A^j_i(t;x)$ satisfies the initial conditions
\begin{equation}   \label{eq:initial_Aij} 
 B^{j}_i(t;x) = 0, \quad  \partial_t B^{j}_i(0;x) = A^{j}_i(0;x) = \delta^j_i.
\end{equation}

Similarly, $t \mapsto (x_1 + s, t x')$ are geodesics, so that $e_1$ 
is a Jacobi field along the geodesic $t \mapsto (x_1, t x')$. This yields the ODE 
\begin{equation}  \label{eq:ODE_A1j}
\partial_t^2  A^j_1(t;x) 
=  M^k_1(t;x) \,   A^j_k(t;x)
\end{equation}
and by  \eqref{eq:fermi_g_Gamma_on_axis} we have the initial conditions
\begin{equation}   \label{eq:initial_A1j} 
 A^{j}_1(t;x) = \delta^j_1 \quad  \partial_t A^{j}_1(0;x) = 0.
\end{equation}
Next we show  that the system of ODEs~\eqref{eq:ODE_Aij} and~\eqref{eq:ODE_A1j}
with initial conditions~\eqref{eq:initial_Aij} and~\eqref{eq:initial_A1j}
implies that 
\begin{eqnarray}
|A^{j}_i(x) - \delta^j_i - \frac16 M^j_i(0;x)|  &\le& C |x'|^3,  \quad \text{for $i \ge 2, j \ge 1$,}  \label{eq:fermi_bound_Aij}\\
|A^{j}_1(x)  - \delta^j_1 - \frac{1}{2} M^j_1(0;x)| & \le & C |x'|^3, \quad \text{for  $j \ge 1$}  \label{eq:fermi_bound_A1j}
\end{eqnarray}
and 
\begin{eqnarray}
|\partial_{x_k} A^{j}_i(x) -  \frac16 \partial_{x_k} M^j_i(0;x)|  &\le& C |x'|^2,  \quad \text{for $i, \ge 2, j \ge 1$,} 
\label{eq:fermi_bound__dervative_Aij}
\\
|\partial_{x_k} A^{j}_1(x) -  \frac{1}{2}  \partial_{x_k} M^j_i(0;x) |  &\le& C |x'|^2,  \quad \text{for $ j \ge 1$.} 
\label{eq:fermi_bound__dervative_A1j}
\end{eqnarray}
Here the constant $C$ can be estimated  in terms of the supremum of the pulled back curvature tensor  $\overline{\mathcal R}$ and its first derivative.

  To show~\eqref{eq:fermi_bound_Aij} and~\eqref{eq:fermi_bound_A1j},
  we use that the system of ODEs given by~\eqref{eq:ODE_Aij} and~\eqref{eq:ODE_A1j}  
  with initial conditions~\eqref{eq:initial_Aij} and~\eqref{eq:initial_A1j}
  is equivalent to the  following fixed point problem
   in $C^0([0;1]; \mathbb R^{n \times n})$:
   \begin{equation} A = T_x[A] \coloneqq \Id + S_x[A]
   \end{equation}
    where $\Id$ denotes the constant function from $[0,1]$ to $\mathbb R^{n \times n}$ with value $\Id$ and the linear
    operator $S_x$ is given by
    \begin{eqnarray*} (S_x) ^j_1[A](t) &\coloneqq &\int_0^t \int_{0}^s  M^k_1(\sigma;x) A^j_k(\sigma)\, d\sigma \, ds,  
    \quad \text{for $j \ge1$,}
    \\
    (S_x) ^j_i[A](t) &\coloneqq &\frac1t \int_0^t \int_{0}^s  \sigma  M^k_1(\sigma;x) A^j_k(\sigma) \, d\sigma \, ds, 
    \quad \text{for $i \ge 2$, $j \ge1$.}
    \end{eqnarray*}

Let $ \rho > 0$ and set
$$ L_0 \coloneqq \sup_{ x \in (-2L,2L) \times B_\rho(0)} |\mathcal R|, \quad
 L_1 \coloneqq \sup_{ x \in (-2L,2L) \times B_\rho(0)} |D\mathcal R|.$$
 Then, for $\sigma \in [0,1]$, 
 $$  |M(\sigma;x)| \le C L_0  |x'|^2  \quad \text{for $x \in  (-2L,2L) \times B_\rho(0)$}  $$
 and thus the operator norm of $S_x$ with respect to the supremum norm on $C^0([0;1]; \mathbb R^{n \times n})$
 satisfies
 \begin{equation}  \label{eq:fermi_norm_Sx}  \| S_x\| \le C L_0  |x'|^2  \quad \text{for $x \in  (-2L,2L) \times B_\rho(0)$}.
 \end{equation}
 Hence $T_x$ is a contraction if 
 $L_0 \rho^2$ is sufficiently small. 
 We will assume this from now on and we assume in particular that $L_0 \rho^2 \le 1$.
 Then it follows that  for all
 $x \in  (-2L,2L) \times B_\rho(0)$
 there exists
 a unique fixed point $\overline A_x \in  C^0([0;1]; \mathbb R^{n \times n})$  of $T_x$ and
 $\overline A_x(t) = A(t;x)$.
 Moreover $\overline A_x(t)  = \Id + \sum_{k=1}^\infty S^k[\Id](t)$ and hence~\eqref{eq:fermi_norm_Sx} implies that
 \begin{align*}
  \sup_{t \in [0,1]} |\overline A_x(t) - \Id - S_x[\Id](t)|   \le C L_0^2 |x'|^4.
 \end{align*}
 In particular we have 
 \begin{equation}  \label{eq:fermi_trivial_bound_A}
  \sup_{t \in [0,1]} |\overline A_x(t) - \Id| \le C (L_0 |x'|^2 +  L_0^2 |x'|^4) \le C L_0 |x'|^2.
  \end{equation}
 since $L_0 \rho^2 \le 1$. 
 It is easy to see that 
 $$ |M(t;x) - M(0;x)| \le C L_1 |x'|^3  \quad \text{for $t \in [0,1]$.}$$
 Together with~\eqref{eq:fermi_trivial_bound_A} and the definition of $S_x$ this implies that,
 for $i \ge 2$ and $j \ge 1$, 
 \begin{eqnarray*}
  \sup_{t \in [0,1]}  \left|   (\overline A_x)^j_1(t)  -\delta^j_1 - \frac{1}{2} t^2 M^j_1(0;x) \right| &\le& C(L_0^2 |x'|^4 + L_1 |x'|^3), \\
   \sup_{t \in [0,1]}  \left|   (\overline A_x)^j_1(t)  -\delta^j_1 - \frac16 t^2 M^j_1(0;x) \right| &\le& C(L_0^2 |x'|^4 + L_1 |x'|^3).
 \end{eqnarray*}
   Using this estimate for $t=1$, we get~\eqref{eq:fermi_bound_Aij} and~\eqref{eq:fermi_bound_A1j} since $A(x) =A(1;x) = \overline A_x(1)$.
   
   Now the map $x \mapsto S_x$ is a $C^1$ map from $(-2L,2L) \times B_\rho(0)$ to the space of bounded linear
   operators on $C^0([0,1]; \mathbb R^{n \times n})$. Hence $x \mapsto \overline A_x$ is $C^1$ and the derivative
   $F \coloneqq \partial_{x_k} \overline A_x$ satisfies
   \begin{align*}
    F = S_x[F] + (\partial_{x_k} S_x)[\overline A_x].
   \end{align*}
   It is easy to see that, for $x \in  (-2L,2L) \times B_\rho(0)$ and $t \in [0,1]$,
   \begin{eqnarray}    \label{eq:fermi_partialM_partialM0} 
    |\partial_{x_k} M(t;x) - \partial_{x_k} M(0;x)|& \le& C L_1 |x'|^2, \\
      | \partial_{x_k} M(0;x)| &\le& C L_0 |x'|.    \label{eq:fermi_partialM0} 
   \end{eqnarray}
   Together with~\eqref{eq:fermi_trivial_bound_A} we get,  for $i \ge 2$ and $j \ge 1$, 
   \begin{eqnarray}  \label{eqref:partialA_1j}
  \sup_{t \in [0,1]}  \left|  F^j_1(t)  -(S_x)^j_1[F](t)  - \frac{1}{2} t^2 \partial_{x_k}  M^j_1(0;x) \right| &\le& C( L_1 |x'|^2 + L_0^2 |x'|^3)\\
   \sup_{t \in [0,1]}    \left|  F^j_i(t)  -(S_x)^j_i[F](t)  - \frac16 t^2   \partial_{x_k} M^j_1(0;x) \right|   &\le& C( L_1 |x'|^2 + L_0^2 |x'|^3).  \label{eqref:partialA_ij}
   \end{eqnarray}
   Since $\| S_x\| \le C L_0 |x'|^2$ and $| \partial_{x_k} M(0;x)| \le C L_0 |x'|$ this implies 
   \eqref{eq:fermi_bound__dervative_Aij} and \eqref{eq:fermi_bound__dervative_A1j}.

\medskip

We now turn to the estimates for the pull back metric $\overline g$ and its Christoffel symbols $\overline \Gamma$. 
For ease of notation we simply write $g$ and $\Gamma$ instead of $\overline g$ and $\overline \Gamma$.
It follows from~\eqref{eq:fermi_metric_in_A},~\eqref{eq:fermi_bound_Aij}, and~\eqref{eq:fermi_bound_A1j} that  
$$\overline g_{ij} - \delta_{ij} = (\mathcal A^j_i - \delta^j_i) + (\mathcal A^{i}_j - \delta^{i}_j) + \mathcal O(|x'|^4).$$
Moreover we have 
\begin{align*}
  M^j_i(0; x) = - \overline{\mathcal R}_{jkil}(x_1) x'_k x'_l
\end{align*}
where 
\begin{align*} 
& \, \overline{\mathcal R}_{jkil}(x_1)\coloneqq   (\psi^*\mathcal R)(x_1,0)(e_j, e_k, e_i, e_l) 
=\, \mathcal R(\psi(x_1, 0))(a_j, a_k, a_i, a_l)
\end{align*}
with $a_j = D\psi(x_1,0) e_j$, i.e., $a_1 = \gamma'(x_1)$ and $a_j = \nu_j(x_1)$ for $j \ge 2$.

In particular,  $M^j_i(0;x)$ is symmetric in $i$ and $j$, since $ \overline{\mathcal R}_{jkil} =
\overline{\mathcal R}_{iljk}$. Thus
\begin{eqnarray}
\left|\overline g_{ij}(x) - \delta_{ij} -  \frac13 M^j_{i}(0;x) \right|&\le& C |x'|^3,  \quad \text{for $i \ge 2, j \ge 2$,}  \label{eq:fermi_metric_bound_ij}\\
\left|\overline g_{1j}(x) - \delta_{ij} -  \frac23 M^j_{1}(0;x) \right|&\le& C |x'|^3,  \quad \text{for $ j \ge 2$,} 
 \label{eq:fermi_metric_bound_1j}\\
 \left|\overline g_{11}(x) - \delta_{ij} -   M^1_{1}(0;x) \right|&\le& C |x'|^3. \label{eq:fermi_metric_bound_11}
\end{eqnarray}
This shows that \eqref{eq:taylor-metric} holds.

\medskip

To estimate the Christoffel symbols we note that~\eqref{eq:fermi_metric_in_A} and 
 \eqref{eq:fermi_bound_Aij}--\eqref{eq:fermi_bound__dervative_A1j} imply that
 $$ \partial_{x_k}\overline g_{ij} =  \partial_{x_k} A^{j}_i + \partial_{x_k} A^{i}_j + \mathcal O(|x'|^2). $$
 Using the symmetry of $M^j_i(0;x)$ under exchange of $i$ and $j$  we get, for $i,j,k \ge 2$,
 \begin{eqnarray*}
 3 \partial_{x_k}\overline g_{ij}(x) &=&-[\overline{\mathcal R}_{jkil}(x_1) + \overline{\mathcal R}_{jlik}(x_1)]  x'_l + \mathcal O(|x'|^2)
 =  [\overline{\mathcal R}_{kjil}(x_1) + \overline{\mathcal R}_{kijl}(x_1)]  x'_l + \mathcal O(|x'|^2)  \, , \\
  3 \partial_{x_i}\overline g_{jk}(x) &=&- [ \overline{\mathcal R}_{kijl}(x_1) + \overline{\mathcal R}_{klji}(x_1)]  x'_l + \mathcal O(|x'|^2) = 
 -  [ \overline{\mathcal R}_{kijl}(x_1) + \overline{\mathcal R}_{jikl}(x_1)]  x'_l  +  \mathcal O(|x'|^2)  \, , \\
   3 \partial_{x_j}\overline g_{ki}(x) &=&- [ \overline{\mathcal R}_{ijkl}(x_1) + \overline{\mathcal R}_{ilkj}(x_1)]  x'_l + \mathcal O(|x'|^2)
   =  - [ \overline{\mathcal R}_{ijkl}(x_1) + \overline{\mathcal R}_{kjil}(x_1)]    x'_l + \mathcal O(|x'|^2)  \, .
 \end{eqnarray*}
 Here we also used that $\overline{\mathcal R}_{ijkl} = \overline{\mathcal R}_{klij}$ 
 and that $\overline{\mathcal R}$
 is antisymmetric in the first two arguments.  Using this antisymmetry again, we get
 $\overline{\mathcal R}_{jikl}(x_1) + \overline{\mathcal R}_{ijkl}(x_1)  = 0$ and hence
 \begin{align} 6 \overline\Gamma_{kij}(x) =   & \,  3(  \partial_{x_i}\overline g_{jk}(x) + \partial_{x_j}\overline g_{k1}(x)  - \partial_{x_k}\overline g_{ij}(x))
 \nonumber  \\
 =  & \,-  2 [\overline{\mathcal R}_{kjil}(x_1) +  \overline{\mathcal R}_{kijl}(x_1) ] x_l +  \mathcal O(|x'|^2)
 \quad \text{for $i,j,k \ge 2$.}    \label{eq:fermi_Gamma_ijk_2}
 \end{align} 
 For $k=1$  and $i, j \ge 2$ we get $\partial_{x_1}\overline g_{ij} = 0$ and
 \begin{eqnarray}
  \frac32 \partial_{x_i}\overline g_{j1}(x) &=& 
 - [ \overline{\mathcal R}_{1ijl}(x_1) + \overline{\mathcal R}_{ji1l}(x_1)]  x'_l  +  \mathcal O(|x'|^2)   \label{eq:fermi_derivative_g1j_a}  \, , \\
   \frac32  \partial_{x_j}\overline g_{1i}(x) &=&
   - [   \overline{\mathcal R}_{ij1l}(x_1) +\overline{\mathcal R}_{1jil}(x_1) ]   x'_l + \mathcal O(|x'|^2)
   \label{eq:fermi_derivative_g1j_b}  \, .
 \end{eqnarray}
 Thus 
  \begin{align*} 3 \overline\Gamma_{1ij}(x) =   & \,   \frac32 (  \partial_{x_i}\overline g_{j1}(x) + \partial_{x_j}\overline g_{1i}(x)  - \partial_{x_1}\overline g_{ij}(x))   \nonumber \\
 =  & \, -  [\overline{\mathcal R}_{1jil}(x_1) +  \overline{\mathcal R}_{1ijl}(x_1) ] x'_l +  \mathcal O(|x'|^2)
 \quad \text{for $i,j  \ge 2$.}    
 \end{align*}
 
 Combining this identity with~\eqref{eq:fermi_Gamma_ijk_2} we get
 \begin{equation}  \label{eq:fermi_Gamma_normal}
  \overline\Gamma_{kij}(x) = - \frac13 [\overline{\mathcal R}_{kjil}(x_1) +  \overline{\mathcal R}_{kijl}(x_1) ] x'_l +  \mathcal O(|x'|^2)
  \quad \text{if $i,j \ge 2$, $k \ge 1$.}
 \end{equation}
 In view of the estimate for the metric we have
 \begin{equation}  \label{eq:fermi_Gamma_upper_lower} 
  | \overline\Gamma^{k}_{ij}(x) - \overline\Gamma_{kij}(x)|  \le  C |x'|^2.
 \end{equation}
 Thus~\eqref{eq:fermi_Gamma_normal} implies the first estimate in~\eqref{eq:taylor-christoff}.
  
 \medskip
 
 Similarly,~\eqref{eq:fermi_derivative_g1j_a} and~\eqref{eq:fermi_derivative_g1j_b} imply that
 \begin{align} 3\overline \Gamma_{ij1}(x) =   & \,  \frac32 (  \partial_{x_j}\overline g_{1i}(x) + \partial_{x_1}\overline g_{ij}(x)  - \partial_{x_i}\overline g_{j1}(x)) \nonumber  \\
 =  & \,- [2 \overline{\mathcal R}_{ij1l}(x_1) -  \overline{\mathcal R}_{j1il}(x_1)  -   \overline{\mathcal R}_{1ijl}(x_1) ] x'_l +  \mathcal O(|x'|^2)    \nonumber \\
 = & \, - 3 \overline{\mathcal R}_{ij1l}(x_1) x_l  +  \mathcal O(|x'|^2)
 \quad \text{for $i,j \ge 2$.}  \label{eq:fermi_Gamma_ij1}
 \end{align} 
 Here we used in the last step that
  \begin{align*}  
 & \,  \overline{\mathcal R}_{ijkl} +  \overline{\mathcal R}_{jkil} +  \overline{\mathcal R}_{kijl}  = 
 \overline{\mathcal R}_{klij} +  \overline{\mathcal R}_{iljk} +  \overline{\mathcal R}_{jlki}    
 = -[\overline{\mathcal R}_{lkij} +  \overline{\mathcal R}_{lijk} +  \overline{\mathcal R}_{ljki} ] = 0,
 \end{align*}
 since $\mathcal R(X,Y)Z + \mathcal R(Y,Z)X + \mathcal R(Z,X)Y = 0$.

 Moreover,
  \begin{align}
 2\overline \Gamma_{k11}(x) =  & \, - \partial_{x_k}\overline g_{11}(x) =  
 [\overline{\mathcal R}_{1k1l}(x_1) +   \overline{\mathcal R}_{1l1k}(x_1) ]x'_l  +   \mathcal O(|x'|^2)  \nonumber \\
 = & \, 2 \overline{\mathcal R}_{1k1l}(x_1) x_l +  \mathcal O(|x'|^2) \nonumber  \\
 = & \,-  2  \overline{\mathcal R}_{k11l}(x_1) x'_l  +  \mathcal O(|x'|^2) \quad \text{for $k \ge 2$.}
 \label{eq:fermi_Gamma_k11}
 \end{align}
 In addition we have  $\overline\Gamma_{111} = \mathcal O(|x'|^2)$,  since $\partial_{x_1}\overline g_{11} = \mathcal O(|x'|^2)$ and
 the antisymmetry of $\overline{\mathcal R}$ in the first two arguments gives
 $ \overline{\mathcal R}_{111l} = 0$. Hence  the identity  \eqref{eq:fermi_Gamma_k11} also holds for $k=1$. 
 Combining this observation with  \eqref{eq:fermi_Gamma_ij1} (with $(i,j)$ replaced by $(k,i)$  we
 get
 \begin{equation} \label{eq:fermi_Gamma_tangential}
  \overline\Gamma_{ki1}(x)  =-  \overline{\mathcal R}_{ki1l}(x_1) x_l  +  \mathcal O(|x'|^2) \quad \text{for $i,k \ge 1$.}
 \end{equation}
 Together with \eqref{eq:fermi_Gamma_upper_lower}   we get the second estimate in \eqref{eq:taylor-christoff}.
 
 \medskip

  Finally we discuss the independence of the constants in the estimates~\eqref{eq:taylor-metric} and~\eqref{eq:taylor-christoff}
  of the  framed unit speed geodesic $(\gamma, \underline \nu)$.
 Set
 $$ C_\rho \coloneqq   (-2L,2L) \times B_\rho(0).$$
 It follows from~\eqref{eq:fermi_trivial_bound_A} that there exists an $\eps_0$ such that
  $$\sup_{x \in C_\rho}  |\overline g - \Id | \le 
 C \rho^2 \sup_{x \in C_\rho} |\overline{\mathcal R}(x)|$$
 as long as
 $$ \rho^2 \sup_{x \in C_\rho} |\overline{\mathcal R}(x)| \le \eps_0$$
 where $|\mathcal R(p)|$ is an intrinsic norm of the curvature tensor at $p \in \mathcal M$. 
 On the other hand,  if $ \frac{1}{2} \Id \le \overline g \le 2 \Id$ then
  \begin{equation}   \label{eq:fermi_intrinsic_C0}
   \sup_{C_\rho} |\overline{\mathcal R}(x)| \le C \sup_{ p \in \psi(C_\rho)} |\mathcal R(p)|.
  \end{equation}
  Thus a continuity argument shows that 
   $ \frac{1}{2} \Id \le \overline g \le 2 \Id$ and  \eqref{eq:fermi_intrinsic_C0} hold provided that
   $$ \rho^2  \sup_{ p \in \psi(C_\rho)} |\mathcal R(p)| \le \eps_1$$
   for some $\eps_1 > 0$. 
   
   Similarly,  the other constants in the estimates for $\overline g$ and $\overline \Gamma$ are controlled
   by $ \sup_{x \in C_\rho} |\overline{\mathcal R}(x)|$ and $\sup_{x \in C_\rho} |D\overline{\mathcal R}(x)|$.
   Now,  for $|\Gamma| \le 1$   and $ \frac{1}{2} \Id \le\overline g \le 2 \Id$,  
   the quantity $\sup_{x \in C_\rho} |D\overline{\mathcal R}(x)|$
   is controlled by the $C^0$ norm of $\mathcal R$ and 
   its covariant derivative $\nabla \mathcal R$ in $\psi(C_\rho)$. 
   Thus another  continuity argument based on  \eqref{eq:fermi_partialM0}--\eqref{eqref:partialA_ij}  shows that that the constants are uniformly controlled if
   \begin{align*}
    \rho  \sup_{ p \in \psi(C_\rho)} |\mathcal R(p)|  +  \rho^2  \sup_{ p \in \psi(C_\rho)} |\mathcal \nabla\mathcal R(p)| 
   \le \eps_2
   \end{align*}
   for some $\eps_2 > 0$.  
   In particular,  for a compact manifold, the radius $\rho$ and the constants $C$ in 
   can be chosen independent of the frame unit speed geodesic $(\gamma, \underline \nu)$. 
   
\end{proof}


\bibliographystyle{abbrv}
\bibliography{bibfile.bib}

@Article{Maor2025,
author={Maor, Cy and Mora, Maria Giovanna},
title={Rigorous analysis of shape transitions in frustrated elastic ribbons},
year={2025},
journal={arXiv 2503.11779},
}

@Article{Mora2003,
  author    = {Maria Giovanna Mora and Stefan M{\"u}ller},
  journal   = {Calculus of Variations and Partial Differential Equations},
  title     = {Derivation of the nonlinear bending-torsion theory for inextensible rods by {$\Gamma$}-convergence},
  year      = {2003},
  number    = {3},
  pages     = {287--305},
  volume    = {18},
  doi       = {10.1007/s00526-003-0204-2},
  publisher = {Springer Science and Business Media {LLC}},
}

@Article{Friesecke2002,
  author   = {Friesecke, Gero and James, Richard D. and M{\"u}ller, Stefan},
  journal  = {Comm. Pure Appl. Math.},
  title    = {A theorem on geometric rigidity and the derivation of nonlinear plate theory from three-dimensional elasticity},
  year     = {2002},
  number   = {11},
  pages    = {1461-1506},
  volume   = {55},
  doi      = {10.1002/cpa.10048},
  keywords = {Plates},
  mrnumber = {1916989},
}

@Article{Nash1956,
  author   = {Nash, John},
  journal  = {Ann. of Math. (2)},
  title    = {The imbedding problem for {R}iemannian manifolds},
  year     = {1956},
  pages    = {20-63},
  volume   = {63},
  doi      = {10.2307/1969989},
  keywords = {Riemannian geometry, connections},
  mrnumber = {75639},
}

@Article{Karcher1977,
  author   = {Karcher, H.},
  journal  = {Comm. Pure Appl. Math.},
  title    = {Riemannian center of mass and mollifier smoothing},
  year     = {1977},
  number   = {5},
  pages    = {509-541},
  volume   = {30},
  doi      = {10.1002/cpa.3160300502},
  keywords = {Variational problems in applications to the theory of geodesics (problems in one independent variable)},
  mrnumber = {442975},
}

@Article{Kroemer2025,
  author   = {Kr\"omer, Milan and M\"uller, Stefan},
  journal  = {Calc. Var. Partial Differential Equations},
  title    = {Scaling of the elastic energy of small balls for maps between manifolds with different curvature tensors},
  year     = {2025},
  number   = {3},
  pages    = {Paper No. 85, 17 pp.},
  volume   = {64},
  doi      = {10.1007/s00526-025-02939-1},
  keywords = {Variational problems concerning minimal surfaces (problems in two independent variables)},
  mrnumber = {4865870},
}

@Article{Maor2019,
  author   = {Maor, Cy and Shachar, Asaf},
  journal  = {J. Elasticity},
  title    = {On the role of curvature in the elastic energy of non-{E}uclidean thin bodies},
  year     = {2019},
  number   = {2},
  pages    = {149-173},
  volume   = {134},
  doi      = {10.1007/s10659-018-9686-1},
  mrnumber = {3913889},
}

@Article{Manasse1963,
  author   = {Manasse, F. K. and Misner, C. W.},
  journal  = {J. Mathematical Phys.},
  title    = {Fermi normal coordinates and some basic concepts in differential geometry},
  year     = {1963},
  pages    = {735-745},
  volume   = {4},
  doi      = {10.1063/1.1724316},
  keywords = {Local Riemannian geometry},
  mrnumber = {155665},
}

@inproceedings{Alberti,
author="Alberti, G.",
editor="Buttazzo, Giuseppe
and Marino, Antonio
and Murthy, M. K. V.",
title="Variational models for phase transitions, an approach via {$\Gamma$}-convergence",
bookTitle="Calculus of Variations and Partial Differential Equations: Topics on Geometrical Evolution Problems and Degree Theory",
year="2000",
publisher="Springer Berlin Heidelberg",
pages="95--114",
isbn="978-3-642-57186-2",
doi="10.1007/978-3-642-57186-2_3",
url="https://doi.org/10.1007/978-3-642-57186-2_3"
}

@Book{Gray2004,
  author    = {Gray, Alfred},
  publisher = {Birkhäuser Basel},
  title     = {Tubes},
  year      = {2004},
  isbn      = {9783034879668},
  volume    = {2},
  doi       = {10.1007/978-3-0348-7966-8},
  keywords  = {Research exposition (monographs, survey articles) pertaining to differential geometry},
  mrnumber  = {2024928},
  pages     = {xiv+280 pp.},
}

@article {Levin2021,
    AUTHOR = {Levin, Ido and Si\'efert, Emmanuel and Sharon, Eran and Maor,
              Cy},
     TITLE = {Hierarchy of geometrical frustration in elastic ribbons:
              shape-transitions and energy scaling obtained from a general
              asymptotic theory},
   JOURNAL = {J. Mech. Phys. Solids},
  FJOURNAL = {Journal of the Mechanics and Physics of Solids},
    VOLUME = {156},
      YEAR = {2021},
           PAGES = {Paper No. 104579, 14},
      ISSN = {0022-5096,1873-4782},
   MRCLASS = {74K35},
  MRNUMBER = {4298627},
       DOI = {10.1016/j.jmps.2021.104579},
       URL = {https://doi.org/10.1016/j.jmps.2021.104579},
}

@article {Liang2009,
    AUTHOR = {Liang, Haiyi and Mahadevan, L.},
     TITLE = {The shape of a long leaf},
   JOURNAL = {Proc. Natl. Acad. Sci. USA},
  FJOURNAL = {Proceedings of the National Academy of Sciences of the United
              States of America},
    VOLUME = {106},
      YEAR = {2009},
    NUMBER = {52},
     PAGES = {22049--22054},
      ISSN = {0027-8424,1091-6490},
   MRCLASS = {92C80},
  MRNUMBER = {2580802},
       DOI = {10.1073/pnas.0911954106},
       URL = {https://doi.org/10.1073/pnas.0911954106},
}

@article {Klein2007,
    AUTHOR = {Klein, Yael and Efrati, Efi and Sharon, Eran},
     TITLE = {Shaping of elastic sheets by prescription of non-{E}uclidean
              metrics},
   JOURNAL = {Science},
  FJOURNAL = {American Association for the Advancement of Science. Science},
    VOLUME = {315},
      YEAR = {2007},
    NUMBER = {5815},
     PAGES = {1116--1120},
      ISSN = {0036-8075,1095-9203},
   MRCLASS = {74K15 (53A05 74G60 74G65)},
  MRNUMBER = {2290671},
       DOI = {10.1126/science.1135994},
       URL = {https://doi.org/10.1126/science.1135994},
}

@article {Sharon2007,
Author={Sharon, E and Roman, B and Swinney HL},
Title={Geometrically driven wrinkling observed in free plastic sheets and leaves},
Journal={Phys Rev E},
Volume={75},
Year={2007},
Pages={46211--046217},
}

@article {Ball1984,
    AUTHOR = {Ball, J. M.},
     TITLE = {Differentiability properties of symmetric and isotropic
              functions},
   JOURNAL = {Duke Math. J.},
  FJOURNAL = {Duke Mathematical Journal},
    VOLUME = {51},
      YEAR = {1984},
    NUMBER = {3},
     PAGES = {699--728},
      ISSN = {0012-7094,1547-7398},
   MRCLASS = {58C25 (26C05 73B10)},
  MRNUMBER = {757959},
MRREVIEWER = {Martin\ Golubitsky},
       DOI = {10.1215/S0012-7094-84-05134-2},
       URL = {https://doi.org/10.1215/S0012-7094-84-05134-2},
}

@article {Liu1977,
    AUTHOR = {Liu, Fon Che},
     TITLE = {A {L}uzin type property of {S}obolev functions},
   JOURNAL = {Indiana Univ. Math. J.},
  FJOURNAL = {Indiana University Mathematics Journal},
    VOLUME = {26},
      YEAR = {1977},
    NUMBER = {4},
     PAGES = {645--651},
      ISSN = {0022-2518,1943-5258},
   MRCLASS = {28A20 (28A75)},
  MRNUMBER = {450488},
MRREVIEWER = {T.\ Rolf\ Turner},
       DOI = {10.1512/iumj.1977.26.26051},
       URL = {https://doi.org/10.1512/iumj.1977.26.26051},
}

@book {EvansGariepy1992,
    AUTHOR = {Evans, Lawrence C. and Gariepy, Ronald F.},
     TITLE = {Measure theory and fine properties of functions},
    SERIES = {Studies in Advanced Mathematics},
 PUBLISHER = {CRC Press, Boca Raton, FL},
      YEAR = {1992},
     PAGES = {viii+268},
      ISBN = {0-8493-7157-0},
   MRCLASS = {28-02 (26-02 26Bxx 46E35)},
  MRNUMBER = {1158660},
MRREVIEWER = {R.\ G.\ Bartle},
}

\end{document}